\newcommand{\+}{\nobreakdash-}
\renewcommand{\:}{\colon}
\renewcommand{\.}{\mskip.5\thinmuskip}
\renewcommand{\le}{\leqslant}
\renewcommand{\ge}{\geqslant}
\DeclareMathOperator{\Id}{Id}
\DeclareMathOperator{\id}{id}
\DeclareMathOperator{\Hom}{Hom}
\DeclareMathOperator{\Ext}{Ext}
\newcommand{\rarrow}{\longrightarrow}
\newcommand{\larrow}{\longleftarrow}
\newcommand{\mpsto}{\longmapsto}
\newcommand{\birarrow}{\rightrightarrows}
\newcommand{\eps}{\epsilon}
\newcommand{\ups}{\upsilon}
\newcommand{\kap}{\varkappa}
\renewcommand{\d}{\partial}
\newcommand{\ot}{\otimes}
\newcommand{\lrarrow}{\.\relbar\joinrel\relbar\joinrel\rightarrow\.}
\newcommand{\bu}{{\text{\smaller\smaller$\scriptstyle\bullet$}}}
\newcommand{\A}{{\mathcal A}}
\newcommand{\D}{{\mathcal D}}
\newcommand{\E}{{\mathcal E}}
\newcommand{\F}{{\mathcal F}}
\newcommand{\G}{{\mathcal G}}
\renewcommand{\H}{{\mathcal H}}
\newcommand{\I}{{\mathcal I}}
\newcommand{\J}{{\mathcal J}}
\renewcommand{\S}{{\mathcal S}}
\newcommand{\Z}{{\mathbb Z}}
\newcommand{\Q}{{\mathbb Q}}
\newcommand{\s}{{\mathfrak s}}
\renewcommand{\t}{{\mathfrak t}}
\newcommand{\gr}{{\mathrm{gr}}}
\newcommand{\Section}[1]{\bigskip\section{#1}\medskip}
\theoremstyle{plain}
\newtheorem{thm}{Theorem}[section]
\newtheorem{conj}[thm]{Conjecture}
\newtheorem{lem}[thm]{Lemma}
\newtheorem{prop}[thm]{Proposition}
\newtheorem{cor}[thm]{Corollary}
\theoremstyle{definition}
\newtheorem{rem}[thm]{Remark}
\newtheorem{ex}[thm]{Example}
\begin{document}

\title{Categorical Bockstein sequences}
\author{Leonid Positselski}

\address{Department of Mathematics, Faculty of Natural Sciences,
University of Haifa, Mount Carmel, Haifa 31905, Israel; and
\newline\indent  Laboratory of Algebraic Geometry, National Research
University Higher School of Economics, Moscow 119048; and
\newline\indent Sector of Algebra and Number Theory, Institute
for Information Transmission Problems, Moscow 127051, Russia; and
\newline\indent Mathematical Institute, Czech Academy of Sciences,
\v Zitn\'a~25, 115~67 Prague~1, Czech Republic}

\email{posic@mccme.ru}

\begin{abstract}
 We construct the reduction of an exact category with a twist functor
with respect to an element of its graded center in presence of
an exact-conservative forgetful functor annihilating this central
element.
 The construction uses matrix factorizations in a nontraditional way.
 We obtain the Bockstein long exact sequences for the Ext groups
in the exact categories produced by reduction.
 Our motivation comes from the theory of Artin--Tate motives and motivic
sheaves with finite coefficients, and our key techniques generalize
those of~\cite[Section~4]{Partin}.
\end{abstract}

\maketitle

\tableofcontents

\section*{Introduction}
\medskip

 The goal of this paper is to develop a general categorical framework
for the following problem.
 Let $G$ be a finite group.
 For any commutative ring~$k$, denote by $\F_k$ the category of
representations of $G$ in finitely generated free $k$\+modules.
 The category $\F_k$ has a natural exact category structure in which
a short sequence is exact if and only if it is exact as a sequence
of modules over $k[G]$, or equivalently, split exact as a sequence of
$k$\+modules.
 Let $m=l^r$ be a prime power.
 How does one recover the exact category of modular representations
$\F_{\Z/m}$ from the exact category $\F_{\Z_l}$ of representations
of $G$ over the $l$\+adic integers?

 Notice that the reduction functor $\rho\:\F_{\Z_l}\rarrow\F_{\Z/m}$
taking a free $\Z_l$\+module $M$ with an action of $G$ to the free
$\Z/m$\+module $\rho(M)=M/mM$ with the induced action of $G$ is
\emph{not} surjective on the isomorphism classes of objects.
 E.~g., for $m=l^2$ with an odd prime~$l$ and a cyclic group $G=\Z/l$,
the representation of $G$ in a free $\Z/l^2$\+module of rank~$1$
corresponding to a nontrivial character $\Z/l\rarrow (\Z/l^2)^*$
cannot be lifted to a representation of $G$ in a free $\Z_l$\+module
of rank~$1$.
 Similarly, for a prime number $m=l>3$ and a cyclic group $G=\Z/l$,
the representation of $G$ in a free $\Z/l$\+module of rank~$2$ given by
the matrix $\left(\begin{smallmatrix}1&1\\0&1\end{smallmatrix}\right)$
modulo~$l$ cannot be lifted to a representation of $G$ in a free
$\Z_l$\+module of rank~$2$.
 On the other hand, the regular representation of a finite group $G$
over the residue ring $\Z/m$ can, of course, be lifted to a regular
representation of $G$ over the ring~$\Z_l$.

 Neither is the functor~$\rho$ surjective on morphisms.
 Instead, for any two objects $M$ and $N\in\F_{\Z_l}$ there is a natural
\emph{Bockstein long exact sequence}
\begin{alignat*}{3}
 0&\lrarrow\Hom_{\F_{\Z_l}}(M,N)&&\lrarrow\Hom_{\F_{\Z_l}}(M,N)
 &&\lrarrow\Hom_{\F_{\Z/m}}(\rho(M),\rho(N)) \\
 &\lrarrow \Ext^1_{\F_{\Z_l}}(M,N)&&\lrarrow\Ext^1_{\F_{\Z_l}}(M,N)
 &&\lrarrow\Ext^1_{\F_{\Z/m}}(\rho(M),\rho(N)) \\
 &\lrarrow \Ext^2_{\F_{\Z_l}}(M,N)&&\lrarrow\Ext^2_{\F_{\Z_l}}(M,N)
 &&\lrarrow\Ext^2_{\F_{\Z/m}}(\rho(M),\rho(N))\lrarrow\dotsb
\end{alignat*}
 Moreover, given two prime powers $m'=l^s$ and $m''=l^t$ with
$m=m'm''$, there is a Bockstein long exact sequence
\begin{alignat*}{3}
 0&\rarrow\Hom_{\F_{\Z/m'}}(\rho'(M),\rho'(N))&&\rarrow
 \Hom_{\F_{\Z/m}}(M,N)&&\rarrow
 \Hom_{\F_{\Z/m''}}(\rho''(M),\rho''(N)) \\
 &\rarrow \Ext^1_{\F_{\Z/m'}}(\rho'(M),\rho'(N))&&\rarrow
 \Ext^1_{\F_{\Z/m}}(M,N)&&\rarrow
 \Ext^1_{\F_{\Z/m''}}(\rho''(M),\rho''(N)) \\
 &\rarrow\Ext^2_{\F_{\Z/m'}}(\rho'(M),\rho'(N))&&\rarrow
 \Ext^2_{\F_{\Z/m}}(M,N)&&\rarrow\dotsb
\end{alignat*}
for the reduction functors $\rho^{(i)}\:\F_{\Z/m}\rarrow\F_{\Z/m^{(i)}}$,
\,$i=1$,~$2$, and any two objects $M$, $N\in\F_{\Z/m}$.
 We would like to have such long exact sequences coming out from our
categorical formalism of reductions.

 More generally, let $G$ be a profinite group.
 Then it is more convenient to consider representations of $G$
in infinitely generated $k$\+modules, particularly when the ring
of coefficients~$k$ is itself infinite.
 So, let $\F_{\Z/m}^+$ denote the category of free $\Z/m$\+modules
endowed with a discrete action of~$G$.
 For the $l$\+adic coefficients, set $\F_{\Z/l}^+$ to be the category
of $l$\+divisible $l$\+primary torsion abelian groups (or, in
a different language, injective discrete $\Z_l$\+modules) with
a discrete action of~$G$.

 The reduction functor $\rho\:\F_{\Z_l}^+\rarrow\F_{\Z/m}^+$ acts by
assigning to an $l$\+divisible $l$\+primary abelian group $M$ with
a discrete action of $G$ the (co)free $\Z/m$\+module ${}_mM\subset M$
of all the elements of $M$ annihilated by~$m$, endowed with
the restriction of the action of~$G$.
 Our procedure of reduction of exact categories will produce
the exact category $\F_{\Z/m}^+$ with the reduction functor~$\rho$
starting from the exact category $\F_{\Z_l}^+$ endowed with
the natural tranformation of the identity endofunctor
$m\:\Id_{\F_{\Z_l}^+}\rarrow \Id_{\F_{\Z_l}^+}$
(acting by multiplication with~$m$ on every object
$M\in\F_{\Z_l}^+$) and some additional data.

 If one feels a bit uncomfortable about a reduction functor assigning
to an injective $l^\infty$\+torsion abelian group its subgroup of
elements annihilated by~$m=l^r$, one can restate the above description
of the category $\F_{\Z_l}^+$ in the language of free
$\Z_l$\+contramodules~\cite{Pweak,Prev,Pcta} rather than
cofree discrete $\Z_l$\+modules.
 The $\Z_l$\+module $\Z_l(G)$ of continuous $\Z_l$\+valued functions on
a profinite group $G$ is a \emph{$\Z_l$\+free coalgebra} in the sense
of~\cite[Subsections~1.6 and~3.1]{Pweak}.
 The category $\F_{\Z_l}^+$ can be equivalently defined as consisting of
free/projective $\Z_l$\+contramodules $P$ (i.~e., torsion-free
abelian groups for which the natural map $P\rarrow\varprojlim_n
P/l^nP$ is an isomorphism) endowed with a \emph{$\Z_l$\+free
$\Z_l(G)$\+comodule} structure~\cite[Subsections~3.1 and~3.3]{Pweak}
(cf.~\cite[Example in Subsection~3.1]{PosVish}).
 The reduction functor $\rho\:\F_{\Z_l}^+\rarrow\F_{\Z/m}^+$ then acts
by assigning the $\Z/m$\+free discrete $G$\+module $P/mP$ to
a $\Z_l$\+free $\Z_l(G)$\+comodule~$P$.
 The equivalence between the two definitions of the category
$\F_{\Z_l}^+$ is provided by the rules $P=\Psi_{\Z_l}(M)=
\Hom_\Z(\Q_l/\Z_l,M)$ and $M=\Phi_{\Z_l}(P)=\Q_l/\Z_l\ot_\Z P$
(see~\cite[Proposition~2.1]{Har} and~\cite[Subsection~1.5
and Proposition~3.3.2(b)]{Pweak}).

 In fact, the reduction construction presented in this paper is
applicable to a wider class of situations than the above discussion
may seem to suggest.
 Invented originally in the author's paper~\cite{Partin}, 
the first version of this reduction procedure was intended to
solve the following associated graded category problem, which
looks quite different from the above coefficient reduction questions
at the first glance.

 Let $l$~be a prime number and $G$ be a pro-$l$-group.
 We are interested in (say, finite-dimensional) discrete
$G$\+modules $M$ over the field $k=\Z/l$ endowed with a finite
decreasing filtration by $G$\+submodules
$\dotsb\supset F^{-1}M\supset F^0M\supset F^1M\supset\dotsb$
such that the action of $G$ is trivial on the quotient modules
$F^nM/F^{n+1}M$.
 The question is to define the structure induced on the associated
graded vector space $\bigoplus_n F^nM/F^{n+1}M$ by the $G$\+module
structure on~$M$.
 Let us first describe the answer algebraically, and then formulate
the problem in categorical terms.

 More generally, let $C$ be a coassociative coalgebra over
a field~$k$ and $0=F_{-1}C\subset F_0C\subset F_1C\subset F_2C
\subset\dotsb$ be a comultiplicative increasing filtration on~$C$.
 Set $F^{-i}C=F_iC$, and consider the category of finite-dimensional
left $C$\+comodules $M$ endowed with a decreasing filtration $F$
compatible with the filtration on~$C$.
 The above example with a pro-$l$-group $G$ is obtained by setting
$C=k(G)$ to be the group coalgebra (coalgebra of locally constant
$k$\+valued functions with the convolution comultiplication) of
the profinite group $G$ and $F_nC\subset C$ to be the components of
the coaugmentation filtration $F_nC=\ker(C\to (C/k)^{\ot n+1})$
(cf.~\cite[Section~2]{Partin}).

 In these terms, the induced structure on the graded vector space
$\gr_FM=\bigoplus_nF^nM/F^{n+1}M$ is simply that of a graded left
comodule over the graded coalgebra $\gr_FC=\bigoplus_n F^nC/F^{n+1}C$.
 The Ext spaces computed in the categories of filtered
$C$\+comodules and graded $\gr_FC$\+comodules are related
by a Bockstein long exact sequence (cf.\ the spectral sequence
in~\cite[proof of main theorem]{PosVish}).

 Now let $\F$ be the exact category of finite-dimensional left
$C$\+comodules $M$ endowed with finite decreasing filtrations $F$
compatible with the filtration $F$ on~$C$.
 The category $\F$ comes endowed with the \emph{twist functor}
$X\mpsto X(1)$ taking a filtered $C$\+comodule $(M,F)$ to the same
$C$\+comodule $M$ with the shifted filtration $F(1)^n M = F^{n-1}M$.
 One has $F^nM\subset F(1)^nM$ for all $M$ and~$n$; hence for every
filtered $C$\+comodule $X=(M,F)$ there is a natural morphism
$\s_X\:X\rarrow X(1)$ in the category~$\F$.
 The natural transformation $\s\:\Id_\F\rarrow(1)$ can be thought of
as an element of the \emph{graded center} of the category~$\F$ with
the twist functor $(1)\:\F\rarrow\F$.
 The reduction construction, applied to the exact category $\F$ with
the natural transformation~$\s$, should produce the exact (and, in
fact, in this case abelian) category $\G$ of finite-dimensional
graded modules over the graded coalgebra~$\gr_FC$.

 A solution to the latter ``categorical filtration reduction'' problem
was worked out in our previous paper~\cite{Partin}.
 The aim of the present work is to generalize the reduction construction
of~\cite[Section~4]{Partin} so as to make it also applicable to, e.~g.,
the ``categorical coefficient reduction'' problem described in
the beginning of this introduction.
 We also generalize the categorical Bockstein sequence construction
of~\cite[Section~4]{Partin}, formulating it in the abstract terms of
an exact category with a graded center element and two exact functors
to two other exact categories, satisfying appropriate conditions.
 This allows us to obtain the more complicated ``finite-finite-finite''
Bockstein long exact sequence for the Ext groups in our reduced
exact categories alongside with the simpler ``integral-integral-finite''
sequence.

 The construction of the reduced category $\G=\F/\s$ uses matrix
factorizations.
 Indeed, the diagrams $V(-1)\rarrow U\rarrow V\rarrow U(1)$ defining
objects of the intermediate category $\widetilde\H$ are nothing but
matrix factorizations of the natural transformation~$\s\:\Id\rarrow(1)$
on the category $\F$ (in the sense of, e.~g.,
\cite[Remark~2.7]{Psing}).
 However, the category-theoretic procedures that we apply to this
category of matrix factorizations are quite different from the ones
conventionally employed in the matrix factorization theory.
 Our aim is also quite different: while in the matrix factorization
theory as it is presently known one produces a \emph{triangulated}
category out of one's matrix factorizations, in this paper we
use matrix factorizations in order to produce an \emph{exact}
category (cf.~\cite[Remark~4.3]{Partin}).

 One of the differences is that our approach requires
an exact-conservative functor $\pi\:\F\rarrow\E$ annihilating all
the morphisms $\s_X\:X\rarrow X(1)$ (and the ones divisible by
these, but no other morphisms) to be given as an additional
piece of data.
 We start by considering the full subcategory $\H\subset\widetilde\H$
consisting of all the matrix factorizations that are transformed
into exact sequences in $\E$ by the functor~$\pi$
(cf.~\cite[Lemma~1.5]{PolVain}).
 This allows to construct the middle cocycles/coboundaries functor
$\Delta\:\H\rarrow\E$.

 We proceed by passing to the quotient category of the category $\H$
by the ideal $\I$ of all morphisms annihilated by the functor~$\Delta$.
 This serves to make the class $\S$ of all morphisms transformed to
isomorphisms by $\Delta$ a localizing class in the category
$\H/\I$, and we finally set $\G=(\H/\I)[\S^{-1}]$.
 Though this two-step procedure may remind one of (and was indeed
inspired by) the two-step construction of the derived category
starting from the category of complexes (with passing to the quotient
category by the ideal of morphisms homotopic to zero on the first
step), our present construction produces, to repeat, an exact
category and \emph{not} a triangulated one.

 Let us make a few more comments on the difference between the two
theories looking from a different angle.
 The conventional theory of matrix factorizations, from its inception
in the works of Eisenbud~\cite{Eis} and Buchweitz~\cite{Buch}, was
concerned with (local or global) \emph{singularities} of algebraic
varieties.
 When there were, in fact, no singularities, the theory would become
largely trivial.

 In the form the theory obtained in the work of Orlov~\cite{Or1,Or3}
(see also~\cite{PolVain}), the homotopy/derived category of
matrix factorizations was identified with the triangulated category
of singularities of the zero locus $X_0$ of a nonzero-dividing
global section $w\in L(X)$ of a line bundle $L$ on a smooth scheme~$X$.
 In its most advanced present form~\cite{Or3,Psing}, the theory
uses categories of matrix factorizations to describe relative
singularies (in one or another sense) of the Cartier divisor $X_0$
as compared to those of the whole (also singular) scheme~$X$.
 When the zero locus is, in fact, nonsingular (or if its singularities
are no worse than those of the ambient variety), the triangulated
categories of matrix factorizations vanish.

 To compare, consider an associative ring $R$ with a nonzero-dividing
central element $s\in R$, and let $\F$ be the exact category of all
left $R$\+modules $M$ without $s$\+torsion for which the natural map
$M\rarrow\varprojlim_n M/s^nM$ is an isomorphism (or, which is
equivalent for $R$\+modules without $s$\+torsion, one has
$\Ext^*_R(R[s^{-1}],M)=0$; cf.~\cite[Section~1 and Appendix~B]{Pweak}
and~\cite[Theorem~2.4]{Pcta}).
 The multiplication with~$s$ provides a natural transformation
$\s\:\Id_\F\rarrow\Id_\F$.
 Take $\E$ to be the abelian category of abelian groups (or $k$\+vector
spaces, if $R$ contains a field~$k$, etc.).
 One can use the functor assigning the abelian group/vector space $M/sM$
to an $s$\+complete $R$\+module without $s$\+torsion $M\in\F$
in the role of the background exact-conservative functor
$\pi\:\F\rarrow\E$.

 Applying our reduction procedure to this set of inputs, one obtains
the abelian category of left $R/(s)$\+modules $\G=\F/\s$ in the output.
 Notice that the $s$\+com\-plete\-ness condition is necessary for our
construction to work (or otherwise the functor~$\pi$ would not be
``exact-conservative'').
 On the other hand, we obtain the category of $R/(s)$\+modules
\emph{itself} in the result, rather than any category of singularities
of the quotient ring~$R/(s)$.
 Assuming the ring $R$ to be commutative and Noetherian (of finite Krull
dimension, etc.), and the ring $R/(s)$ to be regular, etc., does
\emph{not} make our reduction construction, or the exact/abelian
category produced by it, trivial (in any sense apparent to the author).
 Of course, this does not mean that the category of $R/(s)$\+modules
could not be obtained without our formalism.
 
 The importance of the reduction construction worked out in this paper
lies, in our view, in its wide applicability, including applicability
to complicated exact categories for which its outputs may be hard to
produce in any alternative or explicit way.
 In all the examples discussed above in this introduction, it was known
in advance what the reduced exact category $\G=\F/\s$ is supposed to be.
 The related Bockstein long exact sequences were not difficult to
obtain ``by hand'', either.
 This does not seem to be true for the examples that we are really
interested in, however.

 The latter mostly come from the theory of Artin--Tate motives and
motivic sheaves with finite coefficients~\cite{Partin,Pmotsh}.
 The construction of~\cite[Section~4]{Partin} was developed and
applied for producing the associated graded category $\G$ of the exact
category $\F$ of mixed Artin--Tate motives with finite coefficients
(and of the similar exact categories of filtered objects in a given
exact category with the successive quotients belonging to a given
additive subcategory endowed with the split exact structure).
 We do \emph{not} know of any other way to define this exact category
$\G$ (e.~g., as the category of graded modules or comodules over
anything, etc.).

 Our motivation for developing the construction presented below also
comes from the theory of Artin--Tate motivic sheaves with finite
coefficients.
 It was shown in the paper~\cite{Pmotsh} that the exact category
$\F_X^m$ of Artin--Tate motivic sheaves with the coefficients $\Z/m$
on a smooth algebraic variety $X$ (over a field of characteristic
prime to~$m$) has the Ext groups between the Tate objects
$\Ext^i_{\F_X^m}(\Z/m,\Z/m(j))$ agreeing with the motivic cohomology
groups of the variety $X$ (as described by the Beilinson--Lichtenbaum
conjectures) if and only if all the scheme points~$y$ of varieties
$Y$ \'etale over $X$ have the same property.
 Furthermore, for a field $K$ containing a primitive $m$\+root of
unity it was proven in~\cite{Partin} that one can express
this ``$K(\pi,1)$'' (Ext agreement) conjecture as the Koszul
property of a certain ``big graded algebra''.

 We would like to get rid of the root of unity assumption in the latter
theorem.
 This is straightforward when $m=l$ is a prime, because the cyclotomic
modules $\mu_m^{\ot j}$ over the Galois group $G_K$ are direct summands
of permutational modules in this case.
 So the plan is to reduce the $K(\pi,1)$\+conjecture for Artin--Tate
motives with $\Z/m$\+coefficients to the case of prime
coefficients~$\Z/l$.
 This is easily done (using the Koszul algebras language) when
the field $K$ contains a primitive $m$\+root of
unity~\cite[Subsections~7.3 and~9.5]{Partin}, but we
are interested in the opposite case.
 In the general situation, we want to use the Bockstein long exact
sequences relating the Ext groups in the categories $\F_K^m$ with
varying coefficients~$\Z/m$.

 The existence of such Bockstein exact sequences is itself a nontrivial
assertion, and we intend to prove it by constructing exact categories
$\G_K^m=\F_K^{\.l^\infty}/m$ in which the desired long exact sequences
are forced to hold first, and comparing the categories $\G_K^m$ with
the desired categories $\F_K^m$ later.
 So in this purported application we know in advance what the reduced
exact category $\G$ should be, but the Bockstein exact sequences
for the desired exact categories cannot be obtained directly, and
proving that the category $\G$ is what it is supposed to be is
a nontrivial task.
 In particular, it appears that the $K(\pi,1)$/Koszulity hypothesis
for prime coefficients $\Z/l$ may be \emph{needed} for obtaining
the Bockstein long exact sequences.

 At the very end of the paper, the intended main results are formulated
as two conjectures.

\medskip\noindent
\textbf{Acknowledgement.}
 The author's conception of the question of constructing
the quotient category of an exact category with a twist functor
by a natural transformation goes back to my years as a graduate student
at Harvard University in the second half of~'90s.
 My thinking was influenced by conversations with V.~Voevodsky\-
and A.~Beilinson at the time.
 The details below were worked out in Moscow in
February--March~2010 (as presented in~\cite[Section~4]{Partin})
and subsequently in September~2013 (in full generality).
 The paper was written while I~was vacationing in Prague
in March--April~2014, visiting Ben Gurion University of the Negev
in Be'er Sheva in June--September~2014, visiting the Technion
in Haifa in October 2014--March 2015, and working as a postdoc
at the University of Haifa in 2016--2018.
 I~am grateful to the anonymous referee for careful reading of
the manuscript and many insightful suggestions, which helped to
improve the exposition.
 In particular, the arguments in the paragraphs preceding
Lemma~\ref{ext-1-secondary-product} are largely due to the referee.
 The author was supported in part by RFBR grants in Moscow,
a fellowship from the Lady Davis Foundation at the Technion, and
the ISF grant~\#\,446/15 at the University of Haifa.

\setcounter{section}{-1}
\Section{Preliminaries}

\setcounter{subsection}{-1}
\subsection{Notation and terminology}  \label{conventions}
 Throughout this paper, most natural isomorphisms will be harmlessly
treated as identifications.
 We will also tacitly presume that a Grothendieck universe has been
chosen and can be enlarged if necessary, so words like ``a category'' or
``the category of modules'' are understood in the sense relative to
the chosen universe.
 This will allow us to perform various categorical constructions without
fear of problems arising from the foundations.

 By an \emph{exact category} we mean an exact category in Quillen's
sense, i.~e., an additive category endowed with a class of short exact
sequences satisfying the natural axioms (see, e.~g., \cite{Kel,Kel2},
\cite{Bueh}, or~\cite[Appendix~A]{Partin}).
 A sequence of objects and morphisms in an exact category is said to
be \emph{exact} if it is obtained by splicing short exact sequences.
 A functor between exact categories is called exact if it takes
short (or, equivalently, arbitrary) exact sequences in the source
category to short (resp., long) exact sequences in the target one.

 A \emph{twist functor} on a category $\F$ is an autoequivalence
denoted usually by $X\mpsto X(1)$.
 The inverse autoequivalence is denoted by $X\mpsto X(-1)$, and
the integral powers of the twist functor are denoted by
$X\mpsto X(n)$, \,$n\in\Z$.
 Twist functors on exact categories will be presumed to be exact
autoequivalences.

 Given two categories $\F$ and $\E$ endowed with twist functors,
a functor $\pi\:\F\rarrow\E$ is said to commute with the twists if
a functorial isomorphism $\pi(X(1))\simeq\pi(X)(1)$ is fixed for
all objects $X\in\F$.
 Speaking of a commutative diagram of functors $\F\rarrow\G\rarrow\E$
commuting with the twists, we will always presume that
the commutation isomorphisms form commutative diagrams of morphisms.

 A morphism of endofunctors $\t\:\Id\rarrow(n)$, \,$n\in\Z$ on
a category $\F$ with a twist functor $X\mpsto X(1)$ (i.~e., a morphism
$\t_X\:X\rarrow X(n)$ defined for every object $X\in\F$ and functorial
with respect to all the morphisms $X\rarrow Y$ in $\F$) is said to
\emph{commute with the twist} if for every object $X\in\F$ the equation
$\t_{X(1)}=\t_X(1)$ holds in the set $\Hom_\F(X(1),X(n+1))$.

 Notice that the endomorphisms of the identity functor on an additive
category $\F$ always form a commutative ring, which is called the
\emph{center} of the category~$\F$.
 It is the universal object among all the commutative rings~$k$ for
which $\F$ can be endowed with the structure of a $k$\+linear category.
 Similarly, given an additive category $\F$ with a twist functor
$X\mpsto X(1)$, morphisms of endofunctors $\Id\rarrow(n)$ commuting
with the twist form a commutative ring with a $\Z$\+grading, which
can be called the \emph{graded center} of an (additive) category
with a twist functor.

 We will say that a morphism $f\:X\rarrow Y$ in $\F$ is \emph{divisible
by} a natural transformation $\t\:\Id\rarrow(n)$ commuting with
the twist functor $X\mpsto X(1)$ on $\F$ if the morphism~$f$ factorizes
through the morphism $\t_X\:X\rarrow X(n)$, or equivalently, through
the morphism $\t_{Y(-n)}\:Y(-n)\rarrow Y$.
 Indeed, these conditions are equivalent, because for any morphism
$g\:X\rarrow Y(-n)$ one has $g(n)\t_X=\t_{Y(-n)}g$,
since $\t\:\Id\rarrow(n)$ is a natural transformation.
 Similarly, a morphism $f\:X\rarrow Y$ is \emph{annihilated by}
an element of the graded center $\t\:\Id\rarrow(n)$ of an additive
category $\F$ if the composition $X\rarrow Y\rarrow Y(n)$ vanishes,
or equivalently, the composition $X(-n)\rarrow X\rarrow Y$ vanishes
in~$\F$.

 Furthermore, suppose that two commuting autoequivalences
$X\mpsto X(1)$ and $X\mpsto X\{1\}$ are defined on a category~$\F$.
 Then one can consider morphisms of endofunctors $\t\:\Id_\F\rarrow
(n)\{m\}$ commuting with \emph{both} the twist functors $(1)$
and~$\{1\}$, i.~e., satisfying the equations
$\t_{X(1)}=\t_X(1)$ and $\t_{X\{1\}}=\t_X\{1\}$.
 For an additive category $\F$, such natural transformations form
a bigraded commutative ring, which can be called
the \emph{bigraded center} of~$\F$.

 More generally, let $\F$ and $\G$ be two categories with twist functors
$X\mpsto X(1)$ and $\rho\:\F\rarrow\G$ be a functor commuting with
the twists.
 Then a morphism of functors $\s\:\rho\rarrow\rho(n)$ acting between
the categories $\F$ and $\G$ is said to \emph{commute with the twists}
if for every object $X\in\F$ the equation $\s_{X(1)}=\s_X(1)$ holds
in the set $\Hom_\G(\rho(X)(1),\rho(X)(n+1))$.
 If this is the case and $f\:X\rarrow Y$ is a morphism in the category
$\F$, then the composition $\rho(X)\rarrow\rho(Y)\rarrow\rho(Y)(n)$
of the morphisms $\rho(f)$ and $\s_Y$, which is equal to the composition
$\rho(X)\rarrow\rho(X)(n)\rarrow\rho(Y)(n)$ of the morphisms~$\s_X$ and
$\rho(f)(n)$, is called the \emph{product} of the morphism~$\rho(f)$
with the natural transformation~$\s$ and denoted by $\s\rho(f)$.

 Given two objects $X$ and $Y$ in the category $\F$, a morphism
$g\:\rho(X)\rarrow\rho(Y)$ in the category $\G$ is said to be
\emph{divisible by} a natural transformation $\s\:\rho\rarrow\rho(n)$
commuting with the twists if it has the form $\s\rho(f)$ for a certain
morphism $f\:X\rarrow Y(-n)$ in the category $\F$, that is
the morphism~$g$ is equal to the composition $\rho(X)\rarrow\rho(Y)(-n)
\rarrow\rho(Y)$, or equivalently, to the composition
$\rho(X)\rarrow\rho(X)(n)\rarrow\rho(Y)$, where the morphisms
$\rho(X)\rarrow\rho(Y)(-n)$ and $\rho(X)(n)\rarrow\rho(Y)$ come
from morphisms in the category $\F$ via the functor~$\rho$.
 Similarly, given a morphism $f\:X\rarrow Y$ in the category $\F$,
the morphism $\rho(f)$ is said to be \emph{annihilated by} the natural
transformation~$\s$ if the product $\s\rho(f)$ vanishes, i.~e.,
the composition $\rho(X)\rarrow\rho(Y)\rarrow\rho(Y)(n)$ is equal
to zero, or equivalently, the composition $\rho(X)(-n)\rarrow\rho(X)
\rarrow\rho(Y)$ is equal to zero in the category~$\G$.

 An exact functor between exact categories $\pi\:\F\rarrow\E$ is called
\emph{exact-conservative} if it reflects admissible monomorphisms,
admissible epimorphisms, and exact sequences.
 In other words, a functor $\pi$ is said to be exact-conservative if
a morphism in $\F$ is an admissible monomorphism or admissible
epimorphism, or a complex in $\F$ is an exact sequence, if and only if
so is its image with respect to the functor~$\pi$ in
the exact category~$\E$.
 Notice that any exact-conservative functor between exact categories is
conservative in the conventional sense (i.~e., reflects isomorphisms).

\subsection{Exact surjectivity conditions}  \label{exact-surjectivity}
 Let $\eta\:\F\rarrow\G$ be an exact functor between two exact
categories.
 The following conditions on a functor~$\eta$ will play a key role
in the construction of the Bockstein long exact sequence in
Section~\ref{bockstein-sequence-section}:
\begin{itemize}
\item[(i$'$)] for any object $X\in\F$ and any admissible epimorphism
$T\rarrow\eta(X)$ in $\G$ there exists an admissible epimorphism
$Z\rarrow X$ in $\F$ and a morphism $\eta(Z)\rarrow T$ in $\G$ making
the triangle diagram $\eta(Z)\rarrow T\rarrow\eta(X)$ commutative;
\item[(i$''$)] for any object $Y\in\F$ and any admissible monomorphism
$\eta(Y)\rarrow T$ in $\G$ there exists an admissible monomorphism
$Y\rarrow Z$ in $\F$ and a morphism $T\rarrow\eta(Z)$ in $\G$ making
the triangle diagram $\eta(Y)\rarrow T\rarrow\eta(Z)$ commutative;
\item[(ii$'$)] for any objects $X$, $Y\in\F$ and any morphism
$\eta(X)\rarrow\eta(Y)$ in $\G$ there exists an admissible epimorphism
$X'\rarrow X$ and a morphism $X'\rarrow Y$ in $\F$ making the triangle
diagram $\eta(X')\rarrow\eta(X)\rarrow\eta(Y)$ commutative in~$\G$;
\item[(ii$''$)] for any objects $X$, $Y\in\F$ and any morphism
$\eta(X)\rarrow\eta(Y)$ in $\G$ there exists an admissible monomorphism
$Y\rarrow Y'$ and a morphism $X\rarrow Y'$ in $\F$ making the triangle
diagram $\eta(X)\rarrow\eta(Y)\rarrow\eta(Y')$ commutative in~$\G$.
\end{itemize}

 We will say that an exact functor~$\eta$ satisfies the condition~(i)
if both the dual conditions (i$'$) and (i$''$) hold for it.
 Similarly, we will say that $\eta$~satisfies the condition~(ii)
if both the dual conditions (ii$'$) and (ii$''$) hold for~$\eta$.
 For simple examples of exact functors satisfying
the conditions~(i\+ii), we refer the reader to
Subsection~\ref{bockstein-examples}.

 For the sake of completeness, consider also the following two easier
formulated conditions:
\begin{itemize}
\item[($*'$)] for any object $T\in\G$ there exists an object $U\in\F$
and an admissible epimorphism $\eta(U)\rarrow T$ in~$\G$;
\item[($*''$)] for any object $T\in\G$ there exists an object $V\in\F$
and an admissible monomorphism $T\rarrow\eta(V)$ in~$\G$.
\end{itemize}
 We will say that an exact functor~$\eta$ satisfies~($*$)
if it satisfies both~($*'$) and~($*''$).
 The following lemma will be useful in
Subsection~\ref{exact-surjectivity-compositions}, and also
relevant in Subsection~\ref{matrix-reduction-properties}.

\begin{lem}  \label{exact-surjectivity-implications}
 If the three conditions~($*'$), (i$\.'$), (ii$\.'$) hold for
an exact functor~$\eta$, then the morphism $\eta(Z)\rarrow T$
in~(i$\.'$) can be chosen to be an admissible epimorphism, too.
 Moreover, for a functor~$\eta$ reflecting admissible epimorphisms,
the condition~(i$\.'$) entirely follows from~($*'$) and~(ii$\.'$).
\end{lem}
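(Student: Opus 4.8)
The plan is to prove the two assertions in turn, working directly from the definitions of the exact-surjectivity conditions.

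For the first assertion, suppose ($*'$), (i$'$), and (ii$'$) hold, and let $X\in\F$ together with an admissible epimorphism $p\:T\rarrow\eta(X)$ in $\G$ be given. By (i$'$) we already obtain an admissible epimorphism $q\:Z_0\rarrow X$ in $\F$ and a morphism $g\:\eta(Z_0)\rarrow T$ with $p\circ g=\eta(q)$. This $g$ need not be an admissible epimorphism, so the idea is to enlarge the source. Apply ($*'$) to the object $T\in\G$: there is an object $W\in\F$ and an admissible epimorphism $h\:\eta(W)\rarrow T$. Now form $Z=Z_0\oplus W$, with the obvious admissible epimorphism $Z\rarrow X$ given by $q$ on the first summand and by (the zero composite) on the second — more precisely, one uses that $Z_0\rarrow X$ is admissible epi and that adding a direct summand keeps it so. The morphism $\eta(Z)=\eta(Z_0)\oplus\eta(W)\rarrow T$ given by $(g,h)$ is then an admissible epimorphism in $\G$, because it factors the admissible epimorphism $h$ through a split monomorphism followed by $(g,h)$; equivalently, $(g,h)$ is a sum of the admissible epimorphism $h$ with an arbitrary morphism, hence admissible epi (using the standard fact that in an exact category the composite $\eta(W)\hookrightarrow\eta(Z)\xrightarrow{(g,h)}T$ equals $h$, and a morphism through which an admissible epi factors on the right is itself an admissible epi). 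Finally one checks the triangle $\eta(Z)\rarrow T\rarrow\eta(X)$ commutes, which reduces to $p\circ g=\eta(q)$ on the first summand and is automatic on the second once the map $Z\rarrow X$ is chosen to kill $W$. Here is where (ii$'$) enters: to make the whole triangle commute on the nose one may need to correct the map $Z\rarrow X$ or the map $\eta(Z)\rarrow T$ by a morphism that becomes the relevant discrepancy after applying $\eta$, and (ii$'$) is exactly the tool that lifts such a discrepancy morphism $\eta(Z)\rarrow\eta(X)$ through an admissible epimorphism in $\F$.

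For the second assertion, assume $\eta$ reflects admissible epimorphisms and that ($*'$) and (ii$'$) hold; we must deduce (i$'$). Given $X\in\F$ and an admissible epimorphism $p\:T\rarrow\eta(X)$ in $\G$, use ($*'$) to get $W\in\F$ and an admissible epimorphism $h\:\eta(W)\rarrow T$ in $\G$. The composite $p\circ h\:\eta(W)\rarrow\eta(X)$ is an admissible epimorphism in $\G$, being a composite of two such. Apply (ii$'$) to the morphism $p\circ h\:\eta(W)\rarrow\eta(X)$: there is an admissible epimorphism $r\:W'\rarrow W$ in $\F$ and a morphism $f\:W'\rarrow X$ in $\F$ with $\eta(f)=(p\circ h)\circ\eta(r)$. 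Now $\eta(f)=p\circ(h\circ\eta(r))$ is an admissible epimorphism in $\G$ (composite of the admissible epi $p\circ h$ with $\eta(r)$, the latter being admissible epi as $r$ is), hence by the reflection hypothesis $f\:W'\rarrow X$ is an admissible epimorphism in $\F$. Set $Z=W'$; the desired morphism $\eta(Z)\rarrow T$ is $h\circ\eta(r)\:\eta(W')\rarrow\eta(W)\rarrow T$, and the triangle $\eta(Z)\rarrow T\rarrow\eta(X)$ commutes since $p\circ h\circ\eta(r)=\eta(f)$.

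The main obstacle is the first assertion, specifically arranging strict commutativity of the triangle $\eta(Z)\rarrow T\rarrow\eta(X)$ after the direct-sum enlargement. Merely combining the maps from (i$'$) and ($*'$) produces an admissible epimorphism $\eta(Z)\rarrow T$, but the triangle only commutes up to a morphism $\eta(W)\rarrow\eta(X)$ that factors through $h$; one then has to absorb this error term, and the role of (ii$'$) is precisely to pull it back to a genuine morphism in $\F$ so that the map $Z\rarrow X$ can be adjusted accordingly. Keeping track of the fact that the adjustment does not destroy the admissible-epimorphism property of either $Z\rarrow X$ or $\eta(Z)\rarrow T$ is the delicate bookkeeping. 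The second assertion, by contrast, is a short diagram chase once the reflection hypothesis is invoked.
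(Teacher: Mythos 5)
Your proof of the second assertion is complete and coincides with the paper's: apply ($*'$) to get $\eta(W)\rarrow T$, compose with $T\rarrow\eta(X)$, apply (ii$'$) to lift, and use reflection of admissible epimorphisms to conclude that the lift $W'\rarrow X$ is an admissible epimorphism.

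For the first assertion your strategy is the right one (a direct sum of a summand guaranteeing that $Z\rarrow X$ is an admissible epimorphism with a summand guaranteeing that $\eta(Z)\rarrow T$ is), but the commutativity correction you defer to the end is left unexecuted, and as set up it cannot work verbatim: with $Z=Z_0\oplus W$ and the map to $X$ killing $W$, the triangle fails on the $W$\+summand by the discrepancy $\eta(W)\rarrow T\rarrow\eta(X)$, and this discrepancy need not come from any morphism $W\rarrow X$ in $\F$ — condition~(ii$'$) only produces a lift after pulling back along a further admissible epimorphism $W'\rarrow W$. The repair is to replace $W$ by $W'$ \emph{before} forming the direct sum: take $Z=Z_0\oplus W'$, map it to $X$ by $(q,f)$ where $f\:W'\rarrow X$ is the (ii$'$)\+lift of the discrepancy, and map $\eta(Z)$ to $T$ by $(g,\.h\circ\eta(W'\to W))$. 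Then $Z\rarrow X$ is an admissible epimorphism because its restriction to $Z_0$ is, $\eta(Z)\rarrow T$ is an admissible epimorphism because its restriction to $\eta(W')$ is a composite of two, and the triangle commutes on each summand by construction. This is exactly the paper's argument, only the paper orders the steps so that the issue never arises: it first runs ($*'$) followed by (ii$'$) to produce $U'\rarrow X$ (not necessarily an admissible epimorphism in $\F$) with $\eta(U')\rarrow T$ an admissible epimorphism and the triangle already commutative, and only then adjoins the summand $Z\rarrow X$ coming from (i$'$); with that ordering no correction term ever appears and the ``delicate bookkeeping'' you worry about evaporates.
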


\begin{proof}
 Indeed, let $T\rarrow\eta(X)$ be an admissible epimorphism in~$\G$.
 According to~($*'$), there exists an object $U\in\F$ together with
an admissible epimorphism $\eta(U)\rarrow T$ in~$\G$.
 The composition $\eta(U)\rarrow T\rarrow\eta(X)$ is a morphism
in $\G$ between two objects coming from~$\F$.
 According to~(ii$'$), there exists an admissible epimorphism
$U'\rarrow U$ and a morphism $U'\rarrow X$ in $\F$ making the
triangle diagram $\eta(U')\rarrow\eta(U)\rarrow\eta(X)$
commutative in~$\G$.
 Then the composition of two admissible epimorphisms $\eta(U')\rarrow
\eta(U)\rarrow T$ is an admissible epimorphism in $\G$, and
the triangle diagram $\eta(U')\rarrow T\rarrow\eta(X)$ is commutative.

 Now if the functor~$\eta$ reflects admissible epimorphisms, then
the morphism $U'\rarrow X$ is an admissible epimorphism in $\F$,
because its image in $\G$ is equal to the composition of admissible
epimorphisms $\eta(U')\rarrow T\rarrow\eta(X)$.
 In the general case, one applies~(i$')$ to find an admissible
epimorphism $Z\rarrow X$ in $\F$ whose image in $\G$ factorizes
through the admissible epimorphism $T\rarrow\eta(X)$.
 Then $U'\oplus Z\rarrow X$ is an admissible epimorphism in $\F$
whose image in $\G$ is the composition of the two admissible
epimorphisms $\eta(U')\oplus\eta(Z)\rarrow T\rarrow\eta(X)$.
\end{proof}

 Finally, the following pair of conditions on an exact functor
$\eta\:\F\rarrow\G$ will be needed in
Subsection~\ref{exact-surjectivity-compositions}:
\begin{itemize}
\item[($**'$)] For any object $X\in\F$ and any morphism
$\eta(X)\rarrow T$ in $\G$ there exists an admissible epimorphism
$X'\rarrow X$ in $\F$, a morphism $X'\rarrow S$ in $\F$, and
an admissible epimorphism $\eta(S)\rarrow T$ in $\G$ making
the square diagram $\eta(X')\rarrow\eta(X)\rarrow T$, \
$\eta(X')\rarrow\eta(S)\rarrow T$ commutative in~$\G$.
\item[($**''$)] For any object $Y\in\F$ and any morphism
$T\rarrow\eta(Y)$ in $\G$ there exists an admissible monomorphism
$Y\rarrow Y'$ in $\F$, a morphism $S\rarrow Y'$ in $\F$, and
an admissible monomorphism $T\rarrow\eta(S)$ in $\G$ making
the square diagram $T\rarrow\eta(Y)\rarrow\eta(Y')$, \
$T\rarrow\eta(S)\rarrow\eta(Y')$ commutative in~$\G$.
\end{itemize}
 We will say that an exact functor~$\eta$ satisfies~($**$) if
it satisfies both~($**'$) and~($**''$).

\subsection{Ext trivia}
 Here we formulate several elementary lemmas about the low-degree
Yoneda Ext groups in exact categories which will be useful in
Subsections~\ref{third-term-bockstein}\+-\ref{exactness-long-sequence}.
 The proofs are left to the reader.
 
\begin{lem}  \label{ext-1-equation}
 Let $z\in\Ext^1_\F(X,Y)$ and $w\in\Ext^1_\F(U,V)$ be two $\Ext^1$
classes in an exact category $\F$ represented by short exact
sequences $0\rarrow Y\rarrow Z\rarrow X\rarrow 0$ and
$0\rarrow V\rarrow W\rarrow U\rarrow 0$.
 Let $f\:Y\rarrow V$ and $g\:X\rarrow U$ be two morphisms in~$\F$.
 Then the equation $fz=wg$ holds in the group\/ $\Ext^1_\F(X,V)$
if and only if there exists a morphism $h\:Z\rarrow W$ in $\F$
extending the given collection of morphisms to a morphism of
short exact sequences (i.~e., a commutative diagram)
$(Y\to Z\to X)\rarrow(V\to W\to U)$ in~$\F$.  \qed
\end{lem}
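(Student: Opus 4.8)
The plan is to reduce the claim to the standard description of the Yoneda multiplication on $\Ext^1$ in terms of pushouts and pullbacks of short exact sequences. Recall that in any exact category the pushout of an admissible monomorphism $Y\rarrow Z$ along an arbitrary morphism $f\:Y\rarrow V$ exists; the induced morphism $V\rarrow Z'$, where $Z'=Z\sqcup_Y V$, is again an admissible monomorphism, and there is an induced isomorphism $Z'/V\simeq Z/Y=X$ (see \cite{Bueh} or \cite[Appendix~A]{Partin}). Dually, the pullback $W'=W\times_U X$ of the admissible epimorphism $W\rarrow U$ along $g\:X\rarrow U$ exists, the morphism $W'\rarrow X$ is an admissible epimorphism, and $\ker(W'\to X)\simeq\ker(W\to U)=V$. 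By the very definition of the Yoneda product, $fz\in\Ext^1_\F(X,V)$ is represented by the pushout sequence $0\rarrow V\rarrow Z'\rarrow X\rarrow 0$, and it comes equipped with a morphism of short exact sequences $\phi\:(Y\to Z\to X)\rarrow(V\to Z'\to X)$ equal to $f$ on the left-hand terms and to the identity on the right-hand terms. Symmetrically, $wg$ is represented by the pullback sequence $0\rarrow V\rarrow W'\rarrow X\rarrow 0$, together with a morphism of short exact sequences $\psi\:(V\to W'\to X)\rarrow(V\to W\to U)$ equal to the identity on the left and to $g$ on the right.

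First I would prove the ``if'' direction. Given a morphism $h\:Z\rarrow W$ fitting into a commutative diagram $(Y\to Z\to X)\rarrow(V\to W\to U)$ with $f$ on the left and $g$ on the right, commutativity of the right-hand square says exactly that the composite $Z\rarrow W\rarrow U$ equals the composite $Z\rarrow X\rarrow U$. Hence the universal property of the pullback $W'=W\times_U X$ yields a unique morphism $\widetilde h\:Z\rarrow W'$ whose composite with $W'\rarrow X$ is the original map $Z\rarrow X$ and whose composite with $W'\rarrow W$ is~$h$. Using commutativity of the left-hand square, a short diagram chase shows that the restriction of $\widetilde h$ to $Y$ is the composite of $f\:Y\rarrow V$ with the canonical monomorphism $V\rarrow W'$. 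Thus $\widetilde h$ is a morphism of short exact sequences $(Y\to Z\to X)\rarrow(V\to W'\to X)$ that is $f$ on the left and the identity on the right. By the universal property of the pushout (together with the short five lemma in $\F$), such a morphism forces the extension $0\rarrow V\rarrow W'\rarrow X\rarrow 0$ to represent the class $fz$ in $\Ext^1_\F(X,V)$. Since the same extension represents $wg$ by construction, we obtain $fz=wg$.

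Conversely, assume $fz=wg$. Then the pushout sequence $Z'$ and the pullback sequence $W'$ represent one and the same class in $\Ext^1_\F(X,V)$, so there is an isomorphism of short exact sequences $\theta\:(V\to Z'\to X)\rarrow(V\to W'\to X)$ which is the identity on $V$ and on $X$. I would then simply form the composite of morphisms of short exact sequences $\psi\theta\phi\:(Y\to Z\to X)\rarrow(V\to W\to U)$; this composite is $f$ on the left-hand terms and $g$ on the right-hand terms, and its middle component is a morphism $h\:Z\rarrow W$ with exactly the required properties.

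There is no real obstacle here: the only ingredients are the existence and elementary properties of pushouts of admissible monomorphisms along arbitrary morphisms, pullbacks of admissible epimorphisms along arbitrary morphisms, and the identification of the Yoneda $\Ext^1$ product with these operations --- all standard for exact categories. The one place that calls for a little attention is the bookkeeping in the ``if'' direction: one must keep straight which of the two commutative squares is used to produce $\widetilde h$ via the pullback property and which is used to control the restriction of $\widetilde h$ to~$Y$. Everything else is a routine application of universal properties, which is presumably why the author leaves the proof to the reader.
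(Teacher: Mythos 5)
Your proof is correct and is precisely the standard pushout/pullback argument that the paper has in mind when it leaves this lemma to the reader (the paper gives no proof of its own). Both directions are handled properly: the identification of $fz$ with the pushout sequence and $wg$ with the pullback sequence, the use of the universal properties together with the short five lemma, and the composition $\psi\theta\phi$ in the converse direction are all exactly as they should be.
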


\begin{lem}  \label{ext-2-square}
 Let $(K\to L\to M)\rarrow (X\to Y\to Z)\rarrow (U\to V\to W)$
be a $3\times3$ square commutative diagram in an exact category~$\F$,
all of whose three rows and three columns are short exact sequences
in~$\F$.
 Denote by $l\in\Ext^1_\F(M,K)$, \ $z\in\Ext^1_\F(W,M)$, \
$v\in\Ext^1_\F(W,U)$, and $x\in\Ext^1_\F(U,K)$ the\/ $\Ext^1$ classes
represented by the four short exact sequences along the perimeter.
 Then the equation $lz+xv=0$ holds in the group\/
$\Ext^2_\F(W,K)$.  \qed
\end{lem}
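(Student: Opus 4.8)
The plan is to factor both Yoneda products through the central object $Y$ of the diagram, thereby reducing the claim to a single relation in $\Ext^1_\F$. Recall the labelling: the rows of the $3\times3$ diagram are the short exact sequences $0\rarrow K\rarrow L\rarrow M\rarrow0$, \ $0\rarrow X\rarrow Y\rarrow Z\rarrow0$, \ $0\rarrow U\rarrow V\rarrow W\rarrow0$, the columns are $0\rarrow K\rarrow X\rarrow U\rarrow0$, \ $0\rarrow L\rarrow Y\rarrow V\rarrow0$, \ $0\rarrow M\rarrow Z\rarrow W\rarrow0$, and $l$, $x$, $z$, $v$ are the classes of the top row, the left column, the right column, and the bottom row. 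Put $Y_0=\ker(Y\rarrow W)$, where $Y\rarrow W$ is the map obtained as the composite $Y\rarrow Z\rarrow W$ (equivalently $Y\rarrow V\rarrow W$). Being a composite of admissible epimorphisms it is an admissible epimorphism, so $Y_0$ is an admissible subobject of $Y$; since $Y_0$ is the preimage of $M\subseteq Z$, the map $Y\rarrow Z$ carries it onto $M$, and the resulting $\bar d\:Y_0\rarrow M$ is an admissible epimorphism with kernel $X$; dually $Y\rarrow V$ induces an admissible epimorphism $\bar b\:Y_0\rarrow U$ with kernel $L$. In particular $L$ and $X$ are admissible subobjects of $Y_0$ with $L+X=Y_0$ and $L\cap X$ equal to the image of $K$ in $Y_0$, as one reads off from the exactness of the rows and columns using the standard diagram lemmas in exact categories. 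Finally let $\omega\in\Ext^1_\F(W,Y_0)$ be the class of $0\rarrow Y_0\rarrow Y\rarrow W\rarrow0$.

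The first step is the observation that the pushout of $0\rarrow Y_0\rarrow Y\rarrow W\rarrow0$ along $\bar d$ is $0\rarrow M\rarrow Y/X\rarrow W\rarrow0$, which the middle row $0\rarrow X\rarrow Y\rarrow Z\rarrow0$ identifies with the right column; hence $\bar d_*\omega=z$, and symmetrically $\bar b_*\omega=v$. Using associativity of the Yoneda product together with the fact that pushing an extension forward (resp.\ pulling it back) along a morphism is Yoneda composition with that morphism, regarded as a class of degree~$0$, one obtains
\[
lz=l\cdot\bar d_*\omega=(\bar d^*l)\cdot\omega,\qquad xv=x\cdot\bar b_*\omega=(\bar b^*x)\cdot\omega,
\]
so that $lz+xv=\bigl(\bar d^*l+\bar b^*x\bigr)\cdot\omega$. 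It therefore suffices to prove that $\bar d^*l+\bar b^*x=0$ in $\Ext^1_\F(Y_0,K)$.

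For this I would compute the two pullbacks directly. By definition $\bar d^*l$ is represented by the pullback $0\rarrow K\rarrow L\times_M Y_0\rarrow Y_0\rarrow0$ of the top row along $\bar d$, and $\bar b^*x$ by the pullback $0\rarrow K\rarrow X\times_U Y_0\rarrow Y_0\rarrow0$ of the left column along $\bar b$. Since $\bar d$ restricts on $L\subseteq Y_0$ to the top\+row epimorphism $L\rarrow M$ and kills $X$ (and dually for $\bar b$), both $L\times_M Y_0$ and $X\times_U Y_0$ are canonically isomorphic to $L\oplus X$; under these identifications the two $1$\+extensions acquire the same middle object and the same admissible epimorphism onto $Y_0$, while their admissible monomorphisms $K\rarrow L\oplus X$ turn out to be negatives of one another. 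Since negating the monomorphism of a $1$\+extension negates its class in $\Ext^1$, this gives $\bar b^*x=-\bar d^*l$, whence $\bar d^*l+\bar b^*x=0$ and finally $lz+xv=0$.

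I expect the real work to sit in two places. The first is the exact category bookkeeping around $Y_0$ — that the relevant subobjects and quotient objects are admissible, that $L+X=Y_0$, and that $L\cap X$ is precisely the copy of $K$ inside $Y_0$ — which is exactly where the hypothesis that all three rows and all three columns of the diagram are short exact gets used. The second is getting the relative sign right in the last step, that is, confirming that the two pullback extensions are opposite rather than equal; this is the usual sign in the anticommutativity of the two connecting homomorphisms attached to a $3\times3$ diagram, and one should verify it rather than merely invoke it. Everything else is formal: bilinearity and associativity of the Yoneda product, and the base\+change identities for $\Ext$ in an exact category.
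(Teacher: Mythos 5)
The paper states this lemma without proof (the whole ``Ext trivia'' subsection is left to the reader), so there is no argument of the author's to compare against; your proof is correct and complete. The key identities all check out: $Y_0=\ker(Y\to W)$ does carry the two admissible epimorphisms $\bar d\:Y_0\to M$ (kernel $X$) and $\bar b\:Y_0\to U$ (kernel $L$) with $\bar d_*\omega=z$ and $\bar b_*\omega=v$, the reduction $lz+xv=(\bar d^{\,*}l+\bar b^{\,*}x)\cdot\omega$ is legitimate Yoneda associativity, and the final sign is right: identifying both pullback extensions with middle object $L\oplus X$ and the common admissible epimorphism $(\ell,\xi)\mapsto\iota_L\ell+\iota_X\xi$ onto $Y_0$, the monomorphisms become $k\mapsto(\kappa_Lk,-\kappa_Xk)$ and $k\mapsto(-\kappa_Lk,\kappa_Xk)$, which are negatives of one another, so $\bar b^{\,*}x=-\bar d^{\,*}l$. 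The only cosmetic caveat is that the element-wise manipulations should be read as shorthand for universal-property arguments (or carried out after a Gabriel--Quillen embedding and transported back along the fully exact embedding), and the isomorphisms $L\times_MY_0\simeq L\oplus X\simeq X\times_UY_0$ should be justified by the short five lemma applied to the exact sequences $0\to X\to L\times_MY_0\to L\to0$, etc.; you have correctly flagged exactly these two points as the places where work is needed.
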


 Let $X\rarrow Y\rarrow Z$ be two morphisms with vanishing composition
in an exact category~$\F$.
 Assume that $X\rarrow Y$ is an admissible monomorphism with the cokernel
$C$ and $Y\rarrow Z$ is an admissible epimorphism with the kernel~$K$.
 Then the morphim $Y\rarrow Z$ decomposes as $Y\rarrow C\rarrow Z$ and
the morphism $X\rarrow Y$ decomposes as $X\rarrow K\rarrow Y$.

 If the category $\F$ is weakly idempotent complete (``semi-saturated'')
\cite{Neem}, \cite{Bueh}, \cite[Appendix~A]{Partin}, then it follows
that the morphism $C\rarrow Z$ is an admissible epimorphism and
the morphism $X\rarrow K$ is an admissible monomorphism.
 Applying the Snake lemma~\cite[Corollary~8.13]{Bueh} to the morphism of
short exact sequences $(X\to Y\to C)\rarrow(K\to Y\to Z)$, one concludes
that the kernel $H'$ of the morphism $C\rarrow Z$ is naturally
isomorphic to the cokernel $H''$ of the morphism $X\rarrow K$.
 The object $H'\simeq H''$ is called the \emph{cohomology object} of
the pair of morphisms $X\rarrow Y\rarrow Z$.

 When the category $\F$ is not weakly idempotent complete,
the cohomology object $H$ of the pair of morphisms $X\rarrow Y\rarrow Z$
is \emph{a priori} defined as an object of the weak idempotent
completion (``semi-saturation'') $\F'$ of the category $\F$
\cite[Remark~1.12]{Neem}, \cite[Remark~7.8]{Bueh},
\cite[Example~A.5(6)]{Partin}.
 One says that the cohomology object of the pair of morphisms $X\rarrow
Y\rarrow Z$ \emph{exists} in $\F$ if one has $H\in\F\subset\F'$.

 Notice that $\F'$ is naturally an exact category and $\F\subset\F'$ is
a full subcategory closed under extensions.
 Thus, when it can be shown that the object $H\in\F'$ is an extension of
two objects from $\F$, it follows that the cohomology object exists
in~$\F$.
 In particular, if $X'\rarrow X$ is an admissible monomorphism and
$Z\rarrow Z'$ is an admissible epimorphism in $\F$, and the cohomology
object of the pair of morphisms $X\rarrow Y\rarrow Z$ exists in $\F$,
so does the cohomology object of the pair of morphisms $X'\rarrow Y
\rarrow Z'$.

\begin{lem}  \label{ext-1-secondary-product}
 Let $0\rarrow K\rarrow L\rarrow M\rarrow 0$ and $0\rarrow U
\rarrow V\rarrow W\rarrow 0$ be two short exact sequences in
an exact category $\F$, and let $K\rarrow V$ and $L\rarrow W$
be two morphisms in $\F$ forming a commutative square diagram
$K\rarrow L\rarrow W$, $K\rarrow V\rarrow W$.
 Then\par
\textup{(a)} the cohomology object $H$ of the pair of mophisms with zero
composition $K\rarrow V\oplus L\rarrow W$ exists and the short sequence
$0\rarrow U\rarrow H\rarrow M\rarrow0$ of morphisms induced by
the morphisms $U\rarrow V$ and $L\rarrow M$ is exact in~$\F$; \par
\textup{(b)} the short exact sequence constructed in part~(a)
splits if and only if there exists a morphism $L\rarrow V$
in $\F$ making the triangle diagrams $K\rarrow L\rarrow V$ and
$L\rarrow V\rarrow W$ commutative; \par
\textup{(c)} when the morphism $K\rarrow V$ vanishes, so the morphism
$L\rarrow W$ factorizes through the admissible epimorphism $L\rarrow M$,
the extension class in\/ $\Ext^1_\F(M,U)$ represented by the short exact
sequence $(U\to H\to M)$ is equal to the product of the morphism
$M\rarrow W$ and the extension class in\/ $\Ext^1_\F(W,U)$ represented
by the short exact sequence $(U\to V\to W)$; \par
\textup{(d)} given a morphism of short exact sequences
$(K'\to L'\to M')\rarrow(K\to L\to M)$ in $\F$, and denoting
by $H'$ the cohomology object of the pair of morphisms
$K'\rarrow V\oplus L'\rarrow W$, the class in\/
$\Ext^1_\F(M',U)$ represented by the short exact sequence
$(U\to H'\to M')$ is equal to the product of the morphism
$M'\rarrow M$ and the class in\/ $\Ext^1_\F(M,U)$ represented by
the sequence $(U\to H\to M)$; \par
\textup{(e)} given two short exact sequences
$0\rarrow K\rarrow L\rarrow M\rarrow 0$ and $0\rarrow U
\rarrow V\rarrow W\rarrow 0$ as above and two morphisms of morphisms
(two commutative squares) $(K\to L)\birarrow(V\to W)$ in~$\F$,
the class in\/ $\Ext^1_\F(M,U)$ assigned to the sum of two morphisms
of morphisms by the construction of part~(a) is equal to the sum of
the classes assigned to the summands.  \qed
\end{lem}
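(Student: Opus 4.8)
For part~(a) the plan is to realize everything concretely: the auxiliary objects $P$ and $N$ below as subobjects of $V\oplus L$, and $H$ as a genuine quotient of $P$. Write $a\colon K\to V$ and $b\colon K\to L$ for the two given morphisms, $q\colon V\to W$ for the admissible epimorphism from the second short exact sequence, and $r\colon L\to W$ for the remaining given morphism; the commutative square is the relation $qa=rb$. First I would check that $(q,-r)\colon V\oplus L\to W$ is an admissible epimorphism, using the elementary fact that in an exact category a morphism $A\oplus B\to C$ restricting to an admissible epimorphism on the summand $B$ is itself an admissible epimorphism (factor it as $A\oplus B\to A\oplus C\to C$, the first map a direct sum of admissible epimorphisms and the second a split epimorphism composed with a shear automorphism). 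Then $P:=\ker(q,-r)$ is an admissible subobject of $V\oplus L$, sitting in a short exact sequence $0\to P\to V\oplus L\to W\to 0$ and, by base change of $0\to U\to V\to W\to 0$ along $r$, also in $0\to U\to P\to L\to 0$, where $U\to P$ is the composition $U\to V\to V\oplus L$ and $P\to L$ is the restriction of the projection. The composition of $(a,b)\colon K\to V\oplus L$ with $(q,-r)$ equals $qa-rb=0$, so $(a,b)$ factors through a morphism $K\to P$; I would show it is an admissible monomorphism by pulling $0\to U\to P\to L\to 0$ back along the admissible monomorphism $b$: this yields $0\to U\to N\to K\to 0$ with $N\to P$ an admissible monomorphism of cokernel $M$, and $K\to P$ factors through $N$ as a section of $N\to K$, whence $N\cong U\oplus K$ and $K\to N\to P$ is a composition of admissible monomorphisms. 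Finally I would set $H:=P/K$; since $0\to K\to N\to U\to 0$ splits and $N\to P$ has cokernel $M$, the standard property of exact categories furnishing a short exact sequence $0\to N/K\to P/K\to P/N\to 0$ produces the sequence $0\to U\to H\to M\to 0$ with the morphisms asserted in~(a).

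For~(b) I would use the presentation $P=V\times_WL$. If $s\colon L\to V$ satisfies $sb=a$ and $qs=r$, the assignment $(v,\ell)\mapsto v-s(\ell)$ defines a morphism $P\to V$ whose composition with $q$ vanishes, hence factoring through $U$; it annihilates the image of $K$ (because $ak-s(bk)=0$) and restricts to the identity on $U\subset P$, so it descends to a retraction $H\to U$ and the sequence splits. Conversely, a splitting yields a morphism $\phi\colon P\to U$ which is the identity on $U$ and vanishes on $K$; then $\mathrm{id}_P-\iota\phi\colon P\to P$, with $\iota\colon U\to P$ the inclusion, kills $U=\ker(P\to L)$, hence factors through the admissible epimorphism $P\to L$, giving $\psi\colon L\to P$ which one checks is a section of $P\to L$ and satisfies $\psi b=(K\to P)$; then $s:=\mathrm{pr}_V\circ\psi\colon L\to V$, with $\mathrm{pr}_V\colon P\to V$ the projection, satisfies $qs=r$ and $sb=a$.

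Parts~(c) and~(d) I would deduce formally. For~(c), the given morphism of short exact sequences induces a morphism of fibered products $V\times_WL\to V'\times_{W'}L$ compatible with the morphisms out of $K$, hence a morphism of short exact sequences $(U\to H\to M)\to(U'\to H'\to M)$ that is the identity on $M$ and the given $U\to U'$ on subobjects; by Lemma~\ref{ext-1-equation} this is exactly the statement that the class of $(U'\to H'\to M)$ equals the product of the class of $(U\to H\to M)$ with $U\to U'$. For~(d), I would run the construction of~(a) for the target short exact sequence $0\to U\oplus U\to V\oplus V\to W\oplus W\to 0$ and the pair of morphisms $((a_1,a_2),(r_1,r_2))$; applying~(c) to the two coordinate projections $U\oplus U\to U$ identifies the resulting class in $\Ext^1_\F(M,U\oplus U)\cong\Ext^1_\F(M,U)\oplus\Ext^1_\F(M,U)$ with $([H_1],[H_2])$. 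Since the sum of the two morphisms of morphisms is the composition of $((a_1,a_2),(r_1,r_2))$ with the codiagonal morphism of short exact sequences onto $(U\to V\to W)$, part~(c) again identifies its class with the image of $([H_1],[H_2])$ under the addition map $\Ext^1_\F(M,U)\oplus\Ext^1_\F(M,U)\to\Ext^1_\F(M,U)$, that is, with $[H_1]+[H_2]$.

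The only step calling for genuine care is part~(a): the words ``cohomology object'' must be given a meaning in an exact (rather than abelian) category, and this comes down precisely to verifying that $(q,-r)$ is an admissible epimorphism and that the induced morphism $K\to P$ is an admissible monomorphism, so that $H$ is an honest quotient and the three short exact sequences around it fit together as claimed. Once this bookkeeping is done and $P$ is realized as a fibered product, parts~(b)--(d) are the routine verifications and applications of Lemma~\ref{ext-1-equation} indicated above.
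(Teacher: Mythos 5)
The paper gives no proof of this lemma (the ``Ext trivia'' proofs are explicitly left to the reader), so there is nothing to compare against; your argument is correct and is the standard one. Realizing $H$ as $P/K$ with $P=V\times_W L=\ker(q,-r)$, checking that $(q,-r)$ is an admissible epimorphism and $K\to P$ an admissible monomorphism via the auxiliary pullback $N\cong U\oplus K$, and then deducing (b) from the fibered-product description and (c)--(d) from Lemma~\ref{ext-1-equation} together with the codiagonal/Baer-sum formalism, is exactly the intended bookkeeping, and each step you flag as needing care is handled correctly.
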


\subsection{Exact surjectivity and the Ext groups}
 The proofs of the two parts of the next proposition can be found
in~\cite[Subsection~4.4]{Partin} (for a discussion of big graded
rings, see~\cite[Subsection~A.1]{Partin}).
 These results will play a key role in the arguments of
Subsections~\ref{higher-bockstein-construction}\+-%
\ref{exactness-long-sequence}.
 We denote by $\eta^n=\eta^n_{X,Y}\:\Ext^n_\F(X,Y)\rarrow
\Ext^n_\G(\eta(X),\eta(Y))$ the Ext group homomorphisms induced
by an exact functor $\eta\:\F\rarrow\G$.

\begin{prop} \label{exact-surjectivity-ext-prop}
 Let $\F$ and $\G$ be two exact categories and $\eta\:\F\rarrow\G$
be an exact functor satisfying the condition~(i$\.'$).  Then \par
\textup{(a)} for any objects $X$, $Y\in\F$ and $W\in\G$, and any
Ext classes $a\in\Ext^n_\F(X,Y)$ and\/ $b\in\Ext^m_\G(\eta(Y),W)$
such that\/ $b\eta^n(a)=0$ and\/ $m\ge1$, there exists an object
$Y'\in\F$, a morphism $f\:Y'\rarrow Y$ in $\F$, and a class $a'\in
\Ext^n_\F(X,Y')$ for which $a=fa'$ and\/ $b\eta(f)=0$; \par
\textup{(b)} for any object $W\in\G$, the right graded module\/
$(\Ext_\G^n(\eta(X),W))_{X\in\F;\,n\ge0}$ over the big graded ring\/
$(\Ext_\F^n(X,Y))_{Y,X\in\F;\,n\ge0}$ over the set of all objects of $\F$
is induced from the right module\/ $(\Hom_\G(\eta(Y),W))_{Y\in\F}$
over the big subring\/ $(\Hom_\F(X,Y))_{Y,X}\subset
(\Ext_\F^n(X,Y))_{Y,X;\,n}$. \qed
\end{prop}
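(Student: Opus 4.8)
Both statements are established in \cite[Subsection~4.4]{Partin}; let me outline the argument I would give.

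For part~(a) the plan is an induction on the cohomological degree~$n$. When $n=0$ one simply takes $Y'=X$, $a'=\id_X$, and $f=a\colon X\rarrow Y$: then $fa'=a$ and $b\,\eta(f)=b\,\eta^0(a)=0$ by hypothesis. The essential case is $n=1$. Represent $a$ by a short exact sequence $0\rarrow Y\rarrow E\rarrow X\rarrow0$ in~$\F$, with $i\colon Y\rarrow E$ the admissible monomorphism; then $\eta^1(a)$ is the class of $0\rarrow\eta(Y)\rarrow\eta(E)\rarrow\eta(X)\rarrow0$, and from the long exact sequence of $\Ext_\G(-,W)$ attached to this sequence the vanishing $b\,\eta^1(a)=0$ shows that $b$ lifts to a class $\tilde b\in\Ext^m_\G(\eta(E),W)$ with $\tilde b\,\eta(i)=b$. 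Here is where $m\ge1$ and condition~(i$'$) enter: represent $\tilde b$ by an $m$\+extension in~$\G$ and split off its rightmost short exact sequence $0\rarrow\tilde C\rarrow\tilde B\rarrow\eta(E)\rarrow0$, so that $\tilde b$ is the Yoneda product of the class $\tilde b_0$ of this sequence with a class in $\Ext^{m-1}_\G(\tilde C,W)$; applying~(i$'$) to the object~$E$ and the admissible epimorphism $\tilde B\rarrow\eta(E)$ yields an admissible epimorphism $\psi\colon Z\rarrow E$ in~$\F$ whose image in~$\G$ factors through $\tilde B\rarrow\eta(E)$, so that the pullback of the rightmost sequence along $\eta(\psi)$ splits and hence $\tilde b\,\eta(\psi)=0$ in $\Ext^m_\G(\eta(Z),W)$. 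Finally set $Y'=\ker(Z\rarrow X)$, where $Z\rarrow X$ is $\psi$ composed with $E\rarrow X$, let $a'\in\Ext^1_\F(X,Y')$ be the class of $0\rarrow Y'\rarrow Z\rarrow X\rarrow0$, and let $\iota\colon Y'\rarrow Z$ be the inclusion; since $\psi$ carries $Y'$ into $\ker(E\rarrow X)=Y$ it induces $f\colon Y'\rarrow Y$ with $if=\psi\iota$. Comparing the pushout $Z\sqcup_{Y'}Y$ with $E$ by the short five lemma gives $fa'=a$, and $b\,\eta(f)=\tilde b\,\eta(i)\,\eta(f)=\tilde b\,\eta(\psi)\,\eta(\iota)=0$.

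For $n\ge2$ the plan is to write $a=a_1a''$, where $a_1\in\Ext^1_\F(Q,Y)$ is the class of a short exact sequence $0\rarrow Y\rarrow E\rarrow Q\rarrow0$ and $a''\in\Ext^{n-1}_\F(X,Q)$. Setting $b_1=b\,\eta^1(a_1)\in\Ext^{m+1}_\G(\eta(Q),W)$ one has $b_1\,\eta^{n-1}(a'')=b\,\eta^n(a)=0$, so the inductive hypothesis in bidegree $(n-1,m+1)$ furnishes $Q'\in\F$, a morphism $g\colon Q'\rarrow Q$ in~$\F$, and $a'''\in\Ext^{n-1}_\F(X,Q')$ with $a''=ga'''$ and $b_1\,\eta(g)=0$. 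Pulling $0\rarrow Y\rarrow E\rarrow Q\rarrow0$ back along~$g$ produces $\tilde a_1=a_1g\in\Ext^1_\F(Q',Y)$ with $a=\tilde a_1a'''$ and $b\,\eta^1(\tilde a_1)=b_1\,\eta(g)=0$; the already proved case $n=1$ (in bidegree $(1,m)$), applied to $\tilde a_1$ and~$b$, yields $Y'$, $f\colon Y'\rarrow Y$, and $\hat a\in\Ext^1_\F(Q',Y')$ with $\tilde a_1=f\hat a$ and $b\,\eta(f)=0$, and one takes $a'=\hat a\,a'''$.

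For part~(b), I would show that the canonical homomorphism of right graded modules over the big graded ring $(\Ext^n_\F(X,Y))_{Y,X\in\F;\,n\ge0}$, from the module induced from $(\Hom_\G(\eta(Y),W))_{Y\in\F}$ over the subring $(\Hom_\F(X,Y))_{Y,X\in\F}$ to $(\Ext^n_\G(\eta(X),W))_{X\in\F;\,n\ge0}$ --- sending $g\ot a$, for $g\in\Hom_\G(\eta(Y),W)$ and $a\in\Ext^n_\F(X,Y)$, to $g\,\eta^n(a)$ --- is bijective in every degree. Surjectivity goes by induction on~$n$, the case $n=0$ being trivial: given $c\in\Ext^n_\G(\eta(X),W)$, split off the rightmost short exact sequence $0\rarrow C\rarrow B\rarrow\eta(X)\rarrow0$ from a representing $n$\+extension and apply~(i$'$) to produce an admissible epimorphism $\psi\colon Z\rarrow X$ in~$\F$ whose image in~$\G$ factors through $B\rarrow\eta(X)$; then $c\,\eta(\psi)=0$, and the long exact sequence of $\Ext_\G(-,W)$ for $\eta$ applied to $0\rarrow Y_0\rarrow Z\rarrow X\rarrow0$ gives $c=d\,\eta^1(a_0)$ for some $d\in\Ext^{n-1}_\G(\eta(Y_0),W)$ and $a_0\in\Ext^1_\F(X,Y_0)$, and the inductive hypothesis handles~$d$. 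For injectivity, a finite-direct-sum reduction (replacing a finite collection of $(g_i,a_i)$ by a single pair over $Y=\bigoplus Y_i$) reduces the claim to: if $g\in\Hom_\G(\eta(Y),W)$, $a\in\Ext^n_\F(X,Y)$, and $g\,\eta^n(a)=0$, then $g\ot a=0$. This is immediate for $n=0$, and for $n=1$, writing $0\rarrow Y\rarrow E\rarrow X\rarrow0$ for a representative of~$a$ with monomorphism $i$, the vanishing $g\,\eta^1(a)=0$ means precisely that $g$ extends to some $h\colon\eta(E)\rarrow W$ with $h\,\eta(i)=g$, whence $g\ot a=h\ot(i_*a)=0$ because the pushout $i_*a\in\Ext^1_\F(X,E)$ is split. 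For $n\ge2$ one decomposes $a=a_1a''$ as above, puts $b=g\,\eta^1(a_1)\in\Ext^1_\G(\eta(Q),W)$ (so $b\,\eta^{n-1}(a'')=0$), invokes part~(a) in bidegree $(n-1,1)$ to get $g'\colon Q'\rarrow Q$ with $a''=g'a'''$ and $b\,\eta(g')=0$; then $\tilde a_1=a_1g'$ satisfies $a=\tilde a_1a'''$ and $g\,\eta^1(\tilde a_1)=0$, the $n=1$ case gives $g=h\,\eta(i')$ through the middle term of $\tilde a_1$'s sequence, and $g\ot a=h\ot(i'_*\tilde a_1)a'''=0$.

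The hard part is the $n=1$ case of part~(a): condition~(i$'$) must be applied not to a sequence representing~$b$ itself but to one representing the \emph{lifted} class~$\tilde b$ over~$\eta(E)$, and one then has to verify \emph{simultaneously} that the morphism $f\colon Y'\rarrow Y$ so produced both recovers the original class~$a$ (which is where the relevant pushout must be identified with~$E$ via the short five lemma, taking care that it is the induced map on the pushout, and not~$\psi$ itself, that is an isomorphism) and is annihilated by~$b$ after applying~$\eta$. The remaining steps are routine manipulations with Yoneda products, pullbacks and pushouts of admissible short exact sequences, and long exact $\Ext$ sequences, modulo sign conventions; full details are in \cite[Subsection~4.4]{Partin} (see also \cite[Subsection~A.1]{Partin} on big graded rings and modules over them).
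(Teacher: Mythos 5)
Your argument is correct and is essentially the one the paper relies on: the paper's "proof" is just the citation to \cite[Subsection~4.4]{Partin}, and the induction you describe --- splitting off the outermost short exact sequence of a representing $(m$- or $n$-)extension, applying condition~(i$'$) to its admissible epimorphism onto $\eta(E)$ or $\eta(X)$ to kill that piece after pulling back along an admissible epimorphism coming from $\F$, and then transporting the class via Lemma~\ref{ext-1-equation} and the long exact sequences of Yoneda Ext --- is exactly the mechanism of that reference. In particular your bookkeeping in the key $n=1$ step (lifting $b$ to $\tilde b$ over $\eta(E)$ before invoking~(i$'$), then checking both $fa'=a$ and $b\eta(f)=\tilde b\eta(\psi)\eta(\iota)=0$) and the surjectivity/injectivity split in part~(b) are sound.
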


 The following two corollaries allow one to prove that certain
exact functors between exact categories induce isomorphisms of
the Ext groups.
 They will be useful for us in
Subsection~\ref{reduction-independence}.

\begin{cor} \label{exact-surjectivity-ext-cor1}
 Let $\F$, $\G'$, $\G''$ be three exact categories, and let $\eta'\:
\F\rarrow\G'$ and $\iota\:\G'\rarrow\G''$ be two exact functors.
 Suppose that the composition of exact functors $\eta''=\iota\eta'\:
\F\rarrow\G''$ satisfies the condition~(i$\.'$) and the functor~$\iota$
induces isomorphisms $\iota\:\Hom_{\G'}(\eta'(X),W)\simeq
\Hom_{\G''}(\eta''(X),\iota(W))$ for all the objects $X\in\F$,
\,$W\in\G'$.  Then \par
\textup{(a)} the functor~$\iota$ induces isomorphisms
of Ext groups $\iota^n\:\Ext^n_{\G'}(\eta'(X),W)\simeq
\Ext^n_{\G''}(\eta''(X),\iota(W))$ for all the objects
$X\in\F$, \,$W\in\G'$ and all integers $n\ge0$; \par
\textup{(b)} if the functor~$\eta'$ satisfies the condition~($*'$),
then the functor~$\iota$ is fully faithful, its image $\iota(\G')$
is a full subcategory closed under extensions in $\G''$, the exact
category structure on $\G'$ coincides with the one induced from $\G''$
via~$\iota$, and the functor~$\iota$ induces isomorphisms of
the Ext groups $\iota^n\:\Ext^n_{\G'}(T,W)\simeq
\Ext^n_{\G''}(\iota(T),\iota(W))$ for all the objects $T$,
$W\in\G'$ and all\/~$n\ge0$.
\end{cor}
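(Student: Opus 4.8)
The plan is to deduce part~(a) from Proposition~\ref{exact-surjectivity-ext-prop}(b), and to deduce part~(b) by first establishing the full faithfulness of~$\iota$ and then invoking Proposition~\ref{exact-surjectivity-ext-prop}(b) once more, applied this time to the functors~$\iota$ and $\Id_{\G'}$. The pivotal observation --- the one I expect to carry the weight of the whole argument --- is that condition~(i$\.'$) for the composition $\eta''=\iota\eta'$, together with the $\Hom$\+isomorphism hypothesis on~$\iota$, already \emph{forces} condition~(i$\.'$) for~$\eta'$. Indeed, given $X\in\F$ and an admissible epimorphism $T\rarrow\eta'(X)$ in~$\G'$, the exact functor~$\iota$ turns it into an admissible epimorphism $\iota(T)\rarrow\eta''(X)$ in~$\G''$; condition~(i$\.'$) for~$\eta''$ then produces an admissible epimorphism $Z\rarrow X$ in~$\F$ and a morphism $\eta''(Z)\rarrow\iota(T)$ making the triangle $\eta''(Z)\rarrow\iota(T)\rarrow\eta''(X)$ commute. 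Since $\eta''(Z)=\iota(\eta'(Z))$, the isomorphism $\Hom_{\G'}(\eta'(Z),T)\simeq\Hom_{\G''}(\eta''(Z),\iota(T))$ descends this morphism to a morphism $\eta'(Z)\rarrow T$ in~$\G'$, and the injectivity of $\Hom_{\G'}(\eta'(Z),\eta'(X))\rarrow\Hom_{\G''}(\eta''(Z),\eta''(X))$ ensures that the triangle $\eta'(Z)\rarrow T\rarrow\eta'(X)$ already commutes in~$\G'$. Granting this, part~(a) is immediate: Proposition~\ref{exact-surjectivity-ext-prop}(b) applied to~$\eta'$ and the object~$W$ exhibits $(\Ext^n_{\G'}(\eta'(X),W))_{X,n}$ as the module induced over the big graded ring $(\Ext^n_\F(X,Y))$ from the $\Hom_\F$\+submodule $(\Hom_{\G'}(\eta'(Y),W))_Y$, while applied to~$\eta''$ and the object~$\iota(W)$ it exhibits $(\Ext^n_{\G''}(\eta''(X),\iota(W)))_{X,n}$ as the module induced from $(\Hom_{\G''}(\eta''(Y),\iota(W)))_Y$; the two $\Hom$\+modules are identified by hypothesis, induction over the big graded ring of~$\F$ is functorial, and the identifications are compatible with the canonical comparison maps out of the induced modules (sending a class in $\Ext^n_\F$ together with a compatible morphism to their Yoneda composition) and hence with the maps~$\iota^n$, so every~$\iota^n$ is an isomorphism.

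For part~(b), I would first prove that~$\iota$ is fully faithful. Given $T$, $W\in\G'$, apply~($*'$) twice to produce an exact sequence $\eta'(U_1)\rarrow\eta'(U_0)\rarrow T\rarrow0$ in~$\G'$ --- an admissible epimorphism $\eta'(U_0)\rarrow T$ followed by an admissible epimorphism from $\eta'(U_1)$ onto its kernel, composed with the kernel inclusion. Then $\Hom_{\G'}(T,W)$ is the kernel of $\Hom_{\G'}(\eta'(U_0),W)\rarrow\Hom_{\G'}(\eta'(U_1),W)$; the exact functor~$\iota$ carries the presentation to the analogous presentation of~$\iota(T)$, so $\Hom_{\G''}(\iota(T),\iota(W))$ is the kernel of $\Hom_{\G''}(\eta''(U_0),\iota(W))\rarrow\Hom_{\G''}(\eta''(U_1),\iota(W))$, and the $\Hom$\+isomorphism hypothesis on $\eta'(U_0)$ and $\eta'(U_1)$ forces $\Hom_{\G'}(T,W)\simeq\Hom_{\G''}(\iota(T),\iota(W))$. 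Fullness of the image $\iota(\G')\subset\G''$ follows.

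Next I would check that~$\iota$ itself satisfies condition~(i$\.'$): given $S\in\G'$ and an admissible epimorphism $E\rarrow\iota(S)$ in~$\G''$, use~($*'$) to choose an admissible epimorphism $\eta'(U)\rarrow S$ in~$\G'$, form the pull-back $P=\eta''(U)\times_{\iota(S)}E$ in~$\G''$ (which exists since $E\rarrow\iota(S)$ is an admissible epimorphism, and for which $P\rarrow\eta''(U)$ is an admissible epimorphism), apply condition~(i$\.'$) for~$\eta''$ to $P\rarrow\eta''(U)$ to get an admissible epimorphism $Z\rarrow U$ in~$\F$ together with a morphism $\eta''(Z)\rarrow P$ making the triangle $\eta''(Z)\rarrow P\rarrow\eta''(U)$ commute, and compose with $P\rarrow E$; using the pull-back square one checks that the triangle $\iota(\eta'(Z))=\eta''(Z)\rarrow E\rarrow\iota(S)$ then commutes, its remaining leg being $\iota$ applied to the admissible epimorphism $\eta'(Z)\rarrow\eta'(U)\rarrow S$ of~$\G'$, which is precisely condition~(i$\.'$) for~$\iota$. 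With this in hand, Proposition~\ref{exact-surjectivity-ext-prop}(b) applies to~$\iota$ --- and trivially to $\Id_{\G'}$ --- and comparing the two induced modules exactly as in part~(a), now using the full faithfulness of~$\iota$ to identify the relevant $\Hom$\+modules, shows that $\iota^n\:\Ext^n_{\G'}(T,W)\rarrow\Ext^n_{\G''}(\iota(T),\iota(W))$ is an isomorphism for all $T$, $W\in\G'$ and all $n\ge0$. In degree~$1$ this means an extension of~$\iota(T)$ by~$\iota(W)$ in~$\G''$ has the same $\Ext^1$ class as the image under~$\iota$ of some extension in~$\G'$ and is therefore isomorphic to it, so $\iota(\G')$ is closed under extensions in~$\G''$. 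Finally, if $A\rarrow B\rarrow C$ is a sequence in~$\G'$ whose image under~$\iota$ is short exact in~$\G''$, then condition~(i$\.'$) for~$\iota$ together with full faithfulness supplies a morphism $C'\rarrow B$ in~$\G'$ whose composition with $B\rarrow C$ is an admissible epimorphism, whence $B\rarrow C$ is an admissible epimorphism in~$\G'$ by the standard fact that a morphism through which an admissible epimorphism factors is itself an admissible epimorphism; comparing its kernel with~$A$ through~$\iota$ and full faithfulness then shows that the original sequence is short exact in~$\G'$, so the exact structure on~$\G'$ coincides with the one induced from~$\G''$.

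The main obstacle is concentrated in the two transfer statements --- condition~(i$\.'$) descending from~$\eta''$ to~$\eta'$, and condition~(i$\.'$) holding for~$\iota$ itself --- since these are exactly what make Proposition~\ref{exact-surjectivity-ext-prop}(b) applicable, and the first of them must be proved \emph{before} the full faithfulness of~$\iota$ is available, so one cannot simply transport morphisms back and forth at will. Everything downstream of these two points is a string of routine diagram chases in exact categories and bookkeeping with the induced-module formalism of~\cite[Subsection~A.1]{Partin}.
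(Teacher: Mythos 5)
Your argument is correct, and part~(a) follows the paper's own route exactly: you spell out the transfer of condition~(i$'$) from $\eta''$ to~$\eta'$ (which the paper dismisses as ``straightforward'') and then compare the two induced-module descriptions supplied by Proposition~\ref{exact-surjectivity-ext-prop}(b), which is precisely what the paper does. For part~(b) you take a genuinely different path. The paper deduces the isomorphisms $\iota^n\:\Ext^n_{\G'}(T,W)\rarrow\Ext^n_{\G''}(\iota(T),\iota(W))$ for arbitrary $T\in\G'$ directly from part~(a) by dimension shifting: it uses~($*'$) to produce a single short exact sequence $0\rarrow S\rarrow\eta'(X)\rarrow T\rarrow 0$ in~$\G'$, compares the two long exact sequences of Ext groups, and argues by induction in~$n$ with the 5\+lemma (injectivity first, then surjectivity); all the remaining assertions of part~(b) are then formal consequences of these isomorphisms. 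You instead prove full faithfulness first by a direct two-step presentation $\eta'(U_1)\rarrow\eta'(U_0)\rarrow T\rarrow0$ and left exactness of Hom, then verify that $\iota$ itself satisfies~(i$'$) --- your pullback argument is exactly the proof of Lemma~\ref{exact-surjectivity-compositions-lemma}(b), which the paper establishes only later --- and finally run Proposition~\ref{exact-surjectivity-ext-prop}(b) a second time, for $\iota$ against $\Id_{\G'}$. Both routes are sound. The paper's is more economical and needs no exact-surjectivity input beyond what part~(a) already used; yours isolates the useful intermediate fact that $\iota$ inherits~(i$'$) from $\eta''$ and~($*'$), and is closer in spirit to the resolution argument the paper uses for Corollary~\ref{exact-surjectivity-ext-cor2}(b). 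The only steps you leave implicit --- that a morphism $g$ is an admissible epimorphism whenever some composition $gf$ is, and that $A$ is identified with $\ker(B\to C)$ via full faithfulness --- are standard facts about exact categories and do not constitute gaps.
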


\begin{proof}
 It is straighforward to see that, in our assumptions on
the functor~$\iota$, a functor~$\eta'$ satisfies
the condition~(i$'$) whenever the functor~$\eta''$ does.
 Applying Proposition~\ref{exact-surjectivity-ext-prop}(b)
to both the functors~$\eta'$ and~$\eta''$ and comparing
the resulting descriptions of Ext groups in the categories
$\G'$ and $\G''$, one obtains the assertion of part~(a).
 
 All the assertions in the conclusion of part~(b) follow from
the last one.
 To deduce it from part~(a), it suffices to consider
a short exact sequence $0\rarrow S\rarrow\eta'(X)\rarrow T\rarrow0$
in $\G'$, the morphism between the related long exact sequences of
Ext groups induced by the functor~$\iota$, and argue
by induction in~$n$ using the 5\+lemma, proving the injectivity
first and then the surjectivity for every given~$n\ge0$.
\end{proof}

\begin{cor} \label{exact-surjectivity-ext-cor2}
 Let $\F$, $\G'$, $\G''$ be three exact categories, and let $\eta'\:
\F\rarrow\G'$ and $\iota\:\G'\rarrow\G''$ be two exact functors.
 Suppose that both functors~$\eta'$ and $\eta''=\iota\eta'\:
\F\rarrow\G''$ satisfy the condition~(i$\.'$), and the functor~$\iota$
induces isomorphisms $\iota\:\Hom_{\G'}(\eta'(X),\eta'(Y))\simeq
\Hom_{\G''}(\eta''(X),\eta''(Y))$ for all the objects $X$, $Y\in\F$.
 Then \par
\textup{(a)} the functor~$\iota$ induces isomorphisms of
Ext groups $\iota^n\:\Ext^n_{\G'}(\eta'(X),\eta'(Y))\simeq
\Ext^n_{\G''}(\eta''(X),\eta''(Y))$ for all the objects
$X$, $Y\in\F$ and all integers~$n\ge0$; \par
\textup{(b)} if the exact category $\G'$ coincides with its
minimal full subcategory containing all the objects in the image
of the functor~$\eta'$ and closed under extensions and direct
summands, or alternatively if the functor~$\eta'$ satisfies both
the conditions~($*'$) and~($*''$), then the conclusion of
Corollary~\ref{exact-surjectivity-ext-cor1}(b) also holds.
\end{cor}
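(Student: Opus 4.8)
The plan is to deduce part~(a) from Proposition~\ref{exact-surjectivity-ext-prop}(b), and to deduce part~(b) from Corollary~\ref{exact-surjectivity-ext-cor1} under its first alternative hypothesis and by a direct argument under the second. For part~(a), I would fix an object $Y_0\in\F$ and apply Proposition~\ref{exact-surjectivity-ext-prop}(b) twice: to the functor~$\eta'$ with the object $W=\eta'(Y_0)$, and to the functor $\eta''=\iota\eta'$ with the object $W=\eta''(Y_0)=\iota(\eta'(Y_0))$; both functors satisfy~(i$'$) by assumption, so the proposition applies to each of them. This presents the graded right module $(\Ext^n_{\G'}(\eta'(X),\eta'(Y_0)))_{X\in\F;\,n\ge0}$ over the big graded ring $(\Ext^n_\F(X,Y))_{Y,X;\,n\ge0}$ as induced from the right $(\Hom_\F(X,Y))$\+module $(\Hom_{\G'}(\eta'(Y),\eta'(Y_0)))_{Y\in\F}$, and likewise $(\Ext^n_{\G''}(\eta''(X),\eta''(Y_0)))_{X;\,n}$ as induced from $(\Hom_{\G''}(\eta''(Y),\eta''(Y_0)))_Y$. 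The functor~$\iota$ induces a morphism between these two induced modules that intertwines the actions of the big graded ring, because $\iota$ preserves Yoneda compositions and, by functoriality of $\Ext$, one has $(\eta'')^n=\iota^n(\eta')^n$ on $\Ext^n_\F(X,Y)$; and on the inducing $(\Hom_\F)$\+submodules this morphism restricts to the homomorphism $\iota\:\Hom_{\G'}(\eta'(Y),\eta'(Y_0))\rarrow\Hom_{\G''}(\eta''(Y),\eta''(Y_0))$, which is an isomorphism by hypothesis. Since inducing a module along $(\Hom_\F)\hookrightarrow(\Ext^n_\F)$ takes isomorphisms to isomorphisms, the maps $\iota^n\:\Ext^n_{\G'}(\eta'(X),\eta'(Y_0))\rarrow\Ext^n_{\G''}(\eta''(X),\eta''(Y_0))$ are isomorphisms for all $X$, $Y_0\in\F$ and $n\ge0$, which is part~(a).

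For part~(b) under the assumption that $\eta'$ satisfies~($*'$) and~($*''$), the plan is to verify the hypothesis of Corollary~\ref{exact-surjectivity-ext-cor1} — that $\iota$ induces isomorphisms $\Hom_{\G'}(\eta'(X),W)\simeq\Hom_{\G''}(\eta''(X),\iota(W))$ for all $X\in\F$ and $W\in\G'$ — and then invoke that corollary. Using~($*''$), I would pick an admissible monomorphism $W\rarrow\eta'(V)$ in $\G'$ with cokernel $W_1$, so that $0\rarrow W\rarrow\eta'(V)\rarrow W_1\rarrow0$ is exact in $\G'$ and its image $0\rarrow\iota(W)\rarrow\eta''(V)\rarrow\iota(W_1)\rarrow0$ is exact in $\G''$. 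Injectivity of $\iota\:\Hom_{\G'}(\eta'(X),W)\rarrow\Hom_{\G''}(\eta''(X),\iota(W))$ then holds for \emph{every} $W\in\G'$ at once, since both groups embed, compatibly with~$\iota$, into $\Hom_{\G'}(\eta'(X),\eta'(V))\simeq\Hom_{\G''}(\eta''(X),\eta''(V))$ by left exactness of the $\Hom$ functors together with part~(a). For surjectivity, given a morphism $\phi\:\eta''(X)\rarrow\iota(W)$, I would lift the composition $\eta''(X)\rarrow\iota(W)\rarrow\eta''(V)$ to a morphism $\psi\:\eta'(X)\rarrow\eta'(V)$ in $\G'$ via the isomorphism $\Hom_{\G'}(\eta'(X),\eta'(V))\simeq\Hom_{\G''}(\eta''(X),\eta''(V))$; the composition $\eta'(X)\rarrow\eta'(V)\rarrow W_1$ is then annihilated by~$\iota$, hence vanishes by the injectivity already established (applied with $W_1$ in place of~$W$), so $\psi$ factors through $W\rarrow\eta'(V)$, and the resulting morphism $\eta'(X)\rarrow W$ maps to~$\phi$ because $\iota(W)\rarrow\eta''(V)$ is a monomorphism. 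With this hypothesis verified, and with $\eta''$ satisfying~(i$'$) and $\eta'$ satisfying~($*'$), Corollary~\ref{exact-surjectivity-ext-cor1}(a)\+(b) applies and yields the conclusion.

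Under the second alternative hypothesis, that $\G'$ coincides with its minimal full subcategory containing the image of~$\eta'$ and closed under extensions and direct summands, I would argue directly. Call a pair of objects $(T,W)$ of $\G'$ \emph{good} if $\iota^n\:\Ext^n_{\G'}(T,W)\rarrow\Ext^n_{\G''}(\iota(T),\iota(W))$ is an isomorphism for every $n\ge0$. By part~(a), every pair $(\eta'(X),\eta'(Y))$ with $X$, $Y\in\F$ is good. For a fixed first argument~$T$, the class of objects~$W$ with $(T,W)$ good is closed under finite direct sums and direct summands (additivity of $\Ext$ and of~$\iota$) and under extensions (the long exact sequences of $\Ext$ groups attached to a conflation map to one another under~$\iota$, which preserves the connecting maps, so the five lemma applies); the analogous statement holds for the class of objects~$T$ with $(T,W)$ good, for a fixed~$W$. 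Hence, for each $X\in\F$, the object $\eta'(X)$ forms a good pair with every object of~$\G'$; and then, for each $W\in\G'$, every object of~$\G'$ forms a good pair with~$W$. Thus $\iota^n\:\Ext^n_{\G'}(T,W)\simeq\Ext^n_{\G''}(\iota(T),\iota(W))$ for all $T$, $W\in\G'$ and $n\ge0$, and the remaining assertions of the conclusion of Corollary~\ref{exact-surjectivity-ext-cor1}(b) — that $\iota$ is fully faithful, that $\iota(\G')$ is a full extension-closed subcategory of~$\G''$, and that the exact structure of $\G'$ is the one induced from~$\G''$ — follow formally from these isomorphisms exactly as in the proof of Corollary~\ref{exact-surjectivity-ext-cor1}.

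The step I expect to require the most care is the compatibility assertion used in part~(a): that under the identifications furnished by Proposition~\ref{exact-surjectivity-ext-prop}(b) the homomorphisms $\iota^n$ really are the maps obtained by inducing the $(\Hom_\F)$\+module homomorphism~$\iota$ up along $(\Hom_\F)\hookrightarrow(\Ext^n_\F)$. This amounts to checking that the construction behind Proposition~\ref{exact-surjectivity-ext-prop}(b) is natural with respect to an exact functor such as~$\iota$ that commutes with all the structure in sight; everything else in the argument is routine manipulation of long exact sequences and the five lemma.
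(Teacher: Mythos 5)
Your proof is correct and follows essentially the same route as the paper: part~(a) by comparing the two induced-module descriptions from Proposition~\ref{exact-surjectivity-ext-prop}(b), the second alternative of part~(b) by resolving $W$ via~($*''$) to verify the Hom-level hypothesis of Corollary~\ref{exact-surjectivity-ext-cor1} and then resolving $T$ via~($*'$) inside that corollary, and the first alternative by the closure-under-extensions-and-summands plus 5\+lemma argument with no induction in~$n$. This is exactly what the paper's terse ``resolving both the objects $T$ and $W$ on the appropriate sides'' amounts to, so no discrepancy to report.
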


\begin{proof}
 Part~(a) is provided by
Proposition~\ref{exact-surjectivity-ext-prop}(b)
as above, and part~(b) in its second set of assumptions follows
from part~(a) as above, by resolving both the objects $T$ and $W$
on the appropriate sides.
 Part~(b) in its first set of assumptions also follows by
the 5\+lemma (no induction in~$n$ needed in this situation).
\end{proof}

\subsection{Exact surjectivity in compositions}
\label{exact-surjectivity-compositions}
 Let $\F$, $\G$, $\H$ be three exact categories and
$\gamma\:\F\rarrow\G$, \,$\eta\:\G\rarrow\H$ be two exact functors.
 We are interested in proving that the functor~$\eta$ satisfies
the above conditions (i) and~(ii) provided that both
the functors~$\gamma$ and $\eta\gamma$ satisfy certain
(stronger) conditions.
 The next lemma will be important in Subsection~\ref{third-bockstein}.

\begin{lem}  \label{exact-surjectivity-compositions-lemma}
\textup{(a)} If the functor~$\eta\gamma$ reflects admissible
epimorphisms, admissible monomorphisms, or exact sequences, then
so does the functor~$\gamma$. 
 If the functor~$\eta\gamma$ satisfies the condition~($*'$),
then so does the functor~$\eta$. \par
\textup{(b)} If the functor~$\gamma$ satisfies the condition~($*'$)
and the functor~$\eta\gamma$ satisfies the condition~(i$\.'$),
then the functor~$\eta$ also satisfies the condition~(i$\.'$). \par
\textup{(c)} If the functor~$\gamma$ satisfies the condition~($*'$)
and the functor~$\eta\gamma$ satisfies the condition~($**'$), then
the functor~$\eta$ also satisfies the condition~($**'$). \par
\textup{(d)} If the functor~$\gamma$ satisfies the condition~($*'$),
and the functor~$\eta\gamma$ satisfies the conditions~(i$\.'$\+ii$\.'$),
($*'$\+-$\.**'$) and reflects admissible epimorphisms, then
the functor~$\eta$ satisfies the condition~(ii$\.'$).
\end{lem}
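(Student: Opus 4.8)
The plan is to dispose of parts~(a)--(c) by short chases and to concentrate on~(d). For~(a): if $\gamma$ carries a morphism $f$ in~$\F$ to an admissible epimorphism (resp.\ admissible monomorphism) in~$\G$, then $\eta\gamma$ carries it to an admissible epimorphism (resp.\ monomorphism) in~$\H$ since $\eta$ is exact, hence $f$ has the property in~$\F$ because $\eta\gamma$ reflects it, and the same reasoning applies to exact sequences; and the admissible epimorphism $\eta\gamma(U)\rarrow T$ furnished by~($*'$) for~$\eta\gamma$ witnesses~($*'$) for~$\eta$, with $\gamma(U)\in\G$. For~(b) and~(c): starting from $P\in\G$ and an admissible epimorphism $T\rarrow\eta(P)$ (case~(b)) or an arbitrary morphism $g\:\eta(P)\rarrow T$ (case~(c)) in~$\H$, I would first use~($*'$) for~$\gamma$ to choose an admissible epimorphism $p\:\gamma(U)\rarrow P$ in~$\G$, so that $\eta(p)\:\eta\gamma(U)\rarrow\eta(P)$ is an admissible epimorphism in~$\H$; in case~(b) I pull $T\rarrow\eta(P)$ back along $\eta(p)$ inside~$\H$ (admissible epimorphisms in an exact category are stable under pullback along arbitrary morphisms) and apply~(i$'$) for~$\eta\gamma$ to the resulting admissible epimorphism onto $\eta\gamma(U)$, while in case~(c) I apply~($**'$) for~$\eta\gamma$ directly to $g\,\eta(p)\:\eta\gamma(U)\rarrow T$. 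In both cases the data required for~$\eta$ is read off over the object $\gamma(Z)\in\G$, respectively over $\gamma(U')\in\G$ with the intermediate object taken to be $\gamma(S')$, and the commutative triangle (resp.\ square) downstairs is inherited from the one produced upstairs together with the defining property of the pullback used in case~(b).

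For part~(d) the target is condition~(ii$'$) for $\eta\:\G\rarrow\H$: given $P$, $Q\in\G$ and $g\:\eta(P)\rarrow\eta(Q)$ in~$\H$, produce an admissible epimorphism $P'\rarrow P$ and a morphism $P'\rarrow Q$ in~$\G$ whose $\eta$\+image equals $g$ composed with $\eta(P'\rarrow P)$. I would use~($*'$) for~$\gamma$ to choose admissible epimorphisms $p\:\gamma(U)\rarrow P$ and $q\:\gamma(V)\rarrow Q$ in~$\G$, so that $\eta(p)$ and $\eta(q)$ are admissible epimorphisms in~$\H$. The key first move is to replace the morphism $g\,\eta(p)\:\eta\gamma(U)\rarrow\eta(Q)$ --- whose target $\eta(Q)$ need not lie in the image of~$\eta\gamma$ --- by data over~$\F$: condition~($**'$) for~$\eta\gamma$, applied to $g\,\eta(p)$, yields an admissible epimorphism $u\:U'\rarrow U$ in~$\F$, a morphism $a\:U'\rarrow S$ in~$\F$, and an admissible epimorphism $r\:\eta\gamma(S)\rarrow\eta(Q)$ in~$\H$ with $g\,\eta(p)\,\eta\gamma(u)=r\,\eta\gamma(a)$. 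Now $r$ and $\eta(q)$ are two admissible epimorphisms onto~$\eta(Q)$, and I form their pullback $K=\eta\gamma(S)\times_{\eta(Q)}\eta\gamma(V)$ in~$\H$, with admissible-epimorphism projections $\pi_S\:K\rarrow\eta\gamma(S)$ and $\pi_V\:K\rarrow\eta\gamma(V)$ satisfying $r\,\pi_S=\eta(q)\,\pi_V$. Applying~(i$'$) for~$\eta\gamma$ to the admissible epimorphism~$\pi_S$ gives an admissible epimorphism $z\:Z\rarrow S$ in~$\F$ and a morphism $b\:\eta\gamma(Z)\rarrow K$ in~$\H$ with $\pi_S\,b=\eta\gamma(z)$; then $\pi_V\,b$ is a morphism $\eta\gamma(Z)\rarrow\eta\gamma(V)$ between objects in the image of~$\eta\gamma$, so~(ii$'$) for~$\eta\gamma$ gives an admissible epimorphism $z'\:Z'\rarrow Z$ in~$\F$ and a morphism $c\:Z'\rarrow V$ in~$\F$ with $\eta\gamma(c)=\pi_V\,b\,\eta\gamma(z')$.

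It remains to glue these together over~$\F$. I would form the pullback $W=U'\times_S Z$ of~$z$ along~$a$ in~$\F$, with $\alpha\:W\rarrow U'$ an admissible epimorphism and $\beta\:W\rarrow Z$ satisfying $z\beta=a\alpha$, and then the pullback $W'=W\times_Z Z'$ of~$z'$ along~$\beta$ in~$\F$, with $\alpha'\:W'\rarrow W$ an admissible epimorphism and $\beta'\:W'\rarrow Z'$ satisfying $z'\beta'=\beta\alpha'$. Then I set $P'=\gamma(W')$, take $p\circ\gamma(u\alpha\alpha')\:P'\rarrow P$ as the admissible epimorphism (a composition of admissible epimorphisms, $\gamma$ being exact and $u$, $\alpha$, $\alpha'$ admissible epimorphisms in~$\F$), and take $q\circ\gamma(c\beta')\:P'\rarrow Q$ as the sought morphism. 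The identity $\eta\bigl(q\circ\gamma(c\beta')\bigr)=g\circ\eta\bigl(p\circ\gamma(u\alpha\alpha')\bigr)$ then falls out by the successive substitutions $\eta\gamma(c)=\pi_V b\,\eta\gamma(z')$, then $z'\beta'=\beta\alpha'$, then $\eta(q)\,\pi_V=r\,\pi_S$, then $\pi_S\,b=\eta\gamma(z)$, then $z\beta=a\alpha$, and finally $r\,\eta\gamma(a)=g\,\eta(p)\,\eta\gamma(u)$. (The hypothesis that $\eta\gamma$ reflects admissible epimorphisms is not logically indispensable here: by Lemma~\ref{exact-surjectivity-implications} it, together with~($*'$) and~(ii$'$) for~$\eta\gamma$, already implies~(i$'$) for~$\eta\gamma$; but it also lets one take $b$ to be an admissible epimorphism if desired.) I expect the main obstacle to be purely organizational --- keeping the pullback square in~$\H$ (involving~$K$) and the two pullback squares in~$\F$ (involving~$W$ and~$W'$) coordinated, and checking at each step that the morphism asserted to be an admissible epimorphism really is one; the conceptual content is confined to the two moves of the preceding paragraph, namely trading the floating morphism $g\,\eta(p)$ for data over~$\F$ via~($**'$), and comparing the two covers~$r$ and~$\eta(q)$ of~$\eta(Q)$ by a pullback in~$\H$.
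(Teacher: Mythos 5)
Your treatments of parts~(a)--(c) coincide with the paper's: (a)~is immediate, and for (b) and~(c) the paper likewise picks an admissible epimorphism $\gamma(U)\rarrow X$ via~($*'$), forms the fibered product of $T$ and $\eta\gamma(U)$ over $\eta(X)$ in~$\H$ in case~(b), and applies~(i$'$), resp.~($**'$), for~$\eta\gamma$. Part~(d) is where you genuinely diverge. The paper bootstraps through the just-proven condition~(i$'$) for~$\eta$ applied to the admissible epimorphism $\eta\gamma(S)\rarrow\eta(Y)$, upgrades the resulting morphism $\eta(Z)\rarrow\eta\gamma(S)$ to an admissible epimorphism using Lemma~\ref{exact-surjectivity-implications} (which consumes~($*'$) and~(ii$'$) for~$\eta\gamma$), then applies~(ii$'$) to the composition $\eta\gamma(V)\rarrow\eta(Z)\rarrow\eta\gamma(S)$ and invokes the hypothesis that $\eta\gamma$ reflects admissible epimorphisms to conclude that the resulting morphism $V'\rarrow S$ is an admissible epimorphism in~$\F$, so that the gluing fibered product $V'\sqcap_S U'$ can be formed in~$\F$. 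You instead compare the two covers $r\:\eta\gamma(S)\rarrow\eta(Q)$ and $\eta(q)\:\eta\gamma(V)\rarrow\eta(Q)$ by a pullback in~$\H$, lift its projections back to~$\F$ via~(i$'$) and~(ii$'$), and glue in~$\F$ by pulling back only along the admissible epimorphisms $z$, $z'$ that~(i$'$) and~(ii$'$) hand you; your chain of substitutions verifying the final triangle checks out. The payoff is that your argument never needs to recognize a morphism of~$\F$ as an admissible epimorphism from its image, so the hypotheses~($*'$) for~$\eta\gamma$ and the reflection of admissible epimorphisms are indeed unused, as you observe --- a marginally stronger statement than the one asserted. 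What the paper's route buys is economy of means: it reuses part~(b) and Lemma~\ref{exact-surjectivity-implications} rather than introducing a second pullback comparison in~$\H$.
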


\begin{proof}
 Part~(a) is obvious.
 Part~(b): let $T\rarrow\eta(X)$ be an admissible epimorphism in $\H$
onto the image of an object $X\in\G$.
 Pick an admissible epimorphism $\gamma(U)\rarrow X$ in $\G$ from
the image of an object $U\in\F$, and denote by $W$ the fibered product
of $T$ and $\eta\gamma(U)$ over $\eta(X)$ in~$\H$.
 Then $W\rarrow\eta\gamma(U)$ is an admissible epimorphism in $\H$
onto the image of an object from $\F$, so there exists an admissible
epimorphism $Z\rarrow U$ in $\F$ and a morphism $\eta\gamma(Z)\rarrow W$
in $\H$ making the triangle $\eta\gamma(Z)\rarrow W\rarrow\eta\gamma(U)$
commutative in~$\H$.
 Now the composition $\gamma(Z)\rarrow\gamma(U)\rarrow X$ is
an admissible epimorphism in $\G$ whose image in $\H$ decomposes
as $\eta\gamma(Z)\rarrow W\rarrow T\rarrow\eta(X)$ and therefore
factorizes through the original admissible epimorphism
$T\rarrow\eta(X)$.
 Notice also that the morphism $\eta\gamma(Z)\rarrow T$ is
an admissible epimorphism in $\H$ whenever the morphism
$\eta\gamma(Z)\rarrow W$ is.

 Part~(c): let $\eta(X)\rarrow T$ be a morphism in $\H$ from the image
of an object $X\in\G$.
 Pick an admissible epimorphism $\gamma(U)\rarrow X$ in $\G$ and
consider the composition $\eta\gamma(U)\rarrow\eta(X)\rarrow T$ in~$\H$.
 Then there exists an admissible epimorphism $U'\rarrow U$ in $\F$,
a morphism $U'\rarrow S$ in $\F$, and an admissible epimorphism
$\eta\gamma(S)\rarrow T$ in $\H$ making the square diagram
$\eta\gamma(U')\rarrow\eta\gamma(U)\rarrow T$, \
$\eta\gamma(U')\rarrow\eta\gamma(S)\rarrow T$ commutative in~$\H$.
 Now the composition $\gamma(U')\rarrow\gamma(U)\rarrow X$ is
an admissible epimorphism in $\G$ whose image under~$\eta$
makes a commutative square diagram with the image of the morphism
$\gamma(U')\rarrow\gamma(S)$, the original morphism $\eta(X)\rarrow T$,
and the admissible epimorphism $\eta\gamma(S)\rarrow T$ in~$\H$.

 Part~(d): let $\eta(X)\rarrow\eta(Y)$ be a morphism in $\H$ between
two objects coming from~$\G$.
  Pick an admissible epimorphism $\gamma(U)\rarrow X$ in $\G$ and
consider the composition $\eta\gamma(U)\rarrow\eta(X)\rarrow\eta(Y)$
in~$\H$.
 According to the condition~($**'$), there exists an admissible
epimorphism $U'\rarrow U$ in $\F$, a morphism $U'\rarrow S$ in $\F$,
and an admissible epimorphism $\eta\gamma(S)\rarrow\eta(Y)$ in $\H$
such that the composition $\eta\gamma(U')\rarrow\eta\gamma(U)
\rarrow\eta(X)\rarrow\eta(Y)$ is equal to the composition
$\eta\gamma(U')\rarrow\eta\gamma(S)\rarrow\eta(Y)$ in~$\H$.
 According to part~(b), there exists an admissible epimorphism
$Z\rarrow Y$ in $\G$ and a morphism $\eta(Z)\rarrow\eta\gamma(S)$
in $\H$ making the triangle $\eta(Z)\rarrow\eta\gamma(S)
\rarrow\eta(Y)$ commutative in~$\H$.
 Moreover, in view of the remark after the above proof of part~(b)
and the first assertion of Lemma~\ref{exact-surjectivity-implications}
applied to the functor~$\eta\gamma$, one can choose the morphism
$\eta(Z)\rarrow\eta\gamma(S)$ to be an admissible epimorphism, too.

 Pick an admissible epimorphism $\gamma(V)\rarrow Z$ in $\G$
and consider the composition $\eta\gamma(V)\rarrow\eta(Z)\rarrow
\eta\gamma(S)$ in~$\H$.
  According to the condition~(ii$'$) for the functor~$\eta\gamma$,
there exists an admissible epimorphism $V'\rarrow V$ and
a morphism $V'\rarrow S$ in $\F$ such that the composition
$\eta\gamma(V')\rarrow\eta\gamma(V)\rarrow\eta\gamma(S)$ in $\H$
comes from the morphism $V'\rarrow S$ via the functor~$\eta\gamma$.
 The morphism $\eta\gamma(V')\rarrow\eta\gamma(S)$ being
a composition of three admissible epimorphisms in $\H$,
we can conclude that the morphism $V'\rarrow S$ is an admissible
epimorphism in~$\F$.
 Now let $W$ be the fibered product of the objects $V'$ and $U'$
over the object $S$ in the category~$\F$.
 Then the composition $\gamma(W)\rarrow\gamma(U')\rarrow\gamma(U)
\rarrow X$ is an admissible epimorphism in $\G$ whose image
under~$\eta$ composed with the original morphism
$\eta(X)\rarrow\eta(Y)$ in $\H$ is equal to the image of
the composition of morphisms $\gamma(W)\rarrow\gamma(V')
\rarrow\gamma(V)\rarrow Z\rarrow Y$ in~$\G$.
\end{proof}

\Section{The Bockstein Sequence}  \label{bockstein-sequence-section}

\setcounter{subsection}{-1}
\subsection{Toy version: two-category sequence}
\label{bockstein-toy}
 Let $\F$ and $\F_\s$ be two exact categories endowed with twist
functors (exact autoequivalences) $X\mpsto X(1)$.
 Suppose that we are given an exact functor $\eta_\s\:\F\rarrow\F_\s$
commuting with the twists.
 Assume that the functor~$\eta_\s$ satisfies
the conditions~(i\+ii) of Subsection~\ref{exact-surjectivity}.

 Suppose also that we are given a natural transformation
$\s\:\Id\rarrow(1)$ on the category $\F$ commuting with
the twist functor $(1)\:\F\rarrow\F$ as explained in
Subsection~\ref{conventions}.
 Assume the following further conditions to be satisfied:

\begin{itemize}
\item[(\i\i\i)] for any object $X\in\F$, the morphism $\s_X\:X
\rarrow X(1)$ is monic and epic; in other words, no nonzero
morphism in the category $\F$ is annihilated by the natural
transformation~$\s$;
\item[(\i\i\i\i)] a morphism in the category $\F$ is annihilated
by the functor~$\eta_\s$ if and only if it is divisible by
the natural transformation~$\s$.
\end{itemize}

 In this section we will construct, in the assumption of
conditions~(i\+ii) and~(\i\i\i\+\i\i\i\i), the following Bockstein
long exact sequence of Ext groups
\begin{alignat*}{3}
 0&\lrarrow\Hom_\F(X,Y(-1))&&\lrarrow\Hom_\F(X,Y)&&\lrarrow
 \Hom_{\F_\s}(\eta_\s(X),\eta_\s(Y)) \\
 &\lrarrow \Ext^1_\F(X,Y(-1))&&\lrarrow\Ext^1_\F(X,Y)&&\lrarrow
 \Ext^1_{\F_\s}(\eta_\s(X),\eta_\s(Y)) \\
 &\lrarrow\Ext^2_\F(X,Y(-1))&&\lrarrow\Ext^2_\F(X,Y)&&\lrarrow
 \Ext^2_{\F_\s}(\eta_\s(X),\eta_\s(Y))\lrarrow\dotsb
\end{alignat*}
for any two objects $X$, $Y\in\F$.
 The differentials in this long exact sequence have the following
properties:
\begin{enumerate}
\renewcommand{\theenumi}{\alph{enumi}}
\item the maps $\eta_\s=\eta_\s^n\:\Ext^n_\F(X,Y)\rarrow
\Ext^n_{\F_\s}(\eta_\s(X),\eta_\s(Y))$ are induced by the exact
functor~$\eta_\s\:\F\rarrow\F_\s$;
\item the maps $\s=\s_n\:\Ext^n_\F(X,Y(-1))\rarrow
\Ext^n_\F(X,Y)$ are provided by the composition with
the natural transformation $\s\:\Id_\F\rarrow(1)$;
\item the maps $\d=\d^n\:\Ext^n_{\F_\s}(\eta_\s(X),\eta_\s(Y))
\rarrow\Ext^{n+1}_\F(X,Y(-1))$ satisfy
the equation $$\d^{i+n+j}(\eta_\s^i(a)z\eta_\s^j(b))=(-1)^i
a(-1)\d^n(z)b$$ for any objects
$U$, $X$, $Y$, $V\in\F$ and any Ext classes $b\in\Ext^j_\F(U,X)$, \
$z\in\Ext^n_{\F_\s}(\eta_\s(X),\eta_\s(Y))$, and $a\in\Ext^i_\F(Y,V)$.
 Here $(-1)^i$ in the right-hand side denotes a plus or minus sign
depending on the parity of the integer~$i$, while
$a(-1)\in\Ext^i_\F(Y(-1),V(-1))$ is the inverse twist of
the Ext class~$a$.
\end{enumerate}

 This long exact sequence is a more immediate generalization of
the long exact sequence of~\cite[Section~4]{Partin} than
the more elaborated constructions of Subsections~\ref{bockstein-posing}
and~\ref{bockstein-generalization} below.
 It is also essentially the particular case of the long exact sequence
of the next Subsection~\ref{bockstein-posing} corresponding to
the situation with $\F_\t=\F_{\s\t}=\F$ and the identity functor
$\eta_\t=\Id_\F$ (or, if one wishes, the particular case of
the even more general long exact sequence of
Subsection~\ref{bockstein-generalization} corresponding to
the situation with $\F_\t=\F=\F_{\s\t}$ and the identity functors
$\eta_\t=\Id_\F=\eta_{\s\t}$).

 Notice that in presence of the above condition~(\i\i\i)
the condition~(iv) of Subsection~\ref{bockstein-posing} becomes
equivalent to its apparently stronger form~(\i\i\i\i).
 One can see this, e.~g., by comparing the initial three-term
fragments of the Bockstein sequences in this subsection and in
Subsection~\ref{bockstein-posing}
(cf.\ Lemmas~\ref{bockstein-s-zero}(d)
and~\ref{bockstein-r-zero}(c) below).

\subsection{Posing the problem: three-category sequence}
\label{bockstein-posing}
 Let $\F_\t$, \,$\F_\s$, and $\F_{\s\t}$ be three exact categories
endowed with twist functors (exact autoequivalences) $X\mpsto X(1)$.
 Suppose that we are given exact functors
$\eta_\t\:\F_{\s\t}\rarrow\F_\t$ and $\eta_\s\:\F_{\s\t}\rarrow\F_\s$,
both commuting with the twists.
 Assume that both the functors $\eta_\t$ and~$\eta_\s$ satisfy
the conditions~(i\+ii) of Subsection~\ref{exact-surjectivity}.

 Furthermore, suppose that we are given a natural transformation
$\s\:\Id\rarrow(1)$ on the category $\F_{\s\t}$ commuting with the twist
functor $(1)\:\F_{\s\t}\rarrow\F_{\s\t}$ as explained in
Subsection~\ref{conventions}.
 We assume the following further conditions to be satisfied:
\begin{enumerate}
\renewcommand{\theenumi}{\roman{enumi}}
\setcounter{enumi}{2}
\item a morphism $X\rarrow Y$ in the category $\F_{\s\t}$ is annihilated
by the functor~$\eta_\t$ if and only if it is annihilated by
the natural transformation~$\s$ in~$\F_{\s\t}$;
\item a morphism $X\rarrow Y$ in the category $\F_{\s\t}$ is annihilated
by the functor~$\eta_\s$ if and only if there exists an admissible
epimorphism $X'\rarrow X$ such that the composition $X'\rarrow X
\rarrow Y$ is divisible by~$\s$ in $\F_{\s\t}$, or equivalently, if and
only if there exists an admissible monomorphism $Y\rarrow Y'$ such that
the composition $X\rarrow Y\rarrow Y'$ is divisible by~$\s$
in~$\F_{\s\t}$.
\end{enumerate}

 We will see below in Subsection~\ref{first-term-bockstein} that
the two dual formulations of the condition~(iv) are equivalent modulo
our previous assumptions (specifically, the argument is based on
the condition~(ii) for the functor~$\eta_\t$ and the condition~(iii)).

 Our goal in this section is to construct, in the assumption of
the conditions~(i\+iv), the following Bockstein long exact sequence
for the Ext groups
\begin{alignat*}{3}
 0&\lrarrow\Hom_{\F_\t}(\eta_\t(X),\eta_\t(Y)(-1))&&\lrarrow
 \Hom_{\F_{\s\!\.\t}}(X,Y)&&\lrarrow
 \Hom_{\F_\s}(\eta_\s(X),\eta_\s(Y)) \\
 &\lrarrow \Ext^1_{\F_\t}(\eta_\t(X),\eta_\t(Y)(-1))&&\lrarrow
 \Ext^1_{\F_{\s\!\.\t}}(X,Y)&&\lrarrow
 \Ext^1_{\F_{\s}}(\eta_\s(X),\eta_\s(Y)) \\
 &\lrarrow\Ext^2_{\F_\t}(\eta_\t(X),\eta_\t(Y)(-1))&&\lrarrow
 \Ext^2_{\F_{\s\!\.\t}}(X,Y)&&\lrarrow\dotsb
\end{alignat*}
for any two objects $X$, $Y\in\F_{\s\t}$.
 The differentials in this long exact sequence have the following
properties:
\begin{enumerate}
\renewcommand{\theenumi}{\alph{enumi}}
\item the maps $\eta_\s=\eta_\s^n\:\Ext^n_{\F_{\s\!\t}}(X,Y)\rarrow
\Ext^n_{\F_\s}(\eta_\s(X),\eta_\s(Y))$ are induced by the exact
functor~$\eta_\s\:\F_{\s\t}\rarrow\F_\s$;
\item the maps $\s=\s_n\:\Ext^n_{\F_\t}(\eta_\t(X),\eta_\t(Y)(-1))
\rarrow\Ext^n_{\F_{\s\!\t}}(X,Y)$ satisfy the equations
$\s_0(\id_{\eta_\t(E)})=\s_E\in\Hom_{\F_{\s\!\t}}(E,E(1))$ and
$$
 \s_{i+n+j}(\eta_\t^i(a)(-1)z\eta_\t^j(b))=a\s_n(z)b
$$ for any
objects $E$, $U$, $X$, $Y$, $V\in\F_{\s\t}$ and any Ext classes
$b\in\Ext^j_{\F_{\s\!\t}}(U,X)$, \
$z\in\Ext^n_{\F_\t}(\eta_\t(X),\eta_\t(Y)(-1))$, and
$a\in\Ext^i_{\F_{\s\!\t}}(Y,V)$;
\item the maps $\d=\d^n\:\Ext^n_{\F_\s}(\eta_\s(X),\eta_\s(Y))
\rarrow\Ext^{n+1}_{\F_\t}(\eta_\t(X),\eta_\t(Y)(-1))$ satisfy
the equation $$\d^{i+n+j}(\eta_\s^i(a)z\eta_\s^j(b))=(-1)^i
\eta_\t^i(a)(-1)\d^n(z)\eta_\t^j(b)$$ for any objects
$U$, $X$, $Y$, $V\in\F_{\s\t}$ and any Ext classes
$b\in\Ext^j_{\F_{\s\!\t}}(U,X)$, \
$z\in\Ext^n_{\F_\s}(\eta_\s(X),\eta_\s(Y))$, and
$a\in\Ext^i_{\F_{\s\!\t}}(Y,V)$.
 Here $(-1)^i$ in the right-hand side denotes a plus or minus sign
depending on the parity of~$i$, while $\eta_\t^i(a)(-1)\in
\Ext^i_{\F_\t}(\eta_\t(Y)(-1),\eta_\t(V)(-1))$ is the inverse twist of
the Ext class~$\eta_\t^i(a)$.
\end{enumerate}

\subsection{Examples}  \label{bockstein-examples}
 The ``real'' examples of exact functors $\eta_\s\:\F\rarrow\F_\s$ and
natural transformations $\s\:\Id_\F\rarrow(1)$ satisfying together
the conditions~(i\+\i\i\i\i), as well as those of pairs of exact
functors $\eta_\s$, $\eta_\t\:\F_{\s\t}\rarrow\F_\s$, $\F_\t$ and
natural transformations $\s\:\Id_{\F_{\s\!\t}}\rarrow(1)$ satisfying
the conditions~(i\+iv), will appear in connection with
the reduction construction of
Section~\ref{matrix-factor-construct-section}, where the results
of the present section will be applied (see
Subsections~\ref{first-bockstein} and~\ref{third-bockstein};
cf.\ Subsection~\ref{second-bockstein}, where the even more
general setting of Subsection~\ref{bockstein-generalization}
below will appear).
 Nevertheless, let us present two very simple explicit
examples here.

\begin{ex}  \label{finite-group-finite-finite-finite-example}
 Let $G$ be a finite group, $l$~be a prime number, and $s$, $t\ge2$
be any two powers of~$l$.
 For any prime power $m\ge2$, let $\F_{\Z/m}=\F_{\Z/m}^G$ be
the exact category of finitely generated free $\Z/m\.$\+modules
with an action of~$G$.

 Set $\F_\s=\F_{\Z/s}$, \,$\F_\t=\F_{\Z/t}$, and $\F_{\s\t}=\F_{\Z/st}$.
 Let the functors $\eta_\s\:\F_{\s\t}\rarrow\F_\t$ and
$\eta_\t\:\F_{\s\t}\rarrow\F_\t$ take a $\Z/st\.$\+free $G$\+module
$M$ to the $\Z/s\.$\+free and $\Z/t\.$\+free $G$\+modules
$M/sM=tM$ and $M/tM=sM$, respectively.
  Set all the twists $(1)$ to be the identity functors, and
the natural transformation $\s\:\Id_{\F_{\s\!\t}}\rarrow
\Id_{\F_{\s\!\t}}$ to act on all the $\Z/st\.$\+free $G$\+modules
by the operator of multiplication with~$s$.

 Then we claim that all the conditions~(i\+iv) of
Subsections~\ref{exact-surjectivity} and~\ref{bockstein-posing}
are satisfied for the exact functors~$\eta_\s$, $\eta_\t$ and
the center element~$\s$.
 The assertion is also true for the exact categories of arbitrary
(infinitely generated) free $\Z/m\.$\+modules with an action of~$G$.
 Indeed, the functor~$\eta_\s$ is exact-conservative, since so is
the forgetful functor $\F^G_{\Z/st}\rarrow\F^{\{e\}}_{\Z/st}$ and
the reduction functor $\F^{\{e\}}_{\Z/st}\rarrow\F^{\{e\}}_{\Z/s}$
acting between the categories of free modules over $\Z/st$ and $\Z/s$
without any group action.

 To check the conditions ($*'$) and~($*''$) for the functor~$\eta_\s$,
one simply notices that any $\Z/m$\+free $G$\+module can be presented
as the image of a surjective homomorphism from, and embedded into,
a (co)free $G$\+module over $\Z/m$ (i.~e., a direct sum of copies
of $\Z/m[G]=\Z/m(G)$).
 Furthermore, (co)free $G$\+modules over $\Z/s$ can be obtained as
the reductions of similar $G$\+modules over~$\Z/st$.
 Using the projectivity/injectivity properties of (co)free
$G$\+modules, one can also check the conditions (ii$'$) and~(ii$''$),
as well as the other conditions of Subsection~\ref{exact-surjectivity}
for the functor~$\eta_\s$.

 The condition~(iii) is obvious: a morphism $f\:M\rarrow N$ in
the category $\F_{\Z/st}$ is annihilated by the functor~$\eta_\t$
if and only if its image is contained in $tN$ (or its kernel
contains~$sM$), which equivalently means that $sf=0$.
 Finally, to check the condition~(iv) one notices that any morphism
$f\:M\rarrow N$ in $\F_{\Z/st}$ with the image contained in $sN$
is divisible by~$s$ in the group $\Hom_{\F_{\Z/st}}(M,N)$ whenever at
least one of the $G$\+modules $M$ and $N$ is (co)free over~$\Z/st$.
\end{ex}

\begin{ex}  \label{finite-group-integral-integral-finite-example}
 Let $G$ be a finite group and $m=l^r$ be a prime power.
 Keeping the notation $\F_{\Z/m}=\F^G_{\Z/m}$ for the exact category of
finitely generated free $\Z/m\.$\+modules with an action of $G$,
set also $\F_{\Z_l}=\F^G_{\Z_l}$ to be the category of finitely
generated free $\Z_l$\+modules with a $G$\+action.
 Set $\F=\F_{\Z_l}$ and $\F_\s=\F_{\Z/m}$.

 Let the functor $\eta_\s\:\F\rarrow\F_\s$ take a $\Z_l$\+free
$G$\+module $M$ to the $\Z/m$\+free $G$\+module $M/mM$.
 Set all the twists $(1)$ to be the identity functors, and
the map $\s\:M\rarrow M$ to be the multiplication with~$m$ for
every module $M\in\F=\F_{\Z_l}$.
 Then it is claimed that all the conditions~(i\+\i\i\i\i) of
Subsections~\ref{exact-surjectivity} and~\ref{bockstein-toy} are
satisfied for the exact functor~$\eta_\s$ and the natural
transformation~$\s$.

 The functor~$\eta_\s$ is exact-conservative, since so are
the forgetful functor $\F^G_{\Z_l}\rarrow\F^{\{e\}}_{\Z_l}$ and
the reduction functor $\F^{\{e\}}_{\Z_l}\rarrow\F^{\{e\}}_{\Z/m}$.
 The ``exact surjectivity'' conditions of
Subsection~\ref{exact-surjectivity} can be proven in the same way
as in Example~\ref{finite-group-finite-finite-finite-example}.
 To compensate for an apparent loss of symmetry between
the injectivity and projectivity properties of the (co)free
$G$\+modules $\Z_l[G]=\Z_l(G)$ with $l$\+adic coefficients,
one can use the alternative interpretation of $\F_{\Z_l}$ as
the category of $l$\+divisible $l^\infty$\+torsion abelian
groups of finite rank endowed with an action of~$G$.

 The condition~(\i\i\i) is obvious: no nonzero morphism in the category
$\F_{\Z_l}^G$ is annihilated by the multiplication with~$m$ (since
the same is true in the category $\F_{\Z_l}^{\{e\}}$).
 The condition~(\i\i\i\i) holds, since any morphism
$f\:M\rarrow N$ in the category $\F_{\Z_l}^G$ that is annihilated by
the functor~$\eta_\s$ (i.~e., has the image contained in~$mN$)
is divisible by~$m$ in the group $\Hom_{\F_{\Z_l}^G}(M,N)$.
\end{ex}

 Applying the categorical Bockstein long exact sequence construction
of this section to these two examples, one obtains
the ``finite-finite-finite'' and ``integral-integral-finite''
Bockstein sequences for the Ext groups in the categories of finite group
representations written down in the beginning of the introduction.

\smallskip
 Examples of explicit sets of data that can be straightforwardly
shown to satisfy ``a~half of'' the conditions~(i\+\i\i\i\i) or~(i\+iv)
are more numerous, and one can easily work out some of them on the basis
of arguments similar to the above.

\subsection{Further generalization: four-category sequence}
\label{bockstein-generalization}
 Let $\F$, $\F_\t$, $\F_\s$, and $\F_{\s\t}$ be four exact categories
endowed with twist functors $X\rarrow X(1)$.
 Suppose we are given exact functors $\eta_\t\:\F\rarrow\F_\t$, \
$\eta_\s\:\F\rarrow\F_\s$, and $\eta_{\s\t}\:\F\rarrow\F_{\s\t}$,
all of them commuting with the twists.
 Assume that all the three functors $\eta_\t$, $\eta_\s$, $\eta_{\s\t}$
satisfy the conditions~(i\+ii) of Subsection~\ref{exact-surjectivity}.

 Suppose further that we are given a natural transformation
$\s\:\eta_{\s\t}\rarrow\eta_{\s\t}(1)$ of functors $\F\rarrow\F_{\s\t}$
which commutes with the twist functors $(1)\:\F\rarrow\F$ and
$(1)\:\F_{\s\t}\rarrow\F_{\s\t}$ in the sense of
Subsection~\ref{conventions}.
 Assume that the following further conditions are satisfied
by this set of data:

\begin{enumerate}
\renewcommand{\theenumi}{\Roman{enumi}}
\setcounter{enumi}{2}
\item a morphism $f\:X\rarrow Y$ in the category $\F$ is annihilated
by the functor~$\eta_\t$ if and only if the morphism $\eta_{\s\t}(f)$
is annihilated by the natural transformation~$\s$;
\item a morphism $f\:X\rarrow Y$ in the category $\F$ is annihilated
by the functor~$\eta_\s$ if and only if there exists an admissible
epimorphism $X'\rarrow X$ in $\F$ such that the composition
$\eta_{\s\t}(X')\rarrow\eta_{\s\t}(X)\rarrow\eta_{\s\t}(Y)$ is divisible
by the natural transformation~$\s$, or equivalently, if and only if
there exists an admissible monomorphism $Y\rarrow Y'$ in $\F$ such that
the composition $\eta_{\s\t}(X)\rarrow\eta_{\s\t}(Y)\rarrow
\eta_{\s\t}(Y')$ is divisible by~$\s$.
\end{enumerate}

 We will see below in Subsection~\ref{first-term-bockstein} that
the two dual formulations of the condition~(IV) are equivalent modulo
the previous assumptions (the argument is based on
the condition~(ii) for the functor~$\eta_\t$ and the condition~(III)).

 Assuming the conditions~(i\+ii) and~(III\+-IV), we will construct
a Bockstein long exact sequence of the form {\hfuzz=3pt
\begin{alignat*}{3}
 0&\rarrow\Hom_{\F_\t}(\eta_\t(X),\eta_\t(Y)(-1))&&\rarrow
 \Hom_{\F_{\s\!\.\t}}(\eta_{\s\t}(X),\eta_{\s\t}(Y))&&\rarrow
 \Hom_{\F_\s}(\eta_\s(X),\eta_\s(Y)) \\
 &\rarrow \Ext^1_{\F_\t}(\eta_\t(X),\eta_\t(Y)(-1))&&\rarrow
 \Ext^1_{\F_{\s\!\.\t}}(\eta_{\s\t}(X),\eta_{\s\t}(Y))&&\rarrow
 \Ext^1_{\F_{\s}}(\eta_\s(X),\eta_\s(Y)) \\
 &\rarrow\Ext^2_{\F_\t}(\eta_\t(X),\eta_\t(Y)(-1))&&\rarrow
 \Ext^2_{\F_{\s\!\.\t}}(\eta_{\s\t}(X),\eta_{\s\t}(Y))&&\rarrow\dotsb
\end{alignat*}
for any} two objects $X$, $Y\in\F$.
 The differentials in this long exact sequence have the following
properties:
\begin{enumerate}
\renewcommand{\theenumi}{\alph{enumi}}
\item the maps $r=r_\t^n\:
\Ext^n_{\F_{\s\!\t}}(\eta_{\s\t}(X),\eta_{\s\t}(Y))\rarrow
\Ext^n_{\F_\s}(\eta_\s(X),\eta_\s(Y))$ satisfty the equations
$$
 r_\t^n(\eta_{\s\t}^n(a))=\eta_\s^n(a)
 \quad\text{and}\quad
 r_\t^{i+j}(xy)=r_\t^i(x)r_\t^j(y)
$$
for any objects $X$, $Y$, $U$, $V$, $W\in\F$ and any Ext classes
$a\in\Ext^n_\F(X,Y)$, \
$y\in\Ext^j_{\F_{\s\!\t}}(\eta_{\s\t}(U),\eta_{\s\t}(V))$, and 
$x\in\Ext^i_{\F_{\s\!\t}}(\eta_{\s\t}(V),\eta_{\s\t}(W))$;
\item the maps $\s=\s_n\:\Ext^n_{\F_\t}(\eta_\t(X),\eta_\t(Y)(-1))
\rarrow\Ext^n_{\F_{\s\!\t}}(\eta_{\s\t}(X),\eta_{\s\t}(Y))$ satisfy
the equations $\s_0(\id_{\eta_\t(E)})=\s_E\in
\Hom_{\F_{\s\!\t}}(\eta_{\s\t}(E),\eta_{\s\t}(E)(1))$ and
$$
\s_{i+n+j}(\eta_\t^i(a)(-1)z\eta_\t^j(b))=
\eta_{\s\t}^i(a)\s_n(z)\eta_{\s\t}^j(b)
$$
for any objects $E$, $U$, $X$, $Y$, $V\in\F$ and any Ext classes
$b\in\Ext^j_\F(U,X)$, \
$z\in\Ext^n_{\F_\t}(\eta_\t(X),\eta_\t(Y)(-1))$, and
$a\in\Ext^i_\F(Y,V)$;
\item the maps $\d=\d^n\:\Ext^n_{\F_\s}(\eta_\s(X),\eta_\s(Y))
\rarrow\Ext^{n+1}_{\F_\t}(\eta_\t(X),\eta_\t(Y)(-1))$ satisfy
the equation $$\d^{i+n+j}(\eta_\s^i(a)z\eta_\s^j(b))=(-1)^i
\eta_\t^i(a)(-1)\d^n(z)\eta_\t^j(b)$$ for any objects
$U$, $X$, $Y$, $V\in\F$ and any Ext classes
$b\in\Ext^j_\F(U,X)$, \
$z\in\Ext^n_{\F_\s}(\eta_\s(X),\eta_\s(Y))$, and
$a\in\Ext^i_\F(Y,V)$.
 Here $(-1)^i$ in the right-hand side denotes a plus or minus sign
depending on the parity of~$i$, while $\eta_\t^i(a)(-1)\in
\Ext^i_{\F_\t}(\eta_\t(Y)(-1),\eta_\t(V)(-1))$ is the inverse twist of
the Ext class~$\eta_\t^i(a)$.
\end{enumerate}

 The Bockstein long exact sequence described in
Subsection~\ref{bockstein-posing} is a particular case of
the long exact sequence of the present subsection corresponding to
the situation when the functor $\eta_{\s\t}\:\F\rarrow\F_{\s\t}$
is an equivalence of exact categories.

\subsection{The first term}  \label{first-term-bockstein}
 Now we proceed to construct the Bockstein long exact sequence
promised in Subsection~\ref{bockstein-generalization}.
 We start with constructing the map~$\s_0$, proving that it is
injective, and describing its image.

 Let $X$ and $Y$ be two objects of the category $\F$, and
$p:\eta_\t(X)\rarrow\eta_\t(Y)(-1)$ be a morphism in
the category~$\F_\t$.
 According to the condition~(ii) for the functor~$\eta_\t$, there
exist an admissible epimorphism $X'\rarrow X$, an admissible
monomorphism $Y(-1)\rarrow Y'(-1)$, and morphisms $X'\rarrow Y(-1)$
and $X\rarrow Y'(-1)$ in the category $\F$ whose images
under the functor~$\eta_\t$ together with the morphism~$p$ form
a commutative diagram of two triangles with a common edge in
the category~$\F_\t$.

 The square diagram of morphisms $X'\rarrow X\rarrow Y'(-1)$, \
$X'\rarrow Y(-1)\rarrow Y'(-1)$ in the category $\F$ becomes commutative
after applying the functor~$\eta_\t$, hence it follows from
the condition~(III) that it is commutative modulo the ideal of morphisms
whose images under the functor~$\eta_{\s\t}$ are annihilated by
the natural transformation~$\s$.
 Multiplying by~$\s$ the images of both morphisms $X\rarrow Y'(-1)$ and
$X'\rarrow Y(-1)$ under the functor~$\eta_{\s\t}$ (and keeping in mind
that $\s\:\eta_{\s\t}\rarrow\eta_{\s\t}(1)$ is a natural transformation
commuting with the twists), we therefore obtain a commutative square
$\eta_{\s\t}(X')\rarrow \eta_{\s\t}(X)\rarrow \eta_{\s\t}(Y')$, \
$\eta_{\s\t}(X')\rarrow \eta_{\s\t}(Y)\rarrow \eta_{\s\t}(Y')$
in the category~$\F_{\s\t}$.

 Since the morphism $\eta_{\s\t}(X')\rarrow\eta_{\s\t}(X)$ is
an admissible epimorphism and the morphism $\eta_{\s\t}(Y)\rarrow
\eta_{\s\t}(Y')$ is an admissible monomorphism, it follows
that there exists a unique morphism $f\:\eta_{\s\t}(X)\rarrow
\eta_{\s\t}(Y)$ complementing the latter square to a commutative diagram
of two triangles with a common edge in the category~$\F_{\s\t}$.
 By the definition, we set $\s_0(p)=f$.
 As the morphisms $X'\rarrow X$ and $Y\rarrow Y'$ can be chosen
independently and the choice of either one of them is sufficient to
determine the morphism~$f$, it does not depend on these choices.

\begin{lem}  \label{bockstein-s-zero}
 Assuming the condition~(ii) for the functor~$\eta_\t$
and the condition~(III), the map
$\s_0\:\Hom_{\F_\t}(\eta_\t(X),\eta_\t(Y)(-1))\rarrow
\Hom_{\F_{\s\!\t}}(\eta_{\s\t}(X),\eta_{\s\t}(Y))$
has the following properties: \par
\textup{(a)} the equation $\s_0(\id_{\eta_\t(E)})=\s_E\in
\Hom_{\F_{\s\!\t}}(\eta_{\s\t}(E),\eta_{\s\t}(E)(1))$ describing
the image of the identity endomorphism in the category $\F_\t$
under the map~$\s_0$ in terms of the natural transformation
$\s\:\eta_{\s\t}\rarrow\eta_{\s\t}(1)$ of functors $\F\rarrow\F_{\s\t}$
holds in the category~$\F_{\s\t}$ for every object $E$ of
the category~$\F$; \par
\textup{(b)} the equation $\s_0(\eta_\t(g)(-1)p\eta_\t(h))=
\eta_{\s\t}(g)\s_0(p)\eta_{\s\t}(h)$ holds in the category $\F_{\s\t}$
for any two morphisms $h\:U\rarrow X$, \ $g\:Y\rarrow V$ in
the category $\F$ and any morphism $p\:\eta_\t(X)\rarrow\eta_\t(Y)(-1)$
in the category~$\F_\t$; \par
\textup{(c)} the map~$\s_0$ is injective for any objects
$X$, $Y\in\F$; \par
\textup{(d)} a morphism $\eta_{\s\t}(X)\rarrow\eta_{\s\t}(Y)$ belongs
to the image of the map~$\s_0$ if and only if there exists
an admissible epimorphism $X'\rarrow X$ in the category $\F$ such that
the composition $\eta_{\s\t}(X')\rarrow \eta_{\s\t}(X)\rarrow
\eta_{\s\t}(Y)$ is divisible by the natural transformation~$\s$, and
if and only if there exists an admissible monomorphism $Y\rarrow Y'$
such that the composition $\eta_{\s\t}(X)\rarrow\eta_{\s\t}(Y)
\rarrow\eta_{\s\t}(Y')$ is divisible by~$\s$.
\end{lem}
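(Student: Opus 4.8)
The plan rests on a single refinement of the construction of $\s_0$, from which all four parts follow. First I would establish that $\s_0(p)$ is characterized by its \emph{epimorphism side}: for \emph{any} admissible epimorphism $e\colon X'\to X$ in $\F$ and any morphism $u\colon X'\to Y(-1)$ in $\F$ with $\eta_\t(u)=p\circ\eta_\t(e)$, one has $\s_0(p)\circ\eta_{\s\t}(e)=\s\eta_{\s\t}(u)$, and this equality determines $\s_0(p)$ since $\eta_{\s\t}(e)$ is an epimorphism. To see this, complete $(e,u)$ to a defining diagram for $p$ of the sort used in the construction, using condition~(ii$''$) for $\eta_\t$ (applied to $p\colon\eta_\t(X)\to\eta_\t(Y(-1))$) to supply the missing monomorphism side; the displayed equality is then one of the two triangle identities defining $f=\s_0(p)$, and the value $\s_0(p)$ is the same for every defining diagram by the independence statement recorded after the construction. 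Dually, $\s_0(p)$ is characterized by its \emph{monomorphism side}: $\eta_{\s\t}(m)\circ\s_0(p)=\s\eta_{\s\t}(v)$ for any admissible monomorphism $m\colon Y\to Y'$ and any $v\colon X\to Y'(-1)$ with $\eta_\t(m(-1))\circ p=\eta_\t(v)$.

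With this in hand, part~(a) is the case $X=E$, $Y=E(1)$, $p=\id_{\eta_\t(E)}$ with the trivial epimorphism side $e=\id_E$, $u=\id_E\colon E\to E=Y(-1)$: the epi-side characterization gives $\s_0(\id_{\eta_\t(E)})=\s\eta_{\s\t}(\id_E)=\s_E$, after identifying $\eta_{\s\t}(E(1))$ with $\eta_{\s\t}(E)(1)$. For part~(b) I would choose a defining diagram $(e\colon X'\to X,\ m\colon Y\to Y',\ u,\ v)$ for $p$ and pull the admissible epimorphism $e$ back along $h\colon U\to X$ to obtain an admissible epimorphism $e'\colon U'\to U$ together with a morphism $U'\to X'$. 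Setting $u''=g(-1)\circ u\circ(U'\to X')$, a short chase in $\F$ shows $\eta_\t(u'')=\bigl(\eta_\t(g(-1))\,p\,\eta_\t(h)\bigr)\circ\eta_\t(e')$ (from the pullback square and $\eta_\t(u)=p\circ\eta_\t(e)$), while $\bigl(\eta_{\s\t}(g)\,\s_0(p)\,\eta_{\s\t}(h)\bigr)\circ\eta_{\s\t}(e')=\s\eta_{\s\t}(u'')$ (from $\s_0(p)\circ\eta_{\s\t}(e)=\s\eta_{\s\t}(u)$ together with the naturality square $\eta_{\s\t}(g)\circ\s_{Y(-1)}=\s_{V(-1)}\circ\eta_{\s\t}(g(-1))$ for~$\s$). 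The epi-side characterization applied to $\eta_\t(g(-1))\,p\,\eta_\t(h)$ then forces the two sides of the equation in~(b) to coincide.

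For part~(c): if $\s_0(p)=0$, the epi-side characterization gives $\s\eta_{\s\t}(u)=0$, i.e.\ $\eta_{\s\t}(u)$ is annihilated by~$\s$; by condition~(III) the morphism $u$ is annihilated by $\eta_\t$, so $p\circ\eta_\t(e)=\eta_\t(u)=0$, and $p=0$ because $\eta_\t(e)$ is an epimorphism. For part~(d), the ``only if'' direction is immediate: for $\phi=\s_0(p)$ and a defining diagram $(e,m,u,v)$, the relations $\phi\circ\eta_{\s\t}(e)=\s\eta_{\s\t}(u)$ and $\eta_{\s\t}(m)\circ\phi=\s\eta_{\s\t}(v)$ exhibit the required divisibilities, with admissible epimorphism $e$, resp.\ admissible monomorphism $m$. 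For the ``if'' direction from the epimorphism side, given an admissible epimorphism $e\colon X'\to X$ and a morphism $u\colon X'\to Y(-1)$ with $\phi\circ\eta_{\s\t}(e)=\s\eta_{\s\t}(u)$, let $i\colon K\to X'$ be the kernel of $e$; then $\s\eta_{\s\t}(u\circ i)=\phi\circ\eta_{\s\t}(e\circ i)=0$, so $u\circ i$ is annihilated by $\eta_\t$ by~(III), whence $\eta_\t(u)\circ\eta_\t(i)=0$; since $\eta_\t$ is exact, $\eta_\t(e)$ is the cokernel of $\eta_\t(i)$, so $\eta_\t(u)$ factors uniquely as $p\circ\eta_\t(e)$ for some $p\colon\eta_\t(X)\to\eta_\t(Y)(-1)$, and the epi-side characterization gives $\s_0(p)\circ\eta_{\s\t}(e)=\s\eta_{\s\t}(u)=\phi\circ\eta_{\s\t}(e)$, hence $\phi=\s_0(p)$. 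The monomorphism-side statement is proven dually, using the cokernel of $m$ in place of the kernel of $e$; in particular the two divisibility conditions in~(d) turn out to be equivalent.

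The step I expect to be the real obstacle is the ``if'' direction of part~(d): one must see that condition~(III) is \emph{exactly} what is needed to turn $\phi\circ\eta_{\s\t}(e\circ i)=0$ into the vanishing of $\eta_\t(u)$ on $\eta_\t(K)$, so that $\eta_\t(u)$ descends along the cokernel map $\eta_\t(e)$; this is in effect the reason condition~(III) is imposed in the precise form it takes. A subsidiary nuisance running through all parts is the bookkeeping of the commutation isomorphisms for the twist functors, needed so that expressions such as $\s\eta_{\s\t}(u)\colon\eta_{\s\t}(X')\to\eta_{\s\t}(Y)$ make sense and the naturality square for~$\s$ used in part~(b) holds as written.
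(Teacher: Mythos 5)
Your proposal is correct and follows essentially the same route as the paper: the one‑sided (epi/mono) characterization of $\s_0(p)$ is exactly the uniqueness observation recorded after the construction, part~(c) and the ``if'' direction of part~(d) use condition~(III) on the kernel of the admissible epimorphism just as in the text, and part~(a) is read off the construction. The only cosmetic difference is in part~(b), where you handle the right composition with $\eta_\t(h)$ by pulling the admissible epimorphism back along~$h$ and staying on the epimorphism side, whereas the paper treats that factor via the monomorphism side; both work.
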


\begin{proof}
 Part~(a) is immediate from the construction (it suffices to take
$X'=X=E$ and $Y'=Y=E(1)$, with the identity morphisms $X'\rarrow X$ and
$Y(-1)\rarrow Y'(-1)$ and the identity morphisms $X'\rarrow Y(-1)$ and
$X\rarrow Y'(-1)$ in the category~$\F$).
 In part~(b), one checks the equations $\s_0(\eta_\t(g)(-1)p)=
\eta_{\s\t}(g)\s_0(p)$ and $\s_0(p\eta_\t(h))=\s_0(p)\eta_{\s\t}(h)$
separately, using the construction of the morphism $\s_0(p)$ in terms
of an admissible epimorphism $X'\rarrow X$ in the former case and
in terms of an admissible monomorphism $Y\rarrow Y'$ in
the latter one.
 Part~(c) holds, since the morphisms $X'\rarrow Y(-1)$ and
$X\rarrow Y'(-1)$ are annihilated by the functor~$\eta_\t$ whenever
their images under the functor~$\eta_{\s\t}$ are annihilated by
the multiplication with the natural transformation~$\s$, according to
the condition~(III).

 The assertions ``only if'' in part~(d) are obvious from
the construction of the map~$\s_0$.
 To prove the ``if'', suppose that we are given a morphism
$X'\rarrow Y(-1)$ in the category $\F$ whose image under
the functor~$\eta_{\s\t}$, multiplied with~$\s$, is equal to
the composition $\eta_{\s\t}(X')\rarrow \eta_{\s\t}(X)\rarrow
\eta_{\s\t}(Y)$.
 Denote by $K$ the kernel of the morphism $X'\rarrow X$ and
consider the composition $K\rarrow X'\rarrow Y(-1)$.
 The image of this composition under the functor~$\eta_{\s\t}$
is annihilated by the multiplication with~$\s$, and consequently,
according to~(III), the morphism $K\rarrow Y(-1)$ is annihilated
by the functor~$\eta_\t$.
 The short sequence $0\rarrow\eta_\t(K)\rarrow\eta_\t(X')
\rarrow\eta_\t(X)\rarrow0$ being exact in $\F_\t$, one obtains
the desired morphism $\eta_\t(X)\rarrow\eta_\t(Y)(-1)$.
\end{proof}

 In particular, it follows from part~(d) of the Lemma that
the two formulations of the condition~(IV) in
Subsection~\ref{bockstein-generalization} are equivalent
to each other, as are the two formulations of
the condition~(iv) in Subsection~\ref{bockstein-posing}.

\subsection{The second term}  \label{second-term-bockstein}
 Let us construct the map $r_\t^0$ and verify exactness of
our sequence at its second nontrivial term.

 Let $X$ and $Y$ be two objects of the category $\F$, and
$f\:\eta_{\s\t}(X)\rarrow\eta_{\s\t}(Y)$ be a morphism
in the category~$\F_{\s\t}$.
 According to the condition~(ii) for the functor~$\eta_{\s\t}$,
there exist an admissible epimorphism $X'\rarrow X$, an admissible
monomorphism $Y\rarrow Y'$, and morphisms $X'\rarrow Y$ and
$X\rarrow Y'$ in the category $\F$ whose images
under the functor~$\eta_{\s\t}$ together with the morphism~$f$
form a commutative diagram of two triangles with a common edge
in the category~$\F_{\s\t}$.

 Let $K\rarrow X'$ be the kernel of the admissible epimorphism
$X'\rarrow X$ and $Y'\rarrow C$ be the cokernel of the admissible
monomorphism $Y\rarrow Y'$.
 Then the compositions of morphisms $K\rarrow X'\rarrow Y$ and
$X\rarrow Y'\rarrow C$ in the category $\F$ are annihilated by
the functor~$\eta_{\s\t}$, so it follows from
(either formulation of) the condition~(IV) that they are also
annihilated by the functor~$\eta_\s$.
 The same applies to the difference of the two compositions
$X'\rarrow X\rarrow Y'$ and $X'\rarrow Y\rarrow Y'$ in
the category~$\F$.
 Hence the image of our square of morphisms in the category $\F$
with respect to the functor~$\eta_\s$ is commutative in
the category~$\F_\s$, and there is a unique morphism
$q\:\eta_\s(X)\rarrow\eta_\s(Y)$ complementing this square to
a commutative diagram of two triangles with a common edge
in~$\F_\s$.
 By the definition, we set $r_\t^0(f)=q$.

\begin{lem}  \label{bockstein-r-zero}
 Assuming the condition~(ii) for the functors~$\eta_\t$, $\eta_{\s\t}$
and the conditions~(III\+-IV), the map
$r_\t^0\:\Hom_{\F_{\s\t}}(\eta_{\s\t}(X),\eta_{\s\t}(Y))\rarrow
\Hom_{\F_\s}(\eta_\s(X),\eta_\s(Y))$ has the following properties: \par
\textup{(a)} the equation $r_\t^0(\eta_{\s\t}(e))=\eta_\s(e)$
holds in the category $\F_\s$ for any morphism $e\:X\rarrow Y$
in the category~$\F$; \par
\textup{(b)} the equation $r_\t^0(gh)=r_\t^0(g)r_\t^0(h)$ holds
in the category $\F_\s$ for any two morphisms $g\:\eta_{\s\t}(V)
\rarrow \eta_{\s\t}(W)$ and $h\:\eta_{\s\t}(U)\rarrow\eta_{\s\t}(V)$
in the category~$\F_{\s\t}$; \par
\textup{(c)} for any two objects $X$, $Y$ in the category $\F$,
the image of the injection
$\s_0\:\Hom_{\F_\t}(\eta_\t(X),\eta_\t(Y)(-1))\rarrow
\Hom_{\F_{\s\!\t}}(\eta_{\s\t}(X),\eta_{\s\t}(Y))$
coincides with the kernel of the map
$r_\t^0\:\Hom_{\F_{\s\!\t}}(\eta_{\s\t}(X),\eta_{\s\t}(Y))
\rarrow\Hom_{\F_\s}(\eta_\s(X),\eta_\s(Y))$.
\end{lem}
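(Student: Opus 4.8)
\emph{Proof proposal for Lemma~\textup{\ref{bockstein-r-zero}}.}

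The plan is to establish parts~(a), (b) and~(c) in succession, working directly with the explicit description of $r_\t^0$ given just above together with Lemma~\ref{bockstein-s-zero} and conditions~(III) and~(IV). Before beginning I would record two facts for repeated use. First, a morphism in $\F$ annihilated by $\eta_{\s\t}$ is automatically annihilated by $\eta_\s$: this is the special case of condition~(IV) with the admissible epimorphism taken to be the identity, the zero morphism being divisible by~$\s$. Second, the value $r_\t^0(f)$ is already determined by the epimorphism half $(\alpha\:X'\rarrow X,\ \gamma\:X'\rarrow Y)$ of the chosen lift, via the equation $r_\t^0(f)\,\eta_\s(\alpha)=\eta_\s(\gamma)$, and it does not depend on that choice: for such a lift the composition $K\rarrow X'\xrightarrow{\gamma}Y$, where $K=\ker\alpha$, is annihilated by $\eta_{\s\t}$ and hence by $\eta_\s$, so $\eta_\s(\gamma)$ factors uniquely through the admissible epimorphism $\eta_\s(\alpha)$; and any two admissible lifts are compared by passing to their fibered product over~$X$. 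I would record this second fact as the opening step, since it is the independence of choices making the construction of $r_\t^0$ unambiguous.

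Part~(a) is then immediate: apply the second fact to the tautological lift $\alpha=\id_X$, $\gamma=e$ of $f=\eta_{\s\t}(e)$, which forces $r_\t^0(\eta_{\s\t}(e))=\eta_\s(e)$. For part~(b), given $g\:\eta_{\s\t}(V)\rarrow\eta_{\s\t}(W)$ and $h\:\eta_{\s\t}(U)\rarrow\eta_{\s\t}(V)$, I would choose lifts $(U'\rarrow U,\ \gamma_h\:U'\rarrow V)$ of $h$ and $(V'\rarrow V,\ \gamma_g\:V'\rarrow W)$ of $g$ afforded by condition~(ii) for $\eta_{\s\t}$, and form the fibered product $U''=U'\times_V V'$, that is, the pullback of the admissible epimorphism $V'\rarrow V$ along $\gamma_h$ (admissible epimorphisms are stable under base change in an exact category), which carries the admissible epimorphism $U''\rarrow U'\rarrow U$. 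Then $(U''\rarrow U,\ U''\rarrow V'\xrightarrow{\gamma_g}W)$ is a lift of $gh$, and applying $\eta_\s$ to the commutative square $U''\rarrow U'\rarrow V$, $U''\rarrow V'\rarrow V$ gives $r_\t^0(gh)\,\eta_\s(U''\rarrow U)=r_\t^0(g)\,r_\t^0(h)\,\eta_\s(U''\rarrow U)$; cancelling the admissible epimorphism yields the assertion.

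Part~(c) carries the real content. For the inclusion of the image of $\s_0$ into the kernel of $r_\t^0$: if $g=\s_0(p)$, then Lemma~\ref{bockstein-s-zero}(d) supplies an admissible epimorphism $\widetilde X\rarrow X$ for which $g\,\eta_{\s\t}(\widetilde X\rarrow X)$ is divisible by~$\s$; choosing any lift $(\alpha\:X'\rarrow X,\ \gamma\:X'\rarrow Y)$ of $g$ and passing to $W=\widetilde X\times_X X'$, the composition $\eta_{\s\t}\bigl(W\rarrow X'\xrightarrow{\gamma}Y\bigr)$ equals $g\,\eta_{\s\t}(W\rarrow X)$, which is divisible by~$\s$ since divisibility by~$\s$ is inherited under precomposition with $\eta_{\s\t}$ of a morphism in~$\F$; so by condition~(IV) the morphism $W\rarrow X'\rarrow Y$ is annihilated by $\eta_\s$, whence $\eta_\s(\gamma)\,\eta_\s(W\rarrow X')=0$, and, $\eta_\s(W\rarrow X')$ being an admissible epimorphism, $\eta_\s(\gamma)=0$; thus $r_\t^0(g)=0$. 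Conversely, if $r_\t^0(g)=0$, then for any lift $(\alpha,\gamma)$ of $g$ one has $\eta_\s(\gamma)=r_\t^0(g)\,\eta_\s(\alpha)=0$, so $\gamma\:X'\rarrow Y$ is annihilated by $\eta_\s$; condition~(IV) then yields an admissible epimorphism $X''\rarrow X'$ with $\eta_{\s\t}\bigl(X''\rarrow X'\xrightarrow{\gamma}Y\bigr)$ divisible by~$\s$, and composing with $\alpha$ gives an admissible epimorphism $X''\rarrow X$ with $g\,\eta_{\s\t}(X''\rarrow X)$ divisible by~$\s$; by Lemma~\ref{bockstein-s-zero}(d) this places $g$ in the image of~$\s_0$.

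The step I expect to be the main obstacle is part~(c), and within it the need to reconcile the arbitrary lift $(\alpha,\gamma)$ thrown up by condition~(ii) in the construction of $r_\t^0$ with the particular admissible epimorphism $\widetilde X\rarrow X$ delivered by Lemma~\ref{bockstein-s-zero}(d). The fibered-product device handles this, but it requires keeping careful track of how ``divisible by~$\s$'' propagates through compositions, namely the identity $\bigl(\s\,\eta_{\s\t}(\phi)\bigr)\circ\eta_{\s\t}(u)=\s\,\eta_{\s\t}(\phi u)$ for a morphism~$u$ in~$\F$ --- this is the naturality of $\s\:\eta_{\s\t}\rarrow\eta_{\s\t}(1)$ together with the equivalence of the two descriptions of the product in Subsection~\ref{conventions} --- and it uses that admissible epimorphisms are stable under base change. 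Everything else is formal diagram-chasing with exact functors and cancellation of admissible epimorphisms.
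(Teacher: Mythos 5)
Your proof is correct, and its overall skeleton matches the paper's: part~(a) is read off the construction, part~(c) runs through condition~(IV) and Lemma~\ref{bockstein-s-zero}(d) in both directions exactly as in the paper, and the preliminary observation that a morphism annihilated by $\eta_{\s\t}$ is annihilated by $\eta_\s$ (the degenerate case of~(IV)) is also the paper's. Where you genuinely diverge is in the handling of well-definedness and of part~(b). You isolate a one-sided characterization $r_\t^0(f)\.\eta_\s(\alpha)=\eta_\s(\gamma)$ valid for \emph{every} epimorphism-half lift, prove independence of the lift by a fibered product over $X$, and then obtain multiplicativity by composing lifts through the fibered product $U'\times_V V'$. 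The paper instead first proves the two one-sided compatibilities $r_\t^0(\eta_{\s\t}(j)h)=\eta_\s(j)r_\t^0(h)$ and $r_\t^0(g\eta_{\s\t}(k))=r_\t^0(g)\eta_\s(k)$ separately (using the epimorphism construction for one and the dual monomorphism construction for the other), and then deduces a ``common generalization'' of the two constructions --- an admissible epimorphism $X'\rarrow X$ and admissible monomorphism $Y\rarrow Y'$ with the total composite coming from $\F$ --- to conclude. Your route buys a more self-contained and symmetric-free argument: everything is reduced to one cancellation of an admissible epimorphism, and the ``common generalization'' step is avoided entirely. What it gives up is the explicit two-sided compatibility statements, which the paper reuses later (e.g.\ in the remark after the lemma about applying $r_\t^0$ to whole commutative diagrams, and in Lemma~\ref{r-n-bockstein}); these do follow from your part~(b) together with part~(a), so nothing downstream is lost.
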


\begin{proof}
 Part~(a) is obvious from the construction.
 In part~(b), choose admissible epimorphisms $U'\rarrow U$ and
$V'\rarrow V$ and morphisms $U'\rarrow V$ and $V'\rarrow W$ in
the category $\F$ such that the images of these morphisms under
the functor~$\eta_{\s\t}$ form two commutative triangle diagrams
with the morphisms $h$ and~$g$ in the category~$\F_{\s\t}$.
 Let $U''$ be the fibered product of the objects $U'$ and $V'$
over the object $V$ in the category~$\F$.
 Then the morphism $U''\rarrow U'$ is an admissible epimorphism,
hence so is the composition $U''\rarrow U'\rarrow U$.
 The images of the composition $U''\rarrow V'\rarrow W$ and
the morphism $U''\rarrow U$ under the functor~$\eta_{\s\t}$ form
a commutative triangle diagram with the morphism
$gh\:\eta_{\s\t}(U)\rarrow\eta_{\s\t}(W)$ in the category~$\F_{\s\t}$.
 Applying the functor~$\eta_\t$ to the whole commutative diagram in
the category~$\F$, we conclude that the composition of the morphism
$r_\t^0(h)$ complementing the pair of morphisms $\eta_\t(U')\rarrow
\eta_\t(U)$, \ $\eta_\t(U')\rarrow\eta_\t(V)$ to a commutative triangle
and the morphism $r_\t^0(g)$ complementing the pair of morphisms
$\eta_\t(V')\rarrow\eta_\t(V)$, \ $\eta_\t(V')\rarrow\eta_\t(W)$ to
a commutative triangle complements the pair of morphisms
$\eta_\t(U'')\rarrow\eta_\t(U)$, \ $\eta_\t(U'')\rarrow\eta_\t(W)$ to
a commutative triangle diagram in the category~$\F_\t$.

 To prove part~(c), consider a morphism $f\:\eta_{\s\t}(X)\rarrow
\eta_{\s\t}(Y)$ in the category~$\F_{\s\t}$.
 Let $X'\rarrow X$ and $X'\rarrow Y$ be an admissible epimorphism
and a morphism in the category $\F$ whose images under
the functor~$\eta_{\s\t}$ form a commutative diagram with
the morphism~$f$.
 The equation $r_\t^0(f)=0$ means that the morphism $X'\rarrow Y$
is annihilated by the functor~$\eta_\s$.
 According to the condition~(IV), this is equivalent to the existence
of an admissible epimorphism $X''\rarrow X'$ in the category~$\F$
for which the composition $\eta_{\s\t}(X'')\rarrow\eta_{\s\t}(X')\rarrow
\eta_{\s\t}(Y)$ is divisible by~$\s$.
 If this is the case, then it follows by the way of
Lemma~\ref{bockstein-s-zero}(d) that the morphism~$f$ belongs to
the image of the map~$\s_0$ (as the composition $X''\rarrow X'
\rarrow X$ is also an admissible epimorphism in~$\F$).
 Conversely, by Lemma~\ref{bockstein-s-zero}(b) the composition
$\eta_{\s\t}(X')\rarrow\eta_{\s\t}(X)\rarrow\eta_{\s\t}(Y)$ belongs
to the image of the map~$\s_0$ whenever the morphism~$f$ does.
 Then it remains to apply Lemma~\ref{bockstein-s-zero}(d) in order
to deduce the existence of an admissible epimorphism $X''\rarrow X'$
with the desired property.
\end{proof}

 The result of Lemma~\ref{bockstein-r-zero}(b) says that for any
commutative diagram in the category $\F_{\s\t}$ with the objects in
the vertices coming from the category $\F$ via
the functor~$\eta_{\s\t}$, one can apply the maps~$r_\t^0$ to every
arrow in the diagram, obtaining a commutative diagram in
the category~$\F_\s$.
 By part~(a) of the Lemma, when some of the arrows in
the original diagram come from arrows in the category $\F$,
the procedure is compatible with the action of
the functors~$\eta_{\s\t}$ and $\eta_\s$ on such arrows.

\subsection{The third term}  \label{third-term-bockstein}
 Now we construct the map~$\d^0$ and check exactness of
the sequence at its third term.
 The construction and arguments largely follow those
in~\cite[Subsections~4.5\+-4.6]{Partin}.

 Let $X$ and $Y$ be two objects of the category $\F$, and
$q\:\eta_\s(X)\rarrow\eta_\s(Y)$ be a morphism
in the category $\F_\s$.
 According to the condition~(ii) for the functor~$\eta_\s$, there
exist an admissible epimorphism $X'\rarrow X$, an admissible
monomorphism $Y\rarrow Y'$, and morphisms $X'\rarrow Y$ and
$X\rarrow Y'$ in the category $\F$ whose images under
the functor~$\eta_\s$ together with the morphism~$q$ form
a commutative diagram of two triangles with a common edge in
the category~$\F_\s$.

 The square diagram of morphisms $X'\rarrow X\rarrow Y'$, \ $X'\rarrow
Y\rarrow Y'$ becomes commutative after applying the functor~$\eta_\s$,
and consequently, according to Lemma~\ref{bockstein-r-zero}(a,c), its
image under the functor~$\eta_{\s\t}$ is commutative in the category
$\F_{\s\t}$ up to a morphism coming from a morphism in $\F_\t$ via
the map~$\s_0$.

 Let $K\rarrow X'$ be the kernel of the morphism $X'\rarrow X$ and
$Y'\rarrow C$ be the cokernel of the morphism $Y\rarrow Y'$
in~$\F$.
 Then the compositions $K\rarrow X'\rarrow Y$ and $X\rarrow Y'\rarrow C$
are annihilated by the functor~$\eta_\s$, and therefore their images
under the functor~$\eta_{\s\t}$ come from morphisms
$\eta_\t(K)\rarrow\eta_\t(Y)(-1)$ and $\eta_\t(X)\rarrow
\eta_\t(C)(-1)$ in the category~$\F_\t$.
 The difference of the two compositions in the square diagram of
morphisms in $\F_{\s\t}$ comes from a morphism
$\eta_\t(X')\rarrow\eta_\t(Y')(-1)$ in~$\F_\t$.

 Together with the images of the short exact sequences $0\rarrow K
\rarrow X'\rarrow X\rarrow\nobreak 0$ and $0\rarrow Y(-1)\rarrow
Y'(-1)\rarrow C(-1)\rarrow 0$ with respect to the functor~$\eta_\t$,
these three morphisms form a diagram of two squares, one of which
is commutative and the other one anticommutative (as one can
check using Lemma~\ref{bockstein-s-zero}(b\+c)).
 Such a diagram defines an element of the group
$\Ext^1_{\F_\t}(\eta_\t(X),\eta_\t(Y)(-1))$ in any one of
the two dual ways differing by the minus sign
(cf.~Lemma~\ref{ext-1-equation}).

 Namely, the desired element can be obtained either
as the composition of the $\Ext^1$ class of
the sequence $0\rarrow\eta_\t(K)\rarrow\eta_\t(X')\rarrow\eta_\t(X)
\rarrow0$ with the morphism $\eta_\t(K)\rarrow\eta_\t(Y)(-1)$, or
as the composition of the morphism $\eta_\t(X)\rarrow\eta_\t(C)(-1)$
with the $\Ext^1$ class of the sequence $0\rarrow\eta_\t(Y)(-1)
\rarrow\eta_\t(Y')(-1)\rarrow\eta_\t(C)(-1)\rarrow0$ in
the exact category~$\F_\t$.
 By the definition, we set this element to be the value $\d^0(q)$
of the map $\d^0\:\Hom_{\F_\s}(\eta_\s(X),\eta_\s(Y))\rarrow
\Ext^1_{\F_\t}(\eta_\t(X),\eta_\t(Y)(-1))$ at the morphism
$q\:\eta_\s(X)\rarrow\eta_\s(Y)$.
 As the pairs of morphisms $X'\rarrow X$, $X'\rarrow Y$ and
$X\rarrow Y'$,  $Y\rarrow Y'$ can be chosen independently of one
another and the choice of either one of these two pairs is sufficient
to determine the extension class $\d^0(q)$, it does not depend on
these choices.

\begin{lem} \label{bockstein-d-zero}
 Assuming the condition~(ii) for the functors~$\eta_\t$, $\eta_{\s\t}$,
$\eta_\s$ and the conditions~(III\+-IV), the map
$\d^0\:\Hom_{\F_\s}(\eta_\s(X),\eta_\s(Y))\rarrow
\Ext^1_{\F_\t}(\eta_\t(X),\eta_\t(Y)(-1))$
has the following properties: \par
\textup{(a)} the equation $\d^0(\eta_\s(g)q\eta_\s(h))=\eta_\t(g)(-1)
\d^0(q)\eta_\t(h)$ holds in the category $\F_\t$ for any two morphisms
$h\:U\rarrow X$, \ $g\:Y\rarrow V$ in the category $\F$ and any
morphism $q\:\eta_\s(X)\rarrow\eta_\s(Y)$ in the category~$\F_\s$; \par
\textup{(b)} for any two objects $X$, $Y$ in the category $\F$,
the kernel of the map $\d^0\:\Hom_{\F_\s}(\eta_\s(X),\eta_\s(Y))\rarrow
\Ext^1_{\F_\t}(\eta_\t(X),\eta_\t(Y)(-1))$ coincides with the image of
the map $r_\t^0\:\Hom_{\F_{\s\!\t}}(\eta_{\s\t}(X),\eta_{\s\t}(Y))
\rarrow\Hom_{\F_\s}(\eta_\s(X),\eta_\s(Y))$.
\end{lem}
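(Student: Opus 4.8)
The plan is to treat parts~(a) and~(b) separately, in each case working with the auxiliary data --- an admissible epimorphism $X'\rarrow X$, an admissible monomorphism $Y\rarrow Y'$, and morphisms $X'\rarrow Y$, $X\rarrow Y'$ in $\F$ --- that enters the construction of $\d^0(q)$ in Subsection~\ref{third-term-bockstein}. Write $K=\ker(X'\rarrow X)$ and let $\phi\:\eta_\t(K)\rarrow\eta_\t(Y)(-1)$ be the correction morphism fixed there by the relation $\s_0(\phi)=\eta_{\s\t}(K\rarrow X'\rarrow Y)$; recall also that $\d^0(q)$ is the Yoneda composition of $\phi$ with the class of the short exact sequence $0\rarrow\eta_\t(K)\rarrow\eta_\t(X')\rarrow\eta_\t(X)\rarrow0$ in $\F_\t$. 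For part~(a) I would prove the two equations $\d^0(q\eta_\s(h))=\d^0(q)\eta_\t(h)$ and $\d^0(\eta_\s(g)q)=\eta_\t(g(-1))\d^0(q)$ and then compose them. To compute $\d^0(q\eta_\s(h))$ for a morphism $h\:U\rarrow X$, one uses the fibered product $X'\times_X U\rarrow U$ (an admissible epimorphism, being a pullback of one) in place of $X'\rarrow X$, the composition $X'\times_X U\rarrow X'\rarrow Y$ in place of $X'\rarrow Y$, and $U\rarrow X\rarrow Y'$ in place of $X\rarrow Y'$; the kernel is again $K$ with the same correction morphism $\phi$, while the middle short exact sequence becomes the pullback along $\eta_\t(h)$ of $0\rarrow\eta_\t(K)\rarrow\eta_\t(X')\rarrow\eta_\t(X)\rarrow0$, so the equation follows from the associativity of the Yoneda composition. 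The second equation is proved dually, using the pushout $V\rarrow V\sqcup_Y Y'$ in place of $Y\rarrow Y'$ and the composition $X'\rarrow Y\rarrow V$ in place of $X'\rarrow Y$; the cokernel stays $C$ and the correction morphism becomes $\eta_\t(g(-1))\circ\phi$, which yields the stated formula. The bookkeeping of the $\Ext^1$ classes here may be organized through Lemma~\ref{ext-1-secondary-product}(c,d) if one prefers.

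For the inclusion $\operatorname{im}(r_\t^0)\subseteq\ker(\d^0)$ in part~(b), write $q=r_\t^0(f)$ for some $f\:\eta_{\s\t}(X)\rarrow\eta_{\s\t}(Y)$, and choose by the condition~(ii) for $\eta_{\s\t}$ an admissible epimorphism $X'\rarrow X$, an admissible monomorphism $Y\rarrow Y'$, and morphisms $X'\rarrow Y$, $X\rarrow Y'$ in $\F$ whose $\eta_{\s\t}$\+images form a commutative diagram with~$f$. By the very construction of~$r_\t^0$ the $\eta_\s$\+images of these four morphisms form a commutative diagram with~$q$, so the same data may be used to compute $\d^0(q)$. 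Then $\s_0(\phi)=\eta_{\s\t}(K\rarrow X'\rarrow Y)=f\circ\eta_{\s\t}(K\rarrow X'\rarrow X)=0$, since $K\rarrow X'\rarrow X$ vanishes, whence $\phi=0$ by the injectivity of~$\s_0$ (Lemma~\ref{bockstein-s-zero}(c)); therefore $\d^0(q)=0$.

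The reverse inclusion $\ker(\d^0)\subseteq\operatorname{im}(r_\t^0)$ is the main point. Suppose $\d^0(q)=0$ and fix auxiliary data $X'\rarrow X$, $Y\rarrow Y'$, $X'\rarrow Y$, $X\rarrow Y'$ as in Subsection~\ref{third-term-bockstein}. Since $\d^0(q)$ is the Yoneda composition of~$\phi$ with the class of $0\rarrow\eta_\t(K)\rarrow\eta_\t(X')\rarrow\eta_\t(X)\rarrow0$, its vanishing is equivalent --- by the standard criterion for a pushout short exact sequence to split (cf.\ Lemma~\ref{ext-1-secondary-product}(b)) --- to the existence of a morphism $\psi\:\eta_\t(X')\rarrow\eta_\t(Y)(-1)$ with $\psi\circ\eta_\t(K\rarrow X')=\phi$. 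I would then set $g'=\eta_{\s\t}(X'\rarrow Y)-\s_0(\psi)\in\Hom_{\F_{\s\t}}(\eta_{\s\t}(X'),\eta_{\s\t}(Y))$ and note that $g'\circ\eta_{\s\t}(K\rarrow X')=\s_0(\phi)-\s_0(\psi\circ\eta_\t(K\rarrow X'))=0$ by the defining relation for~$\phi$ and Lemma~\ref{bockstein-s-zero}(b); hence, the sequence $0\rarrow\eta_{\s\t}(K)\rarrow\eta_{\s\t}(X')\rarrow\eta_{\s\t}(X)\rarrow0$ being exact, $g'$ factors uniquely as $g'=f\circ\eta_{\s\t}(X'\rarrow X)$ for some $f\:\eta_{\s\t}(X)\rarrow\eta_{\s\t}(Y)$. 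To conclude that $r_\t^0(f)=q$, I would apply $r_\t^0$: using its additivity (clear from the construction), Lemma~\ref{bockstein-r-zero}(a,b), and the relation $r_\t^0(\s_0(\psi))=0$ supplied by Lemma~\ref{bockstein-r-zero}(c), one obtains $r_\t^0(f)\circ\eta_\s(X'\rarrow X)=r_\t^0(g')=\eta_\s(X'\rarrow Y)=q\circ\eta_\s(X'\rarrow X)$, and since $\eta_\s(X'\rarrow X)$ is an epimorphism this forces $r_\t^0(f)=q$. I expect the two delicate points in this last argument to be the obstacle: translating $\d^0(q)=0$ into the extendability of~$\phi$ over $\eta_\t(X')$, and checking --- via Lemma~\ref{bockstein-s-zero}(b) and the identity $r_\t^0\circ\s_0=0$ --- that the morphism~$f$ manufactured from~$\psi$ really reproduces~$q$ under $r_\t^0$, rather than only something differing from~$q$ by an element of $\operatorname{im}(\s_0)$.
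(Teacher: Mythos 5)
Your proof is correct and follows essentially the same route as the paper's: part~(a) by the two one-sided equations computed from the epimorphism/monomorphism presentations of $\d^0(q)$, and the hard inclusion of part~(b) by translating $\d^0(q)=0$ into an extension $\psi$ of the correction morphism over $\eta_\t(X')$, subtracting $\s_0(\psi)$ from $\eta_{\s\t}(X'\to Y)$, factoring the result through the admissible epimorphism $\eta_{\s\t}(X')\to\eta_{\s\t}(X)$, and checking $r_\t^0(f)=q$ via $r_\t^0\circ\s_0=0$. The only differences are cosmetic (you handle the $h$\+half of~(a) by pulling back the epimorphism presentation rather than switching to the monomorphism one, and you write out the easy inclusion $\operatorname{im}(r_\t^0)\subseteq\ker(\d^0)$, which the paper leaves implicit).
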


\begin{proof}
 To prove part~(a), one checks the equations $\d^0(\eta_\s(g)q)=
\eta_\t(g)(-1)\d^0(q)$ and $\d^0(q\eta_\s(h))=\d^0(q)\eta_\t(h)$
separately, using the construction of the element $\d^0(q)$ (as
the product of a morphism and an $\Ext^1$ class in~$\F_\t$) in terms of
an admissible epimorphism $X'\rarrow X$ in the former case and
in terms of an admissible monomorphism $Y\rarrow Y'$ in the latter one,
together with the result of Lemma~\ref{bockstein-s-zero}(b).

 To prove part~(b), consider a morphism $q\:\eta_\s(X)\rarrow\eta_\s(Y)$
in the category~$\F_\s$, and let $X'\rarrow X$ and $X'\rarrow Y$ be
an admissible epimorphism and a morphism in the category $\F$
whose images under the functor~$\eta_\s$ form a commutative
diagram together with the morphism~$q$.
 Let $K\rarrow X'$ be the kernel of the morphism $X'\rarrow X$.
 According to the above, the image of the composition of morphisms
$K\rarrow X'\rarrow Y$ in the category $\F$ under
the functor~$\eta_{\s\t}$ comes from a morphism $\eta_\t(K)\rarrow
\eta_\t(Y)(-1)$ in the category $\F_\t$ via the map~$\s_0$.

 The class $\d^0(q)\in\Ext^1_{\F_\t}(\eta_\t(X),\eta_\t(Y)(-1))$
is induced from the $\Ext^1$ class of the short exact sequence
$0\rarrow\eta_\t(K)\rarrow\eta_\t(X')\rarrow\eta_\t(X)\rarrow0$
using the morphism $\eta_\t(K)\rarrow\eta_\t(Y)(-1)$.
 Hence one has $\d^0(q)=0$ if and only if the latter morphism factorizes
through the admissible monomorphism $\eta_\t(K)\rarrow\eta_\t(X')$.

 Subtracting the image of the related morphism $\eta_\t(X')\rarrow
\eta_\t(Y)(-1)$ under the map~$\s_0$ from the image of the morphism
$X'\rarrow Y$ under the functor~$\eta_{\s\t}$, we obtain a morphism
$f'\:\eta_{\s\t}(X')\rarrow\eta_{\s\t}(Y)$ in the category~$\F_{\s\t}$
whose composition with the image of the admissible monomorphism
$K\rarrow X'$ under the functor~$\eta_{\s\t}$ vanishes (as one can
compute using Lemma~\ref{bockstein-s-zero}(b)).
 Hence the desired morphism $f\:\eta_{\s\t}(X)\rarrow\eta_{\s\t}(Y)$
in the category~$\F_{\s\t}$.
 Since $r^0_\t\circ\s_0=0$, one has $r^0_\t(f') = \eta_\s(X'\to Y)$
in the category $\F_\s$, and it follows that $r^0_\t(f) = q$. 

 Conversely, if there exists a morphism $f\:\eta_{\s\t}(X)\rarrow
\eta_{\s\t}(Y)$ such that $q=r_\t^0(f)$, then one can choose
a pair of morphisms $X'\rarrow X$, $X'\rarrow Y$ in the category $\F$
in such a way that the triangle diagram $\eta_{\s\t}(X')\rarrow
\eta_{\s\t}(X)\rarrow\eta_{\s\t}(Y)$ is commutative in the category
$\F_{\s\t}$ and the triangle diagram $\eta_\s(X')\rarrow\eta_\s(X)
\rarrow\eta_\s(Y)$ is commutative in the category~$\F_\s$.
 Then the image of the composition of morphisms $K\rarrow X'\rarrow Y$
in the category $\F$ under the functor~$\eta_{\s\t}$ vanishes in
the category $\F_{\s\t}$, hence the morphism $\eta_\t(K)\rarrow
\eta_\t(Y)(-1)$ vanishes in the category $\F_\t$ by
Lemma~\ref{bockstein-s-zero}(c), and therefore the extension class
$\d^0(q)$ vanishes as well.
\end{proof}

\subsection{Construction of higher differentials}
\label{higher-bockstein-construction}
 The constructions of the maps~$\s_n$, \,$r_\t^n$, and~$\d^n$
for $n\ge1$ are based on the result of
Proposition~\ref{exact-surjectivity-ext-prop}(b).
 We continue to follow~\cite[Subsection~4.5]{Partin}.

\begin{lem}  \label{s-n-bockstein}
 Assuming the conditions~(i\+-IV), there exists a unique way to extend
the maps $\s_0\:\Hom_{\F_\t}(\eta_\t(X),\eta_\t(Y)(-1))\rarrow
\Hom_{\F_{\s\!\t}}(\eta_{\s\t}(X),\eta_{\s\t}(Y))$ of
Subsection~\ref{first-term-bockstein} to maps
$\s_n\:\Ext^n_{\F_\t}(\eta_\t(X),\eta_\t(Y)(-1))\rarrow
\Ext^n_{\F_{\s\!\t}}(\eta_{\s\t}(X),\eta_{\s\t}(Y))$ defined for
all objects $X$, $Y\in\F$ and all integers $n\ge0$ and satisfying
the equations~(b) of Subsection~\ref{bockstein-generalization}.
\end{lem}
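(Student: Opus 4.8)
The plan is to bootstrap everything from the properties of~$\s_0$ established in Lemma~\ref{bockstein-s-zero}, using Proposition~\ref{exact-surjectivity-ext-prop}(b) to pin down the higher Ext groups on both sides. First I would apply that proposition to the functor~$\eta_\t$ (which satisfies condition~(i$'$) by our hypothesis~(i)), with $W=\eta_\t(Y)(-1)$: it presents $\bigl(\Ext^n_{\F_\t}(\eta_\t(X),\eta_\t(Y)(-1))\bigr)_{X,n}$, as a right module over the big graded ring $\bigl(\Ext^n_\F(X,Z)\bigr)$, as the module induced from the right module $\bigl(\Hom_{\F_\t}(\eta_\t(Z),\eta_\t(Y)(-1))\bigr)_Z$ over the big subring of~$\Hom$'s; the same proposition applied to~$\eta_{\s\t}$ with $W=\eta_{\s\t}(Y)$ gives the analogous description of the target groups. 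Uniqueness is then immediate: every element of $\Ext^n_{\F_\t}(\eta_\t(X),\eta_\t(Y)(-1))$ is a finite sum of products $p\.\eta_\t^n(c)$ with $p$ a morphism in~$\F_\t$ and $c\in\Ext^n_\F(X,Z)$, and equations~(b) (with $i=0$, with $a$ an identity morphism, and with the middle Ext class of degree~$0$) force $\s_n(p\.\eta_\t^n(c))=\s_0(p)\.\eta_{\s\t}^n(c)$.

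For existence, I would note that Lemma~\ref{bockstein-s-zero}(b), specialized to $g=\id$, says exactly that $\s_0$ is a morphism of right $\bigl(\Hom_\F(X,Z)\bigr)$\+modules from $\bigl(\Hom_{\F_\t}(\eta_\t(Z),\eta_\t(Y)(-1))\bigr)_Z$ to $\bigl(\Hom_{\F_{\s\t}}(\eta_{\s\t}(Z),\eta_{\s\t}(Y))\bigr)_Z$, and define~$\s_n$ as the morphism of induced modules obtained by tensoring~$\s_0$ up along the inclusion of the $\Hom$\+subring into the big graded $\Ext$\+ring, i.e.\ by $\s_n(p\.\eta_\t^n(c))=\s_0(p)\.\eta_{\s\t}^n(c)$. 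Well-definedness is the formal fact that tensoring up a module morphism makes sense; unwound, it comes down to the relation $\s_0(p\.\eta_\t(h))\.\eta_{\s\t}^n(c)=\s_0(p)\.\eta_{\s\t}^n(h\.c)$, which follows from Lemma~\ref{bockstein-s-zero}(b) and the functoriality of~$\eta_{\s\t}$ on Yoneda Ext. By construction $\s_n$ is additive and commutes with right multiplication by $\Ext_\F$\+classes; together with Lemma~\ref{bockstein-s-zero}(a), this gives all of equations~(b) except for compatibility with the left action of the twisted classes~$\eta_\t^i(a(-1))$, i.e.\ the identity $\s_{i+m}(\eta_\t^i(a(-1))\.w)=\eta_{\s\t}^i(a)\.\s_m(w)$ for $a\in\Ext^i_\F(Y,V)$ and $w\in\Ext^m_{\F_\t}(\eta_\t(X),\eta_\t(Y)(-1))$.

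To prove that identity I would induct on~$i$, with base case $i=0$ supplied again by Lemma~\ref{bockstein-s-zero}(b). For the inductive step, using additivity in~$a$ and the Yoneda-product factorization of an $i$\+extension through a short exact sequence in~$\F$ and an $(i-1)$\+extension, together with the already-known right-multiplication compatibility, the problem reduces to the case $i=1$ with $a$ the class of a short exact sequence $0\rarrow V\rarrow M\rarrow Y\rarrow0$ in~$\F$ (write $\iota$ and $\pi$ for its two maps); a further application of right-multiplication compatibility then reduces the middle class~$w$ to a morphism $p\:\eta_\t(X)\rarrow\eta_\t(Y)(-1)$, leaving the key assertion $\s_1(\eta_\t^1(a(-1))\.p)=\eta_{\s\t}^1(a)\.\s_0(p)$. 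To prove this I would represent $\eta_\t^1(a(-1))$ by $0\rarrow\eta_\t(V)(-1)\rarrow\eta_\t(M)(-1)\rarrow\eta_\t(Y)(-1)\rarrow0$ and pull it back along~$p$ to an extension $0\rarrow\eta_\t(V)(-1)\rarrow P\rarrow\eta_\t(X)\rarrow0$ with its canonical map $P\rarrow\eta_\t(M)(-1)$. Applying condition~(i$'$) to the admissible epimorphism $P\rarrow\eta_\t(X)$ yields an admissible epimorphism $\alpha\:X_1\rarrow X$ in~$\F$ and a morphism $\eta_\t(X_1)\rarrow P$ lying over~$\eta_\t(\alpha)$; the latter splits the pullback of~$P$ along~$\eta_\t(\alpha)$, so the long exact sequence of Ext groups attached to $0\rarrow\eta_\t(K)\rarrow\eta_\t(X_1)\rarrow\eta_\t(X)\rarrow0$ (with $K=\ker\alpha$) presents the class of~$P$ in the generator form $r\.\eta_\t^1(\kappa)$, where $\kappa\in\Ext^1_\F(X,K)$ is the class of $0\rarrow K\rarrow X_1\rarrow X\rarrow0$ and $r\:\eta_\t(K)\rarrow\eta_\t(V)(-1)$ is a morphism in~$\F_\t$; by the definition of~$\s_1$ this gives $\s_1(\eta_\t^1(a(-1))\.p)=\s_0(r)\.\eta_{\s\t}^1(\kappa)$. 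Composing $\eta_\t(X_1)\rarrow P$ with $P\rarrow\eta_\t(M)(-1)$ produces a morphism $\chi\:\eta_\t(X_1)\rarrow\eta_\t(M)(-1)$ satisfying $\eta_\t(\pi)(-1)\circ\chi=p\circ\eta_\t(\alpha)$ and restricting to $\eta_\t(\iota)(-1)\circ r$ on~$\eta_\t(K)$; applying~$\s_0$ and Lemma~\ref{bockstein-s-zero}(b) to these two relations, I would verify that $\s_0(\chi)$ completes $\s_0(r)$ and $\s_0(p)$ to a morphism of short exact sequences $\bigl(\eta_{\s\t}(K)\rarrow\eta_{\s\t}(X_1)\rarrow\eta_{\s\t}(X)\bigr)\rarrow\bigl(\eta_{\s\t}(V)\rarrow\eta_{\s\t}(M)\rarrow\eta_{\s\t}(Y)\bigr)$ in~$\F_{\s\t}$, so that Lemma~\ref{ext-1-equation} delivers $\s_0(r)\.\eta_{\s\t}^1(\kappa)=\eta_{\s\t}^1(a)\.\s_0(p)$, as wanted.

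The main obstacle is exactly this $i=1$ base case. Its subtlety is that a generator~$p$ of the induced module is merely a morphism in~$\F_\t$, not the image of anything under~$\eta_\t$, so the pulled-back extension~$P$ is not literally $\eta_\t$ of anything and $\s_1$ cannot be evaluated on its class until~$p$ has been ``resolved'' by means of condition~(i$'$); producing the auxiliary morphism~$\chi$ and checking, through Lemma~\ref{bockstein-s-zero}(b) (and implicitly condition~(III), which is built into the construction of~$\s_0$), that $\s_0$ carries the relevant square of morphisms in~$\F_\t$ to a morphism of short exact sequences in~$\F_{\s\t}$ is the delicate step, where I would follow~\cite[Subsection~4.5]{Partin} closely.
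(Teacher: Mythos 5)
Your proposal is correct, and its skeleton---present the source Ext groups as modules induced from Hom groups via Proposition~\ref{exact-surjectivity-ext-prop}(b), extend $\s_0$ formally to the induced module, and settle the remaining compatibility in degree one by applying $\s_0$ to a morphism of short exact sequences and invoking Lemmas~\ref{bockstein-s-zero}(b) and~\ref{ext-1-equation}---is the same as the paper's. The difference is organizational. The paper uses Proposition~\ref{exact-surjectivity-ext-prop}(b) \emph{and its dual} (i.e., both conditions~(i$'$) and~(i$''$) for~$\eta_\t$) to manufacture two candidate extensions $\s'_n$ and $\s''_n$, one compatible with right products and one with left products, and then proves they coincide by checking the crossed identity $\eta_{\s\t}^1(a)\s_0(z)=\s_0(w)\eta_{\s\t}^1(b)$ for a degree-one class presented both as $\eta_\t^1(a(-1))z$ and as $w\eta_\t^1(b)$; there both factorizations are given in advance, so the morphism of short exact sequences in $\F_\t$ to which $\s_0$ is applied comes straight out of Lemma~\ref{ext-1-equation}. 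You instead construct only the right-sided extension and verify the left-multiplication equations directly, which costs one extra genuine step: the class $\eta_\t^1(a(-1))p$ must first be brought into generator form $r\.\eta_\t^1(\kappa)$ by pulling back the extension along~$p$ and resolving the resulting admissible epimorphism via condition~(i$'$), before $\s_1$ can be evaluated on it. That step is precisely Proposition~\ref{exact-surjectivity-ext-prop}(b) unwound by hand, so the mathematical content is the same; what your route buys is that it never appeals to the dual condition~(i$''$) for~$\eta_\t$, while the paper's symmetric two-sided setup makes the degree-one verification itself shorter.
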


\begin{proof}
 Consider the two equations $\s_{i+n}(\eta_\t^i(a)(-1)z)=
\eta_{\s\t}^i(a)\s_n(z)$ and $\s_{n+j}(z\eta_\t^j(b))=
\s_n(z)\eta_{\s\t}^j(b)$ separately.
 In view of Proposition~\ref{exact-surjectivity-ext-prop}(b) and
the dual result, based on the conditions~(i$'$) and~(i$''$) for
the functor~$\eta_\t$, it follows from
Lemma~\ref{bockstein-s-zero}(b) that there exists a unique collection
of maps $\s_n''\:\Ext^n_{\F_\t}(\eta_\t(X),\eta_\t(Y)(-1))\rarrow
\Ext^n_{\F_{\s\!\t}}(\eta_{\s\t}(X),\eta_{\s\t}(Y))$ extending
the maps~$\s_0$ and satisfying the former system of equations,
and also a unique collection of maps
$\s_n'\:\Ext^n_{\F_\t}(\eta_\t(X),\eta_\t(Y)(-1))\rarrow
\Ext^n_{\F_{\s\!\t}}(\eta_{\s\t}(X),\eta_{\s\t}(Y))$ extending
the maps~$\s_0$ and satisfying the latter system of equations.
 It remains to show that $\s'_n=\s''_n$.

 Let us first prove that $\s'_1=\s''_1$.
 Suppose that we are given two short exact sequences
$0\rarrow V\rarrow P\rarrow X\rarrow 0$ and
$0\rarrow Y\rarrow Q\rarrow U\rarrow0$ in the category $\F$ representing
the $\Ext^1$ classes $b\in\Ext^1_\F(X,V)$ and $a\in\Ext^1_\F(U,Y)$.
 Suppose further that we are given two morphisms $w\:\eta_\t(V)
\rarrow\eta_\t(Y)(-1)$ and $z\:\eta_\t(X)\rarrow\eta_\t(U)(-1)$
in the category $\F_\t$ such that the equation $\eta_\t^1(a)(-1)z=
w\eta_\t^1(b)$ holds in the group
$\Ext^1_{\F_\t}(\eta_\t(X),\eta_\t(Y)(-1))$.
 Then the morphisms~$w$ and~$z$ can be extended to a morphism of
short exact sequences (that is a diagram of two commutative squares)
$(\eta_\t(V)\to\eta_\t(P)\to\eta_\t(X))\rarrow(\eta_\t(Y)(-1)\to
\eta_\t(Q)(-1)\to\eta_\t(U)(-1))$ in the category~$\F_\t$
(see Lemma~\ref{ext-1-equation}).

 Applying the maps~$\s_0$ to the morphisms $w\:\eta_\t(V)\rarrow
\eta_\t(Y)(-1)$, \ $\eta_\t(P)\rarrow\eta_\t(Q)(-1)$, and $z\:\eta_\t(X)
\rarrow\eta_\t(U)(-1)$ in the category $\F_\t$, we obtain, in view of
Lemma~\ref{bockstein-s-zero}(b), a morphism of short exact sequences
$(\eta_{\s\t}(V)\to\eta_{\s\t}(P)\to\eta_{\s\t}(X))\rarrow
(\eta_{\s\t}(Y)\to\eta_{\s\t}(Q)\to\eta_{\s\t}(U))$
in the category~$\F_{\s\t}$.
 The commutativity of this diagram of two squares proves
the desired equation $\eta_{\s\t}^1(a)\s_0(z)=\s_0(w)\eta_{\s\t}^1(b)$
in the group $\Ext^1_{\F_{\s\!\t}}(\eta_{\s\t}(X),\eta_{\s\t}(Y))$.

 Finally, we argue by induction in $n\ge1$.
 Let us assume that $\s'_i=\s''_i$ and $\s'_j=\s''_j$ for some
$i$, $j\ge1$ and deduce the equation $\s'_{i+j}=\s''_{i+j}$.
 Let $X$ and $Y$ be two objects in the category $\F$ and
$z\in\Ext^{i+j}_{\F_\t}(\eta_\t(X),\eta_\t(Y)(-1))$ be an Ext class in
the category~$\F_\t$.
 By the assertion dual to
Proposition~\ref{exact-surjectivity-ext-prop}(b) applied to
the functor~$\eta_\t$, there exists a morphism $p\:\eta_\t(X)\rarrow
\eta_\t(Y')(-1)$ in the category $\F_\t$ and an Ext class
$a\in\Ext^{i+j}_\F(Y',Y)$ in the category $\F$ such that
$z=\eta_\t^{i+j}(a)(-1)p$.
 By the definition of Yoneda Ext, there exists an object $U\in\F$ and
two Ext classes $c\in\Ext^j_\F(Y',U)$, \ $b\in\Ext^i_\F(U,Y)$
such that $a=bc$.

 Consider the Ext class $w=\eta_\t^j(c)(-1)p\in
\Ext^j_{\F_\t}(\eta_\t(X),\eta_\t(U)(-1))$.
 By Proposition~\ref{exact-surjectivity-ext-prop}(b) for
the functor~$\eta_\t$, there exists a morphism $q\:\eta_\t(X')\rarrow
\eta_\t(U)(-1)$ in the category $\F_\t$ and an Ext class
$d\in\Ext^j_\F(X,X')$ in the category $\F$ such that $w=q\eta_\t^j(d)$.
 Now we have
\begin{multline*}
 \s''_{i+j}(z)=\s''_{i+j}(\eta_\t^{i+j}(a)(-1)p)=
\s''_{i+j}(\eta_\t^i(b)(-1)\eta_\t^j(c)(-1)p) \\ =
\eta_{\s\t}^i(b)\s''_j(\eta_\t^j(c)(-1)p)=\eta_{\s\t}^i(b)\s''_j(w)=
\eta_{\s\t}^i(b)\s'_j(w) = \eta_{\s\t}^i(b)\s'_j(q\eta_\t^j(d)) \\ =
\eta_{\s\t}^i(b)\s_0(q)\eta_{\s\t}^j(d)=\s''_i(\eta_\t^i(b)(-1)q)
\eta_{\s\t}^j(d) =\s'_i(\eta_\t^i(b)(-1)q)\eta_{\s\t}^j(d) \\ =
\s'_{i+j}(\eta_\t^i(b)(-1)q\eta_\s^j(d))=\s'_{i+j}(\eta_\t^i(b)(-1)
\eta_\t^j(c)(-1)p)=\s'_{i+j}(z).
\end{multline*}
\end{proof}

\begin{lem}  \label{r-n-bockstein}
 Assuming the conditions~(i\+-IV), there exists a unique way to extend
the maps $r_\t^0\:\Hom_{\F_{\s\!\t}}(\eta_{\s\t}(X),\eta_{\s\t}(Y))
\rarrow\Hom_{\F_\s}(\eta_\s(X),\eta_\s(Y))$ of
Subsection~\ref{second-term-bockstein} to maps
$r_\t^n\:\Ext^n_{\F_{\s\!\t}}(\eta_{\s\t}(X),\eta_{\s\t}(Y))\rarrow
\Ext^n_{\F_\s}(\eta_\s(X),\eta_\s(Y))$ defined for all objects
$X$, $Y\in\F$ and all integers $n\ge0$ and satisfying the equations~(a)
of Subsection~\ref{bockstein-generalization}.
\end{lem}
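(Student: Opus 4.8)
The plan is to follow the proof of Lemma~\ref{s-n-bockstein}, with the roles played there by the functors~$\eta_\t$ and~$\eta_{\s\t}$ interchanged. First I would rewrite the multiplicativity equation in~(a) as two one-sided laws: specializing $r_\t^{i+j}(xy)=r_\t^i(x)r_\t^j(y)$ to $x=\eta_{\s\t}^i(a)$, respectively to $y=\eta_{\s\t}^j(b)$, and using the other equation $r_\t^n(\eta_{\s\t}^n(a))=\eta_\s^n(a)$ of~(a), one obtains
$$
 r_\t^{i+n}(\eta_{\s\t}^i(a)\.w)=\eta_\s^i(a)\.r_\t^n(w)
 \qquad\text{and}\qquad
 r_\t^{n+j}(w\.\eta_{\s\t}^j(b))=r_\t^n(w)\.\eta_\s^j(b);
$$
in the special case $n=0$ these read $r_\t^i(\eta_{\s\t}^i(a)\.g)=\eta_\s^i(a)\.r_\t^0(g)$ and $r_\t^j(g\.\eta_{\s\t}^j(b))=r_\t^0(g)\.\eta_\s^j(b)$ for any morphism~$g$ in~$\F_{\s\t}$ between objects of the form~$\eta_{\s\t}(-)$.

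Next, for a fixed object $Y\in\F$, the collection $(\Ext^n_{\F_{\s\t}}(\eta_{\s\t}(X),\eta_{\s\t}(Y)))_{X\in\F;\,n\ge0}$ is a right module over the big graded ring $(\Ext^n_\F(X,Z))_{Z,X;\,n}$ (acting on the source variable through~$\eta_{\s\t}$), and by Proposition~\ref{exact-surjectivity-ext-prop}(b) applied to~$\eta_{\s\t}$, using the condition~(i$\.'$) for~$\eta_{\s\t}$, this module is induced from the right $(\Hom_\F)$\+module $(\Hom_{\F_{\s\t}}(\eta_{\s\t}(Z),\eta_{\s\t}(Y)))_Z$; dually, for a fixed~$X$, the collection $(\Ext^n_{\F_{\s\t}}(\eta_{\s\t}(X),\eta_{\s\t}(Y)))_{Y;\,n}$ is a left $(\Ext_\F)$\+module induced from $(\Hom_{\F_{\s\t}}(\eta_{\s\t}(X),\eta_{\s\t}(Z)))_Z$, by the dual statement, which uses the condition~(i$\.''$). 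The groups $\Ext^n_{\F_\s}(\eta_\s(X),\eta_\s(Y))$ carry the analogous module structures through~$\eta_\s$, and the map $r_\t^0$ is both right and left $(\Hom_\F)$\+linear by Lemma~\ref{bockstein-r-zero}(a\+b). Hence the universal property of induction produces a unique right $(\Ext_\F)$\+module map and a unique left $(\Ext_\F)$\+module map $\Ext^n_{\F_{\s\t}}(\eta_{\s\t}(X),\eta_{\s\t}(Y))\rarrow\Ext^n_{\F_\s}(\eta_\s(X),\eta_\s(Y))$ extending~$r_\t^0$; on generators they are given by $\sum_i g_i\.\eta_{\s\t}^n(a_i)\mpsto\sum_i r_\t^0(g_i)\.\eta_\s^n(a_i)$ and by $\sum_i\eta_{\s\t}^n(a_i)\.g_i\mpsto\sum_i\eta_\s^n(a_i)\.r_\t^0(g_i)$ respectively, and by construction the first satisfies the right one-sided law while the second satisfies the left one.

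It remains to prove that these two maps coincide; as in Lemma~\ref{s-n-bockstein}, it is enough to do this in degree~$n=1$. Given short exact sequences $0\rarrow V\rarrow P\rarrow X\rarrow0$ and $0\rarrow Y\rarrow Q\rarrow U\rarrow0$ in~$\F$ representing classes $b\in\Ext^1_\F(X,V)$ and $a\in\Ext^1_\F(U,Y)$, and morphisms $w\:\eta_{\s\t}(V)\rarrow\eta_{\s\t}(Y)$, $z\:\eta_{\s\t}(X)\rarrow\eta_{\s\t}(U)$ in~$\F_{\s\t}$ with $w\.\eta_{\s\t}^1(b)=\eta_{\s\t}^1(a)\.z$, one uses Lemma~\ref{ext-1-equation} to extend $w$ and $z$ to a morphism of short exact sequences from the $\eta_{\s\t}$\+image of the first sequence to the $\eta_{\s\t}$\+image of the second, then applies $r_\t^0$ to its three vertical morphisms; invoking Lemma~\ref{bockstein-r-zero}(a\+b) (the horizontal morphisms come from~$\F$, so they go to their $\eta_\s$\+images, and $r_\t^0$ takes commutative squares to commutative squares), one obtains a morphism of short exact sequences in~$\F_\s$ between the $\eta_\s$\+images of the two given sequences, which by Lemma~\ref{ext-1-equation} again means $r_\t^0(w)\.\eta_\s^1(b)=\eta_\s^1(a)\.r_\t^0(z)$; this is exactly the agreement of the two maps on the class $w\.\eta_{\s\t}^1(b)=\eta_{\s\t}^1(a)\.z$. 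Since every class of $\Ext^1_{\F_{\s\t}}(\eta_{\s\t}(X),\eta_{\s\t}(Y))$ has presentations of both kinds (combining summands via the additivity of~$\eta_{\s\t}$), the two maps are equal; call their common value $r_\t^n$. It then obeys both one-sided laws, whence full multiplicativity $r_\t^{i+j}(xy)=r_\t^i(x)r_\t^j(y)$ follows by writing $x=\sum_k\eta_{\s\t}^i(a_k)\.g_k$ and $y=\sum_l g'_l\.\eta_{\s\t}^j(b_l)$, noting that each composite $g_kg'_l$ is again a morphism of~$\F_{\s\t}$, and evaluating the two sides term by term with the help of the two one-sided laws and Lemma~\ref{bockstein-r-zero}(b). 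Uniqueness of the family $r_\t^n$ is forced by~(a), because $r_\t^n(g\.\eta_{\s\t}^n(a))=r_\t^0(g)\.r_\t^n(\eta_{\s\t}^n(a))=r_\t^0(g)\.\eta_\s^n(a)$ and such classes generate.

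The only step that requires a genuine argument rather than formal manipulation with induced big graded modules (Proposition~\ref{exact-surjectivity-ext-prop}(b) and its dual) and the functoriality of~$\eta_\s$ and~$\eta_{\s\t}$ on Yoneda Ext is the degree~$1$ comparison of the two maps above --- the compatibility of the ``source-side'' and ``target-side'' extension formulas --- which reduces to the diagram chase with Lemma~\ref{ext-1-equation} and Lemma~\ref{bockstein-r-zero}.
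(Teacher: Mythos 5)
Your proposal is correct and follows essentially the same route as the paper's proof: split the multiplicativity into the two one-sided laws, obtain the two candidate extensions ${}'\!\.r_\t^n$ and ${}''\!\.r_\t^n$ from Proposition~\ref{exact-surjectivity-ext-prop}(b) and its dual together with Lemma~\ref{bockstein-r-zero}(a\+b), identify them by the degree-one comparison via Lemma~\ref{ext-1-equation} and the image of a morphism of short exact sequences under~$r_\t^0$, and then deduce the full equation $r_\t^{i+j}(xy)=r_\t^i(x)r_\t^j(y)$ by factoring $x$ and $y$ through degree-zero morphisms. The only cosmetic difference is that the paper uses single presentations $x=\eta_{\s\t}^i(a)f$, $y=g\.\eta_{\s\t}^j(b)$ where you use finite sums, which changes nothing.
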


\begin{proof}
 Consider first two weaker systems of equations
$r_\t^{i+n}(\eta_{\s\t}^i(a)z)=\eta_\s^i(a)r_\t^n(z)$ and
$r_\t^{n+j}(z\eta_{\s\t}^j(b))=r_\t^n(z)\eta_\s^j(b)$ for any
objects $U$, $X$, $Y$, $V\in\F$ and any Ext classes
$a\in\Ext^i_\F(Y,V)$, \ $b\in\Ext^j_\F(U,X)$, and
$z\in\Ext^n_{\F_{\s\!\t}}(\eta_{\s\t}(X),\eta_{\s\t}(Y))$.
 By the way of Proposition~\ref{exact-surjectivity-ext-prop}(b) and
the dual result, based on the conditions~(i$'$) and~(i$''$) for
the functor~$\eta_{\s\t}$, it follows from
Lemma~\ref{bockstein-r-zero}(a\+b) that there exists a unique
collection of maps
${}''\!\.r_\t^n\:\Ext^n_{\F_{\s\!\t}}(\eta_{\s\t}(X),\eta_{\s\t}(Y))
\rarrow\Ext^n_{\F_\s}(\eta_\s(X),\eta_\s(Y))$ extending
the maps~$r_\t^0$ and satisfying the former system of equations,
and also a unique collection of maps 
${}'\!\.r_\t^n\:\Ext^n_{\F_{\s\!\t}}(\eta_{\s\t}(X),\eta_{\s\t}(Y))
\rarrow\Ext^n_{\F_\s}(\eta_\s(X),\eta_\s(Y))$ extending
the maps~$r_\t^0$ and satisfying the latter system of equations.

 Due to a computation similar to the one that we have seen in the final
part of the previous proof, in order to show that ${}'\!\.r_\t^n=
{}''\!\.r_\t^n$ for all $n\ge1$, it suffices to check that
${}'\!\.r_\t^1={}''\!\.r_\t^1$.
 As in the previous proof, we have two short exact sequences
$0\rarrow V \rarrow P\rarrow X\rarrow 0$ and
$0\rarrow Y\rarrow Q\rarrow U\rarrow0$
in the category $\F$ representing the $\Ext^1$ classes 
$b\in\Ext^1_\F(X,V)$ and $a\in\Ext^1_\F(U,Y)$.
 We also have two morphisms $w\:\eta_{\s\t}(V)\rarrow\eta_{\s\t}(Y)$ and
$z\:\eta_{\s\t}(X)\rarrow\eta_{\s\t}(U)$ in the category $\F_{\s\t}$
for which the equation $\eta_{\s\t}^1(a)z=w\eta_{\s\t}^1(b)$ holds in 
$\Ext^1_{\F_{\s\!\t}}(\eta_{\s\t}(X),\eta_{\s\t}(Y))$.
 Then there is a morphism of short exact sequences
$(\eta_{\s\t}(V)\to\eta_{\s\t}(P)\to\eta_{\s\t}(X))\rarrow
(\eta_{\s\t}(Y)\to\eta_{\s\t}(Q)\to\eta_{\s\t}(U))$
in the category~$\F_{\s\t}$.

 Applying the maps~$r_\t^0$ to this commutative diagram in
the category~$\F_{\s\t}$ with the objects in the vertices coming
from the category $\F$ via the functor~$\eta_{\s\t}$, we obtain,
in view of Lemma~\ref{bockstein-r-zero}(a\+b), a morphism of
short exact sequences $(\eta_\s(V)\to\eta_\s(P)\to\eta_\s(X))\rarrow
(\eta_\s(Y)\to\eta_\s(Q)\to\eta_\s(U))$ in the category~$\F_\s$.
 Commutativity of the latter diagram in the category $\F_\s$ proves
the desired equation $\eta_\s^1(a)r_\t^0(z)=r_\t^0(w)\eta_\s^1(b)$
in the group $\Ext^1_{\F_\s}(\eta_\s(X),\eta_\s(Y))$.
 Thus ${}'\!\.r_\t^1={}''\!\.r_\t^1$, and therefore
${}'\!\.r_\t^n={}''\!\.r_\t^n$ for all $n\ge1$.

 Now, again by Proposition~\ref{exact-surjectivity-ext-prop}(b)
and its dual assertion for the functor~$\eta_{\s\t}$, for any three
objects $U$, $V$, $W$ in the category $\F$ and any two Ext classes
$y\in\Ext^j_{\F_{\s\!\t}}(\eta_{\s\t}(U),\eta_{\s\t}(V))$ and 
$x\in\Ext^i_{\F_{\s\!\t}}(\eta_{\s\t}(V),\eta_{\s\t}(W))$ 
in the category~$\F_{\s\t}$, one can find two morphisms
$g\:\eta_{\s\t}(U')\rarrow\eta_{\s\t}(V)$ and
$f\:\eta_{\s\t}(V)\rarrow\eta_{\s\t}(W')$ in $\F_{\s\t}$ and
two Ext classes $b\in\Ext^j_\F(U,U')$ and 
$a\in\Ext^i_\F(W',W)$ in $\F$ such that $y=g\eta_{\s\t}^j(b)$
and $x=\eta_{\s\t}^i(a)f$.
 Finally, we have $r_\t^{i+j}(xy) =
r_\t^{i+j}\big(\eta_{\s\t}^i(a)fg\eta_{\s\t}^j(b)\big) =
\eta_\s^i(a)r_\t^0(fg)\eta_\s^j(b) = \eta_\s^i(a)r_\t^0(f)
r_\t^0(g)\eta_\s^j(b) = r_\t^i(x)r_\t^j(y)$
in $\Ext_{\F_\s}^{i+j}(\eta_\s(U),\eta_\s(W))$.
\end{proof}

\begin{lem}  \label{d-n-bockstein}
 Assuming the conditions~(i\+-IV), there exists a unique way to extend
the maps $\d^0\:\Hom_{\F_\s}(\eta_\s(X),\eta_\s(Y))\rarrow
\Ext^1_{\F_\t}(\eta_\t(X),\eta_\t(Y)(-1))$
of Subsection~\ref{third-term-bockstein}
to maps $\d^n\:\Ext^n_{\F_\s}(\eta_\s(X),\eta_\s(Y))\rarrow
\Ext^{n+1}_{\F_\t}(\eta_\t(X),\eta_\t(Y)(-1))$ defined for all objects
$X$, $Y\in\F$ and all integers $n\ge0$ and satisfying
the equations~(c) of Subsection~\ref{bockstein-generalization}.
\end{lem}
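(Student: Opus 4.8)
The plan is to repeat the argument of Lemmas~\ref{s-n-bockstein}
and~\ref{r-n-bockstein}, this time with the functor~$\eta_\s$ in
the position occupied there by $\eta_\t$ and~$\eta_{\s\t}$.
 First I would split the equations~(c) of
Subsection~\ref{bockstein-generalization} into the two one-sided systems
$$
 \d^{i+n}(\eta_\s^i(a)z)=(-1)^i\eta_\t^i(a(-1))\d^n(z)
 \qquad\text{and}\qquad
 \d^{n+j}(z\eta_\s^j(b))=\d^n(z)\eta_\t^j(b)
$$
for all objects $X$, $X'$, $Y$, $Y'\in\F$, all Ext classes
$a\in\Ext^i_\F(Y',Y)$ and $b\in\Ext^j_\F(X,X')$, and all classes~$z$
in the appropriate Ext groups in~$\F_\s$, and treat the two systems
separately.

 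By Proposition~\ref{exact-surjectivity-ext-prop}(b), applied to
the exact functor~$\eta_\s$ with $W=\eta_\s(Y)$, the right graded
module $\bigl(\Ext^n_{\F_\s}(\eta_\s(X),\eta_\s(Y))\bigr)_{X\in\F;\,n\ge0}$
over the big graded ring $(\Ext^n_\F(X,X'))$ is induced from
the right module $\bigl(\Hom_{\F_\s}(\eta_\s(X'),\eta_\s(Y))\bigr)_{X'\in\F}$
over the subring $(\Hom_\F(X,X'))$, and dually in the covariant
variable.
 In view of Lemma~\ref{bockstein-d-zero}(a), it follows that there is
a unique collection of maps ${}''\d^n\:\Ext^n_{\F_\s}(\eta_\s(X),
\eta_\s(Y))\rarrow\Ext^{n+1}_{\F_\t}(\eta_\t(X),\eta_\t(Y)(-1))$
extending~$\d^0$ and satisfying the former (left) system, and also
a unique collection~${}'\d^n$ extending~$\d^0$ and satisfying
the latter (right) system.
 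Here for the left system one regards the target as a left module
over the big ring with the \emph{sign-twisted} action in which an Ext
class $a\in\Ext^i_\F(Y',Y)$ sends $\xi$ to $(-1)^i\eta_\t^i(a(-1))\xi$;
this is a genuine module structure, and the sign-free degree~$0$
compatibility of Lemma~\ref{bockstein-d-zero}(a) makes $\d^0$
a homomorphism over the $\Hom$\+subring, so the signs~$(-1)^i$ appear
automatically upon passing to the induced module.

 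It remains to prove that ${}'\d^n={}''\d^n$ for all $n\ge1$, and,
as in the two preceding lemmas, it suffices to check this for $n=1$.
 Given an arbitrary class in $\Ext^1_{\F_\s}(\eta_\s(X),\eta_\s(Y))$,
I would use Proposition~\ref{exact-surjectivity-ext-prop}(b), its dual,
and direct sums to write it both as $w\eta_\s^1(b)$ and as
$\eta_\s^1(a)z$, where $0\rarrow V\rarrow P\rarrow X\rarrow0$ and
$0\rarrow Y\rarrow Q\rarrow U\rarrow0$ are short exact sequences
in~$\F$ representing classes $b\in\Ext^1_\F(X,V)$ and
$a\in\Ext^1_\F(U,Y)$, and $w\:\eta_\s(V)\rarrow\eta_\s(Y)$,
$z\:\eta_\s(X)\rarrow\eta_\s(U)$ are morphisms in~$\F_\s$ with
$\eta_\s^1(a)z=w\eta_\s^1(b)$.
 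By Lemma~\ref{ext-1-equation} these data extend to a morphism of
short exact sequences $(\eta_\s(V)\to\eta_\s(P)\to\eta_\s(X))\rarrow
(\eta_\s(Y)\to\eta_\s(Q)\to\eta_\s(U))$ in~$\F_\s$ with some middle
component $h\:\eta_\s(P)\rarrow\eta_\s(Q)$.
 Evaluating the two extensions on this class, the equality
${}'\d^1={}''\d^1$ reduces to the relation
$$
 \eta_\t^1(a(-1))\d^0(z)+\d^0(w)\eta_\t^1(b)=0
$$
in $\Ext^2_{\F_\t}(\eta_\t(X),\eta_\t(Y)(-1))$.
 To establish it, I would apply the naturality of~$\d^0$
(Lemma~\ref{bockstein-d-zero}(a)) to the three morphisms $w$, $h$, $z$,
obtaining, via Lemma~\ref{ext-1-equation}, morphisms of short exact
sequences in~$\F_\t$ relating the classes $\d^0(w)$, $\d^0(h)$, and
$\d^0(z)$; assemble these together with the images under~$\eta_\t$ of
the two short exact sequences in~$\F$ into a $3\times3$ commutative
diagram in~$\F_\t$ whose four perimeter short exact sequences represent
$\eta_\t^1(a(-1))$, $\d^0(z)$, $\eta_\t^1(b)$, and $\d^0(w)$; and
invoke Lemma~\ref{ext-2-square} (using also
Lemma~\ref{ext-1-secondary-product} for the additivity bookkeeping).

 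The main obstacle will be this last step.
 Building the $3\times3$ diagram requires unwinding the
secondary-operation definition of~$\d^0$ in
Subsection~\ref{third-term-bockstein} --- recall that $\d^0(q)$ is
constructed from a diagram of two squares, one commutative and
the other anticommutative --- and matching up the kernels, cokernels,
admissible epimorphisms, and admissible monomorphisms entering that
definition so that the perimeter classes come out as listed;
meanwhile all the signs must be tracked carefully, so that ${}'\d^n$
and~${}''\d^n$ produce literally the same element of $\Ext^2_{\F_\t}$
rather than elements differing by a sign.
 Once $\d^n$ is defined as the common value ${}'\d^n={}''\d^n$, it
satisfies both one-sided systems, hence the full equations~(c) (apply
the right system and then the left one), and the extension is unique
because any extension satisfying~(c) in particular satisfies the left
system and so coincides with~${}''\d^n$.
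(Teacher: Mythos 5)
Your skeleton coincides with the paper's: the same splitting into the two one\-sided systems, the same appeal to Proposition~\ref{exact-surjectivity-ext-prop}(b) (for~$\eta_\s$) together with Lemma~\ref{bockstein-d-zero}(a) to get the unique collections ${}'\d^n$ and ${}''\d^n$, the same reduction to $n=1$, and the same target relation $\eta_\t^1(a(-1))\d^0(z)+\d^0(w)\eta_\t^1(b)=0$ obtained from a $3\times3$ square via Lemma~\ref{ext-2-square}. The sign bookkeeping via the twisted left module structure is fine and matches what the paper does implicitly.

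However, the step you flag as ``the main obstacle'' is not a bookkeeping exercise, and the mechanism you propose for it does not work. ``Applying the naturality of $\d^0$ to the three morphisms $w$, $h$, $z$'' is not available: Lemma~\ref{bockstein-d-zero}(a) only governs composition of $\d^0$ with morphisms coming from $\F$ via $\eta_\s$, not the compatibility of the three secondary constructions for a morphism of short exact sequences in $\F_\s$; and $\d^0(h)$ in fact plays no role --- the middle row of the paper's $3\times3$ square is an \emph{induced} extension, not anything obtained by applying $\d^0$ to~$h$. The genuine difficulty is that the liftings entering the definitions of $\d^0(z)$ and of (a lift of) $h$ cannot be chosen compatibly in~$\F$: the paper has to build a tower of admissible epimorphisms $P'\rarrow P''\rarrow P'''\rarrow P$ (using condition~(ii) for $\eta_\s$, a fibered product with $X'$, then condition~(i) for $\eta_\t$ applied to a fibered product $T$ in~$\F_\t$), measure the discrepancy between the two composites into $U$ by a morphism of~$\F_\t$ via Lemma~\ref{bockstein-r-zero}(c), and \emph{correct} the lift of $h$ by adding the image $f=\s_0(\dots)$ of that morphism --- a correction that exists only in $\F_{\s\t}$, not in~$\F$. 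Only after this correction does the induced morphism on the kernel short exact sequence $(K\to L\to M)$ become annihilated by $r_\t^0$ and hence come from $\F_\t$ via~$\s_0$, which is what makes the left and right columns of the resulting $3\times3$ square represent $\d^0(w)$ and $\d^0(z)$. Without this correction term and the passage to the successive covers, the square you want simply does not close up, so as written the proof is incomplete at its decisive point.
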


\begin{proof}
 As in the proof of Lemma~\ref{s-n-bockstein}, we consider the two
equations $\d^{i+n}(\eta_\s^i(a)z)=(-1)^i\eta_\t^i(a)(-1)\d^n(z)$ and
$\d^{n+j}(z\eta_\s^j(b))=\d^n(z)\eta_\t^j(b)$ separately.
 In view of Proposition~\ref{exact-surjectivity-ext-prop}(b) and
the dual result, based on the conditions~(i$'$) and~(i$''$) for
the functor~$\eta_\s$, it follows from Lemma~\ref{bockstein-d-zero}(a)
that there exists a unique collection of maps
${}''\d^n\:\Ext^n_{\F_\s}(\eta_\s(X),\eta_\s(Y))\rarrow
\Ext^{n+1}_{\F_\t}(\eta_\t(X),\eta_\t(Y)(-1))$ extending the maps~$\d^0$
and satisfying the former system of equations, and also a unique
collection of maps 
${}'\d^n\:\Ext^n_{\F_\s}(\eta_\s(X),\eta_\s(Y))\rarrow
\Ext^{n+1}_{\F_\t}(\eta_\t(X),\eta_\t(Y)(-1))$ extending the maps~$\d^0$
and satisfying the latter system of equations.

 In order to show that ${}'\d^n={}''\d^n$ for all $n\ge1$, it
suffices to check that ${}'\d^1={}''\d^1$.
 As in the proofs of the preceding lemmas in this subsection, we have
two short exact sequences $0\rarrow V \rarrow P\rarrow X\rarrow 0$
and $0\rarrow Y\rarrow Q\rarrow U\rarrow0$
in the category $\F$ representing the $\Ext^1$ classes 
$b\in\Ext^1_\F(X,V)$ and $a\in\Ext^1_\F(U,Y)$.
 We have two morphisms $w\:\eta_\s(V)\rarrow\eta_\s(Y)$ and
$z\:\eta_\s(X)\rarrow\eta_\s(U)$ in the category $\F_\s$ for which
the equation $\eta_\s^1(a)z=w\eta_\s^1(b)$ holds in 
$\Ext^1_{\F_\s}(\eta_\s(X),\eta_\s(Y))$.
 Then there is a morphism of short exact sequences
$(\eta_\s(V)\to\eta_\s(P)\to\eta_\s(X))\rarrow
(\eta_\s(Y)\to\eta_\s(Q)\to\eta_\s(U))$ in the category~$\F_\s$. 

 According to the condition~(ii) for the functor~$\eta_\s$,
there exists an admissible epimorphism $X'\rarrow X$ and
a morphism $X'\rarrow U$ in the category $\F$ whose images
under the functor~$\eta_\s$ form a commutative diagram with
the morphism $\eta_\s(X)\rarrow\eta_\s(U)$ in the category~$\F_\s$.
 Denote by $P'''$ the fibered product of the objects $P$ and $X'$
over $X$ in the category~$\F$.
 Choose an admissible epimorphism $P''\rarrow P'''$ and
a morphism $P''\rarrow Q$ in the category $\F$ whose images
under~$\eta_\s$ form a commutative diagram with
the composition of morphisms $\eta_\s(P''')\rarrow\eta_\s(P)
\rarrow\eta_\s(Q)$ in~$\F_\s$.

 Consider the difference of the compositions of morphisms
$P''\rarrow P'''\rarrow X'\rarrow U$ and $P''\rarrow Q\rarrow U$
in the category~$\F$.
 It is annihilated by the functor~$\eta_\s$, and consequently, its
image under the functor~$\eta_{\s\t}$ comes from a morphism
$\eta_\t(P'')\rarrow\eta_\t(U)(-1)$ in the category $\F_\t$
via the map~$\s_0$.
 Denote by $T$ the fibered product of the objects $\eta_\t(P'')$
and $\eta_\t(Q)(-1)$ over $\eta_\t(U)(-1)$ in the category~$\F_\t$.
 The morphism $T\rarrow\eta_\t(P'')$ is an admissible epimorphism
in~$\F_\t$; hence, according to the condition~(i) for
the functor~$\eta_\t$, there exists an admissible epimorphism
$P'\rarrow P''$ in the category $\F$ and a morphism $\eta_\t(P')
\rarrow T$ in the category $\F_\t$ making the triangle diagram
$\eta_\t(P')\rarrow T\rarrow\eta_\t(P'')$ commutative in~$\F_\t$.

 Applying the map~$\s_0$ to the composition of morphisms
$\eta_\t(P')\rarrow T\rarrow\eta_\t(Q)(-1)$ in the category $\F_\t$,
we obtain a morphism $f\:\eta_{\s\t}(P')\rarrow\eta_{\s\t}(Q)$
entering into a commutative square of morphisms
$\eta_{\s\t}(P')\rarrow\eta_{\s\t}(Q)\rarrow\eta_{\s\t}(U)$ and
$\eta_{\s\t}(P')\rarrow\eta_{\s\t}(P'')\rarrow\eta_{\s\t}(U)$
in the category~$\F_{\s\t}$.
 Here the morphisms $\eta_{\s\t}(Q)\rarrow\eta_{\s\t}(U)$ and
$\eta_{\s\t}(P')\rarrow\eta_{\s\t}(P'')\rarrow\eta_{\s\t}(U)$ come
from morphisms in the category $\F$ via the functor~$\eta_{\s\t}$,
while at the same time the morphisms $\eta_{\s\t}(P'')\rarrow
\eta_{\s\t}(U)$ and $f\:\eta_{\s\t}(P')\rarrow\eta_{\s\t}(Q)$ come
from morphisms in the category $\F_\t$ via the maps~$\s_0$, and are,
consequently, annihilated by the maps~$r_\t^0$.

 Define the morphism $P'\rarrow P$ as the composition $P'\rarrow P''
\rarrow P'''\rarrow P$ and the morphism $P'\rarrow X'$ as
the composition $P'\rarrow P''\rarrow P'''\rarrow X'$ of morphisms
in the category~$\F$.
 Let $\eta_{\s\t}(P')\rarrow\eta_{\s\t}(X')$, \ $\eta_{\s\t}(X')\rarrow
\eta_{\s\t}(U)$, and $\eta_{\s\t}(Q)\rarrow\eta_{\s\t}(U)$ be
the images of the morphisms $P'\rarrow X'$, \ $X'\rarrow U$, and
$Q\rarrow U$ under the functor~$\eta_{\s\t}$.
 Furthermore, set the new morphism $\eta_{\s\t}(P')\rarrow
\eta_{\s\t}(Q)$ to be the sum of the image of the composition
$P'\rarrow P''\rarrow Q$ under the functor~$\eta_{\s\t}$
and the morphism~$f$.
 Then the square diagram formed by the morphisms
$\eta_{\s\t}(P')\rarrow\eta_{\s\t}(X')\rarrow\eta_{\s\t}(U)$
and $\eta_{\s\t}(P')\rarrow\eta_{\s\t}(Q)\rarrow\eta_{\s\t}(U)$
is commutative in the category $\F_{\s\t}$, while the triangle
diagram $\eta_\s(P')\rarrow\eta_\s(P)\rarrow\eta_\s(Q)$ formed by
the morphism $\eta_\s(P)\rarrow\eta_\s(Q)$, the image of the morphism
$P'\rarrow P$ under the functor~$\eta_\s$, and the image of
the morphism $\eta_{\s\t}(P')\rarrow\eta_{\s\t}(Q)$ under
the map~$r_\t^0$ is commutative in the category~$\F_\s$.

 Let $V'\rarrow P'$ be the kernel of the admissible epimorphism
$P'\rarrow X'$ in the category~$\F$.
 Then there is an admissible epimorphism of short exact sequences
$(V'\to P'\to X')\rarrow (V\to P\to X)$ in the category $\F$
and a morphism of short exact sequences
$(\eta_{\s\t}(V')\to\eta_{\s\t}(P')\to\eta_{\s\t}(X'))\rarrow
(\eta_{\s\t}(Y)\to\eta_{\s\t}(Q)\to\eta_{\s\t}(U))$
in the category $\F_{\s\t}$ whose images under the functor~$\eta_\s$
and the maps~$r_\t^0$ form a commutative triangle
with the morphism of short exact sequences
$(\eta_\s(V)\to\eta_\s(P)\to\eta_\s(X))\rarrow
(\eta_\s(Y)\to\eta_\s(Q)\to\eta_\s(U))$ in the category~$\F_\s$.
 Let $0\rarrow K\rarrow L\rarrow M\rarrow 0$ be the kernel of
the admissible epimorphism $(V'\to P'\to X')\rarrow (V\to P\to X)$
(in the exact category) of short exact sequences in~$\F$.
 Then the composition of morphisms of short exact sequences
$(\eta_{\s\t}(K)\to\eta_{\s\t}(L)\to\eta_{\s\t}(M))\rarrow
(\eta_{\s\t}(V')\to\eta_{\s\t}(P')\to\eta_{\s\t}(X'))\rarrow
(\eta_{\s\t}(Y)\to\eta_{\s\t}(Q)\to\eta_{\s\t}(U))$
is annihilated by the maps~$r_\t^0$, so,
by Lemmas~\ref{bockstein-r-zero}(a\+c) and
Lemma~\ref{bockstein-s-zero}(b\+c),
it comes from a (uniquely defined) morphism of short exact
sequences $(\eta_\t(K)\to\eta_\t(L)\to\eta_\t(M))\rarrow
(\eta_\t(Y)(-1)\to\eta_\t(Q)(-1)\to\eta_\t(U)(-1))$
in the category $\F_\t$ via the maps~$\s_0$.

 Consider the extension of short exact sequences $0\rarrow\eta_\t(V)
\rarrow\eta_\t(P)\rarrow\eta_\t(X)\rarrow0$ and $0\rarrow\eta_\t(K)
\rarrow\eta_\t(L)\rarrow\eta_\t(M)\rarrow0$ with the middle term
$0\rarrow\eta_\t(V')\rarrow\eta_\t(P')\rarrow\eta_\t(X')\rarrow0$
in (the exact category of short exact sequences in) the category
$\F_\t$, and induce from it an extension of the exact sequences
$0\rarrow\eta_\t(V)\rarrow\eta_\t(P)\rarrow\eta_\t(X)\rarrow0$
and $0\rarrow\eta_\t(Y)(-1)\rarrow\eta_\t(Q)(-1)\rarrow\eta_\t(U)(-1)
\rarrow0$ using the above-constructed morphism of short exact
sequences in~$\F_\t$.
 We have obtained a commutative $3\times3$ square formed by short
exact sequences in the exact category~$\F_\t$.
 For any such square, the two $\Ext^2$ classes between the objects
at the opposite vertices obtained by composing the $\Ext^1$ classes
along the perimeter differ by the minus sign (see
Lemma~\ref{ext-2-square}).
 This proves the desired equation
$-\eta_\t^1(a)(-1)\d^0(z) = \d^0(w)\eta_\t^1(b)$
in the group $\Ext^2_{\F_\t}(\eta_\t(X),\eta_\t(Y)(-1))$
(cf.~\cite[Subsection~4.5]{Partin}).
\end{proof}

\subsection{Exactness of the long sequence}
\label{exactness-long-sequence}
 Here we partly follow~\cite[Subsection~4.6]{Partin}.
 The argument is based on
Proposition~\ref{exact-surjectivity-ext-prop}(a).
 We start with the following lemma, which is in some way similar
to Lemma~\ref{bockstein-s-zero}(b).

\begin{lem}  \label{ext-1-secondary-product-s-compatibility}
 Let $0\rarrow K\rarrow L\rarrow M\rarrow 0$ and $0\rarrow U
\rarrow V\rarrow W\rarrow 0$ be two short exact sequences in
the category~$\F$, and let $f\:\eta_\t(K)\rarrow\eta_\t(V)(-1)$
and $g\:\eta_\t(L)\rarrow\eta_\t(W)(-1)$ be two morphisms in
the category $\F_\t$ forming a commutative square diagram with
the images of the morphisms $K\rarrow L$ and $V(-1)\rarrow W(-1)$
under the functor~$\eta_\t$.
 Let $w\in\Ext^1_{\F_\t}(\eta_\t(M),\eta_\t(U)(-1))$ denote
the related extension class provided by the construction of
Lemma~\ref{ext-1-secondary-product}(a), and let
$z\in\Ext^1_{\F_{\s\t}}(\eta_{\s\t}(M),\eta_{\s\t}(U))$
be the similar extension class produced by the same construction
applied to the images of the short exact sequences
$0\rarrow K\rarrow L\rarrow M\rarrow 0$ and $0\rarrow U
\rarrow V\rarrow W\rarrow 0$ under the functor~$\eta_{\s\t}$
and the morphisms $\s_0(f)\:\eta_{\s\t}(K)\rarrow\eta_{\s\t}(V)$
and $\s_0(g)\:\eta_{\s\t}(L)\rarrow\eta_{\s\t}(W)$
in the category~$\F_{\s\t}$.
 Then one has $z=\s_1(w)$.
\end{lem}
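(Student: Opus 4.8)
The plan is to use the exact‐surjectivity hypotheses to reduce to the case in which the morphism $g\colon\eta_\t(L)\to\eta_\t(W)(-1)$ admits a lift $\bar g\colon\eta_\t(L)\to\eta_\t(V)(-1)$ along the admissible epimorphism $\eta_\t(V)(-1)\to\eta_\t(W)(-1)$, and then to realize both $w$ and $z$ as products of a single morphism with the class $\mu\in\Ext^1_\F(M,K)$ of the first short exact sequence, so that the compatibility of $\s_1$ with such products (equation~(b) of Subsection~\ref{bockstein-generalization}) closes the argument. I write $\kappa\colon K\to L$ for the monomorphism of the first sequence.

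For the reduction, I would form the fibered product $T=\eta_\t(L)\times_{\eta_\t(W)(-1)}\eta_\t(V)(-1)$ in $\F_\t$; its projection $T\to\eta_\t(L)$ is an admissible epimorphism, so condition~(i$'$) for $\eta_\t$ supplies an admissible epimorphism $L'\to L$ in $\F$ and a morphism $\eta_\t(L')\to T$ over $\eta_\t(L)$. Replace the first short exact sequence by its pullback $0\to K'\to L'\to M\to0$ along $L'\to L$, and replace $f$, $g$ by their precompositions with $\eta_\t(K'\to K)$ and $\eta_\t(L'\to L)$. Both $w$ and $z$ are unaffected: there is a natural morphism between the old and the new cohomology objects that is the identity on the two outer terms ($\eta_\t(U)(-1)$ and $\eta_\t(M)$, respectively their $\eta_{\s\t}$‐analogues), and for $z$ one also uses Lemma~\ref{bockstein-s-zero}(b) to see that $\s_0$ turns precomposition by $\eta_\t(L'\to L)$ into precomposition by $\eta_{\s\t}(L'\to L)$. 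After the replacement, the composite $\bar g\colon\eta_\t(L)\to T\to\eta_\t(V)(-1)$ lifts $g$.

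Granting $g$ lifts to $\bar g$, I would split the commutative square $(f,g)\colon(\eta_\t(K)\to\eta_\t(L))\rightrightarrows(\eta_\t(V)(-1)\to\eta_\t(W)(-1))$ as the sum of the squares $(\bar g\circ\eta_\t(\kappa),\,g)$ and $(f-\bar g\circ\eta_\t(\kappa),\,0)$. By Lemma~\ref{ext-1-secondary-product}(d), $w$ is the sum of the two classes obtained from the summands. The first summand lifts through $\bar g$, so its class vanishes by Lemma~\ref{ext-1-secondary-product}(b). In the second summand the lower edge is zero, while $f-\bar g\circ\eta_\t(\kappa)$ is killed by $\eta_\t(V)(-1)\to\eta_\t(W)(-1)$ and hence factors as the admissible monomorphism $\eta_\t(U)(-1)\to\eta_\t(V)(-1)$ composed with a unique $\psi\colon\eta_\t(K)\to\eta_\t(U)(-1)$; unwinding the construction of Lemma~\ref{ext-1-secondary-product}(a) identifies its class with the pushout of $\mu$ along $\psi$, that is, with $\psi\cdot\eta_\t^1(\mu)$. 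Hence $w=\psi\cdot\eta_\t^1(\mu)$ and, by equation~(b), $\s_1(w)=\s_0(\psi)\cdot\eta_{\s\t}^1(\mu)$. Repeating the computation in $\F_{\s\t}$: Lemma~\ref{bockstein-s-zero}(b) applied to the honest $\F$‐morphism $V\to W$ gives $\s_0(g)=\eta_{\s\t}(V\to W)\circ\s_0(\bar g)$, so $\s_0(\bar g)$ lifts $\s_0(g)$, and the same argument yields $z=\tilde\psi\cdot\eta_{\s\t}^1(\mu)$ with $\tilde\psi$ characterized by $\eta_{\s\t}(U\to V)\circ\tilde\psi=\s_0(f)-\s_0(\bar g)\circ\eta_{\s\t}(\kappa)$. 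Finally, applying $\s_0$ (which is additive) to the relation defining $\psi$ and moving it past the monomorphism $\eta_\t(U)(-1)\to\eta_\t(V)(-1)$ and past $\eta_\t(\kappa)$ by Lemma~\ref{bockstein-s-zero}(b), I obtain $\eta_{\s\t}(U\to V)\circ\s_0(\psi)=\s_0(f)-\s_0(\bar g)\circ\eta_{\s\t}(\kappa)$; since $\eta_{\s\t}(U\to V)$ is an admissible monomorphism this forces $\s_0(\psi)=\tilde\psi$, whence $z=\s_0(\psi)\cdot\eta_{\s\t}^1(\mu)=\s_1(w)$.

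The main obstacle is the reduction step: one must verify that the cohomology‐object construction of Lemma~\ref{ext-1-secondary-product}(a) is functorial in the \emph{first} short exact sequence as well (only functoriality in the second is recorded, as part~(c)), and one must pin down the sign conventions in the identification of the cohomology object of $(f-\bar g\circ\eta_\t(\kappa),\,0)$ with the pushout of $\mu$ along $\psi$, so that the identical sign appears — and cancels — on the $\F_{\s\t}$ side.
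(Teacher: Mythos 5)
Your proposal is correct and follows essentially the same route as the paper's proof: pull back the first short exact sequence along an admissible epimorphism so that $g$ becomes liftable, split the commutative square into a liftable square (whose class vanishes by Lemma~\ref{ext-1-secondary-product}(b)) plus a square with vanishing second component, and conclude from the defining product formula for~$\s_1$. The only cosmetic difference is that the paper takes the fibered product in $\F$ and obtains the lift as an honest $\F$\+morphism $L''\rarrow V(-1)$, whereas you lift only in $\F_\t$ and transport the lifting property through~$\s_0$ via Lemma~\ref{bockstein-s-zero}(b); the functoriality-in-the-first-argument point and the sign cancellation that you flag are treated the same way (and just as implicitly) in the paper.
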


\begin{proof}
 Choose an admissible epimorphism $L'\rarrow L$ and a morphism
$L'\rarrow W(-1)$ in the category $\F$ making the triangle
diagram $\eta_\t(L')\rarrow\eta_\t(L)\rarrow\eta_\t(W)(-1)$
commutative in the category~$\F_\t$.
 Denote by $L''$ the fibered product of the objects $L'$ and
$V(-1)$ over $W(-1)$ in~$\F$.
 Let $K''$ denote the kernel of the composition of admissible
epimorphisms $L''\rarrow L'\rarrow L\rarrow M$; then there is
a natural morphism of short exact sequences $(K''\to L''\to M)
\rarrow (K\to L\to M)$ in the category~$\F$.
 By Lemma~\ref{ext-1-secondary-product}(d), both the classes
$w\in\Ext^1_{\F_\t}(\eta_\t(M),\eta_\t(U)(-1))$
and $z\in\Ext^1_{\F_{\s\t}}(\eta_{\s\t}(M),\eta_{\s\t}(U))$
can be obtained by applying the construction of
Lemma~\ref{ext-1-secondary-product}(a) to the images of
the short exact sequences $0\rarrow K''\rarrow L''\rarrow M\rarrow 0$
and $0\rarrow U\rarrow V\rarrow W\rarrow0$ under the functors
$\eta_\t$ and~$\eta_{\s\t}$ together with the composition of
morphisms of morphisms $(\eta_\t(K'')\to\eta_\t(L''))\rarrow
(\eta_\t(K)\to\eta_\t(L))\rarrow(\eta_\t(V)(-1)\to\eta_\t(W)(-1))$
and its image under the maps~$\s_0$.

 On the other hand, the images of the composition $K''\rarrow L''
\rarrow V(-1)$ and the morphism $L''\rarrow W(-1)$ under
the functor~$\eta_\t$ provide another commutative square
$(\eta_\t(K'')\to\eta_\t(L''))\rarrow(\eta_\t(V)(-1)\to\eta_\t(W)(-1))$
in the category~$\F_\t$.
 By Lemma~\ref{ext-1-secondary-product}(b), the $\Ext^1$ classes
assigned to this morphism of morphisms and its image under
the maps~$\s_0$ by the construction of
Lemma~\ref{ext-1-secondary-product}(a) vanish.
 Subtracting one morphism of morphisms from the other one and
applying Lemma~\ref{ext-1-secondary-product}(e), we reduce
the original problem to the case of a morphism of morphisms~$(f,g)$
with a vanishing second component~$g=0$.
 In this case, in view of the observation dual to
Lemma~\ref{ext-1-secondary-product}(c), the construction of
Lemma~\ref{ext-1-secondary-product}(a) reduces to that of
the product of a morphism with an $\Ext^1$ class; so the desired
assertion holds by the equations~(b) from
Subsection~\ref{bockstein-generalization} (or rather, even
by the definition of the map~$\s_1$).
\end{proof}

\begin{lem}  \label{bockstein-initial}
 The initial {segment \hfuzz=3pt
\begin{gather*}
 0\rarrow\Hom_{\F_\t}(\eta_\t(X),\eta_\t(Y)(-1))\rarrow
 \Hom_{\F_{\s\!\.\t}}(\eta_{\s\t}(X),\eta_{\s\t}(Y))\rarrow
 \Hom_{\F_\s}(\eta_\s(X),\eta_\s(Y)) \\
 \rarrow \Ext^1_{\F_\t}(\eta_\t(X),\eta_\t(Y)(-1))\rarrow
 \Ext^1_{\F_{\s\!\.\t}}(\eta_{\s\t}(X),\eta_{\s\t}(Y))\rarrow
 \Ext^1_{\F_{\s}}(\eta_\s(X),\eta_\s(Y))
\end{gather*}
of} the long sequence that we have constructed is exact for
any two objects $X$ and $Y$ in the category~$\F$.
\end{lem}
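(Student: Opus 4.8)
The plan is the following. Exactness at the first three interior terms is already in hand: injectivity of~$\s_0$ is Lemma~\ref{bockstein-s-zero}(c), the equality $\operatorname{im}\s_0=\ker r_\t^0$ is Lemma~\ref{bockstein-r-zero}(c), and $\operatorname{im}r_\t^0=\ker\d^0$ is Lemma~\ref{bockstein-d-zero}(b). So the content of the lemma is exactness at $\Ext^1_{\F_\t}(\eta_\t(X),\eta_\t(Y)(-1))$ and at $\Ext^1_{\F_{\s\t}}(\eta_{\s\t}(X),\eta_{\s\t}(Y))$. For these I would first bring every relevant class into a standard form: by Proposition~\ref{exact-surjectivity-ext-prop}(b) applied to $\eta_\t$ (resp.\ to $\eta_{\s\t}$), any class $w\in\Ext^1_{\F_\t}(\eta_\t(X),\eta_\t(Y)(-1))$ (resp.\ $v\in\Ext^1_{\F_{\s\t}}(\eta_{\s\t}(X),\eta_{\s\t}(Y))$) is a pushforward $w=\phi\cdot\eta_\t^1(c)$ (resp.\ $v=\psi\cdot\eta_{\s\t}^1(c)$) of the image under $\eta_\t$ (resp.\ under $\eta_{\s\t}$) of a single short exact sequence $c=(0\rarrow\bar Y\rarrow E\rarrow X\rarrow0)$ in $\F$, with the same object $X$, along a morphism $\phi\:\eta_\t(\bar Y)\rarrow\eta_\t(Y)(-1)$ (resp.\ $\psi\:\eta_{\s\t}(\bar Y)\rarrow\eta_{\s\t}(Y)$). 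The second ingredient is the degenerate case of Lemma~\ref{ext-1-secondary-product-s-compatibility} in which the second short exact sequence is $0\rarrow\bar Y\rarrow\bar Y\rarrow0\rarrow0$ and the second morphism is zero; it reads $\s_1(\phi\cdot\eta_\t^1(c))=\s_0(\phi)\cdot\eta_{\s\t}^1(c)$, i.e.\ $\s_1$ intertwines the two pushforwards via~$\s_0$.

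Granting these, the inclusions ``image $\subseteq$ kernel'' are short. For $\s_1\circ\d^0=0$: by the construction of~$\d^0$ in Subsection~\ref{third-term-bockstein}, $\d^0(q)$ is the pushforward of $\eta_\t^1$ of a short exact sequence $0\rarrow K\rarrow X'\rarrow X\rarrow0$ along the morphism $\eta_\t(K)\rarrow\eta_\t(Y)(-1)$ whose image under~$\s_0$ is $\eta_{\s\t}(K\rarrow X'\rarrow Y)$; by the degenerate form of Lemma~\ref{ext-1-secondary-product-s-compatibility}, $\s_1(\d^0(q))$ is then the pushforward of an $\Ext^1$ class along a morphism factoring through the middle term of a representing extension, hence zero. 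For $r_\t^1\circ\s_1=0$: multiplicativity of~$r_\t$ (the equations~(a) of Subsection~\ref{bockstein-generalization}) together with $r_\t^0\circ\s_0=0$ gives $r_\t^1\big(\s_0(\phi)\cdot\eta_{\s\t}^1(c)\big)=r_\t^0(\s_0(\phi))\cdot r_\t^1(\eta_{\s\t}^1(c))=0$.

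For the inclusions ``kernel $\subseteq$ image'' I would argue as follows. If $\s_1(w)=0$ and $w=\phi\cdot\eta_\t^1(c)$, then $\s_0(\phi)\cdot\eta_{\s\t}^1(c)=0$, so by Lemma~\ref{ext-1-equation} the morphism $\s_0(\phi)$ extends over $\eta_{\s\t}(E)$ to some $\psi$; since $r_\t^0\circ\s_0=0$, the morphism $r_\t^0(\psi)$ kills the admissible subobject $\eta_\s(\bar Y)\hookrightarrow\eta_\s(E)$ and descends to a morphism $q\:\eta_\s(X)\rarrow\eta_\s(Y)$. Using condition~(ii$'$) for~$\eta_\s$ to realize $r_\t^0(\psi)$ by a morphism in~$\F$ (which forces replacing $E$ by an admissible epimorphism $E''\rarrow E$ and $c$ by the pulled-back sequence), one feeds $q$ and these data into the construction of~$\d^0$ and computes that $\d^0(q)=w$. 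Symmetrically, if $r_\t^1(v)=0$ and $v=\psi\cdot\eta_{\s\t}^1(c)$, the hypothesis reads $r_\t^0(\psi)\cdot\eta_\s^1(c)=0$, so $r_\t^0(\psi)$ extends over $\eta_\s(E)$ to some $\chi$; by condition~(ii$'$) for~$\eta_\s$, after pulling $c$ back along an admissible epimorphism $E'\rarrow E$ to $c'=(0\rarrow\bar Y'\rarrow E'\rarrow X\rarrow0)$ with induced map $\lambda\:\bar Y'\rarrow\bar Y$, this extension comes from a morphism $g'\:E'\rarrow Y$ in~$\F$. The composite $g\:\bar Y'\rarrow E'\rarrow Y$ satisfies $\eta_\s(g)=r_\t^0(\psi\cdot\eta_{\s\t}(\lambda))$, so $\psi\cdot\eta_{\s\t}(\lambda)-\eta_{\s\t}(g)$ lies in $\ker r_\t^0=\operatorname{im}\s_0$ and equals $\s_0(\phi)$ for some~$\phi$; hence $v=\eta_{\s\t}(g)\cdot\eta_{\s\t}^1(c')+\s_0(\phi)\cdot\eta_{\s\t}^1(c')=\s_1(\phi\cdot\eta_\t^1(c'))$, the first summand vanishing because $g$ factors through the middle term of~$c'$ and the second being $\s_1$ of a class by Lemma~\ref{ext-1-secondary-product-s-compatibility}.

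The step I expect to be the main obstacle is the verification, in the first of the two arguments just sketched, that the class produced by the construction of~$\d^0$ from the descended morphism~$q$ equals~$w$ on the nose, and not merely modulo the ambiguity in lifting morphisms from $\F_\s$ (and from $\F_{\s\t}$) back to~$\F$. Carrying this out calls for the injectivity of~$\s_0$ (Lemma~\ref{bockstein-s-zero}(c)), the functoriality identities for~$\s_0$ (Lemma~\ref{bockstein-s-zero}(b)), the equality $\ker r_\t^0=\operatorname{im}\s_0$, and, repeatedly, the elementary observation that the pushforward of an $\Ext^1$ class along a morphism factoring through the middle term of a representing extension vanishes; the ambiguity terms are precisely of that shape, and so drop out.
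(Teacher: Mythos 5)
Your proposal is correct, and for most of the lemma it runs parallel to the paper's proof: exactness at the three Hom terms is quoted from Lemmas~\ref{bockstein-s-zero}(c), \ref{bockstein-r-zero}(c), and~\ref{bockstein-d-zero}(b); for exactness at $\Ext^1_{\F_\t}(\eta_\t(X),\eta_\t(Y)(-1))$ both you and the paper write $w=\phi\cdot\eta_\t^1(c)$ via Proposition~\ref{exact-surjectivity-ext-prop}(b), extend $\s_0(\phi)$ over the middle term, descend through $r_\t^0$ to get $q$, and verify $\d^0(q)=w$. The one organizational difference there is that the paper realizes the extended morphism by a genuine morphism $Z''\rarrow Y$ in $\F$ whose image commutes already in $\F_{\s\t}$ (condition~(ii) for $\eta_{\s\t}$), which makes the correction term vanish on the nose, whereas you realize it only at the level of $\F_\s$ and must carry the correction term $\s_0(\theta)$ explicitly --- your closing remark correctly identifies that it drops out because it extends to the middle term of the representing extension. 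Where you genuinely diverge is at the fourth term $\Ext^1_{\F_{\s\t}}(\eta_{\s\t}(X),\eta_{\s\t}(Y))$: the paper presents the class as a pullback $z=\eta_{\s\t}^1(a)f$ and builds the preimage $w$ by the secondary-product construction of Lemma~\ref{ext-1-secondary-product}(a), invoking the full strength of Lemma~\ref{ext-1-secondary-product-s-compatibility}; you instead present it as a pushforward $v=\psi\cdot\eta_{\s\t}^1(c)$, split $\psi\cdot\eta_{\s\t}(\lambda)$ into a piece coming from $\F$ (which kills the extension class because it factors through the middle term) plus a piece in $\ker r_\t^0=\operatorname{im}\s_0$, and conclude using only the defining multiplicativity of~$\s_1$. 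Your route for that term is more elementary and bypasses the secondary-product machinery entirely; what the paper's route buys is that the auxiliary Lemma~\ref{ext-1-secondary-product-s-compatibility} is set up once and reused, and its form is insensitive to which side of the extension the data sit on. Both arguments are sound.
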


\begin{proof}
 Exactness at the first three nontrivial terms has been proven already
in Subsections~\ref{first-term-bockstein}\+-\ref{third-term-bockstein}.
 Let us prove exactness at the term
$\Ext^1_{\F_\t}(\eta_\t(X),\eta_\t(Y)(-1))$.

 First of all, we have to check that the composition of two maps
going through this term vanishes, that is $\s_1\d^0=0$.
 Let $q\:\eta_\s(X)\rarrow\eta_\s(Y)$ be a morphism in
the category~$\F_\s$.
 Choose an admissible epimorphism $X'\rarrow X$ with the kernel $K$
and a morphism $X'\rarrow Y$ in the category $\F$ making the triangle
diagram $\eta_\s(X')\rarrow\eta_\s(X)\rarrow\eta_\s(Y)$ commutative in
the category~$\F_\s$.
 Let $b\in\Ext^1_\F(X,K)$ denote the extension class of the short exact
sequence $0\rarrow K\rarrow X'\rarrow X\rarrow 0$ in the category~$\F$,
and let $p\:\eta_\t(K)\rarrow\eta_\t(Y)(-1)$ be a morphism in
the category $\F_\t$ such that the morphism $\s_0(p)\:\eta_{\s\t}(K)
\rarrow\eta_{\s\t}(Y)$ in the category $\F_{\s\t}$ is equal to
the image of the composition $K\rarrow X'\rarrow Y$ under
the functor~$\eta_{\s\t}$.
 By the definition, we have $\d^0(q)=p\eta_\t^1(b)
\in\Ext^1_{\F_\t}(\eta_\t(X),\eta_\t(Y)(-1))$.

 According to the equations~(b) from
Subsection~\ref{bockstein-generalization}, it follows that
$\s_1\d^0(q)=\s_1(p\eta_\t^1(b))=\s_0(p)\eta_{\s\t}^1(b)$.
 It remains to observe that the morphism $\s_0(p)\:\eta_{\s\t}(K)
\rarrow\eta_{\s\t}(Y)$ by construction factorizes through the admissible
monomorphism $\eta_{\s\t}(K)\rarrow\eta_{\s\t}(X')$ in the short exact
sequence $0\rarrow\eta_{\s\t}(K)\rarrow\eta_{\s\t}(X')\rarrow\eta_{\s\t}(X)
\rarrow0$, which represents the extension class $\eta_{\s\t}^1(b)\in
\Ext^1_{\F_{\s\t}}(\eta_{\s\t}(X),\eta_{\s\t}(K))$
in the category~$\F_{\s\t}$.
 Hence $\s_0(p)\eta_{\s\t}^1(b)=0$ in
$\Ext^1_{\F_{\s\t}}(\eta_{\s\t}(X),\eta_{\s\t}(Y))$.

 Let us show that the kernel of the map
$\s_1\:\Ext^1_{\F_\t}(\eta_\t(X),\eta_\t(Y)(-1))\rarrow
\Ext^1_{\F_{\s\t}}(\eta_{\s\t}(X),\eta_{\s\t}(Y))$ is contained in
the image of the map $\d^0\:\Hom_{\F_\s}(\eta_\s(X),\eta_\s(Y))
\allowbreak\rarrow\Ext^1_{\F_\t}(\eta_\t(X),\eta_\t(Y)(-1))$.
 According to Proposition~\ref{exact-surjectivity-ext-prop}(b) and
the condition~(i) for the functor~$\eta_\t$, any element~$z$ in
the group $\Ext^1_{\F_\t}(\eta_\t(X),\eta_\t(Y)(-1))$ is equal to
the product $p\eta_\t^1(b)$ of the image $\eta_\t^1(b)$ of an $\Ext^1$
class~$b$ represented by a short exact sequence $0\rarrow Y'\rarrow Z'
\rarrow X\rarrow 0$ in the category $\F$ under the functor~$\eta_\t$
and a morphism $p\:\eta_\t(Y')\rarrow\eta_\t(Y)(-1)$ in
the category~$\F_\t$. {\hbadness=1125\par}

 By the definition, the element $\s_1(z)\in
\Ext^1_{\F_{\s\!\t}}(\eta_{\s\t}(X),\eta_{\s\t}(Y))$
is constructed as the composition $\s_0(p)\eta_{\s\t}^1(b)$ of
the $\Ext^1$ class $\eta_{\s\t}^1(b)\in
\Ext^1_{\F_{\s\!\t}}(\eta_{\s\t}(X),\eta_{\s\t}(Y'))$ and the morphism
$\s_0(p)\:\eta_{\s\t}(Y')\rarrow\eta_{\s\t}(Y)$ in
the category~$\F_{\s\t}$.
 The equation $\s_1(z)=\s_0(p)\eta_{\s\t}^1(b)=0$ means that
the morphism $\s_0(p)$ factorizes through the morphism
$\eta_{\s\t}(Y')\rarrow\eta_{\s\t}(Z')$ in the category~$\F_{\s\t}$,
i.~e., there exists a morphism $\eta_{\s\t}(Z')\rarrow\eta_{\s\t}(Y)$
in $\F_{\s\t}$ making the triangle $\eta_{\s\t}(Y')\rarrow
\eta_{\s\t}(Z')\rarrow\eta_{\s\t}(Y)$ commutative.

 Applying the maps~$r_\t^0$ to the whole diagram in the category
$\F_{\s\t}$ with the objects in the vertices coming from
the category $\F$ via the functor~$\eta_{\s\t}$, we see that
the morphism $r_\t^0\s_0(p)\:\eta_\s(Y')\rarrow\eta_\s(Y)$ vanishes,
so the morphism $\eta_\s(Z')\rarrow\eta_\s(Y)$ factorizes through
the admissible epimorphism $\eta_\s(Z')\rarrow\eta_\s(X)$
and there exists a morphism $q\:\eta_\s(X)\rarrow\eta_\s(Y)$
in the category $\F_\s$ making the triangle $\eta_\s(Z')\rarrow
\eta_\s(X)\rarrow\eta_\s(Y)$ commutative.
 Let us check that $\d^0(q)=z$.

 Pick an admissible epimorphism $Z''\rarrow Z'$ and
a morphism $Z''\rarrow Y$ in the category $\F$ making
the triangle diagram $\eta_{\s\t}(Z'')\rarrow\eta_{\s\t}(Z')
\rarrow\eta_{\s\t}(Y)$ commutative in the category~$\F_{\s\t}$.
 Let $Y''$ denote the kernel of the composition of admissible
epimorphisms $Z''\rarrow Z'\rarrow X$; then there is
a natural morphism of short exact sequences $(Y''\to Z''\to X)
\rarrow (Y'\to Z'\to X)$ in the category~$\F$.
 Set $b'\in\Ext^1_\F(X,Y'')$ to be the $\Ext^1$ class represented
by the short exact sequence $0\rarrow Y''\rarrow Z''\rarrow X
\rarrow0$ in~$\F$, and denote by $p'\:\eta_\t(Y'')\rarrow
\eta_\t(Y)(-1)$ the composition of the image of the morphism
$Y''\rarrow Y'$ under the functor~$\eta_\t$ with the morphism~$p$.
 Then the triangle diagram $\eta_\s(Z'')\rarrow\eta_\s(X)\rarrow
\eta_\s(Y)$ is commutative in $\F_\s$ and the morphism
$\s_0(p')\:\eta_{\s\t}(Y'')\rarrow\eta_{\s\t}(Y)$ is equal to
the image of the composition $Y''\rarrow Z''\rarrow Y$ under
the functor~$\eta_{\s\t}$.
 By the definition, one has $\d^0(q)=p'\eta_\t^1(b')=
p\eta_\t^1(b)=z$.

 It remains to prove exactness of our sequence at the term
$\Ext^1_{\F_{\s\!\t}}(\eta_{\s\t}(X),\eta_{\s\t}(Y))$.
 Once again, firstly we have to check that the composition
$r_\t^1\s_1$ vanishes.
 Let $z\in\Ext_{\F_\t}^1(\eta_\t(X),\eta_\t(Y)(-1))$ be an extension
class, represented as above in the form $z=p\eta_\t^1(b)$.
 By the equations~(a) and~(b) of
Subsection~\ref{bockstein-generalization}, we have
$\s_1(z)=\s_0(p)\eta_{\s\t}^1(b)$ and $r_\t^1(\s_0(p)\eta_{\s\t}^1(b))=
r_\t^0(\s_0(p))\eta_\s^1(b)=0$, since $r_\t^0\s_0=0$ by
Lemma~\ref{bockstein-r-zero}(c).

 Let us show that the kernel of the map~$r_\t^1$ is contained in
the image of the map~$\s_1$.
 According to Proposition~\ref{exact-surjectivity-ext-prop}(b) and
the condition~(i) for the functor~$\eta_{\s\t}$, any element
$z\in\Ext^1_{\F_{\s\!\t}}(\eta_{\s\t}(X),\eta_{\s\t}(Y))$
can be presented as a product of the form $z=\eta_{\s\t}^1(a)f$,
where $a$~is an $\Ext^1$ class represented by a short exact
sequence $0\rarrow Y\overset u\rarrow Z'\overset v\rarrow X'\rarrow0$
in the category $\F$ and $f$~is a morphism $\eta_{\s\t}(X)\rarrow
\eta_{\s\t}(X')$ in the category~$\F_{\s\t}$. 
 The equation $r_\t^1(z)=\eta_\s^1(a)r_\t^0(f)=0$ in
$\Ext^1_{\F_\s}(\eta_\s(X),\eta_\s(Y))$ means that there exists
a morphism $\eta_\s(X)\rarrow\eta_\s(Z')$ in the category $\F_\s$
forming a commutative triangle with the image of the morphism
$Z'\rarrow X'$ under the functor~$\eta_\s$ and the image of
the morphism $f\:\eta_{\s\t}(X)\rarrow\eta_{\s\t}(X')$
under the map~$r_\t^0$.

 Applying the condition~(ii) for the functors~$\eta_\s$ and
$\eta_{\s\t}$, one can find an admissible epimorphism $e\:X''\rarrow X$
and morphisms $b\:X''\rarrow X'$, \ $c\:X''\rarrow Z'$ in
the category~$\F$ making the triangle diagram $\eta_{\s\t}(X'')\rarrow
\eta_{\s\t}(X) \rarrow\eta_{\s\t}(X')$ commutative in the category
$\F_{\s\t}$ and the triangle diagram $\eta_\s(X'')\rarrow\eta_\s(X)
\rarrow\eta_\s(Z')$ commutative in the category~$\F_\s$.
 Let $k\:K\rarrow X''$ denote the kernel of the morphism
$e\:X''\rarrow X$ in the category~$\F$; then the composition
$K\overset k\rarrow X''\overset b\rarrow X'$ is annihilated by
the functor~$\eta_{\s\t}$ and the composition $K\overset k\rarrow X''
\overset c\rarrow Z'$ is annihilated by the functor~$\eta_\s$.
 Furthermore, it follows that the difference of the composition
$X''\overset c\rarrow Z'\overset v\rarrow X'$ and the morphism
$b\:X''\rarrow X'$ in the category $\F$ is also annihilated by
the functor~$\eta_\s$, and the image of this difference under
the functor~$\eta_{\s\t}$ forms a commutative square with the image of
the composition $K\overset k\rarrow X''\overset c\rarrow Z'$ and
the images of the morphisms $k\:K\rarrow X''$ and $v\:Z'\rarrow X'$
under the same functor.

 By Lemma~\ref{bockstein-r-zero}(a,c), the images of our morphisms
$ck\:K\rarrow Z'$ and $vc-b\:X''\rarrow X'$ under
the functor~$\eta_{\s\t}$ are consequently equal to the images of
(uniquely defined) morphisms $\eta_\t(K)\rarrow\eta_\t(Z')(-1)$ and
$\eta_\t(X'')\rarrow\eta_\t(X')(-1)$ under the map~$\s_0$.
 Using Lemma~\ref{bockstein-s-zero}(b\+c), one checks that
the square diagram $\eta_\t(K)\rarrow\eta_\t(Z')(-1)\rarrow
\eta_\t(X')(-1)$, \ $\eta_\t(K)\rarrow\eta_\t(X'')\rarrow
\eta_\t(X')(-1)$, where the morphisms $\eta_\t(Z')\rarrow\eta_\t(X')$
and $\eta_\t(K)\rarrow\eta_\t(X'')$ come from the given morphisms
in the category~$\F$ via the functor~$\eta_\t$, is commutative
in the category~$\F_\t$.

 Applying the construction of Lemma~\ref{ext-1-secondary-product}(a)
to the images of the short exact sequences $0\rarrow K\rarrow X''
\rarrow X\rarrow 0$ and $0\rarrow Y\rarrow Z'\rarrow X'\rarrow 0$
under the functor~$\eta_\t$ and the above commutative square of
morphisms in the category $\F_\t$ produces the desired class
$w\in\Ext^1_{\F_\t}(\eta_\t(X),\eta_\t(Y)(-1))$, up to a minus sign.
 In fact, we will now see that the result of applying the same
construction to the images of the same short exact sequences under
the functor~$\eta_{\s\t}$ and the commutative square $\eta_{\s\t}(K)
\rarrow\eta_{\s\t}(Z')\rarrow\eta_{\s\t}(X')$, \ $\eta_{\s\t}(K)\rarrow
\eta_{\s\t}(X'')\rarrow\eta_{\s\t}(X')$ in the category~$\F_{\s\t}$ is
equal to minus the original class
$z\in\Ext^1_{\F_{\s\t}}(\eta_{\s\t}(X),\eta_{\s\t}(Y))$; so it remains to
use Lemma~\ref{ext-1-secondary-product-s-compatibility}.

 Our morphism of morphisms (commutative square) $(\eta_{\s\t}(ck),
\eta_{\s\t}(vc-b))\:(\eta_{\s\t}(K)\to\eta_{\s\t}(X''))\rarrow
(\eta_{\s\t}(Z')\to\eta_{\s\t}(X'))$ in the category $\F_{\s\t}$ can be
obtained by subtracting the morphism of morphisms
$(0,\eta_{\s\t}(b))\:(\eta_{\s\t}(K)\to\eta_{\s\t}(X''))\rarrow
(\eta_{\s\t}(Z')\to\eta_{\s\t}(X'))$ from the morphism of morphisms
$(\eta_{\s\t}(ck),\eta_{\s\t}(vc))\:(\eta_{\s\t}(K)\to\eta_{\s\t}(X''))
\rarrow(\eta_{\s\t}(Z')\to\eta_{\s\t}(X'))$.
 By Lemma~\ref{ext-1-secondary-product}(e), the resulting class in
$\Ext^1_{\F_{\s\t}}(\eta_{\s\t}(X),\eta_{\s\t}(Y))$ is equal to
the difference of the two related classes.
 By Lemma~\ref{ext-1-secondary-product}(b), the extension class
corresponding to the morphism of morphisms
$(\eta_{\s\t}(ck),\eta_{\s\t}(vc))$ vanishes.
 Finally, by part~(c) of the same Lemma, the extension class
corresponding to the morphism of morphisms
$(0,\eta_{\s\t}(b))$ is equal to $\eta_{\s\t}^1(a)f=z$.
\end{proof}

\begin{cor}
 The whole long sequence of\/ $\Ext$ groups from
Subsection~\ref{bockstein-generalization}, as constructed in
Subsection~\ref{higher-bockstein-construction}, is exact
for any two objects $X$, $Y$ of the category~$\F$.
\end{cor}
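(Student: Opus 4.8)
The plan is to deduce exactness of the entire sequence from the exactness of its initial segment, established in Lemma~\ref{bockstein-initial}, by a dimension-shifting argument in the spirit of~\cite[Subsection~4.6]{Partin}.
 I~would argue by induction on the homological degree~$n$, proving simultaneously for all objects $X$, $Y\in\F$ exactness of the sequence at the three terms
$$
 \Ext^n_{\F_\t}(\eta_\t(X),\eta_\t(Y)(-1)),\quad
 \Ext^n_{\F_{\s\t}}(\eta_{\s\t}(X),\eta_{\s\t}(Y)),\quad
 \Ext^n_{\F_\s}(\eta_\s(X),\eta_\s(Y)).
$$
 Exactness at any term amounts to two inclusions, and both reduce --- by the product-splitting trick described below --- to their instances one degree lower; so it suffices to establish the base, which is Lemma~\ref{bockstein-initial} (yielding exactness at all three terms in degree~$0$ and at the $\F_\t$- and $\F_{\s\t}$-terms in degree~$1$), and then to run the induction. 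I~therefore fix $n\ge1$ and assume Lemma~\ref{bockstein-initial} together with exactness at all three terms in every degree $<n$.

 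The key reduction is as follows. Given a class~$z$ in one of the three degree-$n$ groups, Proposition~\ref{exact-surjectivity-ext-prop}(b) --- applied to whichever of the functors $\eta_\t$, $\eta_{\s\t}$, $\eta_\s$ is relevant, its condition~(i$\.'$) being a part of~(i) --- presents~$z$ as a single product $p\,\eta^n(a)$, with $a\in\Ext^n_\F(X,Y')$ a Yoneda class in~$\F$ and $p$ a morphism in the relevant category. Splitting a Yoneda representative of~$a$ into its step nearest~$X$, a short exact sequence $0\rarrow B\rarrow E_0\rarrow X\rarrow0$ in~$\F$ representing a class $e\in\Ext^1_\F(X,B)$, and the complementary $(n-1)$-fold extension, representing a class $a'\in\Ext^{n-1}_\F(B,Y')$, we obtain $z=z'\,\eta^1(e)$ where $z'=p\,\eta^{n-1}(a')$ is a class of degree $n-1$ in the same category, now in the pair of objects $(B,Y)$ (for $n=1$ one has $B=Y'$, $a'=\id$, and $z'=p$). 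The product identities~(a), (b), (c) of Subsection~\ref{bockstein-generalization} then show that applying the next map of the Bockstein sequence to $z=z'\eta^1(e)$ gives that same map applied to~$z'$, multiplied on the right by the image of~$e$ under the appropriate functor; the sign $(-1)^i$ in~(c) does not intervene, the relevant~$i$ being~$0$.

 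Now suppose $z$ lies in the kernel of the next map --- say, for definiteness, $z\in\Ext^n_{\F_{\s\t}}$ with $r_\t^n(z)=0$, the other two cases being entirely analogous (for the $\F_\s$-term one reduces to exactness at the $\F_\s$-term in degree $n-1$, which for $n=1$ is the already available exactness at~$\Hom_{\F_\s}$, so that a degree-$0$ instance is never needed there). Then $r_\t^{n-1}(z')\,\eta_\s^1(e)=0$, and this is where Proposition~\ref{exact-surjectivity-ext-prop}(a) enters: applied with $\eta=\eta_\s$, with $e\in\Ext^1_\F(X,B)$ in the role of its Ext class and $r_\t^{n-1}(z')\in\Ext^{n-1}_{\F_\s}(\eta_\s(B),\eta_\s(Y))$ in the role of~$b$ --- its hypothesis $m\ge1$ being met since here $n\ge2$ --- it produces an object $B'\in\F$, a morphism $f\:B'\rarrow B$ in~$\F$, and a class $e'\in\Ext^1_\F(X,B')$ with $e=fe'$ and $r_\t^{n-1}(z')\eta_\s(f)=0$, that is, $r_\t^{n-1}(z'\eta_{\s\t}(f))=0$ by the identities~(a) once more. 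The inductive hypothesis, applied in the pair $(B',Y)$, supplies a class $v\in\Ext^{n-1}_{\F_\t}(\eta_\t(B'),\eta_\t(Y)(-1))$ with $\s_{n-1}(v)=z'\eta_{\s\t}(f)$, whence
$$
 z=z'\eta_{\s\t}^1(e)=z'\eta_{\s\t}(f)\,\eta_{\s\t}^1(e')=\s_{n-1}(v)\,\eta_{\s\t}^1(e')=\s_n\bigl(v\,\eta_\t^1(e')\bigr)
$$
 by identity~(b), so that $z$ lies in the image of~$\s_n$, as required.

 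The single step that takes genuine care is this clearing of a relation asserting that the product of a degree-$(n-1)$ class with $\eta^1(e)$ vanishes, effected by Proposition~\ref{exact-surjectivity-ext-prop}(a); it is for this reason that the induction must be organized so that the reduced degree stays $\ge1$, which forces Lemma~\ref{bockstein-initial} --- rather than a degree-$0$ statement --- to serve as the base, and, at the $\F_\t$- and $\F_{\s\t}$-terms, the degree-$1$ cases to be taken from there directly (so that $n\ge2$ may be assumed when reducing those). The remaining bookkeeping --- tracking the sign $(-1)^i$ in~(c), which ultimately goes back to the sign in Lemma~\ref{ext-2-square}, and keeping straight which of the three functors plays which role in Proposition~\ref{exact-surjectivity-ext-prop} --- is routine and parallels~\cite[Subsection~4.6]{Partin}.
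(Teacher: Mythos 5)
Your argument is correct and follows essentially the same route as the paper's: both present a degree-$n$ class as a product via Proposition~\ref{exact-surjectivity-ext-prop}(b), clear the vanishing of the next differential via Proposition~\ref{exact-surjectivity-ext-prop}(a), and reduce to the exactness of the initial segment (Lemma~\ref{bockstein-initial}) using the product identities (a)--(c). The only difference is organizational --- the paper's displayed case splits off a degree-zero morphism and a full degree-$n$ class from $\F$ and so reduces to degree~$0$ in one step, whereas you peel off a single $\Ext^1$ class and induct on~$n$, which, as you correctly observe, is what the constraint $m\ge1$ in Proposition~\ref{exact-surjectivity-ext-prop}(a) forces at the $\F_\t$-term anyway.
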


\begin{proof}
 The assertion follows formally from the construction and
Lemma~\ref{bockstein-initial} in view of
Proposition~\ref{exact-surjectivity-ext-prop}(a) applied
to the functors~$\eta_\s$, $\eta_\t$, and~$\eta_{\s\t}$.

 E.~g., let us prove exactness at the terms
$\Ext^n_{\F_\s}(\eta_\s(X),\eta_\s(Y))$ for all $n\ge1$.
 Let $z$~be an element of our Ext group in the category~$\F_\s$. 
 By Proposition~\ref{exact-surjectivity-ext-prop}(b) applied to
the functor~$\eta_\s$, there exists an Ext class $b\in\Ext^n_\F(X,X')$
in the category $\F$ and a morphism $q\:\eta_\s(X')\rarrow\eta_\s(Y)$
in the category $\F_\s$ such that the element~$z$ is equal to
the product $q\eta^n_\s(b)$ in the group
$\Ext^n_{\F_\s}(\eta_\s(X),\eta_\s(Y))$.
 By the definition, one has $\d^n(z)=\d^0(q)\eta_\t^n(b)$
in $\Ext^{n+1}_{\F_\t}(\eta_\t(X),\eta_\t(Y)(-1))$.

 Now assume that $\d^0(q)\eta_\t^n(b)=0$.
 By Proposition~\ref{exact-surjectivity-ext-prop}(a) applied
to the functor~$\eta_\t$, there exists a morphism $f\:X''\rarrow X'$
and an Ext class $b'\in\Ext^n_\F(X,X'')$ in the category $\F$
such that $b=fb'$ in $\Ext^n_\F(X,X')$ and $\d^0(q)\eta_\t(f)=0$
in $\Ext^1_{\F_\t}(\eta_\t(X''),\eta_\t(Y)(-1))$.
 By Lemma~\ref{d-n-bockstein} (or, actually, even
Lemma~\ref{bockstein-d-zero}(a)) one has $\d^0(q\eta_\s(f))=
\d^0(q)\eta_\t(f)=0$, and by Lemma~\ref{bockstein-initial} (or,
actually, Lemma~\ref{bockstein-d-zero}(b)), there exists
a morphism $g\:\eta_{\s\t}(X'')\rarrow\eta_{\s\t}(Y)$ in
the category $\F_{\s\t}$ such that $q\eta_\s(f)=r_\t^0(g)$
in the group $\Hom_{\F_\s}(\eta_\s(X''),\eta_\s(Y))$.
 Finally, we have $z=q\eta_\s^n(b)=q\eta_\s(f)\eta_\s^n(b')=
r_\t^0(g)\eta_\s^n(b')=r_\t^n(g\eta_{\s\t}^n(b'))$ in
$\Ext^n_{\F_\s}(\eta_\s(X),\eta_\s(Y))$.
\end{proof}

\Section{The Matrix Factorization Construction}
\label{matrix-factor-construct-section}

\subsection{Posing the problem} \label{reduction-posing}
 Let $\F$ be an exact category endowed with a twist functor
(exact autoequivalence) $X\mpsto X(1)$.
 Let $\s$ be a morphism of endofunctors $\Id\rarrow(1)$ on
the category $\F$ commuting with the twist functor
$(1)\:\F\rarrow\F$ (see Subsection~\ref{conventions} for
the precise definitions and discussion).

 Let $\E$ be another exact category endowed with a twist functor
$(1)\:\E\rarrow\E$.
 Suppose that we are given an exact functor $\pi\:\F\rarrow\E$
commuting with the twists on $\F$ and $\E$, and that the following
conditions are satisfied:
\begin{enumerate}
\renewcommand{\theenumi}{\roman{enumi}}
\setcounter{enumi}{4}
\item the functor~$\pi$ is exact-conservative;
\item the functor~$\pi$ takes all the morphisms
$\s_X\:X\rarrow X(1)$ in the category $\F$ to zero
morphisms in the category~$\E$;
\item any morphism in the category $\F$ annihilated by
the functor~$\pi$ is divisible by the natural transformation~$\s$;
\item for any object $X\in\F$, the morphism $\s_X\:X\rarrow X(1)$ 
is monic and epic; in other words, no nonzero morphism
in the category $\F$ is annihilated by the natural transformation~$\s$.
\end{enumerate} 

 The conditions~(vi) and~(vii) taken together can be restated
by saying that a morphism in the category $\F$ is annihilated
by the functor~$\pi$ if and only if it is divisible by
the natural transformation~$\s$.
 The conditions~(vii) and~(viii) taken together can be reformulated
by saying that any morphism in the category $\F$ annihilated
by the functor~$\pi$ is uniquely divisible by the natural
transformation~$\s$.
 Taken together, these are simply a restament of
the conditions~(\i\i\i\+\i\i\i\i) of Subsection~\ref{bockstein-toy}
(notice, however, that we also presume the exact-conservativity of
our functor~$\pi$ here, and most importantly, do \emph{not} assume
the conditions~(i\+ii)).

 Our goal in this section is to construct an exact category $\G$
which we will call the \emph{reduction of the exact category $\F$
by the natural transformation~$\s$ taken on the background of
the functor~$\pi$}.
 The category $\G$ comes endowed with exact-conservative functors
$\gamma\:\F\rarrow\G$ and $\eps\:\G\rarrow\E$ whose
composition~$\eps\gamma$ is identified with~$\pi$.
 The functor~$\gamma$ annihilates all the morphisms~$\s_X$, while
the functor~$\eps$ reflects zero morphisms (i.~e., it is faithful).
 The category $\G$ is also endowed with a twist functor
$(1)\:\G\rarrow\G$, and the functors~$\gamma$ and~$\eps$ commute
with the twists.

 The Ext groups computed in the categories $\F$ and $\G$ are
related by the following Bockstein long exact sequence
\begin{alignat*}{3}
 0&\lrarrow\Hom_\F(X,Y(-1))&&\lrarrow\Hom_\F(X,Y)&&\lrarrow
 \Hom_\G(\gamma(X),\gamma(Y)) \\
 &\lrarrow \Ext^1_\F(X,Y(-1))&&\lrarrow\Ext^1_\F(X,Y)&&\lrarrow
 \Ext^1_\G(\gamma(X),\gamma(Y)) \\
 &\lrarrow \Ext^2_\F(X,Y(-1))&&\lrarrow\Ext^2_\F(X,Y)&&\lrarrow
 \Ext^2_\G(\gamma(X),\gamma(Y)) \lrarrow\dotsb
\end{alignat*}
for any two objects $X$, $Y\in\F$ (cf.~\cite[Subsection~4.1]{Partin}).

 Here the map $\s_n\:\Ext^n_\F(X,Y(-1))\rarrow\Ext^n_\F(X,Y)$
is provided by the composition with the morphism $\s_{Y(-1)}\:
Y(-1)\rarrow Y$ (or, equivalently, the twist by~$(1)$ and
the composition with the morphism $\s_X\:X\rarrow X(1)$)
in the category~$\F$.
 The map $\gamma^n\:\Ext^n_\F(X,Y)\rarrow\Ext^n_\G(\gamma(X),\gamma(Y))$
is induced by the exact functor $\gamma\:\F\rarrow\G$.
 Finally, the boundary map $\d^n\:\Ext^n_\G(\gamma(X),\gamma(Y))
\rarrow\Ext^{n+1}_\F(X,Y(-1))$ is defined by the construction
of Subsections~\ref{third-term-bockstein}\+-%
\ref{higher-bockstein-construction} and satisfies the equation
$$
 \d^{i+n+j}(\gamma^i(a)z\gamma^j(b))=(-1)^ia(-1)\d^n(z)b
$$
for any objects $U$, $X$, $Y$, $V\in\F$ and any Ext classes
$b\in\Ext^j_\F(U,X)$, \ $z\in\Ext^n_\G(\gamma(X),\gamma(Y))$, and
$a\in\Ext^i_\F(Y,V)$.

\subsection{Examples} \label{reduction-examples}
 The following examples illustrate the ways of choosing 
a background exact-conservative functor~$\pi$ satisfying
the conditions~(vi\+vii) for a given natural transformation
$\s\:\Id\rarrow(1)$ on an exact category~$\F$.

\begin{ex}  \label{pro-finite-group-reduction-example}
 (a)~Let $G$ be a finite group and $m=l^r$ be a prime power.
 As in Example~\ref{finite-group-integral-integral-finite-example},
let $\F=\F_{\Z_l}^G$ be the exact category of finitely generated free
$\Z_l$\+modules with an action of~$G$.
 Set the twist $(1)\:\F\rarrow\F$ to be the identity functor
and the natural transformation $\s\:\Id_\F\rarrow\Id_\F$ to act
by the multiplication with~$m$.

 Let $\E$ be the category of finitely generated free $\Z/m$\+modules
with the split exact category structure, and $\pi\:\F\rarrow\E$
be the functor taking a finitely generated free $\Z_l$\+module $M$
with an action of $G$ to the $\Z/m$\+module $M/mM$.
 Then $\pi$~is an exact-conservative functor satisfying
the conditions~(vi\+vii) for the center element~$\s$ in~$\F$.

\medskip
 (b)~More generally, let $G$ be a profinite group.
 As in the introduction, let $\F^+=\F_{\Z_l}^G{}^+$ be the exact
category of $l$\+divisible $l^\infty$\+torsion ($l$\+primary)
abelian groups with a discrete action of~$G$.
 Set the twist~$(1)$ on $\F^+$ to be the identity functor and
the natural transformation~$\s$ to act by the multiplication with~$m$.

 Let $\E^+$ be the category of free $\Z/m$\+modules with the split
exact category structure, and $\pi\:\F^+\rarrow\E^+$ be the functor
taking an $l$\+divisible $l^\infty$\+torsion abelian group $M$ with
a discrete action of $G$ to the $\Z/m$\+module ${}_mM$ of all
the elements annihilated by~$m$ in~$M$.
 Then $\pi$~is an exact-conservative functor satisfying
the conditions~(vi\+vii) for the center element~$\s$ in~$\F^+$
(which satisfies the condition~(viii)).
 (See Subsection~\ref{reduction-rep-categories} below for further
discussion of this example.)
\end{ex}

\begin{ex} \label{associative-ring-reduction-example}
 (a)~The following example was also mentioned in the introduction
already.
 Let $R$ be an associative ring and $s\in R$ be a nonzero-dividing
central element.
 One can assume that $R$ is $s$\+complete, i.~e., the natural map
$R\rarrow\varprojlim_n R/s^nR$ is an isomorphism.
 Let $\F$ be the exact category of all $s$\+complete left $R$\+modules
without $s$\+torsion, i.~e., left $R$\+modules $M$ such that the map
$s\:M\rarrow M$ is injective and the natural map $M\rarrow\varprojlim_n
M/s^nM$ is an isomorphism.
 For $R$\+modules satisfying the former condition, the latter one is
equivalent to the condition that $M$ is an $s$\+contramodule, which
means that the $R$\+modules $\Hom_R(R[s^{-1}],M)$ and
$\Ext_R^1(R[s^{-1}],M)$ vanish~\cite[Theorem~2.4]{Pcta}.
 The exact category structure on $\F$ is inherited from the abelian
category of left $R$\+modules, in which $\F$ is a full subcategory closed
under extensions and kernels~\cite[Theorem~1.2(a)]{Pcta}.

 Let the twist $(1)$ on $\F$ be the identity functor and the natural
transformation~$\s$ act by the multiplication with~$s$.
 Let $\E$ be the abelian category of abelian groups and $\pi\:\F\rarrow\E$
be the functor assigning the abelian group $M/sM$ to an $R$\+module
$M\in\nobreak\F$.
 The functor~$\pi$ is exact and exact-conservative due to the conditions
of $s$\+torsion-freeness and $s$\+completeness imposed on
the $R$\+modules~$M$ (as one can show using the facts that the class of
$s$\+contramodule $R$\+modules is closed under the kernels and cokernels
in the category of $R$\+modules~\cite[Theorem~1.2(a)]{Pcta}) and
$P/sP=0$ implies $P=0$ for an $s$\+contramodule~$P$).
 The functor~$\pi$ also satisfies the conditions~(vi\+vii) for
the natural transformation~$\s$ (which satisfies the condition~(viii)).

\medskip
 (b)~Let $S$ be a filtered associative ring, $\dotsb\subset F^2S\subset
F^1S\subset F^0S\subset F^{-1}S\subset F^{-2}S\subset\dotsb\subset S$,
such that $1\in F^0S$, \ $F^iS\cdot F^jS\subset F^{i+j}S$ for all $i$,
$j\in\Z$, and $S=\bigcup_{n\to-\infty}F^nS=
\varprojlim_{n\to+\infty}S/F^nS$.
 Let $\F$ be the category of filtered left $S$\+modules, $\dotsb\subset
F^1N\subset F^0N\subset F^{-1}N\subset\dotsb\subset N$, such that
$F^iS\cdot F^jN\subset F^{i+j}N$ and $N=\bigcup_{n\to-\infty}F^nN=
\varprojlim_{n\to+\infty}N/F^nN$.
 The category $\F$ has a natural exact category structure in which
a short sequence with zero composition is exact if and only if
the induced short sequence of the associated graded groups/modules
$\gr_FN=\bigoplus_nF^nN/F^{n+1}N$ is.

 Let the twist functor $(1)\:\F\rarrow\F$ take a filtered $S$\+module
$(N,F)$ to the same $S$\+module $N$ with the shifted filtration
$F(1)^nN=F^{n-1}N$.
 The natural transformation $\s\:(N,F)\rarrow(N,F(1))$ acts by
the identity maps $N\rarrow N$, which are filtered $S$\+module morphisms
because $F^nN\subset F(1)^nN\subset N$.
 Let $\E$ be the abelian category of $\Z$\+graded abelian groups, and
let $\pi\:\F\rarrow\E$ be the exact functor taking a filtered $S$\+module
$(N,F)$ to the graded abelian group~$\gr_FN$.
 Then $\pi$~is an exact-conservative functor satisfying
the conditions~(vi\+vii) for the natural transformation
$\s\:\Id_\F\rarrow(1)$ (which satisfies the condition~(viii)).

\medskip
 Example~\ref{associative-ring-reduction-example}(b) is ``almost''
a particular case of
Example~\ref{associative-ring-reduction-example}(a):
assuming an additional $\Z$\+grading on the ring and all the modules
and abelian groups in~(a), one can describe~(b) as a particular case
of~(a) by passing to the Rees ring $R=\bigoplus_{n\in\Z}F^nS$ and
the Rees modules $M=\bigoplus_{n\in\Z}F^nN$.
 The element $s\in F^{-1}S\subset R$ of grading~$-1$ correponds to
the unit element $1\in F^0S\subset F^{-1}S$ viewed as an element of
$F^{-1}S$.
\end{ex}

\begin{ex}  \label{filtered-exact-associated-graded-reduction-example}
 (a)~Let $C$ be a coassociative coalgebra over a field~$k$ endowed
with a comultiplicative increasing filtration $0=F_{-1}C\subset F_0C
\subset F_1C\subset F_2C\subset\dotsb$.
 Set $F^{-n}C=F_nC$, and let $\F$ be the exact category of
finite-dimensional left $C$\+comodules $M$ endowed with finite
decreasing filtrations $\dotsb\supset F^{n-1}M\supset F^nM\supset
F^{n+1}M\supset\dotsb$, \,$F^{-n}M=M$ and $F^nM=0$ for $n\gg0$,
compatible with the filtration $F$ on~$C$.
 
 Let the twist functor $(1)\:\F\rarrow\F$ take a filtered
$C$\+comodule $(M,F)$ to the same $C$\+comodule $M$ with
the shifted filtration $F(1)^nM=F^{n-1}M$, and let
$\s_{(M,F)}\:(M,F)\rarrow(M,F(1))$ be the identity map $M\rarrow M$
viewed as a morphism of filtered comodules.

 Let $\E$ be the category of finite-dimensional graded $k$\+vector
spaces (with the split exact category structure), and let
$\pi\:\F\rarrow\E$ be the functor taking a filtered $C$\+comodule
$(M,F)$ to its associated graded vector space $\bigoplus_n
F^nM/F^{n+1}M$.
 Then $\pi$~is an exact-conservative functor satisfying
the conditions~(vi\+vii) for the natural transformation
$\s\:\Id\rarrow(1)$ on~$\F$.

\medskip
 In particular, when the coalgebra $C$ is endowed with a coaugmentation
(coalgebra morphism) $k\rarrow C$ and $F$ is the filtration by
the kernels of iterated comultiplication maps $F_nC =
\ker(C\to (C/k)^{\ot n+1})$ \cite[Section~2]{Partin}, this example is
naturally generalized to the next example~(b).
 Specifically, consider the abelian category $\A$ 
of left $C$\+comodules and the exact functor $\phi\:\E_0\rarrow\A$
of fully faithful embedding of the split exact category $\E_0$ of
finite-dimensional $k$\+vector spaces endowed with trivial
$C$\+comodule structures.
 Then the following filtered exact category construction yields
the above exact category $\F$ of filtered $C$\+comodules.

\medskip
 (b)~Let $\A$ and $\E_0$ be exact categories and $\phi\:\E_0\rarrow\A$
be an exact functor.
 Set $\F$ to be the category whose objects are the triples $(M,Q,q)$,
where $M=(M,F)$ is a finitely filtered object of the exact
category~$\A$, \ $Q=(Q_i)$ is a finitely supported object of
the Cartesian product $\prod_{i\in\Z}\E_0$, and $q$~is an isomorphism
$\gr_FM\simeq\phi(Q)$ of graded objects in~$\A$
\cite[Section~3 and Examples~A.5(4\+5)]{Partin}.

 The category $\F$ is endowed with an exact category structure in
which a short sequence with zero composition is exact if the related
short sequence of the objects $Q_i$ is exact in $\E_0$ for each
$i\in\Z$.
 The twist functor $(1)\:\F\rarrow\F$ shifts the indices of both
the filtration $F$ on $M$ and the sequence of objects~$Q_i$, and
the natural transformation $\s\:\Id_\F\rarrow(1)$ acts by identity
on the underlying objects $M\in\A$ of filtered objects $(M,F)$
and by zero on the graded objects $Q\in\prod_{i\in\Z}\E_0$.

 Let $\E$ be the full exact subcategory of finitely supported
objects in $\prod_{i\in\Z}\E_0$ and $\pi\:\F\rarrow\E$ be
the functor taking a triple $(M,Q,q)$ to the graded object~$Q$.
 Then $\pi$~is an exact-conservative functor satisfying
the condition~(vi) for the graded center element~$\s$
in~$\F$ (cf.~\cite[Section~4]{Partin}).
 When the functor $\phi\:\E_0\rarrow\A$ is fully faithful,
the conditions~(vii\+viii) are also
satisfied~\cite[Example~4.1]{Partin}.
\end{ex}

 The categorical reduction construction of this section applied
to the exact categories, natural transformations, and
background exact-conservative functors in
Example~\ref{pro-finite-group-reduction-example}(a\+b), produces
the exact categories $\G=\F_{\Z/m}^G$ and $\G^+=\F_{\Z/m}^{G\,+}$
of finitely and infinitely generated free $\Z/m$\+modules with
a (discrete) action of~$G$ (cf.\
Proposition~\ref{unfiltered-reduction-prop} below).
 The same construction in
Example~\ref{associative-ring-reduction-example}(a) produces the abelian
category $\G$ of left $R/(s)$\+modules, in
Example~\ref{associative-ring-reduction-example}(b) it leads to
the abelian category $\G$ of graded modules over the graded ring
$\gr_FS$, and in
Example~\ref{filtered-exact-associated-graded-reduction-example}(a) it
produces the abelian category $\G$ of finite-dimensional graded
comodules over the graded coalgebra $\gr_FC=\bigoplus_n F_nC/F_{n-1}C$. 

 We are not aware of any alternative way to define or produce
the exact category $\G$ that one obtains as the output of
the categorical reduction construction applied in the case of
Example~\ref{filtered-exact-associated-graded-reduction-example}(b)
in general (even when, e.~g., $\E_0$ is a split exact category and
$\A$ is an abelian category).

\begin{ex} \label{filtered-permutational-coefficient-reduction-example}
 The following example appears in connection with coefficient
reduction and Bockstein sequences in exact categories of
Artin--Tate motives with finite coefficients.
 Let $G$ be a profinite group, $k$~be a complete Noetherian local
ring, and $c\:G\rarrow k^*$ be a continuous group homomorphism;
set $k(n)$ to denote the free $k$\+module $k$ in which $G$ acts
by the character~$c^n$.

 For any integer~$n$, denote by $\E_n^+=\E_{k,\,n}^{G\,+}$ the category
of all injective discrete $k$\+modules $E$ with a discrete action
of $G$ for which the $G$\+module $E(-n)=k(-n)\ot_k E$ is permutational
(i.~e., isomorphic to a direct sum of $G$\+modules induced from
trivial representations of open subgroups $H\subset G$ in
injective discrete $k$\+modules).
 Endow the additive category $\E_n^+$ with the split exact
category structure.

 Let $\F^+_k=\F_k^{G\,+}$ denote the exact category of finitely
filtered discrete $G$\+modules $(M,F)$ over~$k$ whose associated
graded modules $\gr_F^nM=F^nM/F^{n+1}M$ belong to~$\E_n^+$
(cf.\ the construction of exact category $\F$ in
Example~\ref{filtered-exact-associated-graded-reduction-example}(b)).
 Set the twist $(1)\:\F^+_k\rarrow\F^+_k$ to be the identity
functor and the natural transformation $\s\:\Id_{\F_k^+}\rarrow
\Id_{\F_k^+}$ to act by the multiplication with a fixed
nonzero-dividing, noninvertible element $s\in k$.

 The quotient ring $k/s=k/(s)$ is also a complete Noetherian local
ring, and the submodule ${}_sM$ of all elements annihilated by~$s$
in any injective discrete $k$\+module $M$ is an injective discrete
module over~$k/s$.
 Let ${}'\!\.\E^+_{k/s}$ denote the full subcategory of finitely
supported graded objects in $\prod_{n\in\Z}\E_{k/s,\,n}^{G\,+}$, and
let ${}''\!\.\E^+_{k/s}$ denote the category of injective discrete
modules over~$k/(s)$.
 Notice that the objects of ${}'\!\.\E^+_{k/s}$ are collections of
$c^n$\+twisted permutational $G$\+modules over~$k/s$, while
the objects of ${}''\!\.\E^+_{k/s}$ are simply $k/s$\+modules without
any action of~$G$.

 Endow both the additive categories ${}'\!\.\E^+_{k/s}$ and 
${}''\!\.\E^+_{k/s}$ with split exact category structures, and
denote by $\E^+_{k/s}$ their Cartesian product ${}'\!\.\E^+_{k/s}
\times{}''\!\.\E^+_{k/s}$.
 Let the functor $\pi\:\F_k^+\rarrow\E_{k/s}^+$ take a finitely
filtered discrete $G$\+module $(M,F)$ over~$k$ with 
twisted-permutational $k$\+injective $k$\+discrete
associated quotient modules $\gr^n_FM$ to the pair formed by
the collection of twisted-permutational representations
${}_s\.\gr^n_FM=\gr_F^n\,{}_sM\in\E_{k/s,\,n}^{G\,+}$
of the group $G$ over the ring~$k/s$ and the $k/s$\+module
${}_sM\in{}''\!\.\E^+_{k/s}$.  

 It is claimed that $\pi$~is an exact-conservative functor
satisfying the conditions~(vi\+vii) for the center element~$\s$
of the exact category~$\F$.
 The condition~(viii) is also satisfied.
\end{ex}

 In this example one would \emph{like} to obtain the exact category
$\F^+_{k/s}=\F_{k/s}^{G\,+}$ in the output of the reduction procedure
(implying, in particular, the Bockstein long exact sequences
relating the Ext groups in the exact categories $\F_k^+$
and $\F_{k/s}^+$), but this is not always true
(see Subsection~\ref{reduction-filtered-rep-category} below
for further details).

\subsection{Matrix factorizations}  \label{matrix-factorizations}
 In order to construct the desired exact category $\G$, consider
first the following category~$\widetilde\H$.
 The objects of $\widetilde\H$ are the diagrams $(U,V)$ of the form
$$
 V(-1)\lrarrow U\lrarrow V\lrarrow U(1)
$$
in the category $\F$, where the morphism $V\rarrow U(1)$
is obtained from the morphism $V(-1)\rarrow U$ by applying
the twist functor~$(1)$, while the two compositions
$V(-1)\rarrow U\rarrow V$ and $U\rarrow V\rarrow U(1)$ are
equal to the maps $\s_{V(-1)}$ and~$\s_U$, respectively.
 Morphisms $(U',V')\rarrow (U'',V'')$ in the category $\H$ are
the pairs of morphisms $U'\rarrow U''$ and $V'\rarrow V''$ in $\F$
making the whole diagram $(V'(-1)\to U'\to V'\to U'(1))\rarrow
(V''(-1)\to U''\to V''\to U''(1))$ commutative.

 Furthermore, consider the following full subcategory $\H\subset
\widetilde\H$.
 By the definition, a diagram $(U,V)\in\widetilde\H$ belongs to
the category $\H$ if the functor $\pi\:\F\rarrow\E$ (which, as
we recall, takes the morphisms $\s_{V(-1)}$ and~$\s_U$ in $\F$ to
zero morphisms in~$\E$) transforms it into an exact sequence
$\pi(V(-1))\rarrow \pi(U)\rarrow \pi(V)\rarrow \pi(U(1))$ in
the exact category~$\E$.
 The functor $\Delta\:\H\rarrow\E$ assigns to a diagram $(U,V)\in\H$
the image of the morphism $\pi(U)\rarrow\pi(V)$ in $\E$ (which is
well-defined due to the exactness condition imposed on the objects
of~$\H$).

 The category $\widetilde\H$ has a natural exact category structure
in which a short sequence of diagrams is exact if it is exact
in $\F$ at every term of the diagrams.
 The full subcategory $\H\subset\widetilde\H$ is closed under
extensions (since the full subcategory of exact sequences in $\E$ is
closed under extensions in the exact category of complexes in~$\E$);
so it inherits the induced exact category structure.
 The functor $\Delta\:\H\rarrow\E$ is an exact functor between
exact categories.

 When the category $\E$ is weakly idempotent complete
(``semi-saturated'') \cite{Neem}, \cite{Bueh},
\cite[Appendix~A]{Partin}, the full subcategory $\H\subset\widetilde\H$
is also closed under the operations of passage to the cokernels of
admissible monomorphisms and the kernels of admissible epimorphisms
(since so is the full subcategory of exact sequences in $\E$ in
the exact category of complexes in~$\E$).
 Hence, in this case, a morphism in $\H$ is an admissible monomorphism
(admissible epimorphism) if and only if it is an admissible monomorphism
(admissible epimorphism) in~$\widetilde\H$.

 For any exact category $\E$, a morphism of exact sequences in $\E$ is
an admissible epimorphism if and only if it is an admissible epimorphism
of complexes in $\E$ and the induced morphisms of the objects of
cocycles of the exact sequences are admissible epimorphisms in~$\E$.
 Thus a morphism in $\H$ is an admissible epimorpism if and only if
it is an admissible epimorphism in $\widetilde\H$ and applying
the functor~$\pi$ transforms it into a morphism of exact sequences
in $\E$ such that the induced morphisms of the objects of cocycles are
admissible epimorphisms in~$\E$.
 The admissible monomorphisms in $\H$ are described similarly.

 Let $\I\subset\H$ denote the ideal of morphisms in $\H$ annihilated
by $\Delta$.
 Consider the quotient category $\H/\I$ of the category $\H$ by
this ideal of morphisms, and let $\S\subset\H/\I$ denote
the multiplicative class of morphisms which the functor 
$\Delta\:\H/\I\rarrow\E$ transforms to isomorphisms in~$\E$.
 In fact, $\S$ is a saturated class of morphisms in~$\H/\I$.

\begin{lem} \label{admissible-epi-of-diagrams}
 Assume that the conditions~(v\+vi) of Subsection~\ref{reduction-posing}
hold.
 Let $(U,V)\rarrow(K,L)$ be a morphism in\/ $\H$ such that
$\Delta(U,V)\rarrow\Delta(K,L)$ is an admissible epimorphism in~$\E$.
 Consider the object $(L(-1),L)$ in $\H$ given by the diagram formed
by the morphisms\/ $\id_{L(-1)}\:L(-1)\rarrow L(-1)$, \
$\s_{L(-1)}\:L(-1)\rarrow L$, and\/ $\id_L\:L\rarrow L$.
 Then the morphism of diagrams $(U,V)\oplus(L(-1),L)\rarrow(K,L)$ is
an admissible epimorphism in the exact category\/ $\H$ naturally
isomorphic to the original morphism $(U,V)\rarrow(K,L)$ in
the category\/~$\H/\I$.
\end{lem}

\begin{proof}
 The object $(L(-1),L)\in\H$ is annihilated by the functor $\Delta$ by
the condition~(vi), so the object $(U,V)\oplus(L(-1),L)$ is isomorphic
to $(U,V)$ in $\H/\I$.
 In order to show that the morphism $(U,V)\oplus(L(-1),L)\rarrow(K,L)$
is an admissible epimorphism in $\H$, let us first check that it is
an admissible epimorphism in~$\widetilde\H$.

 A morphism of diagrams is an admissible epimorphism in $\widetilde\H$
if and only if it is an admissible epimorphism at every term of
the diagrams.
 The morphism $V\oplus L\rarrow L$ is the projection onto a direct
summand, so it remains to check that the morphism $U\oplus L(-1)\rarrow K$
is an admissible epimorphism.
 In view of the condition~(v), it suffices to show that the morphism
$\pi(U)\oplus\pi(L(-1))\rarrow\pi(K)$ is an admissible epimorphism
in~$\E$.

 The functor~$\pi$ transforms the diagrams $(U,V)$ and $(K,L)$ into
exact sequences in the category~$\E$, which we will denote by
$\dotsb\rarrow U^{-1}\rarrow U^0\rarrow U^1\rarrow U^2\rarrow\dotsb$ and
$\dotsb\rarrow K^{-1}\rarrow K^0\rarrow K^1\rarrow K^2\rarrow\dotsb$
(so $U^{-1}=\pi(V(-1))$, $U^0=\pi(U)$, $K^{-1}=\pi(L(-1))$,
$K^0=\pi(K)$, etc.).
 Let $T=\Delta(U,V)$ and $M=\Delta(K,L)$ denote the images of
the morphisms $U^0\rarrow U^1$ and $V^0\rarrow V^1$ in~$\E$.
 We have a morphism of complexes/exact sequences $U^\bu\rarrow K^\bu$
which, by assumption, induces an admissible epimorphism of the objects
of cocycles $T\rarrow M$.
 In this setting, let us show that $U^0\oplus K^{-1}\rarrow K^0$ is
an admissible epimorphism.

 Let $R$ and $N$ denote the images of the morphisms $U^{-1}\rarrow U^0$
and $K^{-1}\rarrow K^0$.
 We have short exact sequences $0\rarrow R\rarrow U^0\rarrow T\rarrow0$
and $0\rarrow N\rarrow K^0\rarrow M\rarrow0$ in the category $\E$,
and a morphism from the former short exact sequence to the latter one.
 Furthermore, we have a short exact sequence of morphisms $(0\to 0)
\rarrow(K^{-1}\to N)\rarrow (U^0\oplus K^{-1}\to K^0)\rarrow(U^0\to M)
\rarrow (0\to 0)$ in the category~$\E$.
 The morphism $U^0\rarrow M$ is the composition of admissible
epimorphisms $U^0\rarrow T\rarrow M$, hence also
an admissible epimorphism.
 The morphism $K^{-1}\rarrow N$ is an admissible epimorphism, too.
 Since the full subcategory of admissible epimorphisms is closed under
extensions is the exact category of morphisms in $\E$, the morphism
$U^0\oplus K^{-1}\rarrow K^0$ is an admissible epimorphism.

 We have shown that the morphism of diagrams $(U,V)\oplus(L(-1),L)
\rarrow(K,L)$ is an admissible epimorphism in~$\widetilde\H$.
 It remains to check that applying the functor~$\pi$ transforms it into
a morphism of exact sequences in $\E$ which induces admissible
epimorphisms on the objects of cocycles.
 The objects of cocycles of the exact sequence $K^\bu=\pi(K,L)$ are
$M$ and $N$ (and their twists), while the objects of cocycles of
the exact sequence $\pi((U,V)\oplus(L(-1),L))$ are $T$ and $R\oplus
K^{-1}$ (and their twists), respectively.
 The morphism $T\rarrow M$ is an admissible epimorphism by assumption,
and the morphism $R\oplus K^{-1}\rarrow N$ is an admissible epimorphism,
because the morphism $K^{-1}\rarrow N$ is.
\end{proof}

\begin{lem}  \label{ore-conditions}
 Assuming the conditions~(v\+vi) of Subsection~\ref{reduction-posing},
the class of morphisms $\S$ is localizing in the category $\H/\I$
(i.~e., it satisfies the left and right Ore conditions).
\end{lem}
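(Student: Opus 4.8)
The class $\S$ contains all identities and is closed under composition because $\Delta$ is a functor and isomorphisms compose; so what must be verified are the left and right Ore "completion" conditions together with the two cancellation conditions. The plan is to dispose of the cancellation conditions for free, reduce to one of the Ore conditions by duality, and then carry out a single explicit "matrix‑factorization pullback" construction.

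First I would observe that $\Delta\colon\H/\I\rarrow\E$ is faithful, this being exactly what the definition of $\I$ says. Hence if $s\in\S$ and $sf=sg$ in $\H/\I$, then $\Delta(s)\Delta(f)=\Delta(s)\Delta(g)$ with $\Delta(s)$ invertible, so $\Delta(f)=\Delta(g)$ and therefore $f=g$ in $\H/\I$; the left cancellation is dual. Thus both cancellation conditions hold with the identity witnessing them. Next I would note that the whole input $(\F,\s,\pi,\E)$ together with the hypotheses (v\+vi) is self‑dual — pass to the opposite exact categories, replace the twist functor $(1)$ by its inverse, and replace $\s$ by the induced natural transformation $\Id\rarrow(-1)$ (which is again a central element, and $\pi^{\mathrm{op}}$ is again exact‑conservative annihilating it). The construction of $\widetilde\H$, $\H$, $\Delta$, $\I$, $\S$ is carried into itself by this duality (a matrix factorization diagram goes to a matrix factorization diagram, and "image of the middle arrow" is a self‑dual notion), so it suffices to verify the right Ore condition: given morphisms $\phi\colon B\rarrow C$ and $\psi\colon A\rarrow C$ in $\H$ with $\Delta(\psi)$ an isomorphism, produce $P\in\H$, a morphism $\chi\colon P\rarrow B$ with $\Delta(\chi)$ an isomorphism, and a morphism $\rho\colon P\rarrow A$ with $\phi\chi=\psi\rho$ in $\H/\I$.

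The key device is a $\Delta$\+trivial "fattening" object tailored to $C$. Writing the diagram underlying $C=(U_C,V_C)$ as $V_C(-1)\xrightarrow{a}U_C\xrightarrow{b}V_C\xrightarrow{a(1)}U_C(1)$, I would take $Z=(V_C(-1),V_C)$ with structure maps $\id_{V_C(-1)}$ and $\s_{V_C(-1)}$. Since $\pi(\id)$ is an isomorphism, the $\pi$\+image of the four‑term diagram of $Z$ is exact, so $Z\in\H$, and $\Delta(Z)=\mathrm{im}\,\pi(\s_{V_C(-1)})=0$ by~(vi). The pair $(a,\id_{V_C})$ defines a morphism $\xi\colon Z\rarrow C$ in $\H$ (the two commutativity squares are the tautology $a=a$ and the identity $b\circ a=\s_{V_C(-1)}$). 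Now form $\Psi=\bigl((\psi,\xi),\,-\phi\bigr)\colon A\oplus Z\oplus B\rarrow C$ in $\H$. Its $V$\+component has $\id_{V_C}$ as its restriction to a direct summand, so it is an admissible epimorphism in $\F$. Its $U$\+component is $(\psi_U,a,-\phi_U)\colon U_A\oplus V_C(-1)\oplus U_B\rarrow U_C$, and I claim it too is an admissible epimorphism; by~(v) it is enough to check this after applying $\pi$. Here the exactness defining membership of $C$ in $\H$ gives a short exact sequence $0\rarrow\mathrm{im}\,\pi(a)\rarrow\pi(U_C)\xrightarrow{q}\Delta(C)\rarrow0$ identifying $\Delta(C)$ with $\mathrm{coker}\,\pi(a)$; and $q\circ\pi(\psi_U)$ equals the composite $\pi(U_A)\twoheadrightarrow\Delta(A)\xrightarrow{\Delta(\psi)}\Delta(C)$, which is an admissible epimorphism because $\Delta(\psi)$ is invertible, while $\pi(a)$ is an admissible epimorphism onto $\ker q$. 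A routine diagram chase in $\E$ (the standard exact‑category fact that a morphism inducing the identity on a common quotient and an admissible epimorphism on the kernels is an admissible epimorphism) then finishes the claim. I expect this verification — that the $U$\+component becomes admissible — to be the only genuine technical point, and it is precisely where condition~(v) and the exactness built into $\H$ are used.

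With $\Psi$ an admissible epimorphism in $\widetilde\H$, hence in $\H$ (as $\H$ is closed under kernels of admissible epimorphisms), let $P=\ker\Psi$, so $0\rarrow P\rarrow A\oplus Z\oplus B\xrightarrow{\Psi}C\rarrow0$ is exact in $\H$; let $\rho,\zeta,\chi$ be the admissible monomorphism $P\rarrow A\oplus Z\oplus B$ followed by the three coordinate projections. Applying the exact functor $\Delta$ and using $\Delta(Z)=0$ yields a short exact sequence $0\rarrow\Delta(P)\rarrow\Delta(A)\oplus\Delta(B)\xrightarrow{(\Delta\psi,\,-\Delta\phi)}\Delta(C)\rarrow0$ in $\E$; since $\Delta(\psi)$ is invertible, the composite $\Delta(P)\rarrow\Delta(A)\oplus\Delta(B)\rarrow\Delta(B)$, which is $\Delta(\chi)$, is an isomorphism, so $\chi\in\S$. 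Finally, on $P$ we have $\psi\rho+\xi\zeta-\phi\chi=\Psi|_P=0$, while $\xi\zeta$ factors through $Z$ and hence is annihilated by $\Delta$, i.e.\ lies in $\I$; therefore $\phi\chi=\psi\rho$ in $\H/\I$, proving the right Ore condition. The left Ore condition then follows by the dual construction (fattening by a $\Delta$\+trivial object of the dual shape and taking the cokernel of an admissible monomorphism), which is covered by the self‑duality noted above.
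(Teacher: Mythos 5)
Your proof is correct and takes essentially the same route as the paper's: your auxiliary object $Z=(V_C(-1),V_C)$ is the paper's $(L(-1),L)$, your $P=\ker\Psi$ is literally the paper's $(S\sqcap_K(U\oplus L(-1))\;T\oplus V)$ (which the paper itself identifies as the kernel of the same admissible epimorphism $(S,T)\oplus(U,V)\oplus(L(-1),L)\rarrow(K,L)$), and the verification that the $U$\+component becomes an admissible epimorphism after applying~$\pi$ is the paper's key step as well. The only differences are organizational: you make the commutativity modulo $\I$ explicit via the $\xi\zeta$ term and spell out the duality reduction, both of which the paper leaves implicit.
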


\begin{proof}
 The argument essentially follows that in~\cite[Subsection~4.2]{Partin}.
 It is clear from the definitions of the classes $\I$ and $\S$ that
if any two morphisms $X\birarrow Y$ in $\H/\I$ have equal
compositions with a morphism $X'\rarrow X$ or $Y\rarrow Y'$
belonging to $\S$, then these two morphisms $X\birarrow Y$ are equal
to each other in~$\H/\I$.

 Let $(S,T)\rarrow(K,L)\larrow(U,V)$ be a pair of morphisms in $\H$
such that $\Delta((U,V)\allowbreak\rarrow(K,L))$ is an admissible 
epimorphism in~$\E$.
 Consider the object $(U',V')=(U,V)\oplus(L(-1),L)\in\H$.
 By Lemma~\ref{admissible-epi-of-diagrams}, the induced morphism
$(U',V')\rarrow(K,L)$ is an admissible epimorphism in $\H$ isomorphic
to the morphism $(U,V)\rarrow(K,L)$ in the quotient category~$\H/\I$.

 Let $(P,Q)$ be the fibered product of the objects $(S,T)$ and $(U',V')$
over $(K,L)$ in the category~$\H$.
 Since $\H\subset\widetilde\H$ is a full subcategory closed under
extensions and $(U',V')\rarrow(K,L)$ is an admissible epimorphism,
this is the same thing as the fibered product in the category
$\widetilde\H$, that is, the termwise fibered product of the diagrams.

 The square diagram of diagrams formed by the morphisms $(S,T)\rarrow
(K,L) \larrow(U',V')$ and $(S,T)\larrow(P,Q)\rarrow(U',V')$ is
commutative in~$\H$.
 Dropping the direct summand $(L(-1),L)$ annihilated by
the functor $\Delta$, one can transform it into a square diagram
$(S,T)\rarrow(K,L)\larrow(U,V)$, \ $(S,T)\larrow(P,Q)\rarrow(U,V)$
in the category $\H$ which is commutative modulo~$\I$.
 
 Since the functor $\Delta$ is exact, the object $\Delta(P,Q)$ is
the fibered product of $\Delta(S,T)$ and $\Delta(U',V')=\Delta(U,V)$
over $\Delta(K,L)$.
 In particular, if the morphism $(U,V)\rarrow(K,L)$ belongs to $\S$,
then so does the morphism $(P,Q)\rarrow(S,T)$.
 This proves a half of the Ore conditions, and the dual half can be
proven in the dual way.
\end{proof}

\subsection{Exact category structure} \label{exact-structure}
 We define the category $\G$ as the localization $(\H/\I)[\S^{-1}]$.
 By Lemma~\ref{ore-conditions}, $\G$ is an additive category and
the localization $\H\rarrow\G$ is an additive functor.
 The twist functor $(1)\:\G\allowbreak\rarrow\G$ is induced by
the obvious twist functor $(U,V)\mpsto(U(1),V(1))$ on the category~$\H$.
 The functor $\gamma\:\F\rarrow\G$ assigns to an object $X\in\F$
the diagram $(X,X)$ with the identity morphism $X\rarrow X$
in the middle.
 The functor $\eps\:\G\rarrow\E$ is induced by the functor~$\Delta$.

 The functor $\eps\:\G\rarrow\E$ is faithful, because the functor
$\Delta\:\H/\I\rarrow\E$ is faithful by the definition of~$\I$.
 Consequently, the additive functor~$\eps$ reflects monic and epic
morphisms (i.~e., any morphism in $\G$ whose image under~$\eps$ is monic
or epic in $\E$ is, respectively, monic or epic in~$\G$).
 The functor~$\eps$ is also conservative (reflects isomorphisms), since
the morphisms taken to isomorphisms in $\E$ by the functor $\Delta$
have been inverted in~$\G$.

\begin{rem} \label{morphisms-up-to-isomorphism}
 By construction, morphisms in the category $\G$ are represented by
either left or right fractions of morphisms in $\H/\I$, with
an arbitrary morphism in $\H/\I$ in the numerator and a morphism
from $\S$ in the denominator.
 However, in some contexts it suffices to consider a morphism in
the category $\G$ \emph{up to isomorphism in the category of
morphisms in~$\G$}.
 From this point of view, since the morphisms from $\S$ become
isomorphisms in $\G$, one observes that any morphism in $\G$ is
\emph{isomorphic} (though not equal) to a morphism coming from
a morphism in $\H/\I$.

 Moreover, since one can choose between representing morphisms in $\G$
by left or right fractions, the same observation holds when one of
the ends of a morphism is fixed and the other one is viewed up
to isomorphism.
 E.~g., let $(U,V)$ be an object of~$\H/\I$.
 Then for any morphism $g\:(X,Y)\rarrow(U,V)$ in $\G$ there
exists an isomorphism $(X',Y')\simeq(X,Y)$ in $\G$ such that
the composition $g'\:(X',Y')\rarrow(U,V)$ comes from a morphism
$h'\:(X',Y')\rarrow(U,V)$ in the category~$\H/\I$ (which, in turn,
comes from a morphism~$h''$ in the category~$\H$).
 In this sense we will say that, being allowed to replace the object
in the source by an isomorphic one, we can represent a morphism~$g$
in the category $\G$ by a morphism~$h''$ in the category~$\H$.
\end{rem}

\begin{lem}
 In the assumption of the conditions~(v\+viii) from
Subsection~\ref{reduction-posing},
the rule according to which a short sequence in the category $\G$
is said to be exact if its image under the functor~$\eps$ is exact
in $\E$ defines an exact category structure on~$\G$.
 Moreover, a morphism is an admissible monomorphism (resp.,
admissible epimorphism) in $\G$ if and only if its image
under~$\eps$ is an admissible monomorphism (resp., admissible
epimorphism) in~$\E$.
\end{lem}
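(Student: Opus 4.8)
The plan is to take the class of short sequences declared exact in the statement as the exact structure on $\G$ and to verify Quillen's axioms directly, using two inputs: that $\eps$ is faithful (it reflects zero morphisms, as noted above), and that $\Delta\:\H\rarrow\E$ is an exact functor between exact categories, together with the calculus of fractions furnished by Lemma~\ref{ore-conditions}. Throughout one uses that $\Delta$ factors as $\H\rarrow\G\rarrow\E$ (through $\H\rarrow\H/\I\rarrow\G$ and $\eps$), so that a morphism of $\H$ becomes invertible in $\G$ exactly when its image in $\E$ is an isomorphism, and that for $L\in\F$ the object $(L(-1),L)$ of $\H$ satisfies $\Delta(L(-1),L)=0$ (using condition~(vi)), hence is a zero object of $\G$.

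The core of the argument will be a \emph{creation} statement: if $p\:B\rarrow C$ is a morphism in $\G$ whose image $\eps(p)$ is an admissible epimorphism in $\E$, then there is a morphism $i\:A\rarrow B$ in $\G$ making $A\rarrow B\rarrow C$ a kernel--cokernel pair in $\G$ whose image $\eps(A)\rarrow\eps(B)\rarrow\eps(C)$ is a short exact sequence in $\E$; and dually for admissible monomorphisms. To prove it I would first represent $p$ as a fraction using Lemma~\ref{ore-conditions}, lifting the numerator along the full quotient functor $\H\rarrow\H/\I$ to a morphism of $\H$; since the intermediate object is isomorphic to $B$ in $\G$ and the lifted morphism has admissible-epimorphic image under $\Delta$, this reduces to the case where $p$ itself is an honest morphism of $\H$ with $\Delta(p)$ an admissible epimorphism. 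Then, adjoining to the source a direct summand of the form $(L(-1),L)$ and correcting $p$ by a morphism assembled from the factorizations of $\s$ — a manipulation parallel to that in the proof of Lemma~\ref{ore-conditions} — one replaces $p$ by a morphism $\hat p\:\hat B\rarrow C$ of $\H$, still isomorphic to $p$ in $\G$, whose two underlying maps in $\F$ are admissible epimorphisms; by exact-conservativity of $\pi$ (condition~(v)) this makes $\hat p$ an admissible epimorphism of $\H$. As $\H$ is closed in $\widetilde\H$ under kernels of admissible epimorphisms, $\hat p$ has a kernel there, yielding a short exact sequence $0\rarrow A\rarrow\hat B\rarrow C\rarrow 0$ in $\H$; applying the exact functor $\Delta$ then produces the required short exact sequence in $\E$.

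Next I would check that $A\rarrow\hat B\rarrow C$ is a kernel--cokernel pair in $\G$: the composition vanishes because $\eps$ is faithful and kills it, and for a test morphism $g$ into $\hat B$ (dually, out of $C$) whose relevant composition is zero, one represents $g$ as a fraction, factors $\eps(g)$ through $\eps(A)=\ker\eps(\hat p)$ (dually, through $\eps(C)=\operatorname{coker}\eps(\hat p)$) in $\E$, and lifts the factoring morphism back to $\G$. Granting the creation statement, the last sentence of the lemma is immediate — one implication is the definition of the exact structure, the other is creation — and the Quillen axioms follow routinely: closure under isomorphism and exactness of split sequences are clear; identities are admissible monomorphisms and epimorphisms; a composition of admissible epimorphisms (resp.\ monomorphisms) in $\G$ has admissible-epimorphic (resp.\ monomorphic) image under $\eps$ and, being again exhibited as a cokernel (resp.\ kernel) by creation, is admissible in $\G$; and the pushout of an admissible monomorphism along an arbitrary morphism of $\G$, as well as the pullback of an admissible epimorphism, are obtained by forming the corresponding universal square in $\widetilde\H$ — whose kernels, cokernels, pullbacks and pushouts of the relevant sort are computed termwise in $\F$ — checking via $\pi$ that it remains in $\H$, passing to $\G$, and identifying its image under $\eps$ with the corresponding square in $\E$.

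The step I expect to be the main obstacle is the creation statement, and inside it the verification of the universal property of the kernel and cokernel in $\G$: one must show that a factorization available in $\E$ is realized, and realized uniquely, by a morphism of $\G$, and this is precisely where the matrix-factorization bookkeeping and the unique-divisibility hypotheses~(vii)--(viii) become indispensable — without them $\eps$ would fail to be faithful, so morphisms could not be recovered from their images in $\E$. By contrast, the reduction to honest admissible epimorphisms and monomorphisms of $\H$, and the checking of the remaining Quillen axioms, should be routine variations on the constructions already carried out in the proof of Lemma~\ref{ore-conditions}.
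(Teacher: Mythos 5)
Your overall strategy coincides with the paper's: represent a morphism whose image under $\eps$ is an admissible epimorphism by an honest morphism $(U,V)\rarrow(K,L)$ of $\H$, produce the kernel object $(P,Q)=(\ker(U\oplus L(-1)\to K),\,V)$ --- which is exactly the kernel of your $\hat p$ on $\hat B=(U,V)\oplus(L(-1),L)$ --- and handle the pullback/pushout axiom by the fibered-product construction already used for the Ore conditions. The gap sits in the step you yourself single out as the main obstacle, and the mechanism you propose for it fails as stated. To verify that $(P,Q)\rarrow(U,V)$ is a kernel in $\G$ you take a test morphism with vanishing composite, factor its image through $\eps(P,Q)=\ker\eps(\hat p)$ in $\E$, and ``lift the factoring morphism back to $\G$.'' But $\eps$ is faithful, not full: a morphism of $\E$ between objects coming from $\G$ need not itself come from $\G$, so there is nothing to lift along. (Faithfulness of $\eps$, incidentally, holds by the very definition of the ideal $\I$ and does not depend on (vii)--(viii); those conditions are not what makes morphisms ``recoverable from their images in $\E$.'')

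The factorization has to be produced inside $\H$ itself. Represent the test morphism by an honest $(X,Y)\rarrow(U,V)$ in $\H$; the hypothesis says $\Delta$ kills the composite to $(K,L)$, whence the composite $X\rarrow K\rarrow L$ in $\F$ is annihilated by $\pi$. Condition~(vii) makes it divisible by $\s$, so it factors as $X\rarrow L(-1)\xrightarrow{\s}L$; since $\s_{L(-1)}$ is precisely the composite $L(-1)\rarrow K\rarrow L$ of the matrix factorization, the difference $(X\to K)-(X\to L(-1)\to K)$ becomes zero after composing with $K\rarrow L$, hence after composing with $\s_K=(K\to L\to K(1))$, hence vanishes by~(viii). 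This exhibits $X\rarrow K$ as factoring through $L(-1)\rarrow K$, which is exactly what is needed to map $X$ into $P=\ker(U\oplus L(-1)\to K)$ and thereby lift $(X,Y)\rarrow(U,V)$ to $(X,Y)\rarrow(P,Q)$ in $\H$; uniqueness of the lift in $\G$ then does follow from faithfulness, and a dual difficulty with a dual resolution occurs for the cokernel. With this step supplied, the rest of your outline matches the paper's proof.
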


\begin{proof}
 We follow the argument in~\cite[Subsection~4.3]{Partin}.
 Consider a morphism $f$ in $\G$ such that $\eps(f)$ is
an admissible epimorphism in~$\E$.
 Then the morphism $f$~is epic in the category~$\G$.
 Represent~$f$ by a morphism $(U,V)\rarrow(K,L)$ in $\H$ (as explained
in Remark~\ref{morphisms-up-to-isomorphism}) and apply the construction
of Lemma~\ref{admissible-epi-of-diagrams} to obtain an admissible
epimorphism $(U',V')=(U,V)\oplus(L(-1),L)\rarrow(K,L)$
in the category~$\H$.

 Let $(M,N)=(\ker(U\oplus L(-1)\to K)\;V)$ denote the kernel of
the morphism $(U',V')\rarrow (K,L)$ in~$\H$.
 Then $(0,0)\rarrow (M,N)\rarrow(U',V')\rarrow(K,L)\rarrow(0,0)$ is
a short exact sequence in~$\H$.
 Since the functor $\Delta\:\H\rarrow\E$ is exact, the short sequence
$0\rarrow\Delta(M,N)\rarrow\Delta(U',V')=\Delta(U,V)\rarrow\Delta(K,L)
\rarrow0$ is exact in~$\E$.

 We also have a natural morphism $(M,N)\rarrow(U,V)$ in~$\H$.
 Its image~$g$ in $\G$ completes the morphism~$f$ to
a short sequence $0\rarrow(M,N)\rarrow(U,V)\rarrow(K,L)\rarrow0$
that is exact in $\G$ (in the sense of our definition;
i.~e., its image under~$\eps$ is exact in~$\E$).
 Let us check that the morphism~$g$ is the kernel of~$f$ in~$\G$.

 Any morphism with the target $(U,V)$ in $\G$ can be represented by
a morphism $(X,Y)\rarrow(U,V)$ in~$\H$.
 Assume that the composition $(X,Y)\rarrow(U,V)\rarrow(K,L)$ is
annihilated by~$\Delta$.
 Then the composition $X\rarrow K\rarrow L$ is annihilated by~$\pi$,
hence it follows from the conditions~(vii\+viii) that the morphism
$X\rarrow K$ factorizes through the morphism $L(-1)\rarrow K$ in~$\F$.
 In other words, this means that the composition $(X,Y)\rarrow(U,V)
\rarrow(K,L)$, viewed as a morphism in $\H$, factorizes through
the natural morphism $(L(-1),L)\rarrow(K,L)$.
 This allows to lift the morphism $(X,Y)\rarrow(U,V)$ to a morphism
$(X,Y)\rarrow(M,N)$ in~$\H$.

 The morphism~$g$ being monic in $\G$, the above lifting is
unique as a morphism in~$\G$; so $g$~is the kernel of~$f$ in~$\G$.
 Now for any morphism $(S,T)\rarrow(U,V)$ in $\G$ such that
the short sequence $0\rarrow\eps(S,T)\rarrow\eps(U,V)\rarrow\eps(K,L)
\rarrow0$ is exact in~$\E$, the induced morphism $(S,T)\rarrow(M,N)$
is transformed to an isomorphism by the functor~$\eps$.
 Hence $(S,T)\rarrow(M,N)$ is an isomorphism in $\G$ and $(S,T)\rarrow
(U,V)$ is a kernel of~$f$ in~$\G$.
 By the dual argument, the morphism~$f$ is a cokernel of $(S,T)\rarrow
(U,V)$.

 This suffices to check the axioms~Ex0--Ex1 and Ex3
from~\cite[Subsection~A.3]{Partin} for the category~$\G$;
it remains to prove~Ex2.
 Suppose that we are given a short exact sequence in $\G$; according
to the above, it can be represented, up to an isomorphism of short exact
sequences in $\G$, by a short exact sequence of the form
$(M,N)\rarrow(U',V')\rarrow(K,L)$ in~$\H$.
 Any morphism with the target $(K,L)$ in $\G$ can be represented by
a morphism $(X,Y)\rarrow(K,L)$ in~$\H$.

 As in the proof of Lemma~\ref{ore-conditions}, we consider the fibered
product $(P,Q)$ of the objects $(X,Y)$ and $(U',V')$ over $(K,L)$ 
in the category~$\H$.
 Then we have a commutative triangle $(M,N)\rarrow(P,Q)\rarrow(U',V')$
and a short exact sequence $(0,0)\rarrow(M,N)\rarrow(P,Q)\rarrow(X,Y)
\rarrow(0,0)$ in~$\H$.
 Since the functor $\Delta$ is exact, the image of any short exact
sequence in $\H$ is a short exact sequence in~$\G$.
 We have constructed the desired diagram in the category~$\G$, and
the exact category axioms are verified.
\end{proof}

 It follows immediately from the above description of the exact
category structure on $\G$ that the functor $\gamma\:\F\rarrow\G$
is exact and exact-conservative (since both the functors
$\eps\:\G\rarrow\E$ and $\pi=\eps\gamma\:\F\rarrow\E$ are).
 It is also clear that the functors~$\eps$ and~$\gamma$ commute
with the twists.
 The more advanced properties of our reduction construction are
discussed in the next subsection.

\subsection{Properties of the reduction functor}
\label{matrix-reduction-properties}
 The following lemma shows that the reduction functor $\gamma\:\F
\rarrow\G$ satisfies the conditions~(i\+ii) and~($*\.$\+-$\.**$)
from Subsection~\ref{exact-surjectivity} (and even a stronger form
of the last one of these).

\begin{lem}  \label{gamma-properties}
 The exact functor $\gamma\:\F\rarrow\G$ constructed above has
the following ``exact surjectivity'' properties:
\begin{enumerate}
\renewcommand{\theenumi}{\alph{enumi}$'$}
\item for any object $X\in\F$ and any admissible epimorphism
$T\rarrow\gamma(X)$ in $\G$ there exists an admissible epimorphism
$Z\rarrow X$ in $\F$ and a morphism $\gamma(Z)\rarrow T$ in $\G$
making the triangle diagram $\gamma(Z)\rarrow T\rarrow\gamma(X)$
commutative;
\item for any object $T\in\G$ there exists an object $U\in\F$
and an admissible epimorphism $\gamma(U)\rarrow T$ in~$\G$;
\item for any objects $X$, $Y\in\F$ and any morphism $\gamma(X)
\rarrow\gamma(Y)$ in $\G$ there exists an admissible epimorphism
$X'\rarrow X$ and a morphism $X'\rarrow Y$ in $\F$ making
the triangle diagram $\gamma(X')\rarrow\gamma(X)\rarrow\gamma(Y)$
commutative in~$\G$;
\item for any object $X\in\F$ and any morphism $\gamma(X)\rarrow T$
in $\G$ there exists a morphism $X\rarrow S$ in $\F$ and
an admissible epimorphism $\gamma(S)\rarrow T$ in $\G$ such that
the triangle diagram $\gamma(X)\rarrow\gamma(S)\rarrow T$
is commutative;
\end{enumerate}
as well as the properties~(a$\.''$\+d$\,''$) dual to~(a$\.'$\+d$\,'$).
\end{lem}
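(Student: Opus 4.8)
The plan is to deduce all eight assertions from two elementary facts about the categories $\H$ and~$\G$, and then feed the given morphisms into the two-sided calculus of fractions supplied by Lemma~\ref{ore-conditions}.

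The first fact is a description of the morphisms between an object of the form $\gamma(X)$ and an arbitrary object $(U,V)\in\H$ with middle arrow $b\:U\rarrow V$ in~$\F$. Using the matrix factorization relations defining $\widetilde\H$ together with the naturality of the natural transformation~$\s$, one checks that the assignments $\phi\mpsto(\phi,b\phi)$ and $\beta\mpsto(\beta b,\beta)$ define natural bijections
\begin{equation*}
 \Hom_\H(\gamma(X),(U,V))\simeq\Hom_\F(X,U)
 \qquad\text{and}\qquad
 \Hom_\H((U,V),\gamma(X))\simeq\Hom_\F(V,X);
\end{equation*}
under them the identity morphisms $\id_U$ and $\id_V$ correspond to canonical morphisms $\iota_{U,V}\:\gamma(U)=(U,U)\rarrow(U,V)$ and $\kap_{U,V}\:(U,V)\rarrow(V,V)=\gamma(V)$ in~$\H$. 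The second fact is that $\iota_{U,V}$ becomes an admissible epimorphism, and $\kap_{U,V}$ an admissible monomorphism, in the category~$\G$: the functor $\Delta$ carries these two morphisms to the halves $\pi(U)\twoheadrightarrow\Delta(U,V)\hookrightarrow\pi(V)$ of the image factorization of $\pi(b)$ (which exists precisely because $(U,V)\in\H$), and by the description of the exact structure on~$\G$ in Subsection~\ref{exact-structure} a morphism in $\G$ is an admissible epimorphism (resp.\ monomorphism) if and only if its image under the faithful exact functor~$\eps$ is one in~$\E$.

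Granting this, property~(b$'$) is immediate: every object $T\in\G$ is literally an object $(U,V)\in\H$, and $\iota_{U,V}\:\gamma(U)\rarrow T$ is the required admissible epimorphism, while $\kap_{U,V}$ gives~(b$''$). For~(d$'$) I would represent the given morphism $\gamma(X)\rarrow T$ in the form $t^{-1}h$, with $t\in\S$ and $h$ a morphism in $\H/\I$ with source $\gamma(X)$ and target some $(U,V)\in\H$; lifting $h$ to $\H$ and applying the first bijection, $h$ comes from a morphism $\phi\:X\rarrow U$ in~$\F$ with $\iota_{U,V}\circ\gamma(\phi)=h$ in~$\H$. Hence $t^{-1}\iota_{U,V}\:\gamma(U)\rarrow T$ is an admissible epimorphism (an admissible epimorphism composed with an isomorphism) whose composition with $\gamma(\phi)$ equals $t^{-1}h$, i.e.\ the given morphism; so $S=U$ settles~(d$'$), and~(d$''$) is dual.

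For~(a$'$) and~(c$'$) I would instead represent the morphism in question in the form $hs^{-1}$ with $s\in\S$, routed through an object $(U,V)\in\H$, so that $h$ (and, where relevant, $s$) comes via the second bijection from a morphism out of~$V$. In case~(a$'$), if $p\:T\rarrow\gamma(X)$ is the given admissible epimorphism and $p=hs^{-1}$ with $h$ coming from $\beta\:V\rarrow X$, then $e:=s\iota_{U,V}\:\gamma(U)\rarrow T$ is an admissible epimorphism and $p\circ e=h\circ\iota_{U,V}=\gamma(\beta b)$; being a composition of two admissible epimorphisms of~$\G$, the latter is an admissible epimorphism, so by exact-conservativity of~$\gamma$ the morphism $\beta b\:U\rarrow X$ is an admissible epimorphism in~$\F$, and $Z=U$ works. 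In case~(c$'$), writing the given morphism $\gamma(X)\rarrow\gamma(Y)$ as $hs^{-1}$ with $s$ coming from $\sigma\:V\rarrow X$ and $h$ from $\tau\:V\rarrow Y$, one computes $s\iota_{U,V}=\gamma(\sigma b)$ and $h\iota_{U,V}=\gamma(\tau b)$, whence $\gamma(\sigma b)$ is an admissible epimorphism of~$\G$ (the isomorphism~$s$ composed with~$\iota_{U,V}$), so $\sigma b\:U\rarrow X$ is an admissible epimorphism in~$\F$, and $g\circ\gamma(\sigma b)=\gamma(\tau b)$, so $X'=U$ with the morphisms $\sigma b$ and $\tau b$ proves~(c$'$); the dual statements (a$''$\+d$''$) follow by the evident self-duality of the construction (replace $\F$ by $\F^{\mathrm{op}}$ and the twist $(1)$ by~$(-1)$). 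I expect the main obstacle to be the verification of the second fact above — that $\iota_{U,V}$ and $\kap_{U,V}$ are genuinely \emph{admissible} epi/mono in~$\G$, not merely epi/mono — and the routine but somewhat fiddly bookkeeping needed to move between the left and right representations of a morphism of~$\G$; nothing beyond Lemma~\ref{ore-conditions} and the already-established exact-conservativity of~$\gamma$ should be required.
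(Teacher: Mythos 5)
Your proof is correct and follows essentially the same route as the paper: you represent morphisms of $\G$ by fractions of morphisms of matrix-factorization diagrams, exploit the canonical morphisms $\gamma(U)\rarrow(U,V)$ and $(U,V)\rarrow\gamma(V)$ (whose admissibility is verified through $\Delta$ and the description of the exact structure on~$\G$), and transfer admissibility back to $\F$ via exact-conservativity of~$\gamma$. The only cosmetic difference is that you make the two Hom-set bijections and the fraction bookkeeping explicit, and you prove (a$'$) directly rather than deducing it from (b$'$) and (c$'$) via Lemma~\ref{exact-surjectivity-implications} --- both options the paper itself mentions.
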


\begin{proof}
 Part~(b$'$): let the object $T\in\G$ be represented by a diagram
$(U,V)=(V(-1)\to U\to V\to U(1))$ in $\H$; then there is a natural
admissible epimorphism $\gamma(U)\rarrow T$ in~$\G$.
 (Indeed, $\pi(U)\rarrow\Delta(U,V)$ is an admissible epimorphism
in~$\E$.)
 Part~(c$'$): let the morphism $\gamma(X)\rarrow\gamma(Y)$ be
represented by a fraction $(X,X)\larrow(U,V)\rarrow(Y,Y)$ of
two morphisms in $\H$, where the morphism $(U,V)\rarrow(X,X)$
belongs to~$\S$ (modulo~$\I$).
 Then there is a natural admissible epimorphism $U\rarrow X$
and a natural morphism $U\rarrow Y$ in $\F$ making the diagram
$\gamma(U)\rarrow\gamma(X)\rarrow\gamma(Y)$ commutative in
$\G$ by the definition.
 (Indeed, the morphism $\pi(U)\rarrow\Delta(U,V)\simeq
\Delta(X,X)=\pi(X)$ is an admissible epimorphism in~$\E$.)

 Part~(d$'$): one can represent the morphism $\gamma(X)\rarrow T$
in $\G$ by a morphism of diagrams $(X,X)\rarrow (U,V)$ in~$\H$.
 Then there is a morphism $X\rarrow U$ in $\F$ and an admissible
epimorphism $\gamma(U)\rarrow(U,V)$ in $\G$, while the triangle
$(X,X)\rarrow(U,U)\rarrow(U,V)$ is commutative already in~$\H$.
 Part~(a$'$) follows from~(b$'$) and~(c$'$) according to
Lemma~\ref{exact-surjectivity-implications}; to prove it directly,
represent the morphism $T\rarrow\gamma(X)$ in $\G$ by a morphism
of diagrams $(U,V)\rarrow(X,X)$ in~$\H$.
 Then $U\rarrow X$ is an admissible epimorphism in $\F$ (since
$\pi(U)\rarrow\Delta(U,V)\rarrow\pi(X)$ is a composition of
admissible epimorphisms in~$\E$) and $(U,U)\rarrow(U,V)$ is
an admissible epimorphism in $\G$, while the triangle
$(U,U)\rarrow(U,V)\rarrow(X,X)$ is commutative in~$\H$.
\end{proof}

\subsection{The first Bockstein sequence}  \label{first-bockstein}
 Now we are in the position to construct the Bockstein long
exact sequence for the Ext groups in the categories $\F$ and $\G$
promised in Subsection~\ref{reduction-posing}.
 Namely, we obtain the desired construction as a particular case
of the general construction of a Bockstein long exact sequence
from Section~\ref{bockstein-sequence-section}, or more specifically,
of its version from Subsection~\ref{bockstein-toy}.

 To compare our present notation with that in 
Section~\ref{bockstein-sequence-section}, set $\F_\t=\F_{\s\t}=\F$
and $\F_\s=\G$.
 The exact category structures on $\F_\t$, \,$\F_{\s\t}$, and
$\F_\s$ are the same as the exact category structures on $\F$
and~$\G$.
 Let $\eta_\t\:\F\rarrow\F_\t$ and $\eta_{\s\t}\:\F\rarrow\F_{\s\t}$
be the identity functor $\Id_\F$, and $\eta_\s\:\F\rarrow\F_\s$
be our reduction functor~$\gamma$.
 Finally, let the natural transformation $\s\:\Id\rarrow(1)$ on
the category $\F$ from Subsection~\ref{bockstein-toy}
be our natural transformation $\s\:\Id\rarrow(1)$ on
the category~$\F$ from Subsection~\ref{reduction-posing}.

 Let us check that the conditions~(i\+\i\i\i\i) of
Subsections~\ref{exact-surjectivity}\+-\ref{bockstein-toy}
hold in the situation at hand.
 The functor~$\gamma$ satisfies the conditions~(i$'$\+ii$'$)
according to Lemma~\ref{gamma-properties}(a$'$,c$'$), and
the dual conditions~(i$''$\+ii$''$) according to the dual
assertions of the same Lemma.
 The condition~(\i\i\i) coincides with the condition~(viii); and
the condition~(\i\i\i\i) of Subsection~\ref{bockstein-toy} for
the functor~$\gamma$ follows from the conditions~(vi\+vii) of
Subsection~\ref{reduction-posing} for the functor~$\pi$
(since the functor $\pi\:\F\rarrow\E$ decomposes as $\F\rarrow
\G\rarrow\E$ and the functor $\eps\:\G\rarrow\E$ does not
annihilate any morphisms).

 The Bockstein long exact sequence promised in
Subsection~\ref{reduction-posing} is now obtained.
 The equation for the maps~$\d^n$ from
Subsection~\ref{reduction-posing} is
the equation~(c) of Subsection~\ref{bockstein-toy}.

\subsection{Independence of the background functor}
\label{reduction-independence}
 The following arguments show that the construction of the reduction
$\G$ of an exact category $\F$ with a twist functor $(1)\:\F\rarrow\F$
by a natural transformation $\s\:\Id_\F\rarrow(1)$ ``almost'' does not
depend on the background exact-conservative functor $\pi\:\F\rarrow\E$.

 Let $\F$ be an exact category with a twist functor $(1)\:\F\rarrow\F$
and a natural transformation $\s\:\Id_\F\rarrow(1)$ commuting with
the twist.
 Suppose that $\G'$ and $\G''$ are two exact categories with the twist
functors, and $\gamma'\:\F\rarrow\G'$ and $\gamma''\:\F\rarrow\G''$
are two exact functors, commuting with the twists, for which
the boundary maps~$\d^n$ are defined between the Ext groups in
the categories $\G'$, $\G''$ and $\F$ making the long sequence of
Subsection~\ref{reduction-posing} exact in both cases.

 Suppose further that there is an exact functor
$\lambda\:\G'\rarrow\G''$ commuting with the twists,
making a commutative triangle diagram with the functors~$\gamma'$
and~$\gamma''$, and inducing a morphism between the long exact
sequences of Subsection~\ref{reduction-posing}
corresponding to the functors~$\gamma'$ and~$\gamma''$.
 Let $\overline\G'$ and $\overline\G''$ denote the minimal full
subcategories in $\G'$ and $\G''$, closed under extensions and
containing the all the objects in the image of the functors~$\gamma'$
and~$\gamma''$, respectively.
 Endow the full subcategories $\overline\G'\subset\G'$ and
$\overline\G''\subset\G''$ with the induced exact category structures.

\begin{lem}  \label{reductions-comparison}
\textup{(a)} The exact functor $\lambda\:\G'\rarrow\G''$ restricts to
an equivalence $\overline\G'\simeq\overline\G''$ between
the full exact subcategories $\overline\G'$ and $\overline\G''$
in $\G'$ and~$\G''$. \par
\textup{(b)} Assume that the functor~$\gamma'$ or~$\gamma''$
satisfies one of the conditions~(i$\.'$) or~($i\.''$) of
Subsection~\ref{exact-surjectivity}.
 Then the Ext groups between the objects of the corresponding
subcategory $\overline\G'$ or $\overline\G''$ computed in the exact
subcategory $\overline\G'$ or $\overline\G''$ concide with those
computed in the whole exact category $\G'$ or $\G''$, respectively. \par
\textup{(c)} Assume that both functors $\gamma'$ and~$\gamma''$
simultaneously satisfy one of the conditions~(i$\.'$) or~($i\.''$),
and the functor~$\gamma'$ satisfies both the conditions~($*'$)
and~($*''$) of Subsection~\ref{exact-surjectivity}.
 Then the functor~$\lambda$ is fully faithful, its image $\lambda(\G')$
is a full subcategory closed under extensions in $\G''$,
the exact category structure on $\G'$ coincides with the one
induced from $\G''$ via~$\lambda$, and the functor~$\lambda$ induces
isomorphisms $\lambda^n\:\Ext^n_{\G'}(Z,W)\simeq
\Ext^n_{\G''}(\lambda(Z),\lambda(W))$ for all objects $Z$,
$W\in\G'$ and all\/~$n\ge0$.
\end{lem}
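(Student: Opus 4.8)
The plan is to deduce everything from the compatibility of the two Bockstein long exact sequences together with Corollaries~\ref{exact-surjectivity-ext-cor1} and~\ref{exact-surjectivity-ext-cor2}. The first step is to extract the basic $\Ext$\+isomorphisms between the images of~$\gamma'$ and~$\gamma''$. The hypothesis provides a morphism from the Bockstein long exact sequence of Subsection~\ref{reduction-posing} attached to~$\gamma'$ to the one attached to~$\gamma''$; since the terms $\Ext^n_\F(X,Y(-1))$ and $\Ext^n_\F(X,Y)$ and the maps between them do not depend on the chosen reduction functor, this morphism is the identity on all those terms and is the induced map~$\lambda^n$ on the terms $\Ext^n_{\G'}(\gamma'(X),\gamma'(Y))\rarrow\Ext^n_{\G''}(\gamma''(X),\gamma''(Y))$. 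Applying the $5$\+lemma to the five-term fragment of this morphism of long exact sequences centered at~$\Ext^n_{\G'}(\gamma'(X),\gamma'(Y))$ (whose four outer vertical maps are identities), we conclude that $\lambda^n$ is an isomorphism for all $X$, $Y\in\F$ and all $n\ge0$; in particular $\lambda$ is fully faithful on the image of~$\gamma'$.

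The second step is to propagate these isomorphisms from the image of~$\gamma'$ to the whole subcategory~$\overline\G'$. Every object of $\overline\G'$ carries a finite filtration whose successive quotients have the form $\gamma'(X)$, $X\in\F$, because the class of objects admitting such a filtration contains the image of~$\gamma'$ and is closed under extensions. Running a double induction in the lengths of such filtrations of two objects $A$, $B\in\overline\G'$, and using at each step the long exact sequences of $\Ext$ groups in the exact categories $\G'$ and~$\G''$ (which the exact functor~$\lambda$ carries one into the other) together with the $5$\+lemma, we obtain isomorphisms $\Ext^n_{\G'}(A,B)\simeq\Ext^n_{\G''}(\lambda(A),\lambda(B))$ for all $A$, $B\in\overline\G'$ and all $n\ge0$; the cases $n=0$ and $n=1$ are the ones used below.

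For part~(a): the exact functor~$\lambda$ takes $\gamma'(X)$ to $\gamma''(X)$, hence restricts to an exact functor $\overline\G'\rarrow\overline\G''$. It is essentially surjective, because the full subcategory of $\G''$ spanned by the objects isomorphic to objects of $\lambda(\overline\G')$ contains all the $\gamma''(X)$ and is closed under extensions --- given $0\rarrow\lambda(A')\rarrow T\rarrow\lambda(A'')\rarrow0$ with $A'$, $A''\in\overline\G'$, one lifts its class along $\Ext^1_{\G'}(A'',A')\simeq\Ext^1_{\G''}(\lambda(A''),\lambda(A'))$ and applies~$\lambda$ --- so it contains~$\overline\G''$. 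Finally, $\lambda|_{\overline\G'}$ reflects admissible exact sequences: a zero-composition sequence $0\rarrow A'\rarrow A\rarrow A''\rarrow0$ in $\overline\G'$ whose image is admissible exact in $\overline\G''$ represents, under the above $\Ext^1$\+isomorphism, the same class as the $\lambda$\+image of an admissible exact sequence in $\G'$, and comparing these two sequences by full faithfulness of~$\lambda|_{\overline\G'}$ shows the original sequence is itself admissible exact. So $\lambda|_{\overline\G'}$ is exact, fully faithful, essentially surjective, and exact\+conservative, i.~e., an equivalence of exact categories. For part~(b), which does not involve~$\lambda$, assuming, say, that~$\gamma'$ satisfies~(i$'$) --- the case of~(i$''$), and the case of~$\gamma''$, being analogous --- I would apply Corollary~\ref{exact-surjectivity-ext-cor1}(a) to the full exact embedding $\overline\G'\rarrow\G'$ (whose $\Hom$\+isomorphism hypothesis is automatic since the embedding is full), obtaining isomorphisms $\Ext^n_{\overline\G'}(\gamma'(X),W)\simeq\Ext^n_{\G'}(\gamma'(X),W)$ for $W\in\overline\G'$, and then bootstrap in the first argument by the filtration-and-$5$\+lemma argument of the second step. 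Part~(c) is a direct application of Corollary~\ref{exact-surjectivity-ext-cor2}(b) with $\eta'=\gamma'$, \,$\iota=\lambda$, \,$\eta''=\gamma''$: the condition~(i$'$) (or~(i$''$)) for both $\gamma'$ and~$\gamma''$, the isomorphism $\Hom_{\G'}(\gamma'(X),\gamma'(Y))\simeq\Hom_{\G''}(\gamma''(X),\gamma''(Y))$ established in the first step, and the conditions~($*'$) and~($*''$) for~$\gamma'$ are precisely the hypotheses of that corollary, and its conclusion is precisely the assertion of part~(c).

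The step I expect to be the main obstacle is the second one --- propagating the $\Ext$\+isomorphisms from the image of~$\gamma'$ to all of~$\overline\G'$ --- together with the exact-structure bookkeeping in part~(a). Once those isomorphisms are available, parts~(b) and~(c) reduce to near-immediate citations of the two corollaries; but reaching them needs the filtration argument, and in part~(a) one must keep in mind that an exact functor which is an equivalence of the underlying additive categories need not be an equivalence of exact categories, so that the reflection of admissible exact sequences must be checked by hand --- and this is exactly where the isomorphism on the $\Ext^1$ groups over $\overline\G'$ is genuinely used.
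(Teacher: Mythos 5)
Your proposal is correct and takes essentially the same route as the paper: the 5\+lemma applied to the morphism of Bockstein long exact sequences gives the Ext isomorphisms between the images of~$\gamma'$ and~$\gamma''$, and parts~(b) and~(c) are then handled by the corollaries of the exact-surjectivity section (the paper uses the two cases of Corollary~\ref{exact-surjectivity-ext-cor2}(b); your use of Corollary~\ref{exact-surjectivity-ext-cor1}(a) plus bootstrapping for~(b) is an equivalent variant). The only presentational difference is in part~(a), where the paper checks merely that $\lambda^n$ is an isomorphism for $n=0,1$ and a monomorphism for $n=2$ on objects coming from $\F$ --- computed in $\overline\G'$, $\overline\G''$, which agree with the ambient Ext groups in those degrees by extension-closedness --- and then invokes the general criterion of \cite[Lemma~3.2]{Partin}; your filtration-induction and exactness-reflection argument is in effect a direct proof of that criterion in the case at hand.
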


\begin{proof}
 Notice first of all that the maps $\lambda^n\:\Ext^n_{\G'}(\gamma'(X),
\gamma'(Y))\rarrow\Ext^n_{\G''}(\gamma''(X),\allowbreak\gamma''(Y))$
are isomorphisms for all $X$, $Y\in\F$ and $n\ge0$ by 5\+lemma.

 In the situation of part~(a), the maps
$\Ext^n_{\overline\G'}(Z,W)\rarrow\Ext^n_{\G'}(Z,W)$ are isomorphisms
for all $Z$, $W\in\overline\G'$ and $n=0$,~$1$, and monomorphisms
for $n=2$, because $\overline\G'$ is a full subcategory closed under
extensions in the exact category $\G'$, with the induced
exact category structure~\cite[Subsection~A.8]{Partin}.
 The same applies to the maps of Ext groups induced by the embedding
$\overline\G''\rarrow\G''$; and consequently the maps
$\lambda^n\:\Ext^n_{\overline\G'}(\gamma'(X),\gamma'(Y))\rarrow
\Ext^n_{\overline\G''}(\gamma''(X),\gamma''(Y))$ are
also isomorphisms for all $X$, $Y\in\F$ and $n=0$,~$1$, and
monomorphisms for $n=2$.
 The desired assertion now follows by the general criterion
of~\cite[Lemma~3.2]{Partin}.

 To obtain part~(b), one applies 
Corollary~\ref{exact-surjectivity-ext-cor2}(b) in its first
set of assumptions, and to prove part~(c), in the second one.
\end{proof}

 Given an exact category $\F$ with a twist functor~$(1)$ and
a natural transformation~$\s$ commuting with the twist
and satisfying the condition~(viii), let $\E'$ and $\E''$ be two
exact categories endowed with twist functors, and let
$\pi'\:\F\rarrow\E'$ and $\pi''\:\F\rarrow\E''$ be two exact
functors, both commuting with the twists and satisfying
the conditions~(v\+vii) of Subsection~\ref{reduction-posing}.
 Let $\G'$ and $\G''$ denote the corresponding two exact categories
obtained by the reduction procedure of
Subsections~\ref{matrix-factorizations}\+-\ref{exact-structure},
and $\gamma'\:\F\rarrow\G'$, \ $\gamma''\:\F\rarrow\G''$ be
the two related exact functors.

 Suppose first that there exists an exact functor $\E'\rarrow\E''$
commuting with the twists and making a commutative triangle diagram
with the functors $\pi'$ and~$\pi''$.
 Then one easily constructs the induced exact functor
$\lambda\:\G'\rarrow\G''$, commuting with the twists, making
a commutative triangle diagram with the functors $\gamma'$
and~$\gamma''$, and satisfying the assumptions of
Lemma~\ref{reductions-comparison} (while the functors $\gamma'$
and~$\gamma''$ satisfy the conditions of
Lemma~\ref{reductions-comparison}(c) as well by
Lemma~\ref{gamma-properties}).
 All the conclusions of Lemma~\ref{reductions-comparison}(a\+c)
accordingly apply.

 More generally, denote by $\E=\E'\times\E''$ the Cartesian product
of the two exact categories $\E'$ and~$\E''$.
 The objects of $\E$ are pais $(E',E'')$, where $E'$ is an object of
$\E'$ and $E''$ is an object of $\E''$; morphisms $(E'_1,E''_1)\rarrow
(E'_2,E''_2)$ are pairs of morphisms $E'_1\rarrow E'_2$ and
$E''_1\rarrow E''_2$ in $\E'$ and $\E''$; and short exact sequences
in $\E$ are pairs of short exact sequences in the exact categories
$\E'$ and $\E''$ (cf.~\cite[Example~A.5(4)]{Partin}).

 Let $\pi=(\pi',\pi'')\:\F\rarrow\E$ denote the functor taking
an object $X\in\F$ to the pair $(\pi'(X),\pi''(X))$; then
$\pi$~is also an exact functor satisfying the conditions~(v\+vii)
for the natural transformation $\s\:\Id_\F\rarrow(1)$.
 Let $\G$ denote the reduction of the exact category $\F$
by the natural transformation~$\s$ taken on the background
of the functor~$\pi$, and let $\gamma\:\F\rarrow\G$ be
the corresponding exact functor.

 Then the natural projections $\E\rarrow\E'$, $\E''$ forming
commutative triangle diagrams with the exact-conservative
functors~$\pi'$, $\pi''$, and~$\pi$ induce exact functors
$\lambda'\:\G\rarrow\G'$ and $\lambda''\:\G\rarrow\G''$
between the reduced exact categories.
 The functors~$\lambda'$ and $\lambda''$ form commutative
triangle diagrams with the reduction functors~$\gamma'$,
$\gamma''$, and~$\gamma$.
 All the conclusions of Lemma~\ref{reductions-comparison}(a\+c)
apply to both the functors~$\lambda'$ and $\lambda''$, leading
in particular to the following corollary.

\begin{cor}
 The exact functors $\lambda'\:\G\rarrow\G'$ and
$\lambda''\:\G\rarrow\G''$ are fully faithful, and their images
are full subcategories closed under extensions in $\G'$ and~$\G''$.
 The exact category structure on $\G$ coincides with the exact
category structures induced on the full subcategories
$\lambda'(\G)\subset\G'$ and $\lambda''(\G)\subset\G''$ by
the exact category structures on $\G'$ and~$\G''$.
 The induced maps of the Ext groups $\lambda'\.{}^n\:\Ext^n_\G(Z,W)
\rarrow\Ext^n_{\G'}(\lambda'(Z),\lambda'(W))$ and $\lambda''\.{}^n\:
\Ext^n_\G(Z,W)\rarrow\Ext^n_{\G''}(\lambda''(Z),\lambda''(W))$
are isomorphisms for all the objects $Z$, $W\in\G$ and all
integers~$n\ge0$. 

 The restrictions of the functors $\lambda'$ and~$\lambda''$ to
the minimal full subcategories $\overline\G\subset\G$, \
$\overline\G'\subset\G'$, \ $\overline\G''\subset\G''$
containing all the objects in the images of the functors
$\gamma$, $\gamma'$, $\gamma''$ and closed under extensions
are equivalences of exact categories $\overline\G'\simeq
\overline\G\simeq\overline\G''$ (with the exact category
structures induced from $\G$, \,$\G'$,~\,$\G''$).
 The Ext groups between the objects of the subcategories
$\overline\G$, \,$\overline\G'$, \,$\overline\G''$ computed
in these exact subcategories coincide with the Ext groups computed
in the whole exact categories $\G$, $\,\G'$,~\,$\G''$. \qed
\end{cor}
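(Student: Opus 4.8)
The plan is to deduce everything from Lemma~\ref{reductions-comparison}, applied once to $\lambda'\:\G\to\G'$ and once to $\lambda''\:\G\to\G''$, so the task splits into checking the hypotheses of that lemma for these two functors and then reading off its conclusions. Most of the hypotheses are already on the table. The categories $\G$, $\G'$, $\G''$ carry twist functors; by Subsection~\ref{first-bockstein} each of the reduction functors $\gamma$, $\gamma'$, $\gamma''$ comes with boundary maps $\d^n$ making the long sequence of Subsection~\ref{reduction-posing} exact; and $\lambda'$, $\lambda''$ are exact, commute with the twists, and sit in commutative triangles with the reduction functors, as set up in the discussion preceding the statement. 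Finally, Lemma~\ref{gamma-properties} yields, for each of $\gamma$, $\gamma'$, $\gamma''$ alike, the conditions (i$'$), (i$''$), ($*'$), ($*''$) of Subsection~\ref{exact-surjectivity}; in particular the source functor $\gamma$ satisfies both ($*'$) and ($*''$).

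The one hypothesis of Lemma~\ref{reductions-comparison} that is not immediate from the setup, and which I expect to be the main (really the only) obstacle, is that $\lambda'$ and $\lambda''$ induce morphisms between the relevant Bockstein long exact sequences of Subsection~\ref{reduction-posing}. Commutativity of the squares built from the maps $\s_n$ and $\gamma^n$ is automatic, since $\s_n$ acts entirely within $\F$ and $\lambda'\gamma=\gamma'$, so the content is the identity $\d^n_\G=\d^n_{\G'}\circ{\lambda'}^n$. I would first reduce this to the case $n=0$: by Proposition~\ref{exact-surjectivity-ext-prop}(b) for the functor $\gamma$, every class in $\Ext^n_\G(\gamma(X),\gamma(Y))$ is a sum of products $q\,\gamma^n(b)$ with $q\:\gamma(X')\to\gamma(Y)$ a morphism of $\G$ and $b\in\Ext^n_\F(X,X')$, and the equation for $\d^n$ displayed in Subsection~\ref{reduction-posing} then expresses $\d^n_\G$ and $\d^n_{\G'}$ in terms of $\d^0_\G$ and $\d^0_{\G'}$, so it suffices to prove $\d^0_\G(q)=\d^0_{\G'}(\lambda'(q))$ for every morphism $q\:\gamma(X)\to\gamma(Y)$ in $\G$. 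This last equality is then immediate from the construction of $\d^0$ in Subsection~\ref{third-term-bockstein} (as specialized in Subsection~\ref{first-bockstein}): that construction uses only a lift of $q$ to an $\F$-side diagram (an admissible epimorphism $X'\to X$ and a morphism $X'\to Y$ in $\F$), the kernel $K$ of $X'\to X$ in $\F$, the morphism $K\to Y(-1)$ that the composition $K\to X'\to Y$ acquires by unique division by $\s$, and the $\Ext^1$ class of $0\to K\to X'\to X\to0$ in $\F$; and since $\lambda'$, being exact with $\lambda'\gamma=\gamma'$, turns every lift of $q$ over $\gamma$ into the same $\F$-side diagram, now a lift of $\lambda'(q)$ over $\gamma'$, the two computations coincide term by term.

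Once the hypotheses are in place, the conclusions follow by reading off Lemma~\ref{reductions-comparison}. Part~(a), applied to $\lambda'$ and to $\lambda''$, gives equivalences of exact categories $\overline\G\simeq\overline\G'$ and $\overline\G\simeq\overline\G''$, hence the chain $\overline\G'\simeq\overline\G\simeq\overline\G''$. Part~(b), applied the same way and using that each of $\gamma$, $\gamma'$, $\gamma''$ satisfies (i$'$), shows that the Ext groups computed in $\overline\G$, $\overline\G'$, $\overline\G''$ agree with those computed in $\G$, $\G'$, $\G''$. Part~(c), which applies because the source reduction functor $\gamma$ satisfies both ($*'$) and ($*''$) while each of $\gamma$, $\gamma'$, $\gamma''$ satisfies (i$'$), shows that $\lambda'$ and $\lambda''$ are fully faithful with images closed under extensions in $\G'$ and $\G''$, that the exact category structure on $\G$ is the one induced from $\G'$ via $\lambda'$ and from $\G''$ via $\lambda''$, and that the induced maps on all Ext groups are isomorphisms. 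Apart from the boundary-map identity discussed above, all of this is bookkeeping with the commutative triangles, so I anticipate no further difficulty.
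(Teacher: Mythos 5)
Your proposal is correct and follows essentially the same route as the paper: the Corollary is stated with \qed precisely because the preceding discussion asserts that Lemma~\ref{reductions-comparison}(a--c) applies to $\lambda'$ and $\lambda''$, with the hypotheses supplied by Lemma~\ref{gamma-properties}. The only point the paper leaves implicit is that $\lambda'$, $\lambda''$ induce morphisms of the Bockstein sequences, and your verification of this (reduction to $n=0$ via Proposition~\ref{exact-surjectivity-ext-prop}(b) and the multiplicativity of $\d^n$, then comparing the two constructions of $\d^0$ on a common lift) is sound.
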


 Based on the above ``almost uniqueness''/``almost independence''
results, we will sometimes denote the exact category obtained by
the reduction procedure developed in this section by $\G=\F/\s$.
 When a specific choice of the background exact-concervative functor
needs to be mentioned, we will write
$\G=\F/_{\!\!{}_\E}\.\s=\F/_{\!\!\pi}\.\s$.

\subsection{The second Bockstein sequence}  \label{second-bockstein}
 Let $\F$ be an exact category with two commuting exact
autoequivalences $X\mpsto X(1)$ and $X\mpsto X\{1\}$.
 Let $\s\:\Id\rarrow(1)$ and $\t\:\Id\rarrow\{1\}$ be two natural
tranformations of endofunctors on~$\F$, both commuting with both
the twist functors $(1)$ and~$\{1\}$ and acting by morphisms
$\s_X\:X\rarrow X(1)$ and $\t_X\:X\rarrow X\{1\}$ that are 
monic and epic for all the objects $X\in\F$.
 Denote by $\s\t\:\Id_\F\rarrow(1)\{1\}$ the product of these
two bigraded center elements of~$\F$.

 Let $\E_\s$, $\E_\t$, and $\E_{\s\t}$ be three exact categories
endowed with commuting exact autoequivalences $(1)$ and~$\{1\}$,
and let $\pi_\s\:\F\rarrow\E_\s$, \,$\pi_\t\:\F\rarrow\E_\t$, and
$\pi_{\s\t}\:\F\rarrow\E_{\s\t}$ be three exact-conservative
functors, commuting with the twist functors $(1)$ and~$\{1\}$
and satisfying the conditions~(vi\+vii) of
Subsection~\ref{reduction-posing} for the natural transformations~$\s$,
$\t$, and~$\s\t$, respectively.
 Consider the three reduced exact categories
$\G_\s=\F/_{\!\!\pi_\s}\.\s$, \ $\G_\t=\F/_{\!\!\pi_\t}\.\t$,
and $\G_{\s\t}=\F/_{\!\!\pi_{\s\!\t}}\.\s\t$ with the corresponding
reduction functors $\gamma_\s\:\F\rarrow\G_\s$, \,$\gamma_\t\:
\F\rarrow\G_\t$, and $\gamma_{\s\t}\:\F\rarrow\G_{\s\t}$ and
exact-conservative faithful functors $\eps_\s\:\G_\s\rarrow\E_\s$,
\,$\eps_\t\:\G_\t\rarrow\E_\t$, and
$\eps_{\s\t}\:\G_{\s\t}\rarrow\E_{\s\t}$.
 Clearly, there are induced exact autoequivalences $(1)$
and~$\{1\}$ on all the three categories $\G_\s$, $\G_\t$, and
$\G_{\s\t}$, and all the functors $\gamma$ and~$\eps$ commute
with both of the twists.

 Denote by $\widetilde\H_{\s\t}$ and $\H_{\s\t}$ the intermediate
categories employed in the construction of the category
$\G_{\s\t}=\F/_{\!\!\pi_{\s\!\t}}\.\s\t$ in
Subsections~\ref{matrix-factorizations}\+-\ref{exact-structure}.
 Then the twist functors $(1)$ and~$\{1\}$ act naturally
on the categories $\widetilde\H_{\s\t}$ and~$\H_{\s\t}$; and
the  natural transformation $\s\:\Id_\F\rarrow(1)$ induces
bigraded center elements $\s\:\Id\rarrow(1)$ in
the categories~$\widetilde\H_{\s\t}$, \,$\H_{\s\t}$, and~$\G_{\s\t}$.
 Composing the natural transformation $\s\:\Id_{\G_{\s\!\t}}\rarrow(1)$
with the functor~$\gamma_{\s\t}$, or equivalently,
the functor~$\gamma_{\s\t}$ with the natural transformation
$\s\:\Id_\F\rarrow(1)$ produces a morphism
$\s\:\gamma_{\s\t}\rarrow\gamma_{\s\t}(1)$ of functors
$\F\rarrow\G_{\s\t}$ commuting with both the twists $(1)$ and~$\{1\}$
in the sense of Subsection~\ref{conventions}.

\begin{lem}  \label{second-bockstein-satisfied}
 The conditions~(i\+-IV) of Subsection~\ref{bockstein-generalization}
are satisfied for the exact categories $\F$, \ $\F_\t=\G_\t$, \
$\F_\s=\G_\s$, and $\F_{\s\t}=\G_{\s\t}$ and the exact functors
$\gamma_\t\:\F\rarrow\G_\t$, \ $\gamma_\s\:\F\rarrow\G_\s$, and
$\gamma_{\s\t}\:\F\rarrow\G_{\s\t}$
together with the natural transformation $\s\:\gamma_{\s\t}\rarrow
\gamma_{\s\t}(1)$ commuting with the twist functors~$(1)$
on the categories $\F$, $\G_\t$, $\G_\s$, and\/~$\G_{\s\t}$.
\end{lem}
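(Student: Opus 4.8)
The plan is to verify the four families of hypotheses one at a time, in the order (i\+ii), then~(III), then~(IV), exploiting the fact that each of $\gamma_\s$, $\gamma_\t$, $\gamma_{\s\t}$ is itself a reduction functor to which the results of the present section already apply. The conditions~(i\+ii) for the three functors $\gamma_\t\:\F\rarrow\G_\t$, \ $\gamma_\s\:\F\rarrow\G_\s$, \ $\gamma_{\s\t}\:\F\rarrow\G_{\s\t}$ are nothing but Lemma~\ref{gamma-properties}(a$'$,c$'$) together with its dual assertions, applied separately to each of these functors, so this part requires no further work. The whole content of the lemma is therefore the verification of~(III) and~(IV).

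Before addressing those I would set down two observations used throughout. First, since $\eps_\s$, $\eps_\t$, $\eps_{\s\t}$ are faithful and $\pi_\s=\eps_\s\gamma_\s$, \ $\pi_\t=\eps_\t\gamma_\t$, \ $\pi_{\s\t}=\eps_{\s\t}\gamma_{\s\t}$, a morphism in $\F$ is annihilated by $\gamma_\t$ (resp.\ $\gamma_\s$, resp.\ $\gamma_{\s\t}$) if and only if it is annihilated by $\pi_\t$ (resp.\ $\pi_\s$, resp.\ $\pi_{\s\t}$), which by the conditions~(vi\+vii) imposed on these functors is equivalent to being divisible by~$\t$ (resp.\ by~$\s$, resp.\ by~$\s\t$) in~$\F$. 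Second, $\gamma_{\s\t}$ annihilates every morphism $(\s\t)_Z$ in $\F$, and the natural transformation $\s\:\gamma_{\s\t}\rarrow\gamma_{\s\t}(1)$ of the lemma is, by construction, the one whose component at $X$ is $\gamma_{\s\t}(\s_X)$; consequently $\s\gamma_{\s\t}(f)=\gamma_{\s\t}(\s_Y\circ f)$ for every morphism $f\:X\rarrow Y$ in~$\F$.

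For~(III) I would use the identity $\s_Y\circ\t_{Y\{-1\}}=(\s\t)_{Y\{-1\}}$ in $\Hom_\F(Y\{-1\},Y(1))$, valid because $\s\t$ is the product of the bigraded center elements $\s$ and~$\t$. If $f$ is annihilated by $\gamma_\t$, then $f$ factors as $\t_{Y\{-1\}}\circ g$, so $\s\gamma_{\s\t}(f)=\gamma_{\s\t}((\s\t)_{Y\{-1\}})\circ\gamma_{\s\t}(g)=0$; conversely, if $\s\gamma_{\s\t}(f)=\gamma_{\s\t}(\s_Y\circ f)=0$, then $\s_Y\circ f$ is divisible by~$\s\t$ in~$\F$, say $\s_Y\circ f=(\s\t)_{Y\{-1\}}\circ h=\s_Y\circ(\t_{Y\{-1\}}\circ h)$, and cancelling the monomorphism~$\s_Y$ (condition~(viii)) gives $f=\t_{Y\{-1\}}\circ h$, i.e.\ $f$ is divisible by~$\t$. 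For~(IV) I would first invoke the equivalence of its two formulations proved in Subsection~\ref{first-term-bockstein}, which uses only~(ii) for $\gamma_\t$ and the~(III) just established, and then verify the first formulation. One direction is immediate: if $f$ is divisible by~$\s$ in~$\F$, take $X'=X$ with the identity admissible epimorphism. For the converse, given an admissible epimorphism $p\:X'\rarrow X$ with $\gamma_{\s\t}(f\circ p)=\s\gamma_{\s\t}(g)=\gamma_{\s\t}(\s_{Y(-1)}\circ g)$ for some $g\:X'\rarrow Y(-1)$ in~$\F$, the difference $f\circ p-\s_{Y(-1)}\circ g$ is killed by $\gamma_{\s\t}$, hence divisible by~$\s\t$ in~$\F$, and the identity $(\s\t)_{Y(-1)\{-1\}}=\s_{Y(-1)}\circ\t_{Y(-1)\{-1\}}$ lets me rewrite it as $f\circ p=\s_{Y(-1)}\circ g'$ with $g'\:X'\rarrow Y(-1)$; since the composition of $g'$ with the kernel of~$p$ is killed by the monomorphism $\s_{Y(-1)}$, the morphism $g'$ factors through~$p$, and cancelling the epimorphism~$p$ yields $f=\s_{Y(-1)}\circ\bar g$, so $f$ is divisible by~$\s$ and thus annihilated by~$\gamma_\s$.

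The genuinely delicate part, I expect, is the bookkeeping with the two commuting twists in the formulas $(\s\t)_{Y\{-1\}}=\s_Y\circ\t_{Y\{-1\}}$ and $(\s\t)_{Y(-1)\{-1\}}=\s_{Y(-1)}\circ\t_{Y(-1)\{-1\}}$ and in the shifts attached to divisibility: one must keep in mind that ``divisible by~$\s$ in~$\G_{\s\t}$'' is, by definition, witnessed by a morphism coming from~$\F$, which is exactly what legitimizes correcting by an $\s\t$\+divisible term in the converse half of~(IV). Everything else — the cancellation of monomorphisms and epimorphisms, and the factorization through a cokernel — is routine exact-category manipulation, and the dual formulations of~(III) and~(IV) follow by the symmetric arguments using the surjectivity half of condition~(viii).
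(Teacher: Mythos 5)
Your proof is correct and follows essentially the same route as the paper's: conditions~(i\+ii) come from Lemma~\ref{gamma-properties}, condition~(III) is obtained by cancelling the monomorphism $\s_Y$ against an $\s\t$\+divisibility witness, and condition~(IV) by correcting the commutative square in $\F$ by a $\t$\+divisible summand and invoking the equivalence of its two formulations from Subsection~\ref{first-term-bockstein}. The only (harmless) deviation is that in the converse half of~(IV) you additionally cancel the admissible epimorphism $p$ in $\F$ to conclude that $f$ itself is divisible by~$\s$, whereas the paper stops at $\gamma_\s(f\circ p)=0$ and uses that $\gamma_\s(p)$ is an epimorphism.
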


\begin{proof}
 The conditions~(i\+ii) hold for the functors~$\gamma_\t$, $\gamma_\s$,
and~$\gamma_{\s\t}$ by Lemma~\ref{gamma-properties}(a,c).
 To prove the condition~(III), recall that a morphism $f\:X\rarrow Y$
in the category $\F$ is annihilated by the functor~$\gamma_\t$ if
and only if it is divisible by the natural transformation
$\t\:\Id_\F\rarrow\{1\}$.
 Similarly, the morphism $\gamma_{\s\t}(f)$ is annihilated by
the natural transformation $\s\:\gamma_{\s\t}\rarrow\gamma_{\s\t}(1)$
if and only if the morphism $\s f=\s_Y\!\.f\:X\rarrow Y(1)$ in
the category $\F$ is annihilated by the functor~$\gamma_{\s\t}$,
that is, if and only if the morphism $\s f$ is divisible by
the natural transformation $\s\t\:\Id_\F\rarrow(1)\{1\}$.
 In the latter case, let $g\:X\rarrow Y\{-1\}$ be a morphism in~$\F$
such that $\s_Y\!\.f = \s_Y\t_{Y\{-1\}}g$.
 The morphism~$\s_Y$ being monic in the category $\F$,
one has $f=\t_{Y\{-1\}}g$.

 A morphism $f\:X\rarrow Y$ in the category $\F$ is annihilated by
the functor~$\gamma_\s$ if and only if it is divisible by
the natural transformation $\s\:\Id_\F\rarrow(1)$, so the ``only if''
assertion in the condition~(IV) follows immediately (and even
in a stronger form).
 To prove the ``if'', suppose there is an admissible epimorphism
$X'\rarrow X$ and a morphism $X'\rarrow Y(-1)$ in the category~$\F$
making the square diagram $\gamma_{\s\t}(X')\rarrow\gamma_{\s\t}(X)
\rarrow\gamma_{\s\t}(Y)$, \ $\gamma_{\s\t}(X')\rarrow
\gamma_{\s\t}(Y)(-1)\rarrow\gamma_{\s\t}(Y)$ commutative in
the category~$\G_{\s\t}$.
 This diagram being the image of the diagram
$X'\rarrow X\rarrow Y$, \ $X'\rarrow Y(-1)\rarrow Y$ in
the category $\F$ under the functor~$\gamma_{\s\t}$, we conclude
that the diagram in the category $\F$ commutes up to a morphism
divisible by~$\s\t$.
 Hence one can make the square diagram commutative in the category $\F$
by adding a summand divisible by~$\t$ to the morphism $X'\rarrow Y(-1)$.
 So the composition $X'\rarrow X\rarrow Y$ is divisible by~$\s$
in the category $\F$, and consequently annihilated by
the functor~$\gamma_\s$.
 The morphism $X'\rarrow X$ being an admissible epimorphism,
it follows that $\gamma_\s(f)=0$.
\end{proof}

 Therefore, the construction of
Section~\ref{bockstein-sequence-section} applies and we
obtain a natural Bockstein long exact {sequence \hfuzz=1.6pt
\begin{alignat*}{3}
 0&\rarrow\Hom_{\G_\t}(\gamma_\t(X),\gamma_\t(Y)(-1))&&\rarrow
 \Hom_{\G_{\s\!\.\t}}(\gamma_{\s\t}(X),\gamma_{\s\t}(Y))&&\rarrow
 \Hom_{\G_\s}(\gamma_\s(X),\gamma_\s(Y)) \\
 &\rarrow \Ext^1_{\G_\t}(\gamma_\t(X),\gamma_\t(Y)(-1))&&\rarrow
 \Ext^1_{\G_{\s\!\.\t}}(\gamma_{\s\t}(X),\gamma_{\s\t}(Y))&&\rarrow
 \Ext^1_{\G_{\s}}(\gamma_\s(X),\gamma_\s(Y)) \\
 &\rarrow\Ext^2_{\G_\t}(\gamma_\t(X),\gamma_\t(Y)(-1))&&\rarrow
 \Ext^2_{\G_{\s\!\.\t}}(\gamma_{\s\t}(X),\gamma_{\s\t}(Y))&&\rarrow\dotsb
\end{alignat*}
for any} two objects $X$ and $Y$ in the category~$\F$.
 The differentials in this long exact sequence have
the properties~(a\+c) of Subsection~\ref{bockstein-generalization}.

\subsection{The third Bockstein sequence}  \label{third-bockstein}
 The above construction of a ``finite-finite-finite'' Bockstein
long exact sequence for three reductions of a given exact
category $\F$ still leaves somewhat more to be desired.
 One would like to have such a sequence defined and exact for any
two objects of the exact category $\G_{\s\t}$, and functorial with
respect to all the morphisms in the category~$\G_{\s\t}$.

 Keeping the setting and assumptions of
Subsection~\ref{second-bockstein} in place, suppose additionally that
we are given two exact functors $\ups_\s\:\E_{\s\t}\rarrow\E_\s$ and
$\ups_\t\:\E_{\s\t}\rarrow\E_\t$, commuting with the twists $(1)$
and~$\{1\}$ and forming commutative triangle diagrams with
the background functors~$\pi_\s$, $\pi_\t$, and~$\pi_{\s\t}$.
 In this situation, one would expect existence of exact functors
$\eta_\s\:\G_{\s\t}\rarrow\G_\s$ and $\eta_\t\:\G_{\s\t}\rarrow\G_\t$
forming a commutative diagram will all the functors above.
 However, there does \emph{not} seem to be a natural way to construct
a matrix factorization of the natural transformation $\s$ or~$\t$
on $\F$ starting from a matrix factorization of
the central element~$\s\t$.

 Therefore, let us simply \emph{assume} that we are given exact
functors $\eta_\s\:\G_{\s\t}\rarrow\G_\s$ and $\eta_\t\:\G_{\s\t}
\rarrow\G_\t$, commuting with the twist functors
$(1)$ and~$\{1\}$ and making a commutative diagram of seven
categories and ten functors with the above functors~$\gamma$,
$\eps$, and~$\ups$.
 Assume further that there is a bigraded center element
$\s\:\Id_{\E_{\s\!\t}}\rarrow(1)$ in the category $\E_{\s\t}$
agreeing with the natural transformation~$\s$ on~$\F$, and that
a morphism in~$\E_{\s\t}$ is annihilated by the functor~$\ups_\t$
if and only if it is annihilated by~$\s$.

\begin{lem}
 The conditions~(i\+iv) of Subsection~\ref{bockstein-posing} are
satisfied for the exact functors $\eta_\s\:\G_{\s\t}\rarrow\G_\s$
and $\eta_\t\:\G_{\s\t}\rarrow\G_\t$ together with
the bigraded center element $\s\:\Id_{\G_{\s\!\t}}\rarrow(1)$
in the category\/~$\G_{\s\t}$.
\end{lem}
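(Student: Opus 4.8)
The plan is to verify the conditions~(i\+iv) for the functors $\eta_\s$, $\eta_\t$ and the center element~$\s$ on $\G_{\s\t}$ by transporting known properties along the reduction functor $\gamma_{\s\t}\:\F\rarrow\G_{\s\t}$, exploiting the factorizations $\gamma_\s=\eta_\s\gamma_{\s\t}$, $\gamma_\t=\eta_\t\gamma_{\s\t}$, the fact that $\gamma_{\s\t}$, $\gamma_\s$, $\gamma_\t$ are exact-conservative reduction functors and hence enjoy the strong ``exact surjectivity'' properties of Lemma~\ref{gamma-properties}, and the compatibility of $\eps_{\s\t}$, $\eta_\s$, $\eta_\t$ with the twist functors and the center element~$\s$. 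For the conditions~(i) and~(ii) I would apply Lemma~\ref{exact-surjectivity-compositions-lemma} with $\gamma=\gamma_{\s\t}$ and $\eta\gamma$ equal to $\gamma_\s$ (respectively $\gamma_\t$): part~(b) of that lemma, using that $\gamma_{\s\t}$ satisfies~($*'$) (Lemma~\ref{gamma-properties}(b$'$)) and $\gamma_\s$ satisfies~(i$'$) (Lemma~\ref{gamma-properties}(a$'$)), gives~(i$'$) for $\eta_\s$; part~(d), using in addition that $\gamma_\s$ satisfies~(ii$'$) and~($**'$) (Lemma~\ref{gamma-properties}(c$'$,d$'$)) and reflects admissible epimorphisms, gives~(ii$'$) for $\eta_\s$. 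The dual conditions~(i$''$), (ii$''$) follow by the dual parts, and $\eta_\t$ is treated the same way.

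For the condition~(iii), recall that $\eps_{\s\t}\:\G_{\s\t}\rarrow\E_{\s\t}$ is faithful and intertwines the center element~$\s$ on $\G_{\s\t}$ with the one on~$\E_{\s\t}$: this holds on objects of the form $\gamma_{\s\t}(X)$ because both center elements are induced from~$\s$ on~$\F$ and $\eps_{\s\t}\gamma_{\s\t}=\pi_{\s\t}$, and it propagates to arbitrary objects of $\G_{\s\t}$ by Lemma~\ref{gamma-properties}(b$'$) and naturality. Hence for a morphism~$g$ in $\G_{\s\t}$ the following are equivalent: $g$ is annihilated by~$\s$; the morphism $\eps_{\s\t}(g)$ is annihilated by~$\s$ in~$\E_{\s\t}$; $\ups_\t(\eps_{\s\t}(g))=0$ (by the assumed characterization of $\s$\+annihilation in~$\E_{\s\t}$); $\eps_\t(\eta_\t(g))=0$ (by the commutative identity $\eps_\t\eta_\t=\ups_\t\eps_{\s\t}$); and $\eta_\t(g)=0$ ($\eps_\t$ being faithful). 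This is the condition~(iii). The same reasoning shows that $\eta_\s$ intertwines~$\s$ and that~$\s$ acts by zero on $\G_\s$: for $T\in\G_\s$ choose an admissible epimorphism $\gamma_\s(U)\rarrow T$ (Lemma~\ref{gamma-properties}(b$'$)); since $\gamma_\s$ annihilates~$\s_U$ one has $\s_{\gamma_\s(U)}=0$, and naturality together with the fact that admissible epimorphisms are epimorphisms gives $\s_T=0$.

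For the condition~(iv), by the equivalence of its two formulations (available once~(i), (ii) for $\eta_\t$ and~(iii) are established, as in Subsection~\ref{first-term-bockstein}) it suffices to prove one of them. The ``only if'' half follows from the previous paragraph: a morphism of $\G_{\s\t}$ divisible by~$\s$ is sent by $\eta_\s$ to a morphism divisible by~$\s$ in $\G_\s$, hence to zero, and an admissible epimorphism stays an epimorphism under~$\eta_\s$. For the ``if'' half one is given $g\:A\rarrow B$ with $\eta_\s(g)=0$. When $A=\gamma_{\s\t}(X)$ and $B=\gamma_{\s\t}(Y)$ the argument is direct: by Lemma~\ref{gamma-properties}(c$'$) there are an admissible epimorphism $X'\rarrow X$ in~$\F$ and a morphism $\phi\:X'\rarrow Y$ in~$\F$ with $\gamma_{\s\t}(\phi)=g\gamma_{\s\t}(X'\to X)$; applying~$\eta_\s$ yields $\gamma_\s(\phi)=0$, so by the defining property of the reduction functor~$\gamma_\s$ (the condition~(\i\i\i\i) of Subsection~\ref{bockstein-toy}) the morphism~$\phi$ is divisible by~$\s$ in~$\F$; then $g\gamma_{\s\t}(X'\to X)=\gamma_{\s\t}(\phi)$ is divisible by~$\s$ in $\G_{\s\t}$ while $\gamma_{\s\t}(X'\to X)$ is an admissible epimorphism, which is exactly what~(iv) requires. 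For a general~$g$ I would cover~$A$ by an admissible epimorphism from a $\gamma_{\s\t}(U)$ and embed~$B$ into a $\gamma_{\s\t}(V)$ (Lemma~\ref{gamma-properties}(b$'$,b$''$)), apply the direct case to the resulting morphism between $\gamma_{\s\t}$\+objects, and thereby produce an admissible epimorphism $\rho\:\gamma_{\s\t}(U')\rarrow A$ and an admissible monomorphism $b\:B\rarrow\gamma_{\s\t}(V)$ together with a factorization exhibiting $bg\rho$ as divisible by~$\s$ in $\G_{\s\t}$.

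The main obstacle is the remaining step: eliminating the admissible monomorphism~$b$, i.e.\ deducing from ``$bg\rho$ is divisible by~$\s$'' that $g$ itself is divisible by~$\s$ after composition with an admissible epimorphism onto~$A$. Writing $bg\rho=\s\chi$ and composing with the cokernel of~$b$, the obstruction to~$\chi$ factoring through the twist of~$b$ is a morphism~$\delta$ which one checks to be annihilated by~$\s$, hence --- by the already-established condition~(iii) --- annihilated by~$\eta_\t$. The intended resolution is to absorb~$\delta$ by correcting~$\chi$, after a further admissible epimorphism onto~$A$, by a morphism annihilated by~$\s$, obtained from a fibered-product construction lifting~$\delta$ along the cokernel map; the delicate point is to arrange that this correcting morphism is itself annihilated by~$\s$, and it is here that condition~(iii), in the guise of the identification of $\s$\+annihilated with $\eta_\t$\+annihilated morphisms, together with the ``exact surjectivity'' of $\gamma_{\s\t}$ and the defining property of~$\gamma_\t$, must be brought to bear. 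I expect this absorption argument --- equivalently, the verification of~(iv) for morphisms of $\G_{\s\t}$ not a priori represented by morphisms of~$\F$ --- to be the technical heart of the proof.
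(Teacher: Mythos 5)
Your treatment of conditions~(i\+ii) (via Lemma~\ref{exact-surjectivity-compositions-lemma} applied to $\gamma=\gamma_{\s\t}$ and $\eta\gamma=\gamma_\s$ or~$\gamma_\t$, with the inputs supplied by Lemma~\ref{gamma-properties}) and of condition~(iii) (via the faithfulness of $\eps_{\s\t}$ and $\eps_\t$ and the assumed characterization of $\s$\+annihilation in $\E_{\s\t}$ through~$\ups_\t$) coincides with the paper's argument. The first part of your treatment of~(iv) --- reducing to a morphism between objects in the image of~$\gamma_{\s\t}$ and invoking the divisibility property of the reduction functor~$\gamma_\s$ on~$\F$ --- is also the paper's route; the paper reads the admissible epimorphism and admissible monomorphism directly off the matrix factorization presentation $(K,L)\rarrow(P,Q)$ rather than going through Lemma~\ref{gamma-properties}(b$'$,b$''$), but this difference is immaterial.

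The gap is in your last step, and it is one you have already equipped yourself to close. At that point you hold exactly the statement the paper reaches: an admissible epimorphism $X'\rarrow X$ and an admissible monomorphism $Y\rarrow Y'$ in $\G_{\s\t}$ such that the composition $X'\rarrow X\rarrow Y\rarrow Y'$ is divisible by~$\s$. You then propose an elaborate, uncompleted ``absorption argument'' to strip off the monomorphism. No such argument is needed: since~(ii) for $\eta_\t$ and~(iii) are already proven, Lemma~\ref{bockstein-s-zero}(d) applies with $\eta_{\s\t}=\Id_{\G_{\s\!\t}}$, and it says precisely that the morphism $X'\rarrow X\rarrow Y$ lies in the image of~$\s_0$ (because it becomes divisible by~$\s$ after the admissible monomorphism $Y\rarrow Y'$), and therefore, by the other half of the same statement, becomes divisible by~$\s$ after a further admissible epimorphism $X''\rarrow X'$. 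The composition $X''\rarrow X$ is then the admissible epimorphism required by~(iv). You cited this very equivalence of the two dual formulations at the start of your discussion of~(iv); applying it to the mixed statement you derived, rather than only using it to choose which formulation to prove, turns your ``technical heart'' into a one-line deduction.
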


\begin{proof}
 The functor~$\gamma_{\s\t}$ satisfies the condition~($*$)
by Lemma~\ref{gamma-properties}(b), and the functors $\gamma_\s$
and~$\gamma_\t$ satisfy the conditions (i\+ii) and~($*$\+-$**$)
of Subsection~\ref{exact-surjectivity} by
Lemma~\ref{gamma-properties}(a\+d).
 The latter two functors also reflect admissible epimorphisms and
admissible monomorphisms, as explained in the end of
Subsection~\ref{exact-structure}. 
 According to Lemma~\ref{exact-surjectivity-compositions-lemma}(b,d),
it follows that the functors $\eta_\s$ and~$\eta_\t$ satisfy
the conditions~(i\+ii).

 To check the condition~(iii) of Subsection~\ref{bockstein-posing},
consider a morphism $g\:X\rarrow Y$ in the category~$\G_{\s\t}$.
 If it is annihilated by the functor~$\eta_\t$, then
$0=\eps_\t\eta_\t(g)=\ups_\t\eps_{\s\t}(g)$ implies, according to
our condition on the functor~$\ups_\t$, the equations
$0=\s_{\eps_{\s\!\t}(Y)}\eps_{\s\t}(g)=\eps_{\s\t}(\s_Yg)$
in $\E_{\s\t}$ and $\s_Yg=0$ in $\G_{\s\t}$, since
the functor~$\eps_{\s\t}$ is faithful. 
 Conversely, the equation $\s_Yg=0$ in $\G_{\s\t}$ implies
$\s_{\eps_{\s\!\t}(Y)}\eps_{\s\t}(g)=0$ in $\E_{\s\t}$, hence
$0=\ups_\t\eps_{\s\t}(g)=\eps_\t\eta_\t(g)$ and $\eta_\t(g)=0$,
since the functor $\eps_\t$ is faithful, too.

 To prove the remaining condition~(iv), represent a morphism~$g$
in the category $\G_{\s\t}$ by a morphism of diagrams
$(K,L)\rarrow(P,Q)$ in the category $\H=\H_{\s\t}$ (in the sense
of Remark~\ref{morphisms-up-to-isomorphism}).
 Then the composition $(K,K)\rarrow(K,L)\rarrow(P,Q)\rarrow(Q,Q)$
comes from a morphism $f\:K\rarrow Q$ in the category $\F$
via the functor~$\gamma_{\s\t}$.
 Hence the equation $\eta_\s(g)=0$ impies the equation
$\gamma_\s(f)=0$ in the category~$\G_\s$.
 According to the condition~(vii) of
Subsection~\ref{reduction-posing}, it follows that the morphism~$f$
is divisible by the natural transformation~$\s$ in the category~$\F$.

 We have shown that for any morphism $g\:X\rarrow Y$ in
the category $\G_{\s\t}$ annihilated by the functor~$\eta_\s\:\G_{\s\t}
\rarrow\G_\s$ there exist an admissible epimorphism $X'\rarrow X$
and an admissible monomophism $Y\rarrow Y'$ in the category $\G_{\s\t}$
such that the composition $X'\rarrow X\rarrow Y\rarrow Y'$
is divisible by the natural transformation~$\s$ in~$\G_{\s\t}$.
 Since the conditions~(ii\+iii) of Subsection~\ref{bockstein-posing}
are proven already, it follows by the way of
Lemma~\ref{bockstein-s-zero}(d) applied to the composition of
morphisms $X'\rarrow X\rarrow Y$ (and the functors $\eta_\t\:\G_{\s\t}
\rarrow\G_\t$, \ $\eta_\s\:\G_{\s\t}\rarrow\G_\s$, and
$\eta_{\s\t} = \Id_{\G_{\s\!\t}}$) that there exists an admissible
epimorphism $X''\rarrow X$ in the category $\G_{\s\t}$ for which
the composition $X''\rarrow X'\rarrow X\rarrow Y$ is divisible by~$\s$.
 This proves the condition~(iv).
\end{proof}

 Consequently, the construction of
Section~\ref{bockstein-sequence-section} is applicable in our
assumptions and we obtain a natural long exact sequence
\begin{alignat*}{3}
 0&\lrarrow\Hom_{\G_\t}(\eta_\t(X),\eta_\t(Y)(-1))&&\lrarrow
 \Hom_{\G_{\s\!\.\t}}(X,Y)&&\lrarrow
 \Hom_{\G_\s}(\eta_\s(X),\eta_\s(Y)) \\
 &\lrarrow \Ext^1_{\G_\t}(\eta_\t(X),\eta_\t(Y)(-1))&&\lrarrow
 \Ext^1_{\G_{\s\!\.\t}}(X,Y)&&\lrarrow
 \Ext^1_{\G_{\s}}(\eta_\s(X),\eta_\s(Y)) \\
 &\lrarrow\Ext^2_{\G_\t}(\eta_\t(X),\eta_\t(Y)(-1))&&\lrarrow
 \Ext^2_{\G_{\s\!\.\t}}(X,Y)&&\lrarrow\dotsb
\end{alignat*}
for any two objects $X$ and $Y$ in the category~$\G_{\s\t}$.
 The differentials in this long exact sequence have
the properties~(a\+c) of Subsection~\ref{bockstein-posing}.

\Section{Reduction of Coefficients in Artin--Tate Motives}

\subsection{The main hypothesis}  \label{main-hypothesis}
 Let $\A$ be an exact category, \,$\E_i$, \,$i\in\Z$, be a sequence of
additive categories, and $\phi_i\:\E_i\rarrow\A$ be additive functors.
 We will view the categories $\E_i$ as exact categories with split
exact category structures.

 Consider the category $\F$ whose objects are the triples $(M,Q,q)$,
where $M=(M,F)$ is a finitely filtered object of the exact
category~$\A$, \ $Q=(Q_i)$ is a finitely supported object of
the Cartesian product of additive categories $\prod_{i\in\Z}\E_i$,
and $q$~is a collection of isomorphisms $q_i\:\gr_F^iM\simeq\phi_i(Q_i)$
of objects in the category~$\A$.
 A morphism $f\:(M',Q',q')\rarrow(M'',Q'',q'')$ in the category $\F$ is
a pair $f=(g,h)$, where $g\:(M',F')\rarrow(M'',F'')$ is a morphism of
filtered objects in $\A$ and $h\:Q'\rarrow Q''$ is a morphism of objects
in $\prod_{i\in\Z}\E_i$ such that the induced morphism $\gr_F^i(g):
\gr_{F'}^iM'\rarrow\gr_{F''}^iM''$ forms a commutative square diagram
with the morphism $h_i\:Q'_i\rarrow Q''_i$ and the isomorphisms
$q'_i$, $q''_i$ for all $i\in\Z$.

 The category $\F$ is endowed with the exact category structure in
which a short sequence with zero composition is exact if the related
short sequence of the objects $Q_i$ is split exact in $\E_i$ for each
$i\in\Z$.
 Then the related short sequence of the filtered objects $(M,F)$ is also
exact, because the class of exact sequences is closed under extensions
in the category of complexes in~$\A$.
 We refer to~\cite[Section~3 and
Examples~A.5(4\+5)]{Partin} for some further details (cf.\ 
Example~\ref{filtered-exact-associated-graded-reduction-example}(b)
above).

 There are natural embedding functors $\E_i\rarrow\F$ identifying
the category $\E_i$ with the full exact subcategory of $\F$ consisting
of all the objects $(M,Q,q)$ such that $Q_j=0$ for all $i\ne j$.
 For the sake of simplicity of the terminology and notation, assume
further that there are equivalences of categories $(1)\:\E_i\rarrow
\E_{i+1}$ and a twist functor (exact autoequivalence) $(1)\:\A\rarrow\A$
forming commutative diagrams with the functors~$\phi_i$.
 Then there is a naturally induced twist functor $(1)\:\F\rarrow\F$
taking an object $(M,Q,q)$ to the object $(M(1),Q(1),q(1))$, where
$F^{i+1}(M(1))=(F^iM)(1)$, \ $Q(1)_{i+1}=Q_i(1)$, and
$q(1)_{i+1}=q_i(1)$.

 Denote by $\psi\:\F\rarrow\A$ the forgetful exact functor taking
an object $(M,Q,q)=(M,F,Q,q)\in\F$ to the object $M\in\A$.
 Let $\J$ be a full subcategory of $\E_0$ such that any object of
$\E_0$ is a finite direct sum of objects from~$\J$.

\begin{prop}  \label{diagonal-quadratic-prop}
\textup{(a)} One has\/ $\Ext^n_\F(X,Y)=0$ for any two objects
$X\in\E_i$ and $Y\in\E_j\.\subset\.\F$ and any $n>j-i$; \par
\textup{(b)} the maps\/ $\Ext^n_\F(X,Y)\rarrow\Ext^n_\A
(\phi_i(X),\phi_j(X))$ are isomorphisms for all\/ $i<j$, \,$n=0$
or~$1$ and monomorphisms for all\/ $i,j\in\Z$, \,$n=2$; \par
\textup{(c)} the big graded ring of diagonal cohomology\/
$\Ext^n_\F(X,Y(n))_{Y,X\in\J;\,n\ge0}$ is quadratic
(see~\cite[Subsections~A.1 and~6.1]{Partin} for the definitions).
\end{prop}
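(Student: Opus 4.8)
The plan is to deduce all three parts from the structure of the filtered exact category $\F$ built on the data $(\A,\E_i,\phi_i)$, exploiting the fact that its exact structure is detected on the associated graded pieces $Q_i$ in the split exact categories $\E_i$. The key preliminary observation is that for objects $X\in\E_i$ and $Y\in\E_j$ sitting inside $\F$ as pure-degree objects, a short exact sequence $0\to Y\to Z\to X\to 0$ in $\F$ is automatically \emph{split} at the level of the graded objects $Q$, so the middle term $Z$ has $\gr^i_FZ\cong\phi_i(X)$, \ $\gr^j_FZ\cong\phi_j(Y)$, and $\gr^k_FZ=0$ otherwise; when $i=j$ the extension splits in $\F$ outright (the filtration has a single nonzero graded piece, which must be a split extension in $\E_i$), and when $i\neq j$ the only possible nonsplit part of $Z$ is a nontrivial extension of $\phi_i(X)$ by $\phi_j(Y)$ in $\A$ compatible with the one-step filtration. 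This is exactly the content of the filtered exact category computations in \cite[Section~3 and Subsection~A.8]{Partin}, and I would cite those rather than redo them.

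\textbf{Part (a).} I would argue by induction on $j-i$. For $j-i<0$ there are no morphisms and no extensions between a higher-degree and a lower-degree object (a morphism $X\to Y$ with $X\in\E_i$, $Y\in\E_j$, $i>j$ must vanish on graded pieces, hence vanish, and similarly an extension $0\to Y\to Z\to X\to 0$ would force the filtration on $Z$ to be nonmonotone), so $\Ext^n_\F(X,Y)=0$ for $n>j-i$ is vacuous or immediate. For $j-i=0$ the splitting remark above gives $\Ext^n_\F(X,Y)=0$ for $n>0$. For the inductive step, resolve: given $Y\in\E_j$, choose an admissible monomorphism $Y\to Y'$ in $\F$ whose cokernel $Y''$ has graded pieces concentrated in degrees $>j$; concretely one can take $Y'$ to be a two-step filtered object realizing a suitable extension, so that $Y''\in\bigoplus_{k>j}\E_k$. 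The long exact sequence of $\Ext_\F(X,-)$ then reduces the vanishing of $\Ext^n_\F(X,Y)$ to the vanishing of $\Ext^{n-1}_\F(X,Y')$ and $\Ext^{n-1}_\F(X,Y'')$ in ranges covered by the induction hypothesis (the degrees of the graded pieces of $Y'$ and $Y''$ relative to $i$ are the relevant bound); a dimension-shift bookkeeping closes the argument. The main obstacle here is choosing the auxiliary objects $Y'$, $Y''$ correctly so that the inductive hypothesis applies with the right numerical bound — one must be careful that every graded piece that appears is of strictly higher degree, which forces $Y'$ to be genuinely filtered rather than pure.

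\textbf{Part (b).} For $n=0$ this is immediate since $\Hom_\F(X,Y)$ is computed by a compatible pair of a graded map and a filtered underlying map, and for $X,Y$ pure of degrees $i<j$ the graded component lands in $\Hom_{\E_i}(X,0)=0$ unless it is the degree where both are supported, which does not happen, so $\Hom_\F(X,Y)\hookrightarrow\Hom_\A(\phi_i(X),\phi_j(Y))$, and surjectivity holds because any $\A$-morphism automatically respects the (trivial) filtration data. For $n=1$, given a class in $\Ext^1_\A(\phi_i(X),\phi_j(Y))$ with $i<j$, the corresponding extension in $\A$ carries a canonical two-step filtration (image of $\phi_j(Y)$ in degree $j$, the whole object in degree $i$), whose graded pieces are $\phi_i(X)$ and $\phi_j(Y)$ and hence lie in $\phi_i(\E_i)$ and $\phi_j(\E_j)$; this produces an object of $\F$ and hence a class in $\Ext^1_\F(X,Y)$ mapping to the given one, giving surjectivity, while injectivity follows from the splitting analysis (a class in $\Ext^1_\F$ killed in $\A$ has split underlying extension, and then the $\F$-extension splits because its graded part splits). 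For $n=2$ and arbitrary $i,j$, injectivity (which is all that is claimed) follows from Part~(a) together with the long exact sequences: resolving $X$ on the left or $Y$ on the right by pure-degree objects and invoking the $n\le 1$ statements and the vanishing of Part~(a) in the appropriate degree, a diagram chase with the five lemma forces $\Ext^2_\F(X,Y)\hookrightarrow\Ext^2_\A(\phi_i(X),\phi_j(Y))$; this is the mechanism used in \cite[Subsection~A.8]{Partin} and I would cite the criterion \cite[Lemma~3.2]{Partin} where convenient.

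\textbf{Part (c).} Recall from \cite[Subsection~6.1]{Partin} that the diagonal cohomology big graded ring $A=\bigl(\Ext^n_\F(X,Y(n))\bigr)_{Y,X\in\J;\,n\ge0}$ is \emph{quadratic} iff it is generated in degree $1$ (over degree $0$) and all relations are in degree $2$, equivalently iff for every $Y,X\in\J$ the canonical maps from the degree-$1$-generated tensor construction are onto $\Ext^n$ for all $n$ and injective in the appropriate sense through degree $2$. The first part — generation in degree $1$ — I would get from Part~(a): since $\Ext^n_\F(X,Y(n))$ is the extreme case $j-i=n$ of the vanishing bound, every such class is represented by an $n$-step filtered object all of whose graded subquotients are consecutive-degree, hence is a Yoneda product of $n$ classes in $\Ext^1$; this is precisely the ``extremal'' or ``Koszul-type'' decomposition argument, and one checks the products land among the $\J$-diagonal classes because $\E_0$ is the direct sum of copies of $\J$ and the twists permute degrees. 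The relations-in-degree-$2$ part is where the real work lies: one must show that the kernel of the multiplication map on the diagonal big graded ring is generated by its degree-$2$ component. I expect this to be the main obstacle. The strategy is to use Part~(b): the isomorphisms $\Ext^1_\F\cong\Ext^1_\A$ and $\Ext^n_\F\hookrightarrow\Ext^n_\A$ (for $n=2$) transport the relation-detection problem into $\A$, and then to argue that since every diagonal $\Ext^n$-class in $\F$ is a product of $\Ext^1$'s, and any relation among such products already visible in $\F$ becomes a relation between filtered-object presentations whose ``defect'' is captured by $\Ext^2$ (by the monomorphism in Part~(b) applied one degree at a time, peeling off the filtration), every relation reduces to quadratic ones. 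Concretely I would induct on $n$, using the long exact sequences to express $\Ext^n_\F(X,Y(n))$ in terms of $\Ext^{n-1}$ and $\Ext^1$ groups and the quadraticity hypothesis at lower degrees, with Part~(a) killing all the error terms of too-high cohomological degree; the bookkeeping mirrors \cite[Subsections~A.1, 6.1]{Partin}, to which I would defer for the precise diagram-chasing. The delicate point throughout is that ``quadratic'' is a statement about the whole big graded ring over the object set $\J$, so one cannot fix a single pair of objects — the relations in degree $2$ between $\Ext^1(X,Z(1))$ and $\Ext^1(Z,Y(1))$ for all intermediate $Z\in\J$ must be shown to cut out all of $\ker$ in every higher degree simultaneously; handling this uniformly (rather than object-by-object) is the crux.
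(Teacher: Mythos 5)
The paper does not prove this proposition from first principles: after observing that the $n=0,1$ cases of part~(a) are obvious (your base cases), it obtains part~(a) for $n\ge2$ together with part~(c) by applying \cite[Theorem~6.1]{Partin} to the derived category $\D^b(\F)$ with its full subcategories $\E_i$, and part~(b) by applying \cite[Theorem~3.1(2)]{Partin}. Those two cited theorems are exactly the nontrivial content you set out to reprove, and your sketch does not actually supply it.

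Concretely: for part~(a), the dimension shift you propose hinges on an admissible monomorphism $Y\rarrow Y'$ whose cokernel $Y''$ has graded pieces in degrees $>j$. Any such short exact sequence is automatically split --- strict compatibility of the filtrations forces $F^{j+1}Y'$ to meet $Y$ trivially and to map isomorphically onto $Y''$ --- so the long exact sequence carries no information. Taking $Y''$ in degrees $<j$ instead makes the connecting maps useful, but then one must also kill $\Ext^n_\F(X,Y')$, which would require $\F$ to have enough injectives of a controlled filtered shape; nothing in the hypotheses provides these. The same objection defeats the ``resolutions by pure-degree objects'' invoked for the $n=2$ monomorphism of part~(b): such resolutions need not exist, and \cite[Theorem~3.1]{Partin} proves that statement by a direct analysis of Yoneda extensions instead. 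For part~(c), the claim that every class in $\Ext^n_\F(X,Y(n))$ is represented by an $n$\+step filtered object with consecutive-degree subquotients, and hence factors as a product of $n$ classes in $\Ext^1$, is not a corollary of the vanishing in part~(a); it is the ``generation in degree one'' half of quadraticity and is itself the hard half of \cite[Theorem~6.1]{Partin}, proved there by an induction over silly filtrations of complexes in $\D^b(\F)$. Since the degree-two-relations half is explicitly left open in your text as well, essentially all of parts (a) for $n\ge2$, (b) for $n=2$, and (c) remains unproved.
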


\begin{proof}
 The cases $n=0$ and~$1$ in part~(a) are obvious.
 Applying the result of~\cite[Theorem~6.1]{Partin} to the derived
category $\D=\D^b(\F)$ and its full subcategories
$\E_i\subset\F\subset\D$, one obtains the rest of part~(a)
together with part~(c).
 Applying the result of~\cite[Theorem~3.1(2)]{Partin} to
the sequence of exact functors $\phi_i\:\E_i\rarrow\A$ provides
part~(b).
\end{proof}

 We will say that the exact category $\A$ together with the additive
categories $\E_i$ and additive functors $\phi_i\:\E_i\rarrow\A$
\emph{satisfy the main hypothesis} if the maps
$$
 \psi^n\:\Ext^n_\F(X,Y)\lrarrow\Ext^n_\A(\phi_i(X),\phi_j(Y))
$$
are isomorphisms for all the objects $X\in\E_i$ and $Y\in\E_j\.
\subset\.\F$ and all integers $n\le j-i$.
 In particular, this condition for $n=0$, \,$i=j$ means that
the functors~$\phi_i$ are fully faithful.
 With this fact in mind, we will often consider the categories $\E_i$
as full additive subcategories in the exact category $\A$ and
the functors $\E_i\rarrow\A$ as identity embeddings, dropping
the notation~$\phi_i$.
 Abusing terminology, we will simply speak of the exact category $\F$
or, sometimes, the exact functor $\psi\:\F\rarrow\A$ as satisfying
(or not satisfying) the main hypothesis.

 It follows from Proposition~\ref{diagonal-quadratic-prop}(c) that
the main hypothesis implies quadraticity of the big graded ring
$\Ext_\A^n(X,Y(n))_{Y,X\in\J;\,n\ge0}$.
 When the twist $(1)\:\A\rarrow\A$ is the identity functor, the main
result of the paper~\cite{Partin} allows one to say more.

\begin{thm}  \label{koszulity-theorem}
 Assume that all the functors $\phi_i\:\E_i\rarrow\A$ are fully
faithful embeddings of one and the same full additive subcategory
$\E_0\subset\A$.
 Then the main hypothesis holds if and only if the big graded ring\/
$\Ext_\A^n(X,Y(n))_{Y,X\in\J;\,n\ge0}$ is Koszul
(see~\cite[Section~7]{Partin} for the definition and discussion).
\end{thm}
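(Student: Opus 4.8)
The plan is to reduce the assertion to the main theorem of~\cite{Partin} by recognizing the exact category~$\F$ of Subsection~\ref{main-hypothesis}, in the situation at hand, as the filtered exact category studied there.  First I would set up the dictionary.  When all the functors $\phi_i\:\E_i\rarrow\A$ are the fully faithful embeddings of one and the same full additive subcategory $\E_0\subset\A$ (and the equivalences $(1)\:\E_i\rarrow\E_{i+1}$ are the restrictions of a twist functor $(1)\:\A\rarrow\A$ preserving~$\E_0$), the construction of~$\F$ specializes to the exact category of finitely $\E_0$\+filtered objects of~$\A$ with the split exact structure on the associated graded objects --- precisely the filtered exact category of \cite[Section~3]{Partin} (cf.\ Example~\ref{filtered-exact-associated-graded-reduction-example}(b)).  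Under this identification the forgetful functor $\psi\:\F\rarrow\A$ becomes the forgetful functor of~\cite{Partin}, the twist $(1)\:\F\rarrow\F$ becomes the filtration shift, the full subcategory $\J\subset\E_0$ plays the role of the set of generating objects used in~\cite[Section~7]{Partin}, and the main hypothesis of Subsection~\ref{main-hypothesis} becomes the $\Ext$\+agreement (``$K(\pi,1)$'') hypothesis of~\cite{Partin}.

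Next I would relate the ``diagonal $\Ext$ algebra of~$\F$'' to ``the big graded ring $\Ext^n_\A(X,Y(n))$'' appearing in the statement by means of Proposition~\ref{diagonal-quadratic-prop}.  Write $B_\F=(\Ext^n_\F(X,Y(n)))_{Y,X\in\J;\,n\ge0}$ and $B_\A=(\Ext^n_\A(X,Y(n)))_{Y,X\in\J;\,n\ge0}$; since $\psi$ is an exact functor, it induces a homomorphism of big graded rings $\psi^\bullet\:B_\F\rarrow B_\A$.  By Proposition~\ref{diagonal-quadratic-prop}(b) this homomorphism is an isomorphism in degrees~$0$ and~$1$ and a monomorphism in degree~$2$, while by part~(c) the ring~$B_\F$ is quadratic; comparing the space of quadratic relations of~$B_\F$ with the degree\+$2$ component of~$B_\A$ (using that $\psi^2$ is injective) one concludes that $\psi^\bullet$ identifies $B_\F$ with the quadratic part of~$B_\A$ --- the quadratic algebra on the generators $\Ext^1_\A$ subject to the relations coming from $\Ext^2_\A$ --- whose image in~$B_\A$ is the subring generated in degree~$1$.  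Consequently, if $B_\A$ is Koszul, then $B_\A$ is quadratic and generated in degree~$1$, hence equals its own quadratic part, so $\psi^\bullet\:B_\F\rarrow B_\A$ is an isomorphism; and conversely, if the main hypothesis holds, then --- taking $i=0$ and $j=n$ in its formulation, so that $n\le j-i$ --- the map $\psi^n\:\Ext^n_\F(X,Y(n))\rarrow\Ext^n_\A(X,Y(n))$ is an isomorphism for all $X,Y\in\J$ and every~$n\ge0$, so $\psi^\bullet$ is again an isomorphism.  In either case $B_\F\cong B_\A$, whence the asserted equivalence follows from the equivalence ``the main hypothesis holds $\iff$ $B_\F$ is Koszul'', which is the content of the main theorem of~\cite{Partin} applied to the filtered exact category~$\F$ (here Proposition~\ref{diagonal-quadratic-prop}(a) supplies the lower triangularity and \cite[Theorem~6.1]{Partin} the derived\+category input that make the Koszulity formalism of \cite[Section~7]{Partin} applicable).

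The main obstacle I expect is making the reduction to~\cite{Partin} genuinely airtight rather than merely plausible: one must verify that the abstractly given data $\A$, $\E_0$, $\J$ --- together with the twist functor on~$\A$, which here need not be the identity --- satisfy whatever standing assumptions~\cite{Partin} imposes in its filtered\+category and Koszulity sections, and that the notion of Koszulity for ``big graded rings over a set of objects'' used there coincides with the one referred to in the statement.  If the main theorem of~\cite{Partin} is not literally stated in quite this generality (for instance, if it is formulated only for the Tate\+twisted categories arising in the theory of Artin--Tate motives, or only for abelian~$\A$), the fallback is to re\+run its proof in the present setting: the essential ingredients are exactly the vanishing and quadraticity of Proposition~\ref{diagonal-quadratic-prop} together with the $\D^b(\F)$\+level criteria of \cite[Sections~6 and~7]{Partin} that translate Koszulity of~$B_\F$ into vanishing of the off\+diagonal groups $\Ext^n_\F(X,Y(m))$ for $n<m$, and thence, via the main hypothesis and Proposition~\ref{exact-surjectivity-ext-prop}, into the desired $\Ext$\+agreement.
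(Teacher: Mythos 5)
Your proposal is correct and takes essentially the same route as the paper: the paper's entire proof of this theorem is the single sentence ``This is essentially the assertion of~\cite[Theorem~9.1]{Partin}'', and your argument is a careful unpacking of exactly that reduction, with the dictionary to the filtered exact category of~\cite{Partin} and the comparison of the diagonal Ext rings of $\F$ and $\A$ via Proposition~\ref{diagonal-quadratic-prop} supplying the details the paper leaves implicit. The one caveat you raise yourself (whether the cited theorem is stated in the required generality, e.g.\ only for an identity twist on~$\A$) is precisely the content of the paper's qualifier ``essentially,'' so your fallback of re-running the proof of~\cite{Partin} in the present setting is the right safety net.
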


\begin{proof}
 This is essentially the assertion of~\cite[Theorem~9.1]{Partin}.
\end{proof}

 In the rest of this section we discuss a specific class of examples
of exact categories $\A$ with additive functors $\phi_i\:\E_i\rarrow
\A$ for which we would like to be able to show that an appropriate
Koszulity assumption guarantees validity of the main hypothesis,
even though the twist functor $(1)\:\A\rarrow\A$ is not isomorphic
to the identity, but only becomes so after the passage to
the reduction of the exact category $\A$ by a certain element of
its center.

\subsection{Reduction of representation categories}
\label{reduction-rep-categories}
 Let $k$~be a complete Noetherian commutative local ring with
the maximal ideal $l\subset k$.
 Pick an injective hull $I$ of the $k$\+module $k/l$ in the abelian
category of $k$\+modules.
 A $k$\+module $M$ is said to be \emph{discrete} if for every element
$m\in M$ there exists an integer $N\ge1$ such that $l^Nm=0$ in~$M$.
 In particular, the $k$\+module $I$ is discrete.
 A discrete $k$\+module $M$ is said to be \emph{of finite rank} if
its submodule of elements annihilated by~$l$ is a finite-dimensional
$k$\+vector space.

 The category of discrete $k$\+modules is a locally Noetherian
Grothendieck abelian category with a single isomorphism class of
indecomposable injective modules formed by the $k$\+module~$I$;
every injective object in the category of discrete $k$\+modules
is a direct sum of copies of the module~$I$.
 We will denote by $\A_k^{\{e\}\,+}$ the additive category of injective
discrete $k$\+modules and by $\A_k^{\{e\}}\subset\A_k^{\{e\}\,+}$ its
full subcategory of injective discrete $k$\+modules of finite rank.
 The subcategory $\A_k^{\{e\}}$ consists of all the objects in
$\A_k^{\{e\}\,+}$ isomorphic to finite direct sums of the object~$I$.

 Furthermore, let $G$ be a profinite group.
 Let us denote by $\A_k^G{}^+$ the category of injective discrete
$k$\+modules endowed with a discrete action of the group~$G$,
and by $\A_k^G$ the full subcategory of $\A_k^G{}^+$ formed
by injective discrete $k$\+modules of finite rank endowed
with a discrete $G$\+action.
 The categories $\A_k^G$ and $\A_k^G{}^+$ are endowed with exact
category structures in which a short sequence is exact if it is
(split) exact as a short sequence of discrete $k$\+modules.

 Let $s\in k$ be a noninvertible, nonzero-dividing element and
$k/s=k/(s)$ be the quotient ring by the principal ideal generated
by~$s$.
 Denote by $\pi\:\A_k^G{}^+\rarrow\A_{k/s}^{\{e\}\,+}$
the exact functor assigning to a discrete $G$\+module
$M\in\A_k^G{}^+$ over~$k$ the $k/s$\+module $\pi(M)={}_sM\subset M$
of $s$\+torsion elements in~$M$.
 So the functor $\pi$ is the diagonal composition in the commutative
square diagram of the reduction and forgetful functors
$\A_k^G{}^+\rarrow\A_{k/s}^{G\,+}\rarrow\A_{k/s}^{\{e\}\,+}$ and
$\A_k^G{}^+\rarrow\A_k^{\{e\}\,+}\rarrow\A_{k/s}^{\{e\}\,+}$.

 An alternative description of the category $\A_k^G{}^+$ and
the functor~$\pi$ is provided by the theory of $k$\+contramodule
coalgebras and comodules developed in
the paper~\cite[Sections~1 and~3]{Pweak}.
 The ring $k$ can be considered as (a very particular case of)
a pro-Artinian topological local ring~\cite[Section~1 and
Appendix~B]{Pweak}.
 Continuous functions $G\rarrow k$ in the $l$\+adic topology of $k$
form a $k$\+free $k$\+contramodule coalgebra $k(G)$, and the category
$\A_k^G{}^+$ is that of $k$\+cofree $k$\+comodule $k(G)$\+comodules, or,
equivalently, $k$\+free $k$\+contramodule $k(G)$\+comodules.

 The choice of an injective $k$\+module $I$ as above fixes
an equivalence between these two representations of objects of
the category $\A_k^G{}^+$ given by the usual functors
$P=\Psi_I(M)=\Hom_k(I,M)$ and $M=\Phi_I(P)=I\ot_kP$.
 While on the level of $k$- and $G$\+discrete $G$\+modules over~$k$
the functor $\pi$ assigns to a module $M$ its maximal submodule
annihilated by~$s$, on the level of $k$\+free $k$\+contramodule
$k(G)$\+comodules it assigns to a module $P$ its quotient module $P/sP$.

\begin{prop}  \label{unfiltered-reduction-prop}
 Let $\A_k^G{}^+/s$ denote the reduction of the exact category
$\A_k^G{}^+$ by the natural transformation $s\:\Id\rarrow\Id$ taken
on the background of the exact-conservative functor
$\pi\:\A_k^G{}^+\rarrow\A_{k/s}^{\{e\}\,+}$.
 Then the natural exact functor $\A_k^G{}^+/s\rarrow\A_{k/s}^{G\,+}$ is
an equivalence of exact categories.
\end{prop}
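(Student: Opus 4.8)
The plan is to construct an explicit exact functor $\Theta\:\A_k^G{}^+/s\rarrow\A_{k/s}^{G\,+}$ lifting the canonical functor $\eps$ to $\A_{k/s}^{\{e\}\,+}$, and then show it is an equivalence by checking essential surjectivity directly and deducing full faithfulness from Corollary~\ref{exact-surjectivity-ext-cor2}(b). Write $\G=\A_k^G{}^+/s$, realized as $(\H/\I)[\S^{-1}]$ out of matrix factorizations of the central element $s$ on $\F=\A_k^G{}^+$ relative to the background functor $\pi(M)={}_sM$. Since the twist is trivial, an object of $\H$ is a diagram $V\rarrow U\rarrow V\rarrow U$ in $\F$ with structure morphisms $\beta$ (the outer two, which coincide) and $\alpha$ (the middle one), subject to $\alpha\beta=s_V$, $\beta\alpha=s_U$ and to the condition that $\pi$ carries it to an exact sequence in $\E=\A_{k/s}^{\{e\}\,+}$. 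For such a diagram the subobject $\ker(\beta\:V\rarrow U)$ is annihilated by $s$ (since $\alpha\beta=s$), hence lies in ${}_sV$, and the exactness of the $\pi$-reduced diagram at the term ${}_sV$ identifies it with $\mathrm{im}({}_s\alpha\:{}_sU\rarrow{}_sV)=\Delta(U,V)$. I would therefore set $\Theta(U,V)$ to be the $k/s$-module $\Delta(U,V)$ equipped with the $G$-action restricted from~$V$, and a morphism to the corresponding restriction of $\Delta$. The one thing that has to be observed is that $\Delta(U,V)$ is already an object of $\E$, so its underlying $k/s$-module is automatically an injective discrete $k/s$-module and nothing needs to be checked; together with the $G$-action it is an object of $\A_{k/s}^{G\,+}$. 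As $\Theta$ coincides with $\Delta$ on underlying $k/s$-modules and morphisms, it annihilates $\I$, inverts $\S$, and descends to an exact functor $\Theta\:\G\rarrow\A_{k/s}^{G\,+}$ with $\Theta\circ\gamma=\rho$ (where $\rho(M)={}_sM$) and with the composition of $\Theta$ and the forgetful functor $\A_{k/s}^{G\,+}\rarrow\A_{k/s}^{\{e\}\,+}$ equal to $\eps$; since those two functors are exact-conservative, so is $\Theta$, and in particular $\Theta$ is conservative and faithful.

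For essential surjectivity, let $Q\in\A_{k/s}^{G\,+}$ and pass to the contramodule description of Subsection~\ref{reduction-rep-categories}, under which $\F$ becomes the category of $k$-free $k(G)$-comodules, $\A_{k/s}^{G\,+}$ the category of $k/s$-free $(k/s)(G)$-comodules, and $\Delta$ becomes the image of the middle arrow reduced modulo~$s$. Using that $k/s=k/(s)$ has the two-term free resolution $0\rarrow k\rarrow k\rarrow k/s\rarrow0$ over~$k$ (first map multiplication by~$s$), the object corresponding to $Q$ admits a two-term resolution $0\rarrow R\rarrow P\rarrow Q\rarrow0$ by $k$-free $k(G)$-comodules — enough projectives of this kind is part of the theory of~\cite{Pweak}, and projective dimension drops, so $R$ is again $k$-free. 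As $sQ=0$, the morphism $s_P$ factors as a composition $P\rarrow R\rarrow P$ of a map $\alpha$ with the inclusion $\beta\:R\rarrow P$, and injectivity of $\beta$ forces $\alpha\beta=s_R$, so $(P,R,\alpha,\beta)$ is a matrix factorization of~$s$; its mod-$s$ reduction is $2$-periodic exact by Eisenbud's observation~\cite{Eis} that continuing the resolution of $Q$ to the left with $\alpha$, $\beta$ alternately yields a resolution of $Q$ over~$k/s$, so this matrix factorization lies in~$\H$. Its $\Delta$ is $\mathrm{coker}(\beta)\simeq Q$, hence $\Theta$ of the corresponding object of $\G$ is isomorphic to~$Q$, and $\Theta$ is essentially surjective.

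For full faithfulness I would apply Corollary~\ref{exact-surjectivity-ext-cor2}(b) with $\eta'=\gamma$, $\iota=\Theta$, $\eta''=\rho=\Theta\gamma$. Lemma~\ref{gamma-properties} supplies the conditions~(i$'$), (i$''$), ($*'$), ($*''$) for~$\gamma$. That $\rho\:\A_k^G{}^+\rarrow\A_{k/s}^{G\,+}$ satisfies~(i$'$) and~(i$''$) is proved just as the ``exact surjectivity'' assertions in Examples~\ref{pro-finite-group-reduction-example} and~\ref{finite-group-integral-integral-finite-example}, using that cofree (coinduced) discrete $G$-modules over~$k$ are injective and that passage to $s$-torsion turns them into cofree discrete $G$-modules over~$k/s$. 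It remains to see that $\Theta$ induces isomorphisms $\Hom_{\G}(\gamma X,\gamma Y)\simeq\Hom_{\A_{k/s}^{G\,+}}(\rho X,\rho Y)$ for all $X$, $Y\in\F$; for this I would compare the initial segments of the Bockstein long exact sequence of Subsection~\ref{bockstein-toy} for $(\F,\G,\gamma)$ and for $(\F,\A_{k/s}^{G\,+},\rho)$ — the second being available once the properties of $\rho$ just listed, together with conditions~(\i\i\i\+\i\i\i\i) (which for $\rho$ follow from conditions~(vi\+viii) for $\pi=(\text{forget})\circ\rho$, the forgetful functor being faithful), are known — and apply the five lemma, noting that $\Theta$ carries one sequence to the other over the identity on the $\Ext_\F$-terms. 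Corollary~\ref{exact-surjectivity-ext-cor2}(b) then yields that $\Theta$ is fully faithful, its image is closed under extensions, the exact structure on $\G$ is induced from $\A_{k/s}^{G\,+}$, and $\Theta$ induces isomorphisms on all Ext groups; combined with essential surjectivity, $\Theta$ is an equivalence of exact categories, which is the assertion.

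I expect the main obstacle to be the verification of conditions~(i) for the reduction functor $\rho$ in the present profinite, infinitely-generated context, and — relatedly — the bookkeeping involved in realizing an arbitrary $Q$ by a two-term $k$-free resolution compatible with the $G$-action in the essential-surjectivity step; both rest on the theory of $k$-contramodule coalgebras and comodules of~\cite{Pweak} and on standard matrix-factorization yoga. By contrast, the construction of $\Theta$ is short, the key point being that its values are the objects $\Delta(U,V)$ of $\E$ equipped with their $G$-actions, so that the injectivity over $k/s$ that one would otherwise have to establish comes for free.
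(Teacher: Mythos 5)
Your construction of the forward functor and your essential-surjectivity argument are essentially the paper's: the paper also sends a diagram $(U,V)$ to the image of ${}_sU\rarrow{}_sV$ with its $G$\+action, and produces a preimage of a given object by a two-term (co)resolution --- it embeds $M$ into a $k$\+ and $k(G)$\+injective comodule $V$ and takes $U=V/M$, whereas you dualize to the contramodule picture and take the kernel $R$ of a surjection from a $k$\+free contramodule $P$; the two are interchangeable via $\Phi_I$, $\Psi_I$, and your Eisenbud-periodicity check that the reduced diagram is exact is the same verification the paper makes by noting the maps are surjective. Where you genuinely diverge is full faithfulness: the paper's one-paragraph proof only constructs the quasi-inverse on objects and leaves functoriality and the isomorphisms of Hom groups implicit, while you derive full faithfulness (and the agreement of exact structures and of all Ext groups) from Corollary~\ref{exact-surjectivity-ext-cor2}(b) after comparing the initial segments of the two toy Bockstein sequences for $\gamma$ and for $\rho$ by the 5\+lemma. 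This is a legitimate and in fact more complete route; it buys you the Ext-group comparison for free, at the price of having to verify the exact-surjectivity conditions for $\rho$ directly.

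Two small points to tighten. First, for the Bockstein sequence of $(\F,\A_{k/s}^{G\,+},\rho)$ you invoke only conditions~(i$'$) and~(i$''$) for~$\rho$, but the construction of the maps $r_\t^0$ and $\d^0$ and the exactness Lemmas~\ref{bockstein-s-zero}--\ref{bockstein-d-zero} also require condition~(ii) for~$\rho$ (and exactness at the $\Ext^1_\F$ term, which your 5\+lemma needs, uses condition~(i) in full via Proposition~\ref{exact-surjectivity-ext-prop}); these follow by the same permutational/coinduced-module arguments as in Examples~\ref{finite-group-finite-finite-finite-example}--\ref{finite-group-integral-integral-finite-example}, but they should be stated. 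Second, the commutativity of the square involving the two connecting maps~$\d^0$ deserves a sentence: it holds because the lifting data $(X'\twoheadrightarrow X\;X'\rarrow Y)$ chosen for $\gamma$ via condition~(ii$'$) can be pushed forward along $\Theta$ and reused for~$\rho$, and $\d^0$ is independent of these choices. Neither point affects the correctness of the argument.
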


\begin{proof}
 The natural exact functor assigns to a diagram $V\rarrow U\rarrow
V\rarrow U$ the image of the map ${}_sU\rarrow{}_sV$ in the category
$\A_{k/s}^{G\,+}$.
 To construct the inverse functor, consider an object
$M\in\A_{k/s}^{G\,+}$.
 Viewed as an object of the abelian category of discrete $k$\+modules
with a discrete $G$\+action, it can be embedded into a $k$- and
$k(G)$\+injective $k$-discrete $k(G)$-comodule $V$.
 The element $s$ being a nonzero-divizor, the injective dimension of $M$
as a discrete $k$\+module is equal to~$1$ (if $M\ne0$), hence
the quotient module $U=V/M$ is also $k$\+injective.
 The multiplication map $s\:V\rarrow V$ annihilates $M$, so it
factorizes naturally through the surjection $V\rarrow U$.
 The composition $U\rarrow V\rarrow U$ is also equal to
the multiplication with~$s$, as one can see by composing it with
the same surjection $V\rarrow U$.
 We have constructed the desired matrix factorization in $\A_k^G{}^+$;
since it consists of surjective maps, the passage to the submodules
of elements annihilated by~$s$ transforms it into an exact sequence
of $k/s$\+injective objects in the abelian category of discrete
$k/s$\+modules with a discrete action of~$G$ \cite[Lemma~1.5]{PolVain}.
 The $k/s$\+modules of cocycles in this exact sequence are injective,
since such is the one of them that is isomorphic to $M$ by construction.
\end{proof}

 Let $H\subset G$ be a fixed closed normal subgroup in the profinite
group~$G$.
 Denote by $\E_{k,\,0}^{G/H}$ the category of $k$\+injective
permutational $G/H$\+modules of finite rank, i.~e., the full subcategory
of $\A_k^G$ consisting of finite direct sums of modules induced from
trivial representations of open subgroups in $G$ containing $H$ in
injective discrete $k$\+modules of finite rank.
 Similarly, let $\E_{k,\,0}^{G/H\,+}$ denote the category of arbitrary
$k$\+injective permutational discrete $G/H$\+modules, i.~e., the full
subcategory in $\A_k^G{}^+$ whose objects are the (infinite) direct sums
of objects from $\E_{k,\,0}^{G/H}$.
 The categories $\E_{k,\,0}^{G/H}$ and $\E_{k,\,0}^{G/H\,+}$ are endowed
with split exact category structures.

 Consider the reduction functor $\pi\:\E_{k,\,0}^{G/H}\rarrow
\E_{k/s,\,0}^{G/H}$ taking a $k$\+injective permutational $G/H$\+module
$M$ to the $k/s$\+injective permutational $G/H$\+module ${}_sM$,
and denote by $f\mpsto\bar f$ its action on morphisms
in the two categories; and similarly for the reduction functor
$\pi\:\E_{k,\,0}^{G/H\,+}\rarrow \E_{k/s,\,0}^{G/H\,+}$ and its action
on morphisms.

\begin{lem} \label{reduction-conservative-lem}
\textup{(a)}
 Let $M$ and $N$ be $k$\+injective permutational $G/H$\+modules of
finite rank over~$k$, i.~e., objects of the category $\E_{k,\,0}^{G/H}$.
 Then every morphism $\bar f\:{}_sM\rarrow{}_sN$ in the category
$\E_{k/s,\,0}^{G/H}$ can be lifted to a morphism $f\:M\rarrow N$.
 Moreover, a morphism $f$ is an admissible monomorphism or admissible
epimorphism if and only if the morphism $\bar f$ is, and a sequence of
permutational $G/H$\+modules with zero composition is exact in
$\E_{k,\,0}^{G/H}$ if and only if its reduction modulo~$s$ is. \par
\textup{(b)}
 Let $M$ and $N$ be arbitrary $k$\+injective discrete permutational
$G/H$\+modules over~$k$, i.~e., objects of the category
$\E_{k,\,0}^{G/H\,+}$.
 Then every morphism $\bar f\:{}_sM\rarrow{}_sN$ in the category
$\E_{k/s,\,0}^{G/H\,+}$ can be lifted to a morphism
$f\:M\rarrow N$.
 Moreover, a morphism $f$ is an admissible monomorphism or admissible
epimorphism if and only if the morphism $\bar f$ is, and a sequence of
permutational $G/H$\+modules with zero composition is exact in
$\E_{k,\,0}^{G/H\,+}$ if and only if its reduction modulo~$s$ is. \par
\end{lem}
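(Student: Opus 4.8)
The plan is to prove parts~(a) and~(b) by a single argument: part~(a) is just part~(b) restricted to objects of finite type, and every construction below preserves finiteness of type. Write $\pi$ for the reduction functor $M\mapsto{}_sM$ (so $\pi(f)=\bar f$). I would first record the structural facts. Since induction from an open subgroup is, after forgetting the $G$\+action, a finite direct power, one has ${}_s\bigl(\mathrm{Ind}_U^G E\bigr)=\mathrm{Ind}_U^G({}_sE)$ for open $U\supseteq H$; and ${}_sI=\Hom_k(k/s,I)$ is a $k/s$\+injective essential extension of the residue field $k/l=(k/s)/(l/s)$, hence an injective hull of the residue field over~$k/s$. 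Consequently $\pi$ sends $\E_{k,\,0}^{G/H}$ and $\E_{k,\,0}^{G/H\,+}$ \emph{onto} the isomorphism classes of $\E_{k/s,\,0}^{G/H}$ and $\E_{k/s,\,0}^{G/H\,+}$. Moreover every permutational module $M$ is $s$\+divisible: applying $\Hom_k(-,I)$ to $0\rarrow k\xrightarrow{\,s\,}k\rarrow k/s\rarrow0$ and using $\Ext^1_k(k/s,I)=0$ produces a surjection $I\xrightarrow{\,s\,}I$ with kernel~${}_sI$, and this passes to each $\mathrm{Ind}_U^G I$ and to direct sums; and every element of $M$ is annihilated by a power of~$s$, being $l$\+torsion with $s\in l$. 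In particular $s\:M\rarrow M$ is surjective with kernel~${}_sM$, so $M/{}_sM\cong M$.

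The heart of the matter is that $\pi$ is full with a tractable kernel. By additivity in the source it suffices to lift morphisms out of $M=\mathrm{Ind}_U^G I$. For open~$U$ the functor $\mathrm{Ind}_U^G=\mathrm{Coind}_U^G$ is left adjoint to restriction, and since $I$ carries the trivial $U$\+action this gives a natural isomorphism $\Hom_G(\mathrm{Ind}_U^G I,N)\cong\Hom_k(I,N^U)$, in which $N^U$ is a direct sum of copies of $I$, hence $k$\+injective, and $({}_sN)^U={}_s(N^U)$. Under these identifications the reduction map on $\Hom(M,N)$ becomes the restriction-along-${}_sI\hookrightarrow I$ map $\Hom_k(I,N^U)\rarrow\Hom_{k/s}({}_sI,{}_s(N^U))$; it is surjective by injectivity of~$N^U$ over~$k$, and its kernel consists of the maps $I\rarrow N^U$ killing~${}_sI$ — such a map factors through $I/{}_sI\cong I$ and so equals $s$ times a map. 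Hence $\pi$ is full and, for all permutational $M$, $N$, the kernel of $\Hom(M,N)\rarrow\Hom({}_sM,{}_sN)$ equals $s\cdot\Hom(M,N)$. Finally, when $M=N$, any endomorphism $f=sg$ in this kernel is locally nilpotent — on the submodule ${}_{s^n}M$ one has $f^n=s^ng^n=0$, and $M=\bigcup_n{}_{s^n}M$ — so $1_M+f$ is invertible, its inverse $\sum_{i\ge0}(-f)^i$ being a pointwise finite, hence well-defined, morphism. The standard consequence is that $\pi$ reflects isomorphisms: lifting an inverse of $\pi(h)$ to some~$h'$, the composites $h'h$ and $hh'$ lie in $1+\ker\pi$, hence are automorphisms, so $h$ is invertible.

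Granting this, the three assertions follow. The lifting statement is exactly the fullness of~$\pi$. For admissible monomorphisms, ``only if'' holds because the additive functor~$\pi$ preserves split short exact sequences; for ``if'', given an admissible monomorphism $\bar f\:{}_sM\rarrow{}_sN$ one writes ${}_sN=\bar f({}_sM)\oplus\bar Q$ with $\bar Q\cong\pi(Q)$ for some object~$Q$ of the appropriate category (essential surjectivity of~$\pi$), lifts $\bar f$ to $f\:M\rarrow N$ and the inclusion $\bar Q\hookrightarrow{}_sN$ to $g\:Q\rarrow N$ (fullness), and observes that $(f,g)\:M\oplus Q\rarrow N$ reduces to an isomorphism, hence is an isomorphism, exhibiting $f$ as the inclusion of a direct summand with cokernel isomorphic to~$Q$. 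The admissible-epimorphism case is dual. For a short sequence $M'\xrightarrow{\,i\,}M\xrightarrow{\,p\,}M''$ with $pi=0$, ``only if'' is again clear; for ``if'', one picks a splitting ${}_sM=\bar i({}_sM')\oplus\bar j({}_sM'')$ of the reduced conflation, lifts $\bar j$ to $j\:M''\rarrow M$, and notes that $(i,j)\:M'\oplus M''\rarrow M$ reduces to an isomorphism, hence is one, while under it $p$ becomes $(0,pj)$ with $pj\in1+\ker\pi$ an automorphism; so the sequence is isomorphic to a split short exact one. Longer exact sequences are handled by splicing short ones.

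The subtle point — and the reason for routing the argument through the adjunction $\Hom_G(\mathrm{Ind}_U^G I,N)\cong\Hom_k(I,N^U)$ rather than working directly inside $\A_k^G{}^+$ — is that the permutational modules $\mathrm{Ind}_U^G I$ are in general \emph{not} injective objects of $\A_k^G{}^+$ or $\A_k^G$ (this already fails for $G=\Z/p^2$, $H=\{e\}$, $U=p\Z/p^2$ and $k=\Z/p$), so one cannot simply extend $\bar f\:{}_sM\rarrow{}_sN\hookrightarrow N$ along ${}_sM\hookrightarrow M$ there; it is only after applying the invariants functor $(-)^U$ that one reaches genuinely $k$\+injective modules. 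The other things to keep an eye on are the $s$\+divisibility of~$I$ (which underlies $M/{}_sM\cong M$) and the fact that for objects of infinite type the ideal $s\cdot\mathrm{End}(M)$ is merely locally nilpotent rather than nilpotent — which is nonetheless exactly what the ``reflects isomorphisms'' step requires.
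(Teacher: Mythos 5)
Your proof is correct, and it reaches the conclusion by a genuinely different technical route from the paper's. The paper forms the internal Hom $\Hom_k(M,N)$, observes that it is a permutational $k$\+free $G/H$\+module of finite rank (respectively, an infinite product of contramodule infinite direct sums of such in part~(b)), identifies $\Hom_{k/s}({}_sM,{}_sN)$ with its reduction modulo~$s$, and then shows that passage to $G$\+invariants commutes with this reduction --- the key observation being that $(l^nP)^G=l^n(P^G)$ for a permutational $k$\+free $G$\+module $P$ of finite rank; part~(b) accordingly leans on the contramodule direct sum formalism of~\cite{Pweak}. You instead apply Frobenius reciprocity on the source, reducing everything to the restriction map $\Hom_k(I,N^U)\rarrow\Hom_{k/s}({}_sI,{}_s(N^U))$, whose surjectivity is just the $k$\+injectivity of $N^U$ (a direct sum of copies of $I$ over the Noetherian ring~$k$) and whose kernel is identified with $s\cdot\Hom$ via the divisibility isomorphism $I/{}_sI\simeq I$; since infinite direct sums in the source only produce products of Hom groups, the contramodule considerations disappear from part~(b). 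The paper's ``version of Nakayama's lemma'' for reflecting isomorphisms is exactly your invertibility of $1+s\.g$ via local nilpotence, which is indeed what the infinite-type case requires; and both arguments conclude identically from fullness, reflection of isomorphisms, and liftability of objects (your treatment of the ``if'' directions, where the lifted comparison maps are only isomorphisms modulo $\ker\pi$, is spelled out more explicitly than in the paper). Your route is the more elementary and self-contained one for part~(b); the paper's has the advantage of exhibiting the Hom modules themselves as permutational objects, consistently with the contramodule viewpoint used elsewhere in the text.
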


\begin{proof}
 Part~(a): the $G$\+module $\Hom_k(M,N)$ is a permutational $k$\+free
$G/H$\+module of finite rank, while the $G$\+module
$\Hom_{k/s}({}_sM,{}_sN)$ is its reduction modulo~$s$.
 Clearly, the reduction map $\Hom_k(M,N)\rarrow\Hom_{k/s}({}_sM,{}_sN)$
identifies the submodule of $G$\+invariants in $\Hom_{k/s}({}_sM,{}_sN)$
with the reduction of the submodule of $G$\+invariants in
$\Hom_k(M,N)$, proving the first assertion.
 Furthermore, by a version of Nakayama's lemma a morphism in the category
$\E_{k,\,0}^{G/H}$ is an isomorphism if and only if so is its image in
the category $\E_{k/s,\,0}^{G/H}$.
 The remaining assertions now follow from the fact any $k/s$\+injective
permutational $G/H$\+module of finite rank can be lifted to a similar
$k$\+injective permutational $G/H$\+module.

 Part~(b): in this case, the $G$\+module $\Hom_k(M,N)$ is an infinite
product of infinite direct sums (in the $k$\+contramodule category) of
permutational $k$\+free $G/H$\+modules of finite rank, while
the $G$\+module $\Hom_{k/s}({}_sM,{}_sN)$ is the reduction of this
infinite product of infinite direct sums modulo~$s$
\cite[Subsections~1.2\+-1.3 and~1.5]{Pweak}.
 Otherwise, the argument works in the same way.
 One only has to check that the passage to $G$\+invariants commutes with
contramodule infinite direct sums of permutational representations and
the reductions of such contramodule representations by the ideals in~$k$.
 Indeed, the direct sum of a family of free $k$\+contramodules $P_i$
is the set of all families of elements $p_i\in P_i$ such that for any
$n>0$ all but a finite number of~$p_i$ belong to $l^nP_i$.
 The key observation is that for a permutational $k$\+free
$G$\+module $P$ (of finite rank over~$k$), the $k$\+submodules
$(l^nP)^G$ and $l^n(P^G)$ coincide in~$P$.
\end{proof}

\begin{cor}  \label{permutational-reduction-cor}
\textup{(a)} The reduction functor $\pi\:\E_{k,\,0}^{G/H}\rarrow
\E_{k/s,\,0}^{G/H}$ satisfies the conditions (v-viii) of
Subsection~\ref{reduction-posing}.
 One obtains the category $\E_{k/s,\,0}^{G/H}$ as the outcome of
the reduction procedure. \par
\textup{(b)} The reduction functor $\pi\:\E_{k,\,0}^{G/H\,+}\rarrow
\E_{k/s,\,0}^{G/H\,+}$ satisfies the conditions (v-viii) of
Subsection~\ref{reduction-posing}.
 One obtains the category $\E_{k/s,\,0}^{G/H\,+}$ as the outcome of
the reduction procedure.
\end{cor}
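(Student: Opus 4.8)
The two statements are proved by one and the same argument, invoking part~(a) or~(b) of Lemma~\ref{reduction-conservative-lem} according to whether one works with the finite-type or the arbitrary version; write $\F$ for $\E_{k,\,0}^{G/H}$ (resp.\ $\E_{k,\,0}^{G/H\,+}$) and $\E$ for $\E_{k/s,\,0}^{G/H}$ (resp.\ $\E_{k/s,\,0}^{G/H\,+}$), with $(1)=\Id_\F$ and $\s\:\Id_\F\rarrow\Id_\F$ the multiplication with~$s$. The plan is to verify the conditions~(v\+viii) of Subsection~\ref{reduction-posing} and then to identify the reduced category with~$\E$. The crucial preliminary remark is that every injective discrete $k$\+module --- hence every object of $\F$ --- is \emph{$s$\+divisible}: the map $s\:k\rarrow k$ is injective since $s$ is a nonzero-divisor, so applying $\Hom_k(-,I)$ shows that $s\:I\rarrow I$ is surjective, and any injective discrete $k$\+module, with or without a $G$\+action, is a direct sum of copies of~$I$. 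Condition~(vi) is then immediate, as $\pi(\s_M)=s\cdot\id_{{}_sM}=0$ for each $M\in\F$, the submodule ${}_sM$ being annihilated by~$s$. Condition~(viii) follows because a morphism $f\:M\rarrow N$ in $\F$ with $\s f=0$, i.~e.\ with $sf=0$, must vanish on $sM=M$, whence $f=0$. For condition~(vii), let $f\:M\rarrow N$ be annihilated by~$\pi$, so that $f$ vanishes on~${}_sM$: since $s\:M\rarrow M$ is a $G$\+equivariant epimorphism with kernel~${}_sM$, the morphism~$f$ factorizes through it as $f=g\circ s$ for a (unique) morphism $g\:M\rarrow N$ in~$\F$, and hence $f$ is divisible by~$\s$. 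Finally, for condition~(v): the additive functor~$\pi$ between split exact categories takes split short exact sequences to split short exact sequences, hence is exact, and it is exact-conservative precisely by the last assertions of Lemma~\ref{reduction-conservative-lem}(a) (resp.~(b)).

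Consequently, the reduction procedure of Subsections~\ref{matrix-factorizations}\+-\ref{exact-structure} applies and yields the reduced exact category $\G=\F/\s$ together with the exact-conservative functors $\gamma\:\F\rarrow\G$ and $\eps\:\G\rarrow\E$ satisfying $\eps\gamma=\pi$, the functor $\eps$ being faithful. To identify the output, I will show that $\eps$ is an equivalence of exact categories; since $\eps$ is exact-conservative by the construction of Subsection~\ref{exact-structure}, it suffices to check that $\eps$ is fully faithful and essentially surjective as a functor of categories. For essential surjectivity, an object $M\in\E$ lifts, by Lemma~\ref{reduction-conservative-lem}, to an object $\widetilde M\in\F$ with ${}_s\widetilde M\simeq M$, and then $\eps(\gamma(\widetilde M))=\Delta(\widetilde M,\widetilde M)=\pi(\widetilde M)={}_s\widetilde M\simeq M$. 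Faithfulness of $\eps$ being already known, for fullness one uses essential surjectivity to reduce to the case of two objects $\gamma(\widetilde U)$, $\gamma(\widetilde V)$ with $\widetilde U$,~$\widetilde V\in\F$; a morphism $\pi(\widetilde U)\rarrow\pi(\widetilde V)$ in $\E$ then lifts, again by Lemma~\ref{reduction-conservative-lem}, to a morphism $\widetilde g\:\widetilde U\rarrow\widetilde V$ in~$\F$, and $\gamma(\widetilde g)$ is carried to it by~$\eps$. Thus $\eps\:\G\rarrow\E$ is an exact-conservative equivalence, so $\G\simeq\E$ as exact categories; this is what ``obtaining the category $\E$ as the outcome of the reduction procedure'' means.

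The corollary is thus mostly a matter of assembling results already in place; the substantive ingredients are the $s$\+divisibility of injective discrete $k$\+modules, which underlies conditions~(vii\+viii), and Lemma~\ref{reduction-conservative-lem}, which supplies both the exact-conservativity of~$\pi$ and all the liftings used to pin down the reduced category. I expect the single point deserving genuine attention to be condition~(vii): one must be sure that the factorization $f=g\circ s$ takes place \emph{inside}~$\F$, for which it is essential that the objects of $\F$ be $s$\+divisible and that multiplication by~$s$ have kernel exactly~${}_sM$, so that $g$ is a well-defined $G$\+equivariant $k$\+linear map between the permutational modules $M$ and~$N$. (In the $+$ case one additionally uses that arbitrary direct sums of $s$\+divisible discrete modules are $s$\+divisible and that ${}_s(-)$ commutes with such direct sums and with induction, facts implicit in the proof of Lemma~\ref{reduction-conservative-lem}(b).)
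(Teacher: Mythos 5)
Your verification of the conditions~(v\+viii) is correct and complete: exact-conservativity is exactly the content of the last assertions of Lemma~\ref{reduction-conservative-lem}, and the $s$\+divisibility of injective discrete $k$\+modules is indeed the key point behind~(vii) and~(viii) (note that $\s_M$ is a monomorphism in the categorical sense even though $s\:M\rarrow M$ is not injective on elements --- but the paper's condition~(viii) only asks that no nonzero morphism be annihilated by~$\s$, which is what you check). The paper dismisses all of this as ``easy'', so there is nothing to compare there.

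The gap is in your identification of the output category, specifically in the fullness of~$\eps$. You write that ``one uses essential surjectivity to reduce to the case of two objects $\gamma(\widetilde U)$, $\gamma(\widetilde V)$''. Essential surjectivity of $\eps$ is a statement about objects of $\E$; it does not imply that every object of $\G$ is isomorphic \emph{in $\G$} to an object in the image of~$\gamma$. Knowing $\eps(T)\simeq\eps(\gamma(\widetilde U))$ does not give $T\simeq\gamma(\widetilde U)$: conservativity of $\eps$ only reflects isomorphisms of the form $\eps(f)$, and producing such an $f$ is precisely the fullness one is trying to prove, so the reduction as stated is circular. What is actually available is weaker but sufficient: by Lemma~\ref{gamma-properties}(a$'$,\.b$'$) and their duals, every object $T\in\G$ receives an admissible epimorphism $e\:\gamma(U)\rarrow T$ and admits an admissible monomorphism $m\:T'\rarrow\gamma(V')$. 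Given $h\:\eps(T)\rarrow\eps(T')$, lift $\eps(m)\.h\.\eps(e)$ to a morphism $g\:\gamma(U)\rarrow\gamma(V')$ (possible since $\pi$ is full and $\eps$ faithful, so $\Hom_\G(\gamma X,\gamma Y)\rarrow\Hom_\E(\pi X,\pi Y)$ is bijective), then use faithfulness of $\eps$ to see that $g$ kills the kernel of $e$ and lands in the kernel of the cokernel of $m$, hence factors as $m\.h'\.e$ with $\eps(h')=h$. Alternatively, and closer to the paper's toolkit, apply Corollary~\ref{exact-surjectivity-ext-cor2}(b) to $\eta'=\gamma$, $\iota=\eps$: the hypotheses ((i$'$) for $\gamma$ and $\pi$, the Hom isomorphisms, and~($*'$), ($*''$) for~$\gamma$) are all supplied by Lemma~\ref{gamma-properties} and Lemma~\ref{reduction-conservative-lem}, and the conclusion together with essential surjectivity of $\pi$ gives the equivalence $\G\simeq\E$. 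With either repair your argument goes through.
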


\begin{proof}
 By Lemma~\ref{reduction-conservative-lem}, the reduction functors~$\pi$
are exact-conservative.
 The remaining verifications are easy.
\end{proof}

\subsection{Large and small filtered representation categories}
\label{large-and-small-categories}
 Let $\bar k$ be a discrete Artinian local ring and $G$ be
a profinite group.
 As above, we denote by $\A_{\bar k}^G{}^+$ the exact category of
$\bar k$\+injective discrete $G$\+modules over~$\bar k$, and
by $\A_{\bar k}^G$ its full exact subcategory formed by injective
$\bar k$\+modules of finite rank endowed with a discrete
action of~$G$.

\begin{lem} \label{large-and-small-unfiltered-lem}
\textup{(a)} The embedding functor $\A_{\bar k}^G\rarrow
\A_{\bar k}^G{}^+$ satisfies the condition (i$\.'$) of
Subsection~\ref{exact-surjectivity}; in particular, it induces
isomorphisms on all the Ext groups. \par
\textup{(b)} The functor Ext in the exact category $\A_{\bar k}^G{}^+$
transforms infinite direct sums in its first argument into infinite
products.
 When its first argument belongs to $\A_{\bar k}^G$, the functor Ext
in the exact category $\A_{\bar k}^G{}^+$ transforms infinite direct
sums in the second argument into infinite direct sums.
\end{lem}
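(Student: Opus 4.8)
For part~(a), the plan is to verify condition~(i$'$) for the embedding functor $\iota\:\A_{\bar k}^G\rarrow\A_{\bar k}^{G\,+}$, and then to deduce the assertion about $\Ext$ groups from Corollary~\ref{exact-surjectivity-ext-cor2}(a): taking there $\F=\G'=\A_{\bar k}^G$, \ $\G''=\A_{\bar k}^{G\,+}$, \ $\eta'=\Id$, and $\iota$ the embedding, one has $\eta''=\iota$; the functor $\eta'=\Id$ satisfies~(i$'$) trivially, $\iota$ satisfies~(i$'$) by the first assertion, and $\iota$ induces isomorphisms on $\Hom$ groups because $\A_{\bar k}^G$ is a full subcategory of $\A_{\bar k}^{G\,+}$, so the corollary applies and yields $\Ext^n_{\A_{\bar k}^G}(X,Y)\simeq\Ext^n_{\A_{\bar k}^{G\,+}}(X,Y)$ for all $X$, $Y\in\A_{\bar k}^G$ and all $n\ge0$.

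The key input for~(i$'$) is the statement that every object $T$ of $\A_{\bar k}^{G\,+}$ is the directed union of those of its $G$-submodules that are $\bar k$-injective and of finite type. Granting this, given $X\in\A_{\bar k}^G$ and an admissible epimorphism $T\rarrow X$ in $\A_{\bar k}^{G\,+}$, I would lift a finite set of $\bar k$-module generators of $X$ to elements of $T$, take $Z$ to be one of the finite-type $\bar k$-injective $G$-submodules of $T$ containing those lifts, and restrict the epimorphism to $Z$; the resulting $G$-equivariant map $Z\rarrow X$ is surjective, hence $\bar k$-split (as $X$ is $\bar k$-injective), and its kernel, being a $\bar k$-direct summand of the $\bar k$-injective object $Z$, is again $\bar k$-injective --- so $Z\rarrow X$ is an admissible epimorphism in $\A_{\bar k}^G$, and the inclusion $Z\hookrightarrow T$ is the required lift. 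To establish the local finiteness statement itself I would argue that any finite family of elements of $T$ lies in a finite-length $G$-submodule (discreteness of the $G$-action makes the action on any finitely generated $\bar k$-submodule factor through a finite quotient $G/V$, and $\bar k[G/V]$ is a finite ring), and that any finite-length $G$-submodule $S\subseteq T$ can be enlarged, first to a finite-length $\bar k$-injective $\bar k$-submodule of $T$ (the $\bar k$-injectivity of $T$ lets it absorb a copy of the $\bar k$-injective hull of $S$, which is of finite length since $\bar k$ is Artinian), and then to the finite-length $G$-submodule generated by it; iterating and passing to the union should produce the desired submodule. The step I expect to be the main obstacle is precisely the control of this iteration --- ensuring that the lengths do not grow without bound, or equivalently that the union one forms is still of finite type --- which forces one to balance the bounded multiplicativity of $\bar k$-injective hulls over the Artinian ring $\bar k$ against the growth of the $G$-orbits appearing at successive stages.

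For part~(b), the plan is to compute the $\Ext$ groups by means of relative injective resolutions. The $\bar k$-split exact category $\A_{\bar k}^{G\,+}$ has enough relative injective objects: for a $\bar k$-injective $\bar k$-module $W$, the coinduced module $\mathrm{Coind}_e^G(W)$, realized as the discrete $G$-module of locally constant maps $G\rarrow W$, is $\bar k$-injective, because it is the filtered colimit $\varinjlim_V\mathrm{Maps}(G/V,W)$ of finite direct sums of copies of $W$ and $\bar k$ is Noetherian, while the unit maps $N\rarrow\mathrm{Coind}_e^G(\mathrm{Res}_e N)$ are $\bar k$-split admissible monomorphisms. For the first assertion, I would choose a relative injective resolution $J^\bullet$ of $N$ and identify $\Ext^n_{\A_{\bar k}^{G\,+}}(\bigoplus_iM_i,N)$ with the cohomology of the complex $\Hom_{\A_{\bar k}^{G\,+}}(\bigoplus_iM_i,J^\bullet)=\prod_i\Hom_{\A_{\bar k}^{G\,+}}(M_i,J^\bullet)$; since products of $\bar k$-modules are exact, this is $\prod_i\Ext^n_{\A_{\bar k}^{G\,+}}(M_i,N)$. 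For the second assertion, the crucial observation is that $\mathrm{Coind}_e^G$ commutes with arbitrary direct sums of discrete $\bar k$-modules (a locally constant map from the profinite group $G$ has finite image, hence factors through finitely many summands), so that the direct sum of relative injective resolutions of the $N_i$ is a relative injective resolution of $\bigoplus_iN_i$; combining this with the fact that $\Hom_{\A_{\bar k}^{G\,+}}(X,-)$ commutes with direct sums when $X$ is of finite type (the image of such a homomorphism being a finitely generated $\bar k$-submodule), and that cohomology commutes with filtered colimits, one obtains $\Ext^n_{\A_{\bar k}^{G\,+}}(X,\bigoplus_iN_i)\simeq\bigoplus_i\Ext^n_{\A_{\bar k}^{G\,+}}(X,N_i)$.
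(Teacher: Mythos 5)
Part~(b) of your argument is sound and matches one of the routes the paper itself indicates (coinduced relative injectives, commutation of coinduction with direct sums, and $\Hom$ out of a finite-type object commuting with direct sums). The problem is part~(a), where your proof has a genuine gap exactly at the point you flag. Your entire verification of~(i$'$) rests on the claim that every object $T$ of $\A_{\bar k}^{G\,+}$ is the directed union of its $\bar k$\+injective, finite-type $G$\+submodules, and you do not establish it: the iteration ``take a $\bar k$\+injective hull inside $T$, then the $G$\+submodule it generates, then another hull, \dots'' has no a priori bound on the lengths (the open subgroup acting trivially can shrink at each stage, and the socle of a sum of submodules can be strictly larger than the sum of the socles), so the union need not be of finite type. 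A second, smaller defect: you justify the splitting of $Z\rarrow X$ by ``surjective onto a $\bar k$\+injective module, hence split,'' which is not a valid general principle --- it is monomorphisms \emph{from} injectives (and epimorphisms \emph{onto} projectives) that split. The conclusion happens to be true for finite-type injectives over an Artinian ring, but it needs an argument (e.g.\ Matlis duality plus the fact that an injection of finitely generated free modules over an Artinian local ring must have a unit matrix entry), not the one you gave.

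The paper avoids both difficulties at once by first applying the duality $M\mapsto\Hom_k(I,M)$ to identify $\A_{\bar k}^{G\,+}$ with the category of $\bar k$\+\emph{projective} discrete $G$\+modules and $\A_{\bar k}^G$ with the finite-rank ones. In that picture one does not need $Z$ to be a subobject of $T$ at all: each element of a discrete $G$\+module lies in the image of a map from a finite-rank permutational module $\bar k[G/V]$, so lifting a finite generating set of $X$ along $T\rarrow X$ yields a finite-rank permutational $P$ with a morphism $P\rarrow T$ whose composite $P\rarrow X$ is surjective; since $X$ is $\bar k$\+projective this composite is automatically $\bar k$\+split with $\bar k$\+projective kernel, hence an admissible epimorphism, and~(i$'$) follows. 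If you want to salvage your approach, the cleanest fix is to adopt this dualization rather than to try to control the growth of injective hulls inside~$T$.
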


\begin{proof}
 Choosing an injective $\bar k$\+module $I$ as in
Subsection~\ref{reduction-rep-categories} allows to identify
$\A_{\bar k}^G{}^+$ with the category of $\bar k$\+projective
discrete $G$\+modules over~$\bar k$ and $\A_{\bar k}^G$ with its
exact subcategory of projective $\bar k$\+modules of finite rank
endowed with a discrete $G$\+action.
 In this setting, the assertions of Lemma become true for any
discrete commutative ring~$\bar k$.
 To prove part~(a), notice that any element in a discrete
$G$\+module over~$\bar k$ generates a submodule isomorphic to
a quotient module of a (permutational) $\bar k$\+projective discrete
$G$\+module of finite rank over~$\bar k$.

 The first assertion in part~(b) holds for any exact category with
enough injective objects, and the second one follows from
the similar property of the groups Hom and the first assertion of
part~(a) (alternatively, it can be deduced from the fact that
infinite direct sums are exact and preserve injective objects in
$\A_{\bar k}^G{}^+$).
 Another alternative way of arguing is to compare the groups Ext in
the exact categories $\A_{\bar k}^G{}^+$ and $\A_{\bar k}^G$ with
the similar groups in the abelian categories of arbitrary and
$\bar k$\+finitely generated discrete $G$\+modules over~$\bar k$
(assuming $\bar k$ Noetherian).
\end{proof}

 Let $H\subset G$ be a closed normal subgroup and $\bar c\:G
\rarrow \bar k^*$ be a discrete multiplicative character
of the profinite group~$G$.

 As in Subsection~\ref{reduction-rep-categories}, we denote by
$\E_{\bar k,\,0}^{G/H}\subset\A_{\bar k}^G$ the full subcategory of
$\bar k$\+injective permutational $G/H$\+modules of finite rank
and by $\E_{\bar k,\,0}^{G/H\,+}\subset\A_{\bar k}^G{}^+$ the full
subcategory of $\bar k$\+injective permutational discrete
$G/H$\+modules of possibly infinite rank.
 For any $i\in\Z$, denote by $\E_{\bar k,\,i}^{G/H}\subset\A_{\bar k}^G$
and $\E_{\bar k,\,i}^{G/H\,+}\subset\A_{\bar k}^G{}^+$ the full
subcategories of objects obtained by twisting objects of the full
subcategories $\E_{\bar k,\,0}^{G/H}$ and $\E_{\bar k,\,0}^{G/H\,+}$,
respectively, with the character~$\bar c^i$
(the $i$\+th power of the character~$\bar c$).

 Let $\E_{\bar k}^G$ denote the category of finitely supported graded
objects $Q=(Q_i)$ in the category $\A_{\bar k}^G$ in which
the object $Q_i$ belongs to the subcategory $\E_{\bar k,\,i}^{G/H}$.
 Similarly, $\E_{\bar k}^G{}^+$ denotes the category of finitely
supported graded objects $Q=(Q_i)$ in the category $\A_{\bar k}^G{}^+$
in which the object $Q_i$ belongs to the subcategory
$\E_{\bar k,\,i}^{G/H\,+}$.
 The categories $\E_{\bar k}^G$ and $\E_{\bar k}^G{}^+$ are endowed with
split exact category structures.

 As in Subsection~\ref{main-hypothesis}, we consider the exact
category $\F_{\bar k}^G$ of finitely filtered objects of the category
$\A_{\bar k}^G$ with the associated quotient modules belonging
to $\E_{\bar k}^G$.
 The twist functor~$(1)$ is the twist with the character~$\bar c$.
 Similarly, $\F_{\bar k}^G{}^+$ is the exact category of finitely
filtered objects of the category $\A_{\bar k}^G{}^+$ with
the associated quotient modules belonging to $\E_{\bar k}^G{}^+$.
 Finally, let $\F_{\bar k,\,[j',j'']}^{G\,+}\subset\F_{\bar k}^G{}^+$
denote the full exact subcategory formed by all the triples $(M,Q,q)$
where the graded object $Q=(Q_i)$ is supported in the segment
of degrees $j'\le i\le j''$.

\begin{prop}  \label{large-and-small-filtered-prop}
\textup{(a)} Any object of the category $\F_{\bar k}^G{}^+$ is
the inductive limit of a directed diagram of objects from
$\F_{\bar k}^G$ and admissible monomorphisms between them.
 In particular, the embedding functor $\F_{\bar k}^G\rarrow
\F_{\bar k}^G{}^+$ satisfies the condition (i$\.'$) of
Subsection~\ref{exact-surjectivity}, so it induces
isomorphisms on all the Ext groups. \par
\textup{(b)} The functor Ext in the exact category
$\F_{\bar k,\,[j',j'']}^{G\,+}$ transforms infinite direct sums in
its first argument into infinite products.
 When its first argument belongs to $\F_{\bar k}^G$, the functor Ext
in the exact category $\F_{\bar k,\,[j',j'']}^{G\,+}$ transforms infinite
direct sums in the second argument into infinite direct sums.
\end{prop}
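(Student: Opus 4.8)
The plan is to handle parts~(a) and~(b) in turn, deriving the homological consequences of~(a) from Corollary~\ref{exact-surjectivity-ext-cor1} and reducing~(b) to the existence of enough injectives in the bounded-width categories.

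For part~(a) I would argue by induction on the length of the filtration of a given object $(M,Q,q)\in\F_{\bar k}^G{}^+$. Fix for each $i$ a direct sum decomposition $Q_i=\bigoplus_{\alpha\in A_i}Q_{i,\alpha}$ into finite-type permutational summands (available by the very definition of $\E_{\bar k,\,i}^{G/H\,+}$), set $A=\coprod_iA_i$, and write $Q_i^{(S)}=\bigoplus_{\alpha\in A_i\cap S}Q_{i,\alpha}$ for finite $S\subseteq A$; the aim is to produce finite-type subobjects $M^{(S)}\subseteq M$, directed in~$S$ with admissible monomorphisms as transition maps, such that $\gr^iM^{(S)}=Q_i^{(S)}$ for all~$i$ and $M=\varinjlim_SM^{(S)}$. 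A one-step filtration is trivial. For the inductive step, let $b$ be the top of the support of~$Q$, so $N:=F^bM=\gr^bM=Q_b$ and $M':=M/N$ carries a shorter filtration with $\gr^iM'=Q_i$ for $i<b$; choose subobjects $M'^{(S')}\subseteq M'$ by induction. For each finite $S'$ pick, compatibly in~$S'$, a finite-type $G$-stable $\bar k$-submodule $L_{S'}\subseteq M$ surjecting onto $M'^{(S')}$; since $L_{S'}\cap N$ is of finite type it is contained in $Q_b^{(S''_0)}$ for some finite $S''_0\subseteq A_b$, and for $S=S'\sqcup S''$ with $S''\supseteq S''_0$ I set $M^{(S)}=Q_b^{(S'')}+L_{S'}\subseteq M$. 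An application of the modular law gives $M^{(S)}\cap N=Q_b^{(S'')}$ together with a surjection $M^{(S)}\rarrow M'^{(S')}$ carrying the induced filtration onto the induced filtration, so $\gr^bM^{(S)}=Q_b^{(S'')}$ and $\gr^iM^{(S)}\cong Q_i^{(S')}$ for $i<b$, compatibly with the inclusions into $\gr^iM=Q_i$. The point worth stressing is that once the associated graded of $M^{(S)}$ has been pinned down to the $\bar k$-injective modules $Q_i^{(S)}$, the filtration of $M^{(S)}$ automatically splits over~$\bar k$, so $M^{(S)}$ is a $\bar k$-injective module of finite type --- genuinely an object of $\F_{\bar k}^G$.

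It is then routine that $M=\varinjlim_SM^{(S)}$ and that the transition maps, being split on each associated graded object, are admissible monomorphisms, which gives the first assertion of~(a). Condition~(i$'$) for the embedding $\F_{\bar k}^G\rarrow\F_{\bar k}^G{}^+$ follows at once: given $X\in\F_{\bar k}^G$ and an admissible epimorphism $T\rarrow X$ in $\F_{\bar k}^G{}^+$, write $T=\varinjlim_\gamma T_\gamma$ as above and split, in each of the finitely many degrees with $\gr^iX\ne0$, the epimorphism $\gr^iT\rarrow\gr^iX$ through some $\gr^iT_\gamma$; for $\gamma$ large this makes $T_\gamma\rarrow X$ a split epimorphism on every associated graded object, hence an admissible epimorphism in $\F_{\bar k}^G$. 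The Ext isomorphisms are then formal via Corollary~\ref{exact-surjectivity-ext-cor1}(a), taking $\F=\G'=\F_{\bar k}^G$ with $\eta'$ the identity and $\iota$ the embedding into $\F_{\bar k}^G{}^+$. The whole argument localizes verbatim to objects supported in a fixed segment $[j',j'']$, so the embedding $\F_{\bar k,\,[j',j'']}^G\rarrow\F_{\bar k,\,[j',j'']}^{G\,+}$ also satisfies~(i$'$) --- this is what will be used in part~(b).

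For part~(b), exactness of infinite products in $\F_{\bar k,\,[j',j'']}^{G\,+}$ is elementary (products of $\bar k$-injective modules are $\bar k$-injective, and products of $\bar k$-split short exact sequences are $\bar k$-split), and fixing the filtration width is exactly what keeps this category closed under infinite products. Granting that $\F_{\bar k,\,[j',j'']}^{G\,+}$ has enough injective objects, the first assertion of~(b) follows by resolving $Y$ injectively and combining $\Hom(\bigoplus_\alpha X_\alpha,-)=\prod_\alpha\Hom(X_\alpha,-)$ with exactness of products of abelian groups, exactly as in the unfiltered case recalled in Lemma~\ref{large-and-small-unfiltered-lem}(b). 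For the second assertion, a morphism from a finite-type object $X\in\F_{\bar k}^G$ into $\bigoplus_\alpha Y_\alpha$ factors through a finite partial sum (its image being finitely generated over $\bar k[G]$), so $\Hom(X,-)$ commutes with infinite direct sums; feeding this, condition~(i$'$) for $\F_{\bar k,\,[j',j'']}^G\rarrow\F_{\bar k,\,[j',j'']}^{G\,+}$, and Proposition~\ref{exact-surjectivity-ext-prop}(b) --- which presents $\Ext^n_{\F_{\bar k,\,[j',j'']}^{G\,+}}(X,-)$ as induced from the $\Hom$-module over the subring of $\Hom$'s inside the big graded ring of the small category --- into the fact that induction of modules commutes with direct sums, one obtains that $\Ext^n(X,-)$ commutes with infinite direct sums.

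The step I expect to be the main obstacle is the existence of enough injective objects in $\F_{\bar k,\,[j',j'']}^{G\,+}$. In $\A_{\bar k}^G{}^+$ coinduced modules supply enough relative injectives, but in $\F_{\bar k,\,[j',j'']}^{G\,+}$ the associated graded objects of an injective are forced to be permutational, and permutational modules need not be relatively injective (for instance $\bar k$ over $\bar k[\Z/p]$ when $p$ is not invertible in~$\bar k$). I anticipate constructing the injectives from the coaugmentation-filtered group coalgebras $\bar k(G/H')$ of finite quotients $G/H'$, in the spirit of~\cite[Section~2]{Partin}, twisted by powers of~$\bar c$, truncated to width $[j',j'']$, and tensored with $\bar k$-injective modules, and embedding a given object into such a cofree object by means of its $\bar k(G)$-comodule structure; the finiteness of the width is essential for the truncation. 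Once this is in place, the remaining verifications --- closure of $\E_{\bar k,\,i}^{G/H\,+}$ under direct summands, the filtration bookkeeping in the construction of part~(a), and the homological bookkeeping in part~(b) --- are routine.
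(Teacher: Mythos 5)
Your part~(a) is correct and is in substance the same induction on the length of the filtration that the paper uses: one strips off the deepest filtration component $Q_{j''}$ (your top degree~$b$), approximates the quotient by finite-type subobjects inductively, and then lifts each such subobject to a finite-type subobject of $M$ whose intersection with $Q_{j''}$ is a finite partial sum of the chosen decomposition. You implement the lifting by hand with the modular law; the paper packages the same fact as the statement that the resulting extension class in $\Ext^1_{\A_{\bar k}^{G\,+}}(N_1,Q_{j''})$ is induced from a finite partial direct sum in the second argument, which is exactly the second assertion of Lemma~\ref{large-and-small-unfiltered-lem}(b). Your deduction of~(i$'$) and of the Ext isomorphisms via Corollary~\ref{exact-surjectivity-ext-cor1}, and your treatment of the second assertion of~(b), are consistent with what the paper does (the paper derives the latter from part~(a)).

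The genuine gap is the one you yourself flag: the existence of enough injective objects in $\F_{\bar k,\,[j',j'']}^{G\,+}$, to which the first assertion of~(b) is reduced, is not proved but only ``anticipated'', and the construction you sketch would not stay inside the category. An object of $\F_{\bar k,\,[j',j'']}^{G\,+}$ must have associated graded pieces that are $\bar c^{\,i}$-twisted \emph{permutational} $G/H$-modules, whereas the coaugmentation filtration on a group coalgebra $\bar k(G/H')$ has graded quotients that are subquotients of tensor powers of $\bar k(G/H')/\bar k$ and are not permutational; so the truncated coaugmentation-filtered cofree objects do not lie in $\F_{\bar k,\,[j',j'']}^{G\,+}$ and cannot serve as its injectives. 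The paper instead constructs the injectives by induction on the width $j''-j'$: given an injective object $J_1$ of $\F_{\bar k,\,[j'+1,j'']}^{G\,+}$, one forms, for every finite-type twisted permutational representation $R$ in degree~$j'$, a direct sum $Q_R$ of copies of $R$ indexed by large multiplicity sets attached to the elements of $\Ext^1_{\A_{\bar k}^{G\,+}}(R,\psi(J_1))$; the first assertion of Lemma~\ref{large-and-small-unfiltered-lem}(b) (Ext into a fixed object takes $\bigoplus_R Q_R$ to a product) then yields a tautological class in $\Ext^1_{\A_{\bar k}^{G\,+}}(\bigoplus_R Q_R,\psi(J_1))$, and the corresponding extension is the desired injective object of $\F_{\bar k,\,[j',j'']}^{G\,+}$; choosing the multiplicity sets big enough, every object admits an admissible monomorphism into such an object. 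Until a construction of this kind is supplied, the first assertion of part~(b) remains unproved.
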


\begin{proof}
 Part~(a): let $(M,Q,q)$ be an object of the category
$\F_{\bar k,\,[j',j'']}^{G\,+}\subset\F_{\bar k}^G{}^+$.
 Let us fix decompositions of the objects $Q_i$ into direct sums
of twists of $\bar k$\+injective permutational representations
of $G/H$ of finite rank over~$\bar k$, and show that for any
finitely generated $G$\+submodule $K\subset M$ there exists
a finitely generated $G$\+submodule $K\subset N\subset M$ such
that the associated graded modules $\gr_F^iN\subset Q_i$ are 
direct sums of finite subcollections of direct summands in our
fixed direct sum decompositions.
 Proceeding by induction in $j''-j'$, consider the object
$M_1=M/F^{j''}M\in\F_{\bar k,\,[j',j''-1]}^{G\,+}$ and the submodule
$K_1=K/F^{j''}M\cap K\subset M_1$; let $K_1\subset N_1\subset M_1$
be the related intermediate submodule satisfying the above condition.
 We have obtained an extension class in
$\Ext^1_{\A_{\bar k}^{G\.+}}(N_1,Q_{j''})$, and now it suffices to show
that it is induced from an extension class with a direct sum of
a finite subcollection of the direct summands in the second argument.
 This is a particular case of the second assertion of
Lemma~\ref{large-and-small-unfiltered-lem}(b).

 To prove the first assertion of part~(b), it suffices to show that
there are enough injective objects in the exact category
$\F_{\bar k,\,[j',j'']}^{G\,+}$.
 To construct such injective objects, we proceed again by induction
in the length of the filtration $j''-j'$.
 Suppose $J_1$ is an injective object in
$\F_{\bar k,\,[j'+1,j'']}^{G\,+}$.
 For every $\bar c^{j'}$\+twisted $\bar k$\+injective permutational
representation $R$ of $G/H$ of finite rank over~$\bar k$ assign
a nonempty multiplicity set to every element of the extension
group $\Ext^1_{\A_{\bar k}^{G\.+}}(R,\psi(J_1))$, where
$\psi\:\F_{\bar k}^G{}^+\rarrow\A_{\bar k}^G{}^+$ is the forgetful
functor, and denote by $Q_R$ the direct sum of copies of $R$ over
the disjoint union of these sets.
 By the first assertion of Lemma~\ref{large-and-small-unfiltered-lem}(b),
there is a naturally induced element in
$\Ext^1_{\A_{\bar k}^{G\.+}}(\bigoplus_R Q_R,\psi(J_1))$, where
the direct sum is taken over all twisted permutational representations
$R$ of finite rank (or just twisted representations induced from
the standard trivial modules $I$ over open subgroups in $G$
containing~$G/H$).
 This extension class corresponds to an injective object
$J\in\F_{\bar k,\,[j',j'']}^{G\,+}$, and choosing the multiplicity sets
big enough one can construct an admissible monomorphism from
any object of $\F_{\bar k,\,[j',j'']}^{G\,+}$ into an injective object
constructed in this way.
 The second assertion of part~(b) follows from the second assertion
of part~(a).
\end{proof}

\begin{rem}
 Applied to $\bar k$\+projective instead of $\bar k$\+injective
$G$\+modules over~$\bar k$ and permutational representations of 
the related kind, the above arguments hold for any discrete
Noetherian ring~$\bar k$.
 However, they fail over a complete Noetherian local ring~$k$
(as in Subsection~\ref{reduction-rep-categories}), because injective
$k$\+modules of finite rank are not finitely generated, while
infinite direct sums in the category of projective
$k$\+contramodules are not preserved by the forgetful functor
to $k$\+modules (or just abelian groups).
\end{rem}

\begin{cor}
 The forgetful functor $\F_{\bar k}^G{}^+\rarrow\A_{\bar k}^G{}^+$
satisfies the main hypothesis of Subsection~\ref{main-hypothesis}
if and only if the functor $\F_{\bar k}^G\rarrow\A_{\bar k}^G{}^+$ does,
and if and only if the functor $\F_{\bar k}^G\rarrow\A_{\bar k}^G$ does.
\end{cor}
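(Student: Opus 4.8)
The plan is to compare the three main hypotheses in two steps, in each step altering only one of the two respects in which the functors differ: their source (the ``small'' filtered category $\F_{\bar k}^G$ or the ``large'' one $\F_{\bar k}^G{}^+$) and their target together with the prescribed additive subcategories (the category $\A_{\bar k}^G$ with the finite-type permutational subcategories $\E_{\bar k,\,i}^{G/H}$, or $\A_{\bar k}^G{}^+$ with either the $\E_{\bar k,\,i}^{G/H}$ or the arbitrary-type subcategories $\E_{\bar k,\,i}^{G/H\,+}$). In all three cases the $\Ext$ groups occurring in the main hypothesis are indexed by the same data $X\in\E_{\bar k,\,i}^{G/H(+)}$, $Y\in\E_{\bar k,\,j}^{G/H(+)}$, $n\le j-i$, and the vanishing in degrees $n>j-i$ is supplied by Proposition~\ref{diagonal-quadratic-prop}(a); so in each step it suffices to match up the maps $\psi^n$ in the range $n\le j-i$.

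First I would change the target. For $X\in\E_{\bar k,\,i}^{G/H}$ and $Y\in\E_{\bar k,\,j}^{G/H}$, Lemma~\ref{large-and-small-unfiltered-lem}(a) gives isomorphisms $\Ext^n_{\A_{\bar k}^G}(X,Y)\simeq\Ext^n_{\A_{\bar k}^G{}^+}(X,Y)$ for all~$n$, and these are compatible with the maps $\psi^n$ induced by the two forgetful functors $\F_{\bar k}^G\rarrow\A_{\bar k}^G$ and $\F_{\bar k}^G\rarrow\A_{\bar k}^G{}^+$, since the square of forgetful and embedding functors commutes. Hence one of the two maps $\psi^n$ is an isomorphism if and only if the other is, and the main hypothesis for $\F_{\bar k}^G\rarrow\A_{\bar k}^G$ is equivalent to the main hypothesis for $\F_{\bar k}^G\rarrow\A_{\bar k}^G{}^+$.

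Next I would change the source and then enlarge the subcategories~$\E_i$. For $X\in\E_{\bar k,\,i}^{G/H}$ and $Y\in\E_{\bar k,\,j}^{G/H}$, Proposition~\ref{large-and-small-filtered-prop}(a) provides isomorphisms $\Ext^n_{\F_{\bar k}^G}(X,Y)\simeq\Ext^n_{\F_{\bar k}^G{}^+}(X,Y)$ compatible with the maps to $\Ext^n_{\A_{\bar k}^G{}^+}(X,Y)$, which reduces the main hypothesis for $\F_{\bar k}^G\rarrow\A_{\bar k}^G{}^+$ to the assertion that the forgetful functor $\F_{\bar k}^G{}^+\rarrow\A_{\bar k}^G{}^+$ induces $\Ext$-isomorphisms in degrees $n\le j-i$ between $X$ and $Y$ whenever $X$ and $Y$ are of \emph{finite type}. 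This restricted form of the main hypothesis for $\F_{\bar k}^G{}^+\rarrow\A_{\bar k}^G{}^+$ clearly follows from the full one, as $\E_{\bar k,\,i}^{G/H}\subseteq\E_{\bar k,\,i}^{G/H\,+}$; conversely, given arbitrary $X\in\E_{\bar k,\,i}^{G/H\,+}$, $Y\in\E_{\bar k,\,j}^{G/H\,+}$ with $i\le j$, I would write $X=\bigoplus_\alpha X_\alpha$, $Y=\bigoplus_\beta Y_\beta$ with $X_\alpha\in\E_{\bar k,\,i}^{G/H}$, $Y_\beta\in\E_{\bar k,\,j}^{G/H}$, pass to the bounded subcategory $\F_{\bar k,\,[i,j]}^{G\,+}$, and combine the first assertions of Proposition~\ref{large-and-small-filtered-prop}(b) and of Lemma~\ref{large-and-small-unfiltered-lem}(b) (turning the first argument of $\Ext$ into a product) with their second assertions (turning the second argument into a direct sum, using that each $X_\alpha$ is of finite type). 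This identifies the map $\Ext^n_{\F_{\bar k}^G{}^+}(X,Y)\rarrow\Ext^n_{\A_{\bar k}^G{}^+}(X,Y)$ with $\prod_\alpha\bigoplus_\beta$ of the corresponding maps for the pairs $(X_\alpha,Y_\beta)$; and a product of direct sums of isomorphisms is an isomorphism.

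The main obstacle is the last point, where I invoked $\F_{\bar k,\,[i,j]}^{G\,+}$: Proposition~\ref{large-and-small-filtered-prop}(b) is stated only for the bounded categories, so to apply it I must first check that, for objects supported in $[i,j]$, the $\Ext$ groups computed in $\F_{\bar k,\,[i,j]}^{G\,+}$ coincide with those computed in $\F_{\bar k}^G{}^+$. I would prove this using the additive exact truncation functors $\F_{\bar k,\,[a,b]}^{G\,+}\rarrow\F_{\bar k,\,[j',b]}^{G\,+}$ and $\F_{\bar k,\,[a,b]}^{G\,+}\rarrow\F_{\bar k,\,[a,j'']}^{G\,+}$ sending $(M,F,Q,q)$ to its subobject $F^{j'}M$ with the induced data and to its quotient $M/F^{j''+1}M$ with the induced data (these are exact because a short exact sequence in these categories is one whose associated graded sequence is split exact degreewise, a property preserved by discarding the graded components outside a segment). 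Since any Yoneda $\Ext$ class is represented by an exact sequence involving finitely many, hence boundedly supported, objects, applying these truncations to an exact sequence and to the chains of morphisms of exact sequences witnessing Yoneda equivalence shows that the inclusion $\F_{\bar k,\,[i,j]}^{G\,+}\hookrightarrow\F_{\bar k,\,[a,b]}^{G\,+}\hookrightarrow\F_{\bar k}^G{}^+$ induces isomorphisms on all $\Ext$ groups between objects supported in $[i,j]$ (a morphism of exact sequences that is the identity on both end terms exhibits a Yoneda equivalence). The remaining verifications — that the direct-sum decompositions are natural enough for the map of $\Ext$ groups to split as the claimed $\prod_\alpha\bigoplus_\beta$ — are routine given the functoriality of the isomorphisms cited above.
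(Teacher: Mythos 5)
Your proposal is correct and follows essentially the same route as the paper, whose proof simply cites Lemma~\ref{large-and-small-unfiltered-lem}(a,b) and Proposition~\ref{large-and-small-filtered-prop}(a,b) — precisely the four assertions you combine (target change via the Ext-isomorphisms of the embedding $\A_{\bar k}^G\rarrow\A_{\bar k}^G{}^+$, source change via Proposition~\ref{large-and-small-filtered-prop}(a), and the product/direct-sum decompositions to pass between finite-type and arbitrary objects of the subcategories $\E_{\bar k,\,i}^{G/H\,+}$). Your truncation argument reconciling the Ext groups of $\F_{\bar k,\,[j',j'']}^{G\,+}$ with those of $\F_{\bar k}^G{}^+$ correctly fills in the one detail the paper leaves implicit.
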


\begin{proof}
 Follows from Lemma~\ref{large-and-small-unfiltered-lem}(a\+b) and
Proposition~\ref{large-and-small-filtered-prop}(a\+b).
\end{proof}

 Now let us suppose that $\bar k$ is a field and the character
$\bar c\:G\rarrow \bar k^*$ annihilates the closed subgroup
$H\subset G$.
 Let $\J\subset\E_{\bar k,\,0}^G$ denote the full subcategory of
representations induced from trivial representations~$\bar k$ of
open subgroups in $G$ containing~$H$.

\begin{lem}  \label{root-of-1-koszulity-lemma}
 The exact category $\F_{\bar k}^G$ satisfies the main hypothesis if and
only if the big graded ring\/
$\Ext^n_{\A_{\bar k}^G}(X,Y(n))_{Y,X\in\J,\,n\ge0}$ is Koszul.
 In this case, the big graded ring\/ $\Ext^n_{\F_{\bar k}^G}
(X,Y)_{Y\in\E_{\bar k,\.j}^G,\.X\in\E_{\bar k,\.i}^G;\,i,j\in\Z;\,n\ge0}$
is generated by its\/ $\Ext^0$ and the diagonal ($n=j-i$)
extension classes.
 The latter property also holds for the extension classes between
objects of the full subcategories $\E_{\bar k,\,i}^{G\,+}$ and
$\E_{\bar k,\,j}^{G\,+}$ in the exact category $\F_{\bar k}^{G\,+}$.
\end{lem}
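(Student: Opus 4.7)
The plan is to deduce the lemma from Theorem~\ref{koszulity-theorem} together with Propositions~\ref{diagonal-quadratic-prop} and~\ref{large-and-small-filtered-prop}. Although Theorem~\ref{koszulity-theorem} is stated under the hypothesis that the twist on~$\A$ is the identity, the assumption that $\bar c$ annihilates $H$ implies that $\E_{\bar k,i}^{G/H}=\E_{\bar k,0}^{G/H}(i)$ as full subcategories of $\A_{\bar k}^G$, so the diagonal big graded ring $\bigoplus_n\Ext^n_{\A_{\bar k}^G}(X,Y(n))_{Y,X\in\J}$ is well-defined with the Yoneda product, and one can check that the argument of \cite[Sections~7 and~9]{Partin} establishing Theorem~\ref{koszulity-theorem} carries over without essential change: the identification $\E_0=\E_0(i)$ used in the identity-twist case plays no role beyond organizing the Koszul resolutions, which can equally well be organized through the twists~$(i)$.

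Granting this adaptation, the equivalence between the main hypothesis and Koszulity of the diagonal big graded ring is immediate, giving the first assertion. For the generation statement about $\bigoplus_{n,i,j}\Ext^n_{\F_{\bar k}^G}(X,Y)$ with $X\in\E_{\bar k,i}^G$, $Y\in\E_{\bar k,j}^G$: these Ext groups vanish above the diagonal $(n>j-i)$ by Proposition~\ref{diagonal-quadratic-prop}(a), and for $n\le j-i$ they coincide under~$\psi$ with the corresponding Ext groups in~$\A_{\bar k}^G$ by the main hypothesis. The Koszul property of the diagonal subring is then equivalent to multiplicative generation of the entire bigraded Ext ring in $\A_{\bar k}^G$ by~$\Ext^0$ and the diagonal classes, which transfers back to~$\F_{\bar k}^G$ via the Yoneda multiplicativity of~$\psi^n$.

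For the ``+'' version, I invoke Lemma~\ref{large-and-small-unfiltered-lem}(b) and Proposition~\ref{large-and-small-filtered-prop}(b): when the first argument is of finite type, Ext in the $+$ category converts direct sums in the second argument into direct sums, and this decomposition is compatible with Yoneda products. Any object $Y\in\E_{\bar k,j}^{G\,+}$ is an infinite direct sum of objects of $\E_{\bar k,j}^G$, so decomposing an element of $\Ext^n_{\F_{\bar k}^{G\,+}}(X,Y)$ componentwise and applying the finite-type generation result on each summand yields the generation statement in the $+$ setting.

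The main obstacle is the first step: verifying that the proof of Theorem~\ref{koszulity-theorem} in~\cite{Partin} remains valid with the twist acting as a shift through the sequence $\E_0,\E_1,\E_2,\dotsc$ rather than as the identity on a single subcategory. I expect this to be a routine but delicate bookkeeping exercise of rewriting the Koszul resolutions of \cite[Sections~7--9]{Partin} in the twisted setting, tracking each appearance of~$(1)$; an alternative would be to pass to an equivariant category of objects invariant under the twist, but this may introduce its own complications and so the direct adaptation seems preferable.
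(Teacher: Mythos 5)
Your proposal hinges entirely on the claim that the proof of Theorem~\ref{koszulity-theorem} (i.e.\ \cite[Theorem~9.1]{Partin}) ``carries over without essential change'' when the twist $(1)\:\A\rarrow\A$ is not the identity, and you yourself flag this adaptation as the main obstacle without carrying it out. This is a genuine gap, not a routine bookkeeping exercise: the entire point of Subsections~\ref{main-hypothesis}--\ref{reduction-filtered-rep-category} (and indeed of the reduction machinery of Section~\ref{matrix-factor-construct-section}) is that the case of a non-identity twist is \emph{not} covered by \cite{Partin} and is the subject of the concluding Conjectures~\ref{comparison-conjecture} and~\ref{main-hypothesis-conjecture}. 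A telling symptom is that your argument nowhere uses the hypothesis that $\bar k$ is a \emph{field}, nor the hypothesis that $\bar c$ annihilates $H$ in any essential way; but without these the assertion is exactly the open problem.

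The paper's proof avoids any re-examination of \cite{Partin} by an elementary representation-theoretic reduction that you miss. Since $\bar c\:G\rarrow\bar k^*$ is a continuous character from a compact group to a discrete one, its image is a finite subgroup of $\bar k^*$, and a finite subgroup of the multiplicative group of a \emph{field} has order prime to the characteristic. By Maschke's theorem the representation $\bar k(1)$ is therefore a direct summand of a permutational module, and (since $\bar c$ kills $H$) of a permutational $G/H$-module; consequently twisting by $\bar c^i$ takes permutational $G/H$-modules to direct summands of permutational $G/H$-modules. Thus $\F_{\bar k}^G$ differs from the category built with the \emph{trivial} character only by adjoining and removing direct summands, and Theorem~\ref{koszulity-theorem} applies verbatim in that identity-twist situation. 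If you want to salvage your route, you must either supply this reduction or actually verify the twisted version of \cite[Theorem~9.1]{Partin} --- and the latter cannot be waved through, since over a non-field coefficient ring (where the finite image of the character may have order divisible by the residue characteristic) the analogous statement is precisely what the paper does not know how to prove. Your treatment of the second and third assertions is broadly in the right spirit, but it rests on the first step and so inherits the gap.
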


\begin{proof}
 The image of the character~$\bar c$, being a compact subgroup
of a discrete group, is therefore a finite subgroup of
the multiplicative group of the field $\bar k$, whose order is
consequently prime to the characteristic.
 Therefore, the representation $\bar k(1)$ of the group $G$
corresponding to the character~$\bar c$ is a direct summand of
a permutational one.
 Since we have assumed that $\bar c$~annihilates $H$, this is
(can be chosen to be) also a permutational representation of~$G/H$.
 Twisting with the character~$\bar c$ transforms permutational
representations of $G/H$ over~$\bar k$ into direct summands of
permutational representations of~$G/H$.

 So we see that the category $\F_{\bar k}^G$ only differs from
the similar category constructed using the trivial character in
place of~$\bar c$ by adjoining some direct summands and removing
some others (cf.~\cite[Section~5]{Partin}).
 Now the first assertion follows from
Theorem~\ref{koszulity-theorem}, and the second and third ones
are also clear.
\end{proof}

\subsection{Reduction of filtered representation category}
\label{reduction-filtered-rep-category}
 Let $G$ be a profinite group, $H\subset G$ be a closed normal
subgroup, $k$~be a complete Noetherian local ring with the maximal
ideal~$l$, and $c\:G\rarrow k^*$ be a continuous multiplicative
character of the profinite group $G$ in the $l$\+adic topology of
the ring~$k$.

 As in Subsections~\ref{reduction-rep-categories}\+-%
\ref{large-and-small-categories} (see also
Example~\ref{filtered-permutational-coefficient-reduction-example}
in Subsection~\ref{reduction-examples}), we denote by $\A_k^G{}^+$
the category of injective discrete $k$\+modules endowed with
a discrete action of the group $G$, and by $\E_{k,\,i}^{G/H\,+}\subset
\A_k^G{}^+$ its full subcategory of $c^i$\+twisted $k$- and
$G$\+discrete $k$\+injective permutational $G/H$\+modules
(of possibly infinite rank over~$k$).
 Let $\E_k^G{}^+$ denote the category of finitely supported graded
objects $Q=(Q_i)$ in the category $\A_k^G{}^+$ in which the object
$Q_i$ belongs to the subcategory $\E_{k,\,i}^{G/H\,+}$.

 As in Subsection~\ref{main-hypothesis}, we consider the exact
category $\F_k^G{}^+$ of finitely filtered objects $(M,F)$ of
the exact category $\A_k^G{}^+$ with the associated graded modules
$Q=(Q_i)$, $\,q_i\:\gr_F^iM\simeq Q_i$ belonging to $\E_k^G{}^+$.
 The twist functor $(1)\:\A_k^G{}^+\rarrow\A_k^G{}^+$ is the twist
with the character~$c$.
 We denote by $\gr_F\:\F_k^G{}^+\rarrow\E_k^G{}^+$ the exact functor
assigning the graded object $Q$ to a filtered object $(M,F)$ and
by $\psi\:\F_k^G{}^+\rarrow\A_k^G{}^+$ the forgetful functor taking
$(M,F)$ to~$M$.

 Let $s\in k$ be a nonzero-dividing, noninvertible element.
 The quotient ring $k/s$ is endowed with a continuous multiplicative
character $c/s\:G\rarrow(k/s)^*$, so the above exact categories
have their natural versions with coefficients in~$k/s$.

 Denote by $\pi'\:\F_k^G{}^+\rarrow\E_{k/s}^{G\,+}$ the composition
of the functor $\gr_F$ with the reduction functor
$\E_k^G{}^+\rarrow\E_{k/s}^{G\,+}$ taking $(Q_i)$ to $({}_sQ_i)$.
 Furthermore, let $\A_k^{\{e\}\,+}$ denote the additive category of
injective discrete $k$\+modules.
 Denote by $\pi''\:\F_k^G{}^+\rarrow\A_{k/s}^{\{e\}\,+}$
the composition of the forgetful functor~$\psi$ with the forgetful
functor $\A_k^G{}^+\rarrow\A_k^{\{e\}\,+}$ and the reduction functor
$\A_k^{\{e\}\,+}\rarrow\A_{k/s}^{\{e\}\,+}$ taking $M$ to ${}_sM$.

 Both $\E_{k/s}^{G\,+}$ and $\A_{k/s}^{\{e\}\,+}$ are naturally endowed
with split exact category structures.
 Consider the Cartesian product $\E_{k/s}^{G\,+}\times
\A_{k/s}^{\{e\}\,+}$, and let $\pi=(\pi',\pi'')\:\F_k^G{}^+\rarrow
\E_{k/s}^{G\,+}\times\A_{k/s}^{\{e\}\,+}$ denote the related diagonal
exact functor.

\begin{lem}
 The exact functor $\pi=(\pi',\pi'')\:\F_k^G{}^+\rarrow
\E_{k/s}^{G\,+}\times\A_{k/s}^{\{e\}\,+}$ satisfies
the conditions~(v\+vii) from Subsection~\ref{reduction-posing}
on a background exact functor for the construction of reduction
of the exact category $\F=\F_k^G{}^+$ with respect to the natural
transformation $s\:\Id_\F\rarrow\Id_\F$.
 The condition~(viii) is also satisfied.
\end{lem}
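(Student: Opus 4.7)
The plan rests on a single structural fact: every object $M$ of $\A_k^G{}^+$ is $s$-divisible (i.e.\ $sM=M$), and each step $F^iM$ of its filtration is itself $k$-injective. The first follows from Matlis duality applied to the short exact sequence $0\rarrow k\rarrow k\rarrow k/s\rarrow 0$ of multiplication by~$s$: dualizing with $\Hom_k(-,I)$ yields surjectivity of $s$ on the injective hull $I=E_k(k/l)$, and hence on any direct sum of copies of~$I$. The second is a descending induction on~$i$: the short exact sequence $0\rarrow F^{i+1}M\rarrow F^iM\rarrow\gr_F^iM\rarrow 0$ of $k$-modules has $k$-injective outer terms (by the inductive hypothesis together with the defining hypothesis on the associated graded), so it splits as a sequence of $k$-modules and $F^iM$ is itself $k$-injective. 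In particular $s\colon F^iM\rarrow F^iM$ is surjective with kernel ${}_sF^iM$.

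Granted this, I would verify the four conditions as follows. Condition~(v) already holds for the component~$\pi'$ alone: exactness of a sequence in $\F_k^G{}^+$ is by definition the split-exactness of its associated graded in $\E_k^G{}^+$, and ${}_s(-)$ preserves splits, yielding exactness of~$\pi'$; the converse, namely reflection of exact sequences and of admissible monomorphisms and admissible epimorphisms, is a direct application of Lemma~\ref{reduction-conservative-lem}(b), which says that a split exact sequence of $c^i$-twisted permutational modules in $\E_{k/s}^{G\,+}$ lifts to a split exact sequence in $\E_k^G{}^+$. Condition~(vi) is immediate, since multiplication by~$s$ acts as zero on the $s$-torsion modules that lie in the image of both $\pi'$ and~$\pi''$.

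For~(vii), given $f\colon M\rarrow N$ with $\pi(f)=0$, the component $\pi''(f)={}_sf=0$ tells us that $f$ vanishes on ${}_sM$, so $f$ factors uniquely through the isomorphism $M/{}_sM\simeq M$ induced by~$s$, producing a $G$-equivariant $k$-linear map $g\colon M\rarrow N$ with $f=sg$; the structural fact that $s\colon F^iM\rarrow F^iM$ is surjective lets one select a preimage $m'\in F^iM$ of any $m\in F^iM$, showing that $g$ preserves the filtration and is thus a morphism in $\F_k^G{}^+$. For~(viii), any $f\colon M\rarrow N$ with $sf=0$ has image in~${}_sN$; but using $sM=M$ together with the $s$-torsion of~${}_sN$, one finds $f(sm_1)=sf(m_1)=0$ for every $m_1\in M$, forcing $f=0$.

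The main technical obstacle is the descending induction showing that each $F^iM$ is $k$-injective: this is what allows the preimage $m'$ in the verification of~(vii) to be chosen within the filtration level, ensuring that $g$ is a morphism in $\F_k^G{}^+$ and not merely a map of the underlying $G$-modules. Once the injectivity has been propagated down the filtration, the remaining verifications are essentially formal, so the substantive content of the lemma is the compatibility between the $s$-divisibility of injective hulls over a complete Noetherian local ring and the filtration hypothesis defining~$\F_k^G{}^+$.
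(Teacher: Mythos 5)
Your proof is correct and follows the same route as the paper's (very terse) argument: condition~(v) via the component $\pi'$ and Lemma~\ref{reduction-conservative-lem}(b), condition~(vi) by inspection, condition~(vii) via the component $\pi''$, and condition~(viii) from $s$ being a nonzero-divisor. The structural facts you supply --- $s$-divisibility of injective discrete $k$-modules and the $k$-injectivity of each filtration step $F^iM$, needed to keep the quotient morphism $g$ filtration-preserving --- are exactly the details the paper leaves implicit.
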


\begin{proof}
 The condition~(viii) on the natural transformation~$s$ holds
because $s\in k$ is a nonzero-dividing element.
 The condition~(vi) is obvious from the construction.
 The condition~(v) holds due to the presence of the component~$\pi'$
in the functor~$\pi$ and by Lemma~\ref{reduction-conservative-lem}(b),
while the condition~(vii) is true due to the presence of
the component~$\pi''$ in~$\pi$.
\end{proof}

 Applying the construction of
Section~\ref{matrix-factor-construct-section}, we obtain the reduced
exact category $\F_k^G{}^+/s$.
 There is a natural exact comparison functor $\kap\:\F_k^G{}^+/s
\rarrow\F_{k/s}^{G\,+}$ assigning to a matrix factorization diagram
$V\rarrow U\rarrow V\rarrow U$ for the center element~$s$ in
the exact category $\F_k^G{}^+$ the image of the morphism
${}_sU\rarrow {}_s V$ in the sequence ${}_sV\rarrow{}_sU\rarrow
{}_sV\rarrow{}_sU$ in the exact category $\F_{k/s}^{G\,+}$ (the sequence
being exact, because so is its associated graded sequence by
the filtration~$F$).

 Furthermore, every object of the exact category $\F_k^G{}^+/s$ comes
endowed with a natural finite filtration $F$ given by the natural
filtration on the matrix factorization diagrams.
 The filtrations $F$ on the objects of $\F_k^G{}^+/s$ are preserved
by all morphisms in this category.
 The objects of $\F_k^G{}^+/s$ concentrated in a single filtration
degree~$i$ form a full exact subcategory $\E_{k,\,i}^{G\,+}/s\simeq
\E_{k/s,\,i}^{G\,+}$ (see
Corollary~\ref{permutational-reduction-cor}(b)), so the categories
$\E_{k/s,\,i}^{G\,+}$ are embedded into $\F_k^G{}^+/s$.
 The triangle diagrams of exact functors $\E_{k/s,\,i}^{G\,+}\rarrow
\F_k^G{}^+/s\rarrow\F_{k/s}^{G\,+}$ are commutative.

\begin{prop}  \label{comparison-fully-faithful}
 The comparison functor $\kap\:\F_k^G{}^+/s\rarrow\F_{k/s}^{G\,+}$
is fully faithful.
\end{prop}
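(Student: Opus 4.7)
The plan is to apply Corollary~\ref{exact-surjectivity-ext-cor2}(b) in its second alternative, with $\eta' = \gamma\colon\F_k^G{}^+\to\F_k^G{}^+/s$ and $\iota = \kappa$, so that $\eta'' = \kappa\gamma$ is identified with the direct reduction functor $\rho\colon\F_k^G{}^+\to\F_{k/s}^{G\,+}$, $(M,F)\mapsto({}_sM,F)$. Once the hypotheses of that corollary are verified, the conclusion will be exactly that $\kappa$ is fully faithful on all of $\F_k^G{}^+/s$, with its image closed under extensions in $\F_{k/s}^{G\,+}$ and all Ext groups preserved as a bonus.

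The hypotheses split into three items. First, the functor $\gamma$ satisfies the conditions (i$\,'$), ($*'$) and ($*''$) by Lemma~\ref{gamma-properties}(a$\,'$,b$\,'$) and its dual, supplying both prerequisites of the second alternative. Second, the composition $\rho = \kappa\gamma$ must satisfy~(i$\,'$): given an admissible epimorphism $T\to\rho(X)$ in $\F_{k/s}^{G\,+}$, one constructs an admissible epimorphism $Z\to X$ in $\F_k^G{}^+$ with a compatible factorization $\rho(Z)\to T$ by lifting the graded pieces of $T$ to twisted permutational $k$\+injective modules via Lemma~\ref{reduction-conservative-lem}(b) and propagating these lifts through the filtration of~$T$. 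Third and most substantive, $\kappa$ must induce bijections $\Hom_{\F_k^G{}^+/s}(\gamma X,\gamma Y)\simeq\Hom_{\F_{k/s}^{G\,+}}(\rho X,\rho Y)$ for all $X,Y\in\F_k^G{}^+$. By the matrix-factorization and Ore-localization construction of Subsections~\ref{matrix-factorizations}\+-\ref{exact-structure}, morphisms $\gamma X\to\gamma Y$ in $\F_k^G{}^+/s$ are presented as equivalence classes of morphisms $X\to Y$ in $\F_k^G{}^+$ modulo those divisible by~$s$; the comparison thus reduces to the assertion that every morphism of filtered discrete $k/s$\+modules ${}_sX\to{}_sY$ lifts to a morphism $X\to Y$ in $\F_k^G{}^+$, uniquely modulo $s$\+multiplication.

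The main obstacle will be the third hypothesis. Proving the lifting of morphisms proceeds by induction on the filtration length, with the base case handled by Lemma~\ref{reduction-conservative-lem}(b) on the graded pieces. The challenge is to propagate lifts through the filtration: given a morphism $f\colon{}_sX\to{}_sY$ and a partial lift $\bar f\colon X/F^{i+1}X\to Y/F^{i+1}Y$, one must control the $\Ext^1$ obstruction to extending $\bar f$ across the next filtration step and show it vanishes after possibly modifying $\bar f$ by an $s$\+divisible morphism. This should follow from the $k$\+ and $k/s$\+injectivity properties of graded pieces together with Proposition~\ref{large-and-small-filtered-prop}(b), which controls the behavior of Ext groups in the large category with respect to infinite direct sums.
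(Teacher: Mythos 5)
Your overall strategy (verify the hypotheses of Corollary~\ref{exact-surjectivity-ext-cor2}(b) for $\eta'=\gamma$, $\iota=\kap$) is a legitimate route, and the first two hypotheses are handled correctly. But the third, decisive step rests on two false premises. First, morphisms $\gamma(X)\rarrow\gamma(Y)$ in $\F_k^G{}^+/s$ are \emph{not} presented by morphisms $X\rarrow Y$ in $\F_k^G{}^+$ modulo $s$\+divisible ones: the localization at $\S$ genuinely enlarges the Hom groups, and by Lemma~\ref{gamma-properties}(c$'$) a morphism $\gamma(X)\rarrow\gamma(Y)$ is only guaranteed to come from a morphism $X'\rarrow Y$ after passing to an admissible epimorphism $X'\rarrow X$. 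Quantitatively, the Bockstein sequence of Subsection~\ref{reduction-posing} exhibits $\Hom_{\F_k^G{}^+/s}(\gamma X,\gamma Y)$ as an extension of the $s$\+torsion subgroup of $\Ext^1_{\F_k^G{}^+}(X,Y)$ by $\Hom_{\F_k^G{}^+}(X,Y)/s$, and the torsion term need not vanish. Second, and dually, the claim that every morphism ${}_sX\rarrow{}_sY$ of filtered $k/s$\+modules lifts to $X\rarrow Y$ is exactly the non-surjectivity of reduction on morphisms that the whole Bockstein formalism is designed to measure; the example immediately following this proposition in the paper (surjectivity of $H^1(G,\Z/st(1))\rarrow H^1(G,\Z/s(1))$, which can fail) shows that the analogous lifting statements are false in general. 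Your two errors point in compatible directions, which is why the argument looks closed, but neither identification holds.

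The correct way to obtain the Hom comparison (and this is what the paper does) is to compare the two Bockstein long exact sequences --- the one for $\F_k^G{}^+$ relative to $\F_k^G{}^+/s$ and the one for $\A_k^G{}^+$ relative to $\A_k^G{}^+/s\simeq\A_{k/s}^{G\,+}$ (Proposition~\ref{unfiltered-reduction-prop}) --- via the forgetful functor~$\psi$, and then feed in the vanishing of $\Ext^n_{\F_k^G{}^+}(X,Y)$ above the diagonal and the low-degree isomorphisms of Proposition~\ref{diagonal-quadratic-prop}(a\+b), reducing everything to generators $X\in\E_{k,\,i}^{G\,+}$, $Y\in\E_{k,\,j}^{G\,+}$ by the 5\+lemma. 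The resulting control of $\Ext^0$ and $\Ext^1$ through the ambient category $\A_{k/s}^{G\,+}$ is what replaces your lifting claim; without an input of this kind the third hypothesis of Corollary~\ref{exact-surjectivity-ext-cor2}(b) cannot be verified.
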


\begin{proof}
 Essentially, the claim is that the induced morphisms of the Ext groups
$$
 \kap^n\:\Ext^n_{\F_k^G{}^+/s}(X,Y)\rarrow
 \Ext^n_{\F_{k/s}^{G\.+}}(X,Y)
$$
are isomorphisms for $n=0$ and monomorphisms for $n=1$.
 In view of the 5\+lemma, it suffices to show this for objects
$X\in\E_{k/s,\,i}^{G\,+}$ and $Y\in\E_{k/s,\,j}^{G\,+}$
(cf.~\cite[Lemma~3.2]{Partin}).

 Taking into account Proposition~\ref{unfiltered-reduction-prop},
the forgetful functor $\psi\:\F_k^G{}^+\rarrow\A_k^G{}^+$ induces
a morphism between the long exact sequences
$$
 \Ext^n_{\F_k^G{}^+}(X,Y)\lrarrow\Ext^n_{\F_k^G{}^+}(X,Y) \lrarrow
 \Ext^n_{\F_k^G{}^+/s}({}_sX,{}_sY)\lrarrow\Ext^{n+1}_{\F_k^G{}^+}(X,Y)
$$
and
$$
 \Ext^n_{\A_k^G{}^+}(X,Y)\lrarrow\Ext^n_{\A_k^G{}^+}(X,Y) \lrarrow
 \Ext^n_{\A_{k/s}^{G\.+}}({}_sX,{}_sY)\lrarrow\Ext^{n+1}_{\A_k^G{}^+}(X,Y)
$$
from Subsection~\ref{reduction-posing}
for any two objects $X\in\E_{k,\,i}^{G\,+}$ and $Y\in\E_{k,\,j}^{G\,+}$.
 According to Proposition~\ref{diagonal-quadratic-prop}(a),
one has $\Ext^n_{\F_k^G{}^+}(X,Y)=\nobreak0$ and consequently
$\Ext^n_{\F_k^G{}^+/s}(X,Y)=0$ for $n>j-i$.
 By Proposition~\ref{diagonal-quadratic-prop}(b), the maps
$\psi^n\:\Ext^n_{\F_k^G{}^+}(X,Y)\rarrow\Ext^n_{\A_k^G{}^+}(X,Y)$
are isomorphisms for $i<j$ and $n=\nobreak0$ or~$1$ and
monomorphisms for $n=2$.
 Applying the 5\+lemma, we conclude that the map
$\Ext^n_{\F_k^G{}^+/s}({}_sX,{}_sY)\rarrow
\Ext^n_{\A_{k/s}^{G\.+}}({}_sX,{}_sY)$ is an isomorphism for $n=0$
and a monomorphism for $n=1$ whenever $i<j$.
 It remains to compare these computations with the description
of $\Ext^n_{\F_{k/s}^{G\.+}}({}_sX,{}_sY)$ provided by the same
Proposition, keeping in mind the commutative
triangle of exact functors $\F_k^G{}^+/s\rarrow\F_{k/s}^{G\.+}
\rarrow\A_{k/s}^{G\.+}$.
\end{proof}

\begin{ex}
 The following example shows that the comparison functors~$\kap$
are not equivalences of categories in general.
 Let $l$~be a prime number and $k=\Z_l$ be the ring of $l$\+adic
integers.
 Let $s$ and $t$ be two powers of~$l$.
 Then for any $X\in\E_{k,\,i}^{G\,+}$ and $Y\in\E_{k,\,j}^{G\,+}$
there is the long exact sequence of Subsection~\ref{second-bockstein}
\begin{alignat*}{3}
 0&\lrarrow\Hom_{\F_k^G{}^+/t}({}_tX,{}_tY)&&\lrarrow
 \Hom_{\F_k^G{}^+/st}({}_{st}X,{}_{st}Y)&&\lrarrow
 \Hom_{\F_k^G{}^+/s}({}_sX,{}_sY) \\
 &\lrarrow\Ext^1_{\F_k^G{}^+/t}({}_tX,{}_tY)&&\lrarrow
 \Ext^1_{\F_k^G{}^+/st}({}_{st}X,{}_{st}Y)&&\lrarrow
 \Ext^1_{\F_k^G{}^+/s}({}_sX,{}_sY) \\
 &\lrarrow\Ext^2_{\F_k^G{}^+/t}({}_tX,{}_tY)&&\lrarrow
 \Ext^2_{\F_k^G{}^+/st}({}_{st}X,{}_{st}Y)&&\lrarrow\dotsb
\end{alignat*} 
 Set $X=\Q_l/\Z_l\in\E_{k,\,0}^{G\,+}$ and $Y=(\Q_l/\Z_l)(1)
\in\E_{k,\,1}^{G\,+}$; according to the above, one then has
$\Ext^2_{\F_k^G{}^+/t}({}_tX,{}_tY)=0$, so the map
$\Ext^1_{\F_k^G{}^+/st}({}_{st}X,{}_{st}Y)\rarrow
\Ext^1_{\F_k^G{}^+/s}({}_sX,{}_sY)$ is surjective.
 Assuming the equivalences $\F_k^G{}^+/s\simeq\F_{k/s}^{G\,+}$ and
$\F_k^G{}^+/st\simeq\F_{k/st}^{G\,+}$, this would simply mean
that the natural cohomology map $H^1(G,\Z/st(1))\rarrow
H^1(G,\Z/s(1))$ is surjective for the profinite group $G$ with
the character $c\:G\rarrow\Z_l^*$ (irrespectively of the closed
subgroup~$H$).
 However, this is not always true.
 When $G$ is the absolute Galois group of a field and $c$~is
its cyclotomic character, the surjectivity of such maps follows
from Hilbert's Theorem~90.
\end{ex}

 As it was mentioned in Subsection~\ref{third-bockstein}, there is
apparently no straightforward way to construct reduction functors
$\eta\:\F_k^G{}^+/st\rarrow\F_k^G{}^+/s$ on the matrix factorization
diagrams level.
 However, there \emph{are} natural functors $\rho\:\F_{k/st}^{G\,+}
\rarrow \F_{k/s}^{G\,+}$ assigning to an injective discrete
$k/st$\+module $N$ with a filtration $F$ and a discrete action of
the group $G$ the injective discrete $k/s$\+module ${}_sN$ of
elements annihilated by~$s$ in $N$ with the induced filtration and
the group action.

\begin{prop}
 The functor $\rho\:\F_{k/st}^{G\,+}\rarrow\F_{k/s}^{G\,+}$ takes
the full subcategory $\F_k^G{}^+/st\subset\F_{k/st}^{G\,+}$ into
the full subcategory $\F_k^G{}^+/s\subset\F_{k/s}^{G\,+}$.
\end{prop}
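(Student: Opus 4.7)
The plan is to construct, from a matrix factorization lift of $N$ for the center element $st$ in $\F_k^G{}^+$, a matrix factorization lift of $\rho(N)={}_sN$ for the center element~$s$. First I would invoke the filtered analog of the construction in the proof of Proposition~\ref{unfiltered-reduction-prop} to represent $N\in\F_k^G{}^+/st$ by a short exact sequence $0\rarrow N\rarrow V\rarrow U\rarrow 0$ in the abelian category of finitely filtered discrete $k$\+modules with a discrete $G$\+action, where $V,U\in\F_k^G{}^+$, the first matrix factorization arrow is the quotient $V\rarrow U=V/N$, and the second is the factored multiplication by~$st$.

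Next I would define the candidate matrix factorization for~$s$ by setting $V':=V$ and $U':=V/{}_sN$, with arrows $V\rarrow V/{}_sN$ the quotient map and $V/{}_sN\rarrow V$ the factored multiplication by~$s$ (well-defined because $s\cdot{}_sN=0$). Both compositions equal multiplication by~$s$. The map ${}_sU'\rarrow{}_sV'$ induced by the second arrow sends $v+{}_sN$ (with $sv\in{}_sN$) to $sv\in{}_sN$, and its image is all of ${}_sN$ by surjectivity of multiplication by~$s$ on the $l$\+divisible module~$V$; hence $\kap(V,V/{}_sN)={}_sN=\rho(N)$ as subobjects of~$V$ in $\F_{k/s}^{G\,+}$. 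Verification that the diagram lies in the intermediate category~$\H$ relative to the background functor of Subsection~\ref{reduction-filtered-rep-category} is a direct check using the short exact sequences obtained from $0\rarrow N\rarrow V\rarrow U\rarrow 0$ by passing to $s$\+torsion and to associated graded quotients.

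The hard part will be to verify that $U'=V/{}_sN$ actually belongs to $\F_k^G{}^+$. Divisibility by~$l$ is immediate from $l$\+divisibility of~$V$, so the crux is to show that the associated graded $\gr^i(V/{}_sN)=\gr^iV/{}_s\gr^iN$ lies in $\E_{k,\,i}^{G/H\,+}$ for every~$i$. My approach is to prove the following structural lemma: in any short exact sequence $0\rarrow\gr^iN\rarrow\gr^iV\rarrow\gr^iU\rarrow 0$ of $c^i$\+twisted discrete $G$\+modules over~$k$ in which $\gr^iV$ and $\gr^iU$ belong to $\E_{k,\,i}^{G/H\,+}$ and $\gr^iN$ belongs to $\E_{k/st,\,i}^{G/H\,+}$, there is a direct sum decomposition $\gr^iV=W\oplus W'$ in $\E_{k,\,i}^{G/H\,+}$ with $\gr^iN={}_{st}W\subseteq W$ and $\gr^iU\simeq W/{}_{st}W\oplus W'$. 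Granting the lemma, ${}_s\gr^iN={}_sW$ is the standard $s$\+torsion of a permutational direct sum, and $\gr^i(V/{}_sN)=W/{}_sW\oplus W'$ remains in $\E_{k,\,i}^{G/H\,+}$ because $I[G/H_\alpha]/(k/s)[G/H_\alpha]\simeq I[G/H_\alpha]$ via multiplication by~$s$ on each indecomposable summand. I expect to establish the structural lemma by decomposing $\gr^iV$ into indecomposable summands $c^i\ot I[G/H_\alpha]$ and using the hypothesis that $\gr^iU$ must again be $c^i$\+twisted $k$\+injective permutational to pin down the possible embeddings $\gr^iN\hookrightarrow{}_{st}\gr^iV$ summand by summand, the essence being that the very existence of a valid lift $(V,U)$ in $\F_k^G{}^+$ is a rigid condition that forces $\gr^iN$ to realize the full $st$\+torsion of a permutational direct summand of $\gr^iV$.
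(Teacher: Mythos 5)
Your overall strategy -- turn a matrix factorization of $st$ with kernel $N$ into the matrix factorization $V\rarrow V/{}_sN\rarrow V$ of~$s$ -- is viable, but as written the argument has a gap concentrated in two places that are really the same difficulty. First, the opening step is not available as stated: the ``filtered analog of the construction in the proof of Proposition~\ref{unfiltered-reduction-prop}'' applied to an object of $\F_{k/st}^{G\,+}$ would prove that $\kap$ is essentially surjective, which is false (this is exactly the point of the example after Proposition~\ref{comparison-fully-faithful}). What the hypothesis $N\in\F_k^G{}^+/st$ actually gives you is an arbitrary diagram $(U_0,V_0)\in\H_{st}$ with $N=\mathrm{im}({}_{st}U_0\to{}_{st}V_0)\subset V_0$; producing from it a \emph{surjective} presentation with $U=V_0/N\in\F_k^G{}^+$ already requires your ``structural lemma''. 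Second, that lemma is left unproved, and the route you propose for it (decomposing $\gr^iV$ into indecomposables and using permutationality of the abstract quotient $\gr^iU$ to rigidify the embedding) is both delicate in the infinite-type profinite setting and uses a weaker hypothesis than the one you actually have. The hypothesis you should use is that the diagram $(U_0,V_0)$ lies in $\H$, i.e.\ its image under the background functor $\pi=(\pi',\pi'')$ is exact in a \emph{split} exact category; this forces $\gr^iN\rarrow{}_{st}\gr^iV_0$ to be a split monomorphism in $\E_{k/st,\,i}^{G\,+}$, and then Lemma~\ref{reduction-conservative-lem}(b) lifts this split monomorphism to a split monomorphism $W\rarrow\gr^iV_0$ in $\E_{k,\,i}^{G\,+}$ with ${}_{st}W=\gr^iN$. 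That is precisely your decomposition $\gr^iV_0=W\oplus W'$, obtained for free; with it, your computation $\gr^i(V/{}_sN)=(W/{}_sW)\oplus W'\simeq W\oplus W'$ and the verification that the new diagram lies in $\H_s$ go through.

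For comparison, the paper's proof avoids constructing the new matrix factorization explicitly. It first characterizes membership in $\F_k^G{}^+/st$ by an embedding criterion: $N$ lies in the subcategory if and only if it embeds into some $M\in\F_k^G{}^+$ strictly compatibly with the filtrations so that each $\gr_F^iN\rarrow{}_{st}\gr_F^iM$ is an admissible monomorphism in $\E_{k/st,\,i}^{G\,+}$ (``only if'' from the matrix factorization, ``if'' via Lemma~\ref{reduction-conservative-lem}(b)). The proposition then follows in one line: the composition ${}_sN\rarrow N\rarrow M$ satisfies the same criterion for~$s$. Your construction is an explicit unwinding of the ``if'' direction of that criterion; once you replace your structural lemma by the split-mono lifting of Lemma~\ref{reduction-conservative-lem}(b), the two arguments carry the same content.
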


\begin{proof}
 An object $N=(N,F)\in\F_{k/st}^{G\,+}$ belongs to $\F_k^G{}^+/st$
if and only if there exists an object $M=(M,F)\in\F_k^G{}^+$
such that the injective $k/st$\+module $N$ can be embedded into
the injective $k$\+module $M$ in a way strictly compatible with
the filtrations, compatible with the actions of $k$ and $G$,
and such that the induced embeddings
$\gr_F^iN\rarrow{}_{st}\.\gr_F^iM$ are admissible monomorphisms
in $\E_{k/st,\,i}^{G\,+}$.
 (Cf.\ the proof of Proposition~\ref{unfiltered-reduction-prop}.)
 Indeed, the ``only if'' part is obvious from the construction, and
to prove the ``if'', one recalls that by
Lemma~\ref{reduction-conservative-lem}(b) any admissible monomorphism
in $\E_{k/st,\,i}^{G\,+}$ can be lifted to an admissible monomorphism
in $\E_{k,\,i}^{G\,+}$.

 Whenever such an embedding $N\rarrow M$ exists, its composition
${}_sN\rarrow N\rarrow M$ with the identity embedding ${}_sN\rarrow N$
provides an embedding ${}_sN\rarrow M$ showing that the object
$({}_sN,F)$ belongs to $\F_k^G{}^+/s$.
\end{proof}

\begin{thm}
 Suppose that the maps
$$
 \Ext^n_{\A_k^G{}^+}(X,Y)\lrarrow\Ext^n_{\A_{k/s}^{G\.+}}({}_sX,{}_sY)
$$
are surjective for all $X\in\E_{k,\,i}^{G\,+}$ and
$Y\in\E_{k,\,j}^{G\,+}$ with $j-i=n$ and $n\ge1$.
 Assume that the exact category $\F_k^G{}^+$ satisfies the main
hypothesis of Subsection~\ref{main-hypothesis}.
 Then the comparison functor $\kap\:\F_k^G{}^+/s\rarrow
\F_{k/s}^{G\,+}$ is an equivalence of exact categories, and
the main hypothesis for the exact category $\F_{k/s}^{G\,+}$ is
also satisfied.
\end{thm}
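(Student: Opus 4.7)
The plan is to prove the two conclusions in sequence. First I will upgrade the fully faithfulness of Proposition~\ref{comparison-fully-faithful} to show $\kap$ is essentially surjective, using only the tools already at hand (no surjectivity hypothesis on Ext groups required). Second I will use the Bockstein long exact sequences to deduce the main hypothesis for $\F_{k/s}^{G\,+}$, the theorem's surjectivity assumption entering precisely in the diagonal cohomological degree.

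\textbf{Essential surjectivity of $\kap$.} I argue by induction on the filtration length of an object $N\in\F_{k/s}^{G\,+}$. The base case is Corollary~\ref{permutational-reduction-cor}(b), which identifies $\E_{k,\,i}^{G\,+}/s\simeq\E_{k/s,\,i}^{G\,+}$, so single-degree objects lie in the essential image. For the inductive step, write $N$ as an extension $0\rarrow N_1\rarrow N\rarrow Q\rarrow 0$ with $Q=\gr_F^{i_0}N$ the top associated graded piece (single-degree of degree~$i_0$) and $N_1$ of strictly shorter filtration supported in degrees $\ge i_0+1$; by induction there are preimages $Q'$, $N_1'$ in $\F_k^G{}^+/s$. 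It remains to lift the extension class along $\kap^1\:\Ext^1_{\F_k^G{}^+/s}(Q',N_1')\rarrow\Ext^1_{\F_{k/s}^{G\,+}}(Q,N_1)$. A secondary induction on the filtration length of $N_1'$, using the long exact Ext sequences obtained by peeling off successive graded pieces of $N_1'$, reduces this to the single-degree case $(Q',R)$ with $R$ in a degree $j\ge i_0+1$; the strict inequality $i_0<j$ is what we need to invoke Proposition~\ref{diagonal-quadratic-prop}(b), and combined with the $\Ext^1$-isomorphism and $\Ext^2$-monomorphism on single-degree pairs from Proposition~\ref{comparison-fully-faithful}, the 5-lemma propagates isomorphism of $\kap^1$ through the filtration.

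\textbf{Main hypothesis for $\F_{k/s}^{G\,+}$.} Given the equivalence, this reduces to showing that
\[
 \Ext^n_{\F_k^G{}^+/s}({}_sX',{}_sY')\lrarrow\Ext^n_{\A_{k/s}^{G\.+}}({}_sX',{}_sY')
\]
is an isomorphism for $X'\in\E_{k,\,i}^{G\,+}$, $Y'\in\E_{k,\,j}^{G\,+}$, and $n\le j-i$. I compare the Bockstein long exact sequence of Subsection~\ref{reduction-posing} for $\F_k^G{}^+$ with the one for $\A_k^G{}^+$ (using Proposition~\ref{unfiltered-reduction-prop} to identify the reduction of the latter with $\A_{k/s}^{G\,+}$). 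In the strict range $n<j-i$, the main hypothesis for $\F_k^G{}^+$ renders the vertical maps at degrees $n$ and $n+1$ between $\F_k^G{}^+$ and $\A_k^G{}^+$ isomorphisms, so the 5-lemma concludes. In the diagonal case $n=j-i$, Proposition~\ref{diagonal-quadratic-prop}(a) gives $\Ext^{n+1}_{\F_k^G{}^+}(X',Y')=0$; thus $\Ext^n_{\F_k^G{}^+/s}({}_sX',{}_sY')$ coincides with the cokernel of $s$-multiplication on $\Ext^n_{\F_k^G{}^+}(X',Y')$, and the main-hypothesis iso in degree $n$ identifies this with the cokernel of $s$ on $\Ext^n_{\A_k^G{}^+}(X',Y')$, which embeds into $\Ext^n_{\A_{k/s}^{G\.+}}({}_sX',{}_sY')$ as the kernel of the Bockstein boundary into $\Ext^{n+1}_{\A_k^G{}^+}(X',Y')$. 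Surjectivity of the comparison map is therefore equivalent to vanishing of that boundary, which is exactly the surjectivity hypothesis of the theorem.

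The main obstacle is the diagonal degree $n=j-i$: the filtered category $\F_k^G{}^+$ benefits from the a~priori vanishing $\Ext^{n+1}=0$ of Proposition~\ref{diagonal-quadratic-prop}(a), while the unfiltered $\A_k^G{}^+$ generally does not, and the theorem's surjectivity hypothesis is precisely the condition needed to close that cohomological-dimension gap. Everything else is diagram-chasing and bookkeeping of the two nested filtration-length inductions, which stay within the range where Proposition~\ref{comparison-fully-faithful} applies thanks to the position of $Q$ at the top of the filtration of~$N$.
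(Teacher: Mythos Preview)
Your essential surjectivity argument has a genuine gap.  You write that you will use ``the $\Ext^1$-isomorphism and $\Ext^2$-monomorphism on single-degree pairs from Proposition~\ref{comparison-fully-faithful}'', but that Proposition does not establish that $\kap^1$ is an isomorphism on single-degree pairs --- its proof only shows that $\kap^1$ is a \emph{monomorphism} (the sentence ``the claim is that $\kap^n$ are isomorphisms for $n=1$'' at the start of that proof is evidently a slip for $n=0$, as the computation that follows delivers an isomorphism for $n=0$ and a monomorphism for $n=1$).  In your inductive step you must lift an extension class in $\Ext^1_{\F_{k/s}^{G\,+}}(Q,R)$ with $Q$ in degree~$i_0$ and $R$ in degree $j\ge i_0+1$, and the case $j-i_0=1$ actually occurs.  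For that case the Bockstein sequence (using $\Ext^2_{\F_k^G{}^+}(X,Y)=0$ from Proposition~\ref{diagonal-quadratic-prop}(a)) identifies $\Ext^1_{\F_k^G{}^+/s}({}_sX,{}_sY)$ with the cokernel of~$s$ on $\Ext^1_{\A_k^G{}^+}(X,Y)$, while $\Ext^1_{\F_{k/s}^{G\,+}}({}_sX,{}_sY)\cong\Ext^1_{\A_{k/s}^{G\,+}}({}_sX,{}_sY)$ by Proposition~\ref{diagonal-quadratic-prop}(b).  Surjectivity of $\kap^1$ here is therefore \emph{equivalent} to the theorem's surjectivity hypothesis at $n=1$; without it the extension cannot be lifted and the middle object lies outside the essential image of~$\kap$.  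The Example following Proposition~\ref{comparison-fully-faithful} shows this failure is real.  So the surjectivity hypothesis is needed already for the equivalence, not only for the main hypothesis over~$k/s$.

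The paper does not separate the two conclusions.  It uses both the main hypothesis for $\F_k^G{}^+$ and the surjectivity assumption together to show, via the morphism of Bockstein sequences and the commutative triangle $\F_k^G{}^+/s\to\F_{k/s}^{G\,+}\to\A_{k/s}^{G\,+}$, that $\kap^n$ is an isomorphism for $n\le 1$ and a monomorphism for $n=2$ on single-degree pairs; then~\cite[Lemma~3.2]{Partin} gives the equivalence, and the main hypothesis for $\F_{k/s}^{G\,+}$ drops out from the same computation.  Your second part (the diagonal-degree analysis) is essentially the paper's computation and is fine; the issue is only that you tried to decouple it from essential surjectivity.
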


\begin{proof}
 From the morphism of long exact sequences considered in the proof
of Proposition~\ref{comparison-fully-faithful} and the main
hypothesis for the exact category $\F_k^G{}^+$ one can see by
the way of the 5\+lemma that the map
$\Ext^n_{\F_k^G{}^+/s}({}_sX,{}_sY)\rarrow
\Ext^n_{\A_{k/s}^{G\.+}}({}_sX,{}_sY)$ is an isomorphism for $n<j-i$
and a monomorphism for $n=j-i$.
 Furthermore, since the map $\Ext^n_{\F_k^G{}^+}(X,Y)\rarrow
\Ext^n_{\A_k^G{}^+}(X,Y)$ is surjective by the main hypothesis,
surjectivity of the map $\Ext^n_{\A_k^G{}^+}(X,Y)\rarrow
\Ext^n_{\A_{k/s}^{G\.+}}({}_sX,{}_sY)$ implies surjectivity of
the map $\Ext^n_{\F_k^G{}^+/s}({}_sX,{}_sY)\rarrow
\Ext^n_{\A_{k/s}^{G\.+}}({}_sX,{}_sY)$ for $n=j-i$.

 By Proposition~\ref{diagonal-quadratic-prop}(b), the map
$\Ext^n_{\F_{k/s}^{G\.+}}({}_sX,{}_sY)\rarrow
\Ext^n_{\A_{k/s}^{G\.+}}({}_sX,{}_sY)$ is an isomorphism for
$n\le 1$, \,$n\le j-i$ and a monomorphism for $n=2$.
 From commutativity of the triangle diagram
$\Ext^n_{\F_k^G{}^+/s}({}_sX,{}_sY)\rarrow
\Ext^n_{\F_{k/s}^{G\.+}}({}_sX,{}_sY)\allowbreak\rarrow
\Ext^n_{\A_{k/s}^{G\.+}}({}_sX,{}_sY)$ we conclude
that the map $\Ext^n_{\F_k^G{}^+/s}({}_sX,{}_sY)\rarrow
\Ext^n_{\F_{k/s}^{G\.+}}({}_sX,{}_sY)$ is an isomorphism for $n\le1$
and a monomorphism for $n=2$ (and all $i$, $j\in\Z$).
 According to~\cite[Lemma~3.2]{Partin}, it follows that
the comparison functor $\kap\:\F_k^G{}^+/s\rarrow\F_{k/s}^{G\,+}$
is an equivalence of exact categories.
 Now we have proven that the map $\Ext^n_{\F_{k/s}^{G\.+}}({}_sX,{}_sY)
\rarrow\Ext^n_{\A_{k/s}^{G\.+}}({}_sX,{}_sY)$ is an isomorphism
for $n\le j-i$, that is the main hypothesis holds for
the exact category $\F_{k/s}^{G\,+}$.
\end{proof}

 In order to formulate our main conjectures, let us pass to
the following particular case of our general setting.
 Suppose that $k$ is a complete discrete valuation ring with
a uniformizing element $l\in k$.
 Let $c\:G\rarrow k^*$ be a continuous multiplicative character of
a profinite group~$G$ and $H\subset G$ be a closed normal subgroup
annihilated by the reduced character $c/l\:G\rarrow (k/l)^*$.

\begin{conj}  \label{comparison-conjecture}
 Suppose that the natural maps
$$
\Ext_{\A_{k/l^{r+1}}^{G\.+}}^n({}_{l^{r+1}}X,\.{}_{l^{r+1}}Y(n))\lrarrow
\Ext_{\A_{k/l^r}^{G\.+}}^n({}_{l^r}\!\.X,\.{}_{l^r}\!\.Y(n))
$$
are surjective for all\/ $r>0$ and $X$, $Y\in\E_{k,\,0}^{G/H\,+}$.
 Assume that the exact category $\F_{k/l}^{G\,+}$ satisfies
the main hypothesis.
 Then the comparison functors
$$
 \kap\:\F_k^G{}^+/l^r\lrarrow\F_{k/l^r}^{G\,+}
$$ 
are equivalences of exact categories for all $r>0$.
\end{conj}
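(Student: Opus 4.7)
The plan is to apply the preceding theorem directly to the ring~$k$ with central element $s=l^r$, which would simultaneously produce the equivalence $\F_k^G{}^+/l^r\simeq\F_{k/l^r}^{G\,+}$ and the main hypothesis for the reduced category. By Proposition~\ref{comparison-fully-faithful}, the comparison functor $\kap$ is already known to be fully faithful, so the open issue is essential surjectivity; the theorem provides it (together with the main hypothesis for $\F_{k/l^r}^{G\,+}$) once its two hypotheses are verified, namely (a)~the main hypothesis for $\F_k^G{}^+$ itself, and (b)~the surjectivity of
$$
 \Ext^n_{\A_k^G{}^+}(X,Y)\lrarrow
 \Ext^n_{\A_{k/l^r}^{G\.+}}({}_{l^r}X,{}_{l^r}Y)
$$
for all $X\in\E_{k,\.i}^{G/H\.+}$, $Y\in\E_{k,\.j}^{G/H\.+}$ with $n=j-i\ge1$.

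For step~(b) I would argue by induction on~$r$. The tautological short exact sequences arising from the $l$\+divisibility of objects of $\E_{k,\.i}^{G/H\.+}$, of the form $0\rarrow{}_lX\rarrow{}_{l^{r+1}}X\rarrow{}_{l^r}X\rarrow0$ (the nontrivial maps being inclusion and multiplication by~$l$), together with their analogues for~$Y$, produce a Bockstein-type ladder comparing Ext groups in the categories $\A_{k/l^{r+1}}^{G\.+}$, $\A_{k/l^r}^{G\.+}$, and $\A_{k/l}^{G\.+}$. The conjecture's surjectivity assumption feeds into this ladder, while the vanishing of $\Ext^{n+1}$ between objects whose $\E$\+degrees differ by~$n$, furnished by Proposition~\ref{diagonal-quadratic-prop}(a) under the inductively-assumed main hypothesis at the lower level, absorbs the obstructions in the next cohomological degree. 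Passage from finite-tower surjectivity $\A_{k/l^s}\twoheadrightarrow\A_{k/l^r}$ (for $s\ge r$) to surjectivity $\A_k\twoheadrightarrow\A_{k/l^r}$ then relies on the identification of $\A_k^G{}^+$ with an inverse limit of its $l^s$\+torsion quotients---best formulated through the contramodule description of Subsection~\ref{reduction-rep-categories}---together with a Mittag--Leffler-type continuity property of Ext along the tower.

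For step~(a) the idea is to lift the main hypothesis from $\F_{k/l}^{G\,+}$ up to $\F_k^G{}^+$ by comparing Bockstein sequences. Running the ladder argument of the preceding theorem in reverse, the Bockstein sequence of Subsection~\ref{reduction-posing} for the reduction $\F_k^G{}^+\rarrow\F_{k/l}^{G\,+}$ and the analogous sequence for $\A_k^G{}^+\rarrow\A_{k/l}^{G\,+}$ assemble into a morphism of long exact sequences; the assumed main hypothesis for $\F_{k/l}^{G\,+}$ together with the already-established surjectivity of~(b) at $r=1$ force the comparison maps $\Ext^n_{\F_k^G{}^+}(X,Y)\rarrow\Ext^n_{\A_k^G{}^+}(X,Y)$ to be isomorphisms in the required range, by a 5\+lemma induction on~$n$.

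The principal obstacle is the interlocking nature of (a) and~(b): each level of~(b) needs the main hypothesis at the previous quotient (supplied by the output of the theorem at the previous stage, which in turn needed~(a)). I would resolve this by a single induction on~$r$ that handles both conditions simultaneously: at each step, the main hypothesis for $\F_{k/l^{r-1}}^{G\,+}$ (carried forward from the previous stage, with the base case $r=1$ assumed outright) supplies the vanishings needed for~(b) at level~$r$; (b) at level~$r$ and (a) (established once and for all by the bootstrap of the preceding paragraph) let the theorem produce the main hypothesis at $\F_{k/l^r}^{G\,+}$; and the contramodule inverse-limit step glues these finitary surjectivities together to furnish the form of~(b) actually required to close the induction.
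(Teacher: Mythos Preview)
This statement is presented in the paper as a \emph{conjecture}; the paper offers no proof of it (and the introduction explicitly says that ``at the very end of the present version of the paper, the intended main results are formulated as two conjectures''). The only deduction the paper carries out is that Conjecture~\ref{main-hypothesis-conjecture} follows from Conjecture~\ref{comparison-conjecture}. So there is no paper proof to compare your proposal against.

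Your argument, as it stands, has genuine gaps. In step~(a) you invoke ``the Bockstein sequence of Subsection~\ref{reduction-posing} for the reduction $\F_k^G{}^+\rarrow\F_{k/l}^{G\,+}$'', but the Bockstein sequence produced by the reduction construction relates $\F_k^G{}^+$ to $\F_k^G{}^+/l$, not to $\F_{k/l}^{G\,+}$. The assumed main hypothesis concerns the latter category, and identifying the two is exactly the $r=1$ case of the conjecture, so the bootstrap is circular. (Proposition~\ref{comparison-fully-faithful} only gives that $\kap^n$ is an isomorphism for $n\le1$ and a monomorphism for $n=2$ between objects of the subcategories~$\E$, which is not enough to transport the main hypothesis across.)

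In step~(b) the ``absorption of obstructions'' does not go through: the vanishing from Proposition~\ref{diagonal-quadratic-prop}(a) concerns $\Ext^{n+1}_\F$, whereas your ladder lives in the ambient categories~$\A$, and the main hypothesis only identifies $\Ext^n_\F$ with $\Ext^n_\A$ for $n\le j-i$; it says nothing about $\Ext^{j-i+1}_\A$, which can perfectly well be nonzero. Finally, the inverse-limit step---deducing surjectivity out of $\A_k^G{}^+$ from surjectivity at each finite level---requires controlling $\varprojlim^1$ of the tower of Ext groups; in the $+$~categories these groups are typically not finitely generated, and termwise surjectivity alone does not suffice. These are precisely the kinds of obstacles that leave the statement a conjecture rather than a theorem.
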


\begin{conj}  \label{main-hypothesis-conjecture}
 Suppose that the natural maps
$$
\Ext_{\A_{k/l^{r+1}}^G}^n({}_{l^{r+1}}X,\.{}_{l^{r+1}}Y(n))\lrarrow
\Ext_{\A_{k/l^r}^G}^n({}_{l^r}\!\.X,\.{}_{l^r}\!\.Y(n))
$$
are surjective for all\/ $r>0$ and $X$, $Y\in\E_{k,\,0}^{G/H}$.
 Then the exact categories $\F_{k/l^r}^G$ satisfy the main hypothesis
whenever so does the exact category $\F_{k/l}^G$.
\end{conj}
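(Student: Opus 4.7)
The plan is to first reduce the entire problem from the small filtered categories $\F^G$ to the large ones $\F^{G\.+}$, which is legitimate by the corollary following Proposition~\ref{large-and-small-filtered-prop} and which makes the matrix-factorization machinery of Section~\ref{matrix-factor-construct-section} applicable. Next, I would upgrade the given surjectivity of $\Ext^n_{\A_{k/l^{r+1}}^G}\to\Ext^n_{\A_{k/l^r}^G}$ from the small $\A^G$-Ext groups to the large $\A^{G\.+}$-Ext groups: Lemma~\ref{large-and-small-unfiltered-lem}(a) identifies these two flavors on pairs of small objects, and part~(b) then extends the surjection coordinate-wise to arbitrary direct sums, since every object of $\E_{k,0}^{G/H\.+}$ is a direct sum of objects of $\E_{k,0}^{G/H}$.

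Since $\bar l\in k/l^{r+1}$ is a zero-divisor, the comparison theorem proved just before Conjecture~\ref{comparison-conjecture} cannot be applied directly at the step $k/l^{r+1}\to k/l^r$. The key reroute is to work with the complete DVR $k$ itself, where $l^r$ is a non-zero-divisor for every $r\ge 1$. Once the main hypothesis for $\F_k^{G\.+}$ is known, applying that theorem with source ring $k$ and element $s=l^r$ yields simultaneously the equivalence $\kap\colon\F_k^{G\.+}/l^r\simeq\F_{k/l^r}^{G\.+}$ (namely the statement of Conjecture~\ref{comparison-conjecture}) and the main hypothesis for $\F_{k/l^r}^{G\.+}$ for every~$r$; transferring back from the large to the small category via the corollary to Proposition~\ref{large-and-small-filtered-prop} then finishes the conjecture. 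The surjectivity input $\Ext^n_{\A_k^{G\.+}}(X,Y(n))\to\Ext^n_{\A_{k/l^r}^{G\.+}}({}_{l^r}X,{}_{l^r}Y(n))$ required by the comparison theorem is obtained by composing the upgraded large-category surjections down the tower of successive quotients and taking an $l$-adic inverse limit, using completeness of $k$ and a vanishing-$\varprojlim^1$ argument on $l$-divisible targets.

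The main obstacle is thus promoting the main hypothesis from $\F_{k/l}^{G\.+}$ to $\F_k^{G\.+}$. The tool is the Bockstein long exact sequence of Subsection~\ref{first-bockstein} for the reduction of $\F_k^{G\.+}$ by multiplication with $l$ (on the background functor of Example~\ref{filtered-permutational-coefficient-reduction-example}), compared termwise with the parallel Bockstein sequence for $\A_k^{G\.+}$, for which Proposition~\ref{unfiltered-reduction-prop} already supplies the equivalence $\A_k^{G\.+}/l\simeq\A_{k/l}^{G\.+}$. A downward induction in $n$, anchored by the $\Ext^n_\F$-vanishing above the diagonal from Proposition~\ref{diagonal-quadratic-prop}(a) and the isomorphism/monomorphism statements of part~(b), together with a $5$-lemma diagram chase linking the two Bockstein sequences through the assumed main hypothesis for $\F_{k/l}^{G\.+}$, should yield the desired isomorphisms modulo~$l$ in each degree; completeness of $k$ should then lift these to integral statements. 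The delicate point, and the main obstacle, is that the comparison functor $\kap$ of Proposition~\ref{comparison-fully-faithful} is only asserted to be fully faithful, not essentially surjective, so feeding the assumed finite-level surjectivity hypothesis correctly into the diagram chase to bound the failure of essential surjectivity — and thereby to identify the middle term of the Bockstein sequence for $\F_k^{G\.+}/l$ with $\Ext$ in $\F_{k/l}^{G\.+}$ in the relevant degrees — is where the bulk of the work will lie.
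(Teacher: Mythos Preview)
This statement is labeled a \emph{conjecture} in the paper and is not proved there. What the paper does provide, in the paragraph immediately following the two conjectures, is a reduction: assuming Conjecture~\ref{comparison-conjecture} (so that the comparison functors~$\kap$ are equivalences), the forgetful functor~$\psi$ induces a morphism between the ``finite--finite--finite'' Bockstein sequences of Subsection~\ref{second-bockstein} or~\ref{third-bockstein} for $\F_{k/l^r}^{G\,+}$ and for $\A_{k/l^r}^{G\,+}$, and the 5-lemma plus induction on~$r$ then propagate the main hypothesis from $r=1$ to all~$r$. The passage between small and large categories and the equivalence of the two surjectivity hypotheses are handled exactly as you describe, via Lemma~\ref{large-and-small-unfiltered-lem} and Proposition~\ref{large-and-small-filtered-prop}.

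Your proposal is more ambitious: you attempt an unconditional proof of both conjectures by first establishing the main hypothesis for the integral category $\F_k^{G\,+}$ and then invoking the comparison theorem preceding the conjectures. But the step you yourself flag as ``the main obstacle'' is precisely the open content of these conjectures, and the sketch does not close it. The Bockstein sequence for $\F_k^{G\,+}$ has $\Ext^*_{\F_k^{G\,+}/l}$ in the middle, not $\Ext^*_{\F_{k/l}^{G\,+}}$; identifying these in the critical diagonal degree $n=j-i$ requires essential surjectivity of~$\kap$ (or closure of its image under extensions), which is the statement of Conjecture~\ref{comparison-conjecture} for $r=1$. So the argument is circular at exactly the point you identify: you need the conclusion of the comparison theorem to run the diagram chase that would supply its hypothesis. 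The inverse-limit step is also a genuine gap: the conjecture's surjectivity assumption concerns only the maps between successive \emph{finite} levels $k/l^{r+1}\to k/l^r$ for $r\ge1$, and deducing surjectivity of $\Ext^n_{\A_k^{G\,+}}\to\Ext^n_{\A_{k/l^r}^{G\,+}}$ from this would amount to showing that $\Ext^{n+1}_{\A_k^{G\,+}}$ is $l$-torsion-free, which neither the hypotheses nor a $\varprojlim^1$ argument provide.
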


 It is not difficult to deduce
Conjecture~\ref{main-hypothesis-conjecture} from
Conjecture~\ref{comparison-conjecture}.
 Their surjectivity assumptions are equivalent by
Lemma~\ref{large-and-small-unfiltered-lem}(a\+b), while
the Ext groups in the categories $\F_{k/l^r}^G$ and $\F_{k/l^r}^{G\,+}$
agree by Proposition~\ref{large-and-small-filtered-prop}(a).
 Assuming the comparison functors~$\kap$ to be equivalences,
the forgetful functor $\psi\:\F_k^G{}^+\rarrow\A_k^G{}^+$
induces a morphism between the long exact sequences
\begin{multline*}
 \Ext^n_{\F_{k/l^{r'}}^G}({}_{l^{r'}}\!\.X,\.{}_{l^{r'}}\!\.Y)
 \lrarrow\Ext^n_{\F_{k/l^r}^G}({}_{l^r}\!\.X,\.{}_{l^r}\!\.Y)\\
 \lrarrow\Ext^n_{\F_{k/l^{r''}}^G}({}_{l^{r''}}\!\.X,\.{}_{l^{r''}}\!\.Y)
 \lrarrow\Ext^{n+1}_{\F_{k/l^{r'}}^G}({}_{l^{r'}}\!\.X,\.{}_{l^{r'}}\!\.Y)
\end{multline*}
and
\begin{multline*}
 \Ext^n_{\A_{k/l^{r'}}^G}({}_{l^{r'}}\!\.X,\.{}_{l^{r'}}\!\.Y)
 \lrarrow\Ext^n_{\A_{k/l^r}^G}({}_{l^r}\!\.X,\.{}_{l^r}\!\.Y)\\
 \lrarrow\Ext^n_{\A_{k/l^{r''}}^G}({}_{l^{r''}}\!\.X,\.{}_{l^{r''}}\!\.Y)
 \lrarrow\Ext^{n+1}_{\A_{k/l^{r'}}^G}({}_{l^{r'}}\!\.X,\.{}_{l^{r'}}\!\.Y)
\end{multline*}
of Subsection~\ref{second-bockstein} or~\ref{third-bockstein} for
any positive integers $r'+r''=r$ and any objects $X\in\E_{k,\,i}^G$
and $Y\in\E_{k,\,j}^G$.
 Applying the 5\+lemma, the main hypothesis for $\F_{k/l^r}^G$
follows from that for $\F_{k/l}^G$ by trivial induction in~$r$.

\medskip
 In the situation relevant for Artin--Tate
motives~\cite[Subsections~9.2\+-9.5]{Partin}, the coefficient
ring $k=\Z_l$ is the ring of $l$\+adic integers, the profinite
group $G=G_K$ is the absolute Galois group of a field $K$ of
characteristic different from~$l$, the character~$c$ is
the cyclotomic one, and the closed normal subgroup $H\subset G$
corresponds to a Galois extension $M/K$ with the field $M$
containing a primitive $l$\+root of unity.
 The surjectivity assumption of the Conjectures is then
a reformulation of the Milnor--Bloch--Kato conjecture
(proven by Rost, Voevodsky, et.~al.~\cite{Voev}), while
the main hypothesis of Subsection~\ref{main-hypothesis} is
the silly filtration conjecture~\cite[Conjecture~9.2]{Partin}.

 The main hypothesis for the exact category $\F_{k/l}^G$ can be 
interpreted as a Koszulity condition by
Lemma~\ref{root-of-1-koszulity-lemma}.
 The described setting includes the case of $H=\{e\}$ that appears
in connection with Artin--Tate motivic sheaves~\cite{Pmotsh}.

\bigskip

\end{document}